\def\patchsect#1\let\@svsec\@empty{#1\def\@svsec{\leavevmode\kern1sp\relax}}
\let\old@sect\@sect
\def\@sect{\expandafter\patchsect\old@sect}
\theoremstyle{definition}
\theoremstyle{plain}
\newtheorem{defi}{Definition}[section]
\newtheorem{lemma}[defi]{Lemma}
\newtheorem{theorem}[defi]{Theorem}
\newtheorem{prop}[defi]{Proposition}
\newtheorem{corollary}[defi]{Corollary}
\newtheorem{remark}[defi]{Remark}
\DeclareMathOperator*{\Id}{Id}
\DeclareMathOperator*{\Tr}{Tr}
\newcommand{\jap}[1]{\langle #1 \rangle}
\newcommand{\bd}[1]{\mathbf{#1}}
\title{Propagation of chaos for the Landau equation with very soft and Coulomb potentials}
\date{\today}
\author{Côme Tabary\footnote{DMA, ENS, Université PSL, CNRS, 75005 Paris, France, and Université Paris Cité and Sorbonne Université, CNRS, IMJ-PRG, F-75013 Paris, France}}
\begin{document}

\maketitle

\begin{abstract}
\noindent We consider a drift-diffusion process of $N$ stochastic particles and show that its empirical measure converges, as $N\rightarrow\infty$, to the solution of the Landau equation. We work in the regime of very soft and Coulomb potentials using a tightness/uniqueness method. To claim uniqueness, we need high integrability estimates that we obtain by crucially exploiting the dissipation of the Fisher information at the level of the particle system. To be able to exploit these estimates as $N\rightarrow\infty$, we prove the affinity in infinite dimension of the entropy production and Fisher information dissipation (and other first and second-order versions of the Fisher information through a general theorem), results which were up to now only known for the entropy and the usual Fisher information.
\end{abstract}

\setcounter{tocdepth}{1}
\tableofcontents

\section{Introduction}

\subsection{Background}

The homogeneous Landau equation in three space dimensions writes
\begin{equation}
\label{eq:landau}
    \partial_t f_t(v) = \nabla_v \cdot \int_{\mathbb{R}^3} \alpha(\vert v-w \vert) a (v - w) (\nabla_v - \nabla_w) f_t(v)f_t(w) dw,
\end{equation}
where the unknown $f_t:\mathbb{R}^3\rightarrow\mathbb{R}_+$ is the time-dependent distribution of velocities in a plasma.
The matrix-valued function
$$a(z):=\vert z \vert^2 \Id - z \otimes z$$
is the projection on $z^{\perp}$ up to a $\vert z \vert^2$ factor. The non-negative function $\alpha$ is called the interaction potential, and depends on the nature of the interactions between the plasma particles. In this work we will consider power-law interaction potentials 
$$\alpha(r)=r^\gamma$$
and will focus on so-called \textit{very soft potentials} $\gamma\in[-3,-2]$. The case $\gamma=-3$ corresponds to the most physically relevant Coulomb interactions.

The Landau equation has been widely
used in plasma physics as it is derived from the Boltzmann equation in the limit of \textit{grazing collisions} (that is to say, when very few momentum is exchanged during a single interaction between two particles). Its statistical nature makes it a description of the plasma at a \textit{mesoscopic scale}, bridging the microscopic scale of individual particles and the macroscopic scale of fluid dynamics.
For an in-depth review of these kinetic models, we refer to \cite{LifsicPitaevskij2008} for a physicist's approach and to \cite{Villani2002} for a mathematician's.

The rigorous derivation of time-irreversible kinetic equations from a microscopic reversible Newtonian particle system is a deep question initiated by Boltzmann himself in his 1872 work \cite{Boltzmann2003a}. Showing that, as the number of particles becomes infinite, they can be statistically described by the solution to a kinetic equation is called \textit{propagation of chaos}. This name refers to the fact that particles are necessarily correlated in the finite system because they collide with each other, but in the limit they become independent, or \textit{chaotic}. Indeed, the Boltzmann and Landau equations intrinsically postulate that before any collision, two particles have "forgotten" their past and are uncorrelated: this is the hypothesis of "molecular chaos". A testimony of this is the tensor product $f_t(v)f_t(w)$ in~\eqref{eq:landau}, which is the distribution of velocities of two \textit{independent} particles. For a long time Lanford's result \cite{Lanford1975} was the best but still partial answer to this complex question, at least until the recent breakthrough \cite{DengHaniMa2024}.

In 1956, Kac \cite{Kac1956} proposed the simpler goal of obtaining the space-homogeneous Boltzmann equation from a Markov $N$-particle jump process by performing a mean field limit, rather than starting from Newtonian dynamics. He studied a one-dimensional toy model, for which he proved the first result of propagation of chaos, towards the associated toy equivalent of the Boltzmann equation. These results were later extended to higher dimensions and more physical cases, we refer in particular to \cite{MischlerMouhot2012} and the references therein.

The same goal can be set for the space-homogeneous Landau equation~\eqref{eq:landau}: can the Landau equation be obtained as a mean-field limit from a suitable stochastic particle system? 
The answer has been proven positive for a variety of interaction potentials $\alpha$ (and slightly different particle systems to start with), that we will review in more detail below. To summarize, propagation of chaos was shown for the case of moderately soft to hard potentials $\alpha(r)=r^\gamma, \gamma\in(-2,1)$ \cite{FontbonaGuerinMeleard2007, Carrapatoso2015, FournierHauray2015, NicolasFournierArnaudGuillin2017, CarrilloFengGuo2024} and partial results of convergence towards the BBGKY-Landau hierarchy were proven in the Coulomb case \cite{MiotPulvirentiSaffirio2011, CarrilloGuo2025}. Very soft and Coulomb potentials $\gamma\in[-3,-2]$ were, to our knowledge, never covered until the very recent result of Feng and Wang \cite{FengWang2025}. The present work was pursued in parallel to \cite{FengWang2025}, with a very different approach, differences that we discuss in more detail below.

\subsection{Main results}
\label{ssec:mainresult}

\subsubsection{Propagation of chaos}

The main result of this paper is to show propagation of chaos for the Landau equation with very soft (or Coulomb) potentials $\gamma\in[-3,-2]$. We will start from a regularized version of the particle system considered in \cite{MiotPulvirentiSaffirio2011, Carrapatoso2015} (and also studied in \cite{NicolasFournierArnaudGuillin2017}), to which we refer for details. Consider a $N$-particle jump process where each pair $(i,j)$ of particles with velocities $V^N_i$ and $V^N_j$ overcomes elastic collisions with a rate $\vert V^N_i - V^N_j\vert^\gamma$: this system correponds to Kac's original proposition to approximate the Boltzmann equation. One can pass this system to the grazing collision limit and (formally) obtain a diffusion-drift $N$-particle system approaching the Landau equation.

This resulting system is not clearly well-posed for $\gamma<0$ because of the singularity at the origin of the interaction potential $\alpha$. As done in \cite{FournierHauray2015} for moderately soft potentials, we regularize $\alpha$ for each number $N$ of particles and let the regularization vanish as $N\rightarrow+\infty$. We emphasize that since the regularization can be taken arbitrarily close to the original $\alpha$ and can disappear arbitrarily fast, this does not affect the physical meaning of the system.

We consider a pointwise non-decreasing sequence of positive, smooth and bounded functions $(\alpha^N)_{N\geq 2}$, such that $\alpha^N \nearrow \alpha$, and such that $\alpha=\alpha^N$ outside the ball $B(0,\eta_N)$ for some $\eta_N \xrightarrow[N\rightarrow\infty]{} 0$. We also suppose that for each $N\geq 2$, for any $r>0$,
\begin{equation}
\label{eq:fishercond}
    \frac{r \vert (\alpha^{N})'(r) \vert}{\alpha^{N}(r)}\leq \frac{-\gamma}{0.99}.
\end{equation}
Note that this condition is automatically satisfied for $r> \eta_N$ because the left-hand side equals $-\gamma$ for $\alpha^N(r)=r^\gamma$. We will give an example of such a regularization in Section~\ref{sec:regularized particle system}. (We fixed the value $0.99$ for nicer explicit constants in estimates but it can be lowered to anything strictly greater than $(-\gamma)/\sqrt{22}$, see Remark~\ref{rem:on0.99}.)

Using these potentials we can present the particle system we will start from. We define the drift and the diffusion
\begin{align*}
b^N:=\nabla\cdot(\alpha^Na)=\sum_{k=1}^3\partial_k(\alpha^Na_{kl})=-2\alpha^N v,&&\sigma^N:=\left(\alpha^N a\right)^{1/2}=\sqrt{\alpha^N} \vert v\vert^{-1} a.
\end{align*}We consider a family of independent Brownian motions indexed by the ordered pairs of particles, $(B^{ij})_{1 \leq i<j \leq N}$, and let $B^{ji} = - B^{ij}$. The regularized particle system is the following system of stochastic differential equations for the $\mathbb{R}^3$-valued velocities $(V^N_i)_{1\leq i \leq N} $:
\begin{align}
    \label{eq:particlesystem}
    V^N_i (t) = V^N_{i,0} &+ \frac{2}{N-1}  \sum_{\substack{j=1 \\ j \neq i}}^N \int_0^t b^N(V^N_i(s) - V^N_j(s)) ds\nonumber
    \\&+ \frac{\sqrt{2}}{\sqrt{N-1}} \sum_{\substack{j=1 \\ j \neq i}}^N \int_0^t\sigma^N(V^N_i(s) - V^N_j(s)) dB^{N}_{ij}(s).
\end{align}
Here, the initial data $(V^N_{i,0})_{1\leq i \leq N}$ is a random variable independent of the Brownian motions. We suppose that initial chaos holds, \textit{i.e.} that the law of $(V^N_{i,0})_{1\leq i \leq N}$ is given by the tensor product $g_0^{\otimes N}$ for some initial distribution $g_0$.
\begin{remark}
    The original particle system derived in \cite{MiotPulvirentiSaffirio2011, Carrapatoso2015} is exactly~\eqref{eq:particlesystem} but defined with $\alpha$ instead of $\alpha^N$ in the drift and diffusion. Its coefficients are hence singular at the origin, so its well-posedness is a potentially difficult issue that is not the topic of this work.
\end{remark}

To state our result, we define the random empirical measure $(\mu^N_t)_{t\in[0,T]}\in C([0,T],\mathcal{P}(\mathbb{R}^3))$, the space of probability-valued continuous functions on $[0,T]$, by letting
$$\mu^N_t:=\frac{1}{N}\sum_{i=1}^N \delta_{V^N_i(t)}.$$
The space $\mathcal{P}(\mathbb{R}^3)$ of probabilities over $\mathbb{R}^3$ will always be endowed with the (metrizable) topology of weak convergence (against bounded continuous functions) and $C([0,T],\mathcal{P}(\mathbb{R}^3))$ is endowed with the uniform topology.
Our main result is the following (see below for the classical definition of Fisher information and entropy):
\begin{theorem}
    \label{thm:main}
    Let $ \gamma\in[-3,-2]$. Let $g_0\in L^1(\mathbb{R}^3)$ have unit mass, finite energy $E_0$, finite entropy $H_0$, finite Fisher information $I_0$, and finite moment of order $m$
    \begin{equation*}
    \int_{\mathbb{R}^3} \vert v\vert^m g_0 dv <+\infty
    \end{equation*}
    for some $m> 3(2-\gamma)$ ; and let $(g_t)_{t\geq 0}$ be the classical solution to the Landau equation~\eqref{eq:landau} with initial condition $g_0$. 
    Then, for any $T>0$, the random empirical measure converges in probability in $ C([0,T],\mathcal{P}(\mathbb{R}^3))$ to the deterministic $(g_t)_{t\in[0,T]}$:
    \begin{equation*}
    \label{eq:maingoal}
        (\mu^N_t)_{t\in[0,T]} \xrightarrow[N\rightarrow \infty]{\mathbb{P}}(g_t)_{t\in[0,T]}.
    \end{equation*}
    
\end{theorem}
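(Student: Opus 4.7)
I would follow a tightness/uniqueness scheme. Because the target $(g_t)_{t\in[0,T]}$ is deterministic, it suffices to show convergence in law of $(\mu^N)_N$ in $C([0,T],\mathcal{P}(\mathbb{R}^3))$ to the Dirac mass at $(g_t)_{t\in[0,T]}$: convergence in probability then follows automatically. The argument has three steps: (i) a priori estimates giving tightness of the laws of $(\mu^N)_N$, (ii) identification of any limit point as concentrated on weak solutions of~\eqref{eq:landau} with sufficient integrability, and (iii) uniqueness in that class to identify the limit with $(g_t)_t$.

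For the a priori bounds I would propagate moments of order $m>3(2-\gamma)$ and use energy conservation to control the second moment, and then crucially exploit the dissipation of the $N$-particle Fisher information. The regularization condition~\eqref{eq:fishercond} is designed precisely so that the Fisher information of the joint law dissipates coercively along~\eqref{eq:particlesystem}; integration in time, together with a Sobolev-type embedding applied to the resulting square-root-type term, should yield a uniform-in-$N$, $L^q_tL^p_v$-type bound on the one-particle marginal of $(V^N_i(t))$ for $(p,q)$ strong enough to fall inside a known uniqueness class for~\eqref{eq:landau} with $\gamma\in[-3,-2]$. Tightness then follows from Itô's formula applied to $\langle\mu^N_t,\varphi\rangle$ for smooth $\varphi$: the bounded variation part is controlled by these moment and integrability bounds, while the martingale part has quadratic variation of order $1/N$, yielding Aldous-type equicontinuity in time.

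Identifying any limit point $\Pi$ is where the real work lies. Passing to the limit in the martingale problem derived from~\eqref{eq:particlesystem} requires, on the one hand, controlling the singular zone $|v-w|\le \eta_N$ of the regularized potential, which is handled by the $L^p$ bound above combined with the vanishing Lebesgue measure of this zone; and, on the other hand, transferring the particle-level Fisher-dissipation estimate to the limit one-particle law, a highly nonlinear operation that cannot be handled by standard weak convergence arguments. This last point is the main obstacle of the proof, and it is precisely what the infinite-dimensional affinity theorems announced in the abstract are designed to resolve: applied to the entropy production and Fisher information dissipation functionals, they imply that the normalized particle-level dissipation converges, under empirical-measure convergence, to the corresponding dissipation functional evaluated at the one-particle limit, so that the uniform-in-$N$ integrability estimate is inherited by any $\Pi$-typical trajectory. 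Once such a trajectory $(\nu_t)_t$ is shown to be a weak solution of~\eqref{eq:landau} lying in the uniqueness class, we conclude $\nu_t=g_t$ almost surely, hence $\Pi$ is the Dirac mass at $(g_t)_t$ and Theorem~\ref{thm:main} follows.
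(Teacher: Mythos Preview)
Your overall architecture---tightness, identification of limit points as weak solutions with extra integrability, then uniqueness---is exactly the paper's scheme, and your description of the tightness step and of the role of the affinity theorems in transferring particle-level functional estimates to the limit is accurate.

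The gap is in the phrase ``for $(p,q)$ strong enough to fall inside a known uniqueness class''. For $\gamma\in[-3,-2]$ the available uniqueness results (Gu\'erin--Fournier, Fournier) require $f\in L^1([0,T],L^p)$ with $p>3/(3+\gamma)$, which means $p=\infty$ at $\gamma=-3$. The Fisher information itself only yields $\sqrt{f_t}\in H^1\hookrightarrow L^6$, i.e.\ $f_t\in L^3$, and the affinity theorem transfers this to $f\in L^1_tL^3_v$ almost surely---not enough. Your proposal does not explain how to close this gap, and a ``square-root-type term plus Sobolev'' argument cannot reach $L^\infty$.

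What the paper actually does is substantially more delicate. From the \emph{dissipation} of the $N$-particle Fisher information it isolates a second-order functional $\mathcal{K}(F)=\sum_k\int (\partial_{b_k}^2 F)^2/F$; the affinity theorem (only approximate, up to a factor $16$) transfers the bound $\int_0^T\mathcal K(f_t\otimes f_t)\,dt<\infty$ to the limit. From $\mathcal{K}$ one extracts a first-order quartic term $\mathcal{J}(f)=\sum_k\int f\otimes f\,\alpha|v-w|^{-2}(\tilde b_k\cdot\nabla\log(f\otimes f))^4$, and then---this is the crux---shows that $\mathcal{J}$ controls $\int f\langle v\rangle^{\gamma-2}|\nabla\log f|^4$, which by the embedding $W^{1,p}\hookrightarrow L^\infty$ for $p\in(3,4)$ yields $\|f_t\|_{L^\infty}$. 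That last step is a Desvillettes-type ellipticity argument and needs (a) high moments and (b) a non-concentration bound on $f_t$. The moments are propagated \emph{after} passing to the limit, via the H-formulation of Landau (which requires knowing that the entropy production is a.s.\ finite---again from the affinity theorem), not at the particle level as you suggest. The non-concentration cannot come from entropy because one only gets $H(f_t)\in L^1_t$, not $L^\infty_t$; the paper replaces it by the ``$\delta$-non-aligned points'' device of Fournier--Hauray, which is weak-topology continuous, and runs an iterative bootstrap over short time intervals $[n\tau,(n+1)\tau]$, invoking uniqueness on each subinterval to refresh the non-concentration bound from the classical solution.

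In short: your plan is right up to and including the $L^1_tL^3_v$ level, but the jump from there to the uniqueness class is the heart of the paper and requires the second-order Fisher dissipation, the H-formulation for moments, and the non-aligned-points argument---none of which your proposal anticipates.
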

\begin{remark}
    In view of the topology we endowed $ C([0,T],\mathcal{P}(\mathbb{R}^3))$ with, the convergence in probability in the above theorem means that, for any $\varepsilon>0$,
    $$\mathbb{P}\left( \sup_{t\in[0,T]} d\left(\mu^N_t,g_t\right)>\varepsilon\right)\xrightarrow[N\rightarrow \infty]{}0,$$
    where $d$ is any metric for the weak topology on $\mathcal{P}(\mathbb{R}^3)$ (for instance, a Wasserstein metric associated to a bounded distance on $\mathbb{R}^3$, see \cite[Theorem 6.9, Corollary 6.13]{Villani2009}).
\end{remark}
\begin{remark}
\label{rem:postthm}
    The hypotheses on $g_0$ are stronger than those of \cite[Theorem 1.5]{Ji2024}, that guarantees the existence of a classical solution to the Landau equation with constant energy and non-increasing entropy (and non-increasing Fisher information, but we will not use it directly). It will likewise be shown in Proposition~\ref{prop:wellposednessparticlesystem} that~\eqref{eq:particlesystem} admits a pathwise unique solution defined for all positive times, so every object in the above theorem makes sense.
\end{remark}
This formulation, which is natural in the setting of our proof, might not be the most common way to state propagation of chaos. It is indeed usually expressed in terms of the law of the particle system and its marginals. Let $F^N_t$ be the law of $(V^N_i(t))_{1\leq i \leq N}$, and for any $1\leq j \leq N$, let $F^{N:j}_t$ be its $j$-th marginal, \textit{i.e.}
$$F^{N:j}_t=\int_{\mathbb{R}^{3(N-j)}} F^{N}_t dv_{j+1}...dv_N.$$
It actually does not matter over which variables we integrate, because of the exchangeability of particles. Then Theorem~\ref{thm:main} entails:

\begin{corollary}
\label{cor:main}
    Under the same hypothesis on $g_0$ as in Theorem~\ref{thm:main}, for any $t\geq 0$, and any $j\in\mathbb{N}^*$, the weak convergence 
    $$F^{N:j}_t \xrightharpoonup[N\rightarrow\infty]{} g_t^{\otimes j}$$
    holds in $\mathcal{P}(\mathbb{R}^{3j})$.
\end{corollary}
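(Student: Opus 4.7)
The plan is to deduce the corollary from Theorem~\ref{thm:main} via the classical equivalence, valid for exchangeable particle systems, between the convergence of the empirical measure to a deterministic limit and the convergence of the $j$-marginals to the corresponding tensor product. The key ingredient will be the exchangeability of $(V^N_i(t))_{1\leq i\leq N}$ for every $t\geq 0$, which follows from the exchangeability of the initial law $g_0^{\otimes N}$ and the permutation symmetry of~\eqref{eq:particlesystem}.

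The first step is to tensorize Theorem~\ref{thm:main}. Since convergence in probability implies convergence in law, and since both the time-evaluation at $t$ and the tensorization map $\nu\mapsto\nu^{\otimes j}$ from $\mathcal{P}(\mathbb{R}^3)$ to $\mathcal{P}(\mathbb{R}^{3j})$ are continuous for the weak topologies, I obtain that $(\mu^N_t)^{\otimes j}$ converges in law to $g_t^{\otimes j}$ in $\mathcal{P}(\mathbb{R}^{3j})$. Testing against an arbitrary $\varphi\in C_b(\mathbb{R}^{3j})$, which is a bounded continuous functional of $\nu\in\mathcal{P}(\mathbb{R}^{3j})$, this yields
\begin{equation*}
\mathbb{E}\Bigl[\int_{\mathbb{R}^{3j}}\varphi\, d(\mu^N_t)^{\otimes j}\Bigr]\xrightarrow[N\rightarrow\infty]{}\int_{\mathbb{R}^{3j}}\varphi\, dg_t^{\otimes j}.
\end{equation*}

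The second step is to relate the expectation on the left-hand side to $\int\varphi\, dF^{N:j}_t$. Expanding the tensor product,
\begin{equation*}
\mathbb{E}\Bigl[\int\varphi\, d(\mu^N_t)^{\otimes j}\Bigr]=\frac{1}{N^j}\sum_{i_1,\ldots,i_j=1}^N \mathbb{E}[\varphi(V^N_{i_1}(t),\ldots,V^N_{i_j}(t))].
\end{equation*}
By exchangeability, each of the $N(N-1)\cdots(N-j+1)$ terms associated with a tuple of pairwise distinct indices equals $\int\varphi\, dF^{N:j}_t$, while the $O(N^{j-1})$ remaining terms are bounded in absolute value by $\|\varphi\|_\infty$. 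The right-hand side thus rewrites as
\begin{equation*}
\frac{N(N-1)\cdots(N-j+1)}{N^j}\int\varphi\, dF^{N:j}_t \; + \; O_j(\|\varphi\|_\infty /N),
\end{equation*}
and since the prefactor tends to $1$, combining with the previous display gives $\int\varphi\, dF^{N:j}_t \to \int\varphi\, dg_t^{\otimes j}$. This being valid for every $\varphi\in C_b(\mathbb{R}^{3j})$, the claimed weak convergence follows.

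No real obstacle arises in this argument: the entire content of the corollary is carried by Theorem~\ref{thm:main} together with the exchangeability of the particle system, and the reasoning above is simply the standard way to reconcile the empirical-measure and marginal formulations of propagation of chaos.
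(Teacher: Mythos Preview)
Your proof is correct and follows essentially the same route as the paper: both deduce the corollary from Theorem~\ref{thm:main} via the classical equivalence, for exchangeable systems, between convergence of the empirical measure to a deterministic limit and weak convergence of the $j$-marginals to the tensor product. The paper packages this step by invoking Proposition~\ref{prop:convFNj} (which in turn cites \cite[Theorem 5.3]{HaurayMischler2012}) and then identifying $\pi_t^j=g_t^{\otimes j}$, whereas you write out the elementary expansion and exchangeability argument directly; the content is the same.
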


In this form the term \textit{propagation of chaos} can be better understood: note that our hypothesis on $(V^N_{i,0})_{1\leq i \leq N}$ is exactly $F^{N}_0=g_0^{\otimes N}$, so that we suppose molecular chaos (independence of the particles) at initial time, and we show that this chaos is \textit{propagated} to later times in the large $N$ limit. For more details on the different statements of chaos and quantitative equivalences between them, we refer to \cite{HaurayMischler2012}.

\subsubsection{Generalized Fisher information}

Our proof of propagation of chaos naturally leads us to study variations of the Fisher information and extend its well-known properties to these more general versions. We state these results as a separate side theorem since they are in our opinion of independent interest. It also allows us to invoke them as a black box to simplify the presentation of the main result, since their proof is quite orthogonal to the remainder of this work.

Consider, for some integer $k_0\geq 1$, $b=b(v_1,...,v_{k_0})\in\mathbb{R}^{3k_0}$ a smooth, bounded, divergence-free vector field, and $\delta=b\cdot\nabla_{1...k_0}$ the associated derivation operator acting on the $(v_1,...,v_{k_0})$ variables. We consider the following generalized Fisher information: for any symmetric non-negative function $F$ on $\mathbb{R}^{3N}$ for any $N\geq k_0$, 
$$I^\partial(F):=\int_{\mathbb{R}^{3N}}\frac{\vert \partial F\vert^2}{F} dv_1...dv_N.$$
By symmetry of $F$, the usual Fisher information $I$ on $\mathbb{R}^{3N}$ (normalized by $\frac{1}{N}$) corresponds to $$I(F):=\frac{1}{N}\int_{\mathbb{R}^{3N}}\frac{\vert \nabla_{1...N} \partial F\vert^2}{F} dv_1...dv_N=I^{e_1}(F)+I^{e_2}(F)+I^{e_3}(F)$$ where $e_i$ is the canonical basis of $\mathbb{R}^3$ (hence we can take $k_0=1$ in this case). We can also write the entropy production functional of the Landau equation in this framework if we allow for a singular $b$, see Section~\ref{sec:infinite-dimension limit of the regularity estimates}.

We will also study the generalized second order Fisher information
$$K^\partial(F):=\int_{\mathbb{R}^{3N}} \frac{( \partial^2 F)^2}{F}.$$
This study is motivated by the fact that the dissipation of the Fisher information along the flow of the Landau equation is connected to such a functional.

Then, the following properties hold for $I^\partial$ and $K^\partial$:
\begin{theorem}
\label{thm:side}
Let $b=b(v_1,...,v_{k_0})\in\mathbb{R}^{3k_0}$ for some $k_0\geq 1$ be a smooth, bounded, divergence-free vector field, and $\delta=b\cdot\nabla_{1...k_0}$. Then for all $N\geq k_0$,
\begin{enumerate}
    \item \textbf{Lower semi-continuity and convexity:} The functionals $I^\partial$ and $K^\partial$ can be extended as non-negative, possibly infinite functionals to the space $\mathcal{P}(\mathbb{R}^{3N})$ of probabilites on $\mathbb{R}^{3N}$. They are convex and lower semi-continuous with respect to weak convergence over that space.
    \item \textbf{Super-additivity}: if $F\in\mathcal{P}(\mathbb{R}^{3N})$ and $$F^{:j}=\int_{\mathbb{R}^{3(N-j)}} F dv_{j+1}...dv_N,$$
    is its $j$-th marginal for any $k_0\leq j\leq N$, then 
    \begin{align*}
        I^\partial(F^{:j})\leq  I^\partial(F) && K^\partial(F^{:j})\leq  K^\partial(F).
    \end{align*}
    \item \textbf{Infinite-dimensional affinity}: Let $\nu\in\mathcal{P}(\mathcal{P}(\mathbb{R}^{3}))$ and consider the marginals
    $$\nu^j:=\int_{\mathcal{P}(\mathbb{R}^3)} \rho^{\otimes j} \nu(d\rho).$$
    Suppose $\nu$ has finite mean Fisher information and finite mean second moment, i.e.
    \begin{align*}
        \sup_{j\geq 1} I(\nu^j) < +\infty, &&\int_{\mathbb{R}^3} \vert v \vert^2 \nu^1(dv) < +\infty.
    \end{align*}
    Then it holds that
    \begin{align*}
        \int_{\mathcal{P}(\mathbb{R}^3)} I^\partial(\rho^{\otimes k_0}) \nu(d\rho) &=\lim_{j\geq k_0} I^\partial(\nu^j) \\\lim_{j\geq k_0} K^\partial(\nu^j)\leq \int_{\mathcal{P}(\mathbb{R}^3)} K^\partial(\rho^{\otimes k_0}) \nu(d\rho) &\leq 16 \lim_{j\geq k_0} K^\partial(\nu^j), 
    \end{align*}
    and all the sequences of which we take the limits are increasing in $j$.
\end{enumerate}
\end{theorem}
All these properties are known to hold for the Fisher information and the entropy. Their connection with propagation of chaos is further discussed in the subsection below. The hypotheses of point (3) can certainly be relaxed (especially the moment assumption) but they are quite natural in practice. The proofs of points (1) and (2) are quit short and close to the Fisher information case, they are done as a preliminary in Section~\ref{sec:on the fisher dissipation and aux} along a comparison between alternate definitions of the second order Fisher information that will be useful later. The affinity (3) is much more complex and is proven in Section~\ref{sec:infdimlimofDandK}, adapting the Hilbert framework developed in \cite{Rougerie2020}.
\begin{remark}
\label{rem:lvl3energy}
    We call the moment in point (3) above \textbf{mean second moment} because if $f$ is a $\nu$-distributed random variable, then
    $$\mathbb{E}\left[\int \vert v \vert^2 f(dv)\right]=\int \vert v \vert^2 \mathbb{E}[f](dv)=\int \vert v \vert^2 \int_{\mathcal{P}(\mathbb{R}^3)} \rho \nu(d\rho) dv=\int \vert v \vert^2 \nu^1_t (dv).$$
    More generally, we can refer to integrals against $\nu$ as mean, or statistical, quantities related to $\nu$. They are also sometimes called \textbf{level-3} quantities in the literature.
\end{remark}

\subsection{Literature review and discussion}

Propagation of chaos is an expansive topic, reaching beyond kinetic theory. For a general review, we refer to \cite{ChaintronDiez2022a, ChaintronDiez2022, HaurayMischler2012, Sznitman1991}. Below we will focus on the case of the Landau equation which already enjoys a wide literature.
\\

\textbf{On particle systems.} 
As stated above, the particle system we consider in this work was derived by Miot, Pulvirenti, Saffirio, and Carrapatoso \cite{MiotPulvirentiSaffirio2011, Carrapatoso2015}, as a grazing collision limit of a jump process approximating the Boltzmann equation. (Actually, in both works the grazing collision limit is performed on the Kolmogorov equation for the \textit{law} of the process, and in \cite{Carrapatoso2015} proposes an actual stochastic system such that its law satisfies the limit equation.) An important consequence is that, as does the jump process, the drift diffusion system almost surely conserves momentum and energy: $\sum_i V^N_i$ and $\sum_i (V^N_i)^2$ are constant in time. This means that the process is confined on the corresponding Boltzmann sphere (the sphere of dimension $3(N-1)-1$ formed by the points of $\mathbb{R}^{3N}$ of constant given momentum and energy), so there is no diffusion in the orthogonal directions. These constraints also mean that approximate independence of the particles is harder to achieve. Propagation of chaos for this conservative system in the case of Maxwellian molecules $\gamma=0$ was shown in \cite{Carrapatoso2015} adapting the abstract framework developed in \cite{MischlerMouhot2012}, and Fournier and Guillin covered the case of hard potentials $\gamma\in[0,1)$ in \cite{NicolasFournierArnaudGuillin2017} with a probabilistic coupling method. These methods both provide (polynomial in $N$) rates of convergence.

There exists a slightly easier to study but slightly less natural particle system, that does not conserve the energy. It is obtained by considering~\eqref{eq:particlesystem} with $N^2$ independent Brownian motions rather than setting $B^{ij}=-B^{ji}$. Propagation of chaos towards the Landau equation was shown by Fontbona, Guerin, Méléard \cite{FontbonaGuerinMeleard2007} for Maxwell molecules. This system is also considered by Fournier and Hauray \cite{FournierHauray2015} in the only complete result of propagation of chaos with soft potentials we are aware of: they cover moderately soft potentials $\gamma\in(-2,0)$.
They also regularize the particle system (by convolution) to make it well-posed. We believe the difference in regularization has little influence on the system since both are arbitrarily small (in fact, their regularization could probably fit in our method but some proofs and inequalities would be less straightforward). The case $\gamma\in(-1,0)$ is treated by a fine probabilistic coupling (providing convergence with a polynomial rate in $N$), and the case $\gamma\in(-2,-1)$ is by a tightness-uniqueness argument. We also mention a variation of this non-conservative particle system, obtained with $N$ independent Brownian instead of $N^2$, considered in \cite{CarrilloFengGuo2024, CarrilloGuoJabin2024}.

These non-conservative systems still preserve the expected energy $\mathbb{E}(\sum_i (V^N_i)^2)$ and, at first glance, their difference with~\eqref{eq:particlesystem} seems light. However we point out that their laws obey a different Kolmogorov equation, for which it is not clear that the Fisher information is non-increasing, as it will be the case with the Kolmogorov equation of~\eqref{eq:particlesystem}. We believe this makes the conservative system~\eqref{eq:particlesystem} even more preferable to the non-conservative ones, as the Landau equation is now known to dissipate the Fisher information \cite{GuillenSilvestre2023}.
\\

\textbf{On the Kolmogorov/Master equation.}
Although we did not need it to state Theorem~\ref{thm:main}, the law $F^N_t$ of the random velocities $(V^N_i(t))_{1\leq i\leq N}$ plays a central role in both parts of the tightness/uniqueness method we use. First, showing tightness of the empirical measures $\mu^N$ requires estimates on their laws and hence on $F^N_t$. Second, any regularity of the cluster points of $(\mu^N)_{N\geq 2}$ is obtained by passing to the limit in the regularity of $F^N_t$ (once expressed in adequate quantities that behave well as the number $N$ of particles, and hence the dimension, goes to infinity: this is discussed below).

The law $F^N_t=F^N_t(v_1,...v_N)$ obeys the following Kolmogorov (also known as \textit{Master}) equation (in a weak form) (see \cite{Carrapatoso2015,CarrilloGuo2025}, and Appendix~\ref{sec:derivkol} for a derivation):
\begin{equation}
\label{eq:kolintro}
    \partial_t F^N_t =\frac{1}{(N-1)} \sum_{\substack{i, j = 1 \\ i< j}}^N Q^N_{ij}(F^N_t),
\end{equation}
with 
$$Q^N_{ij}(F^N_t)=(\nabla_i - \nabla_j) \cdot \left[ \alpha^N a(v_i - v_j) (\nabla_i - \nabla_j) F^N_t\right],$$
$\nabla_i$ denoting the nabla with respect to $v_i$. It is possible to forget the particle system itself and only work on its law to show propagation of chaos: it amounts to showing Corollary~\ref{cor:main} directly, without Theorem~\ref{thm:main}. This is the approach of \cite{Carrapatoso2015, CarrilloFengGuo2024, CarrilloGuo2025,  MiotPulvirentiSaffirio2011}.

The operator $Q^N_{ij}$ is the same that appears in the landmark proof of the monotonicity of the Fisher information by Guillen and Silvestre \cite{GuillenSilvestre2023} (up to $\alpha^N$ being replaced by $\alpha$). Therein, the question for the nonlinear Landau equation~\eqref{eq:landau} is reduced to the monotonicity for the "lifted" linear equation $\partial_t F(v_1,v_2)=Q_{12}(F)(v_1,v_2)$ on $\mathbb{R}^6$, which is nothing but~\eqref{eq:kolintro} for $N=2$. From there it is easy to see that the Fisher information does not increase along the flow of the general $N$-particle Kolmogorov equation because it decreases along each $Q^N_{ij}$. This was first observed by Carrillo and Guo in \cite{CarrilloGuo2025} in the Coulomb case $\gamma=-3$, where the well-posedness of~\eqref{eq:kolintro} without the regularization of $\alpha$ is studied. The authors use the (uniform in $N$) control provided by the Fisher information to show tightness of the sequences of $j$-marginals $(F^{N:j}_t)_N$ for any $j$. Suitable subsequences in $N$ converge to some limit $G^j_t$ which satisfies a BBGKY-type infinite hierarchy of equations: $G^j_t$ obeys an equation depending on $G^{j+1}_t$. Because the tensorization $(g_t^{\otimes j})_j$ of any solution of the Landau equation is a solution to the hierarchy, uniqueness of solutions to the hierarchy would imply the propagation of chaos $F_t^{N:j}\rightharpoonup g_t^{\otimes j}$. Unfortunately, it is not reached in \cite{CarrilloGuo2025}. We also mention the earlier work \cite{MiotPulvirentiSaffirio2011}, also in the Coulomb case, where~\eqref{eq:kolintro} is further regularized with a Laplacian (vanishing as $N\rightarrow +\infty$), and a similar convergence to a hierarchy is shown (in a weaker sense).

\begin{remark}
Since the divergence with respect to $v_2$ vanishes in the integral, the Landau equation~\eqref{eq:landau} can be rewritten
$$\partial_t f_t(v_1)=\int_{\mathbb{R}^3}Q_{12}(f(v_1)f(v_2))dv_2,$$
which highlights the formal proximity between the particle system and the Landau equation. If we suppose that $F^N_t$ is tensorized at all times, $F^N_t=f_t^{\otimes N}$, then integrating~\eqref{eq:kolintro} over $(v_2,...,v_N)$ and using symmetry on the remaining terms we see that $f_t$ must solve the Landau equation.
\end{remark}

\begin{remark}
The equation~\eqref{eq:kolintro} is a degenerate parabolic equation. As the particle system is confined to Boltzmann spheres of constant impulsion and energy, the diffusion in~\eqref{eq:kolintro} only occurs in the directions tangent to these spheres. When restrained to the unit Boltzmann sphere $\{(v_1,...v_N)\in\mathbb{R}^{3N} \vert \sum_i v_i = 0, \sum_i \vert v_i \vert^2 =1\}$, it reduces for $N=2$ to the heat equation with the Laplace Beltrami operator on the sphere and a diffusion coefficient proportional to the interaction potential $\alpha^N$. In contrast, in higher dimension it still degenerates at certain points where the velocities are aligned. The structure of the equation~\eqref{eq:kolintro} might not be completely understood.
\end{remark}

\textbf{Hierarchy versus probabilistic point of view.}
We believe uniqueness to the BBGKY-type Landau hierarchy mentioned in the above paragraph, which has yet to be proven, is strictly stronger than the uniqueness part of our tightness/uniqueness method. Working at the level of the Landau hierarchy as in \cite{CarrilloGuo2025, MiotPulvirentiSaffirio2011} amounts to showing tightness of $(F^{N:j}_t)_N$ for each time $t$ and extract a limit $G^j_t \in \mathcal{P}(\mathbb{R}^{3j})$ to build a hierarchy $(G^j_t)_j$, solution to the infinite BBGKY-Landau system. To such a hierarchy can be associated a unique element $G_t\in\mathcal{P}(\mathcal{P}(\mathbb{R}^{3}))$ by the de Finetti-Hewitt-Savage theorem, such that
$G^j_t = \int \rho^{\otimes j} G_t(d\rho)$. The limit object is thus an element $(G_t)_{t\in[0,T]}\in C\left([0,T],\mathcal{P}(\mathcal{P}(\mathbb{R}^{3}))\right)$, and the uniqueness part of the tightness/uniqueness method in the setting of the BBGKY-Landau hierarchy would amount to show that $G_t=\delta_{g_t}$ (where $g_t\in\mathcal{P}(\mathbb{R}^3)$ is the solution to the Landau equation at time $t$).

In the present work, however, we show tightness at the level of the particle system, more precisely at the level of the law of empirical measures $(\mu^N_t)_{t\in[0,T]}$. Our limit object $\pi$ hence lives in the space $\mathcal{P}\left(C([0,T],\mathcal{P}(\mathbb{R}^3))\right)$, which can be continuously mapped to $C\left([0,T],\mathcal{P}(\mathcal{P}(\mathbb{R}^{3}))\right)$ by considering the time marginals $(\pi_t)_{t\in [0,T]}$ ($\pi_t$ is the law of $f_t$ if $f=(f_t)_{t\in [0,T]}$ has law $\pi$). The converse cannot be done, since a probability is in general not characterized by its marginals. This means $\pi$ contains more information than $(G_t)_{t\in[0,T]}$, so the uniqueness of $\pi$ we show in this work, that is to say $\pi=\delta_{(g_t)_t}$, is less general than the uniqueness in the hierarchy approach. In some sense, we trade a stronger tightness against an easier uniqueness. 

It is highly probable that the hierarchy $(\pi^j_t)_j$ solves the same BBGKY-Landau equations satisfied by the $(G^j_t)_j$. If so, uniqueness for the hierarchy would entail $\pi_t=\delta_{g_t}$. In this deterministic case knowledge of the marginals is sufficient to claim $\pi=\delta_{(g_t)_t}$, so uniqueness for the hierarchy would indeed be stronger. We did not investigate this point further.
\\

\textbf{Uniqueness in the Landau equation.} In the tightness/uniqueness method, once tightness is proven, we suppose that (up to a subsequence) $(\mu^N_t)_t \rightharpoonup (f_t)_t$ for some $C([0,T],\mathcal{P}(\mathbb{R}^3))$-valued random variable $f=(f_t)_t$ and want to conclude that $(f_t)_t=(g_t)_t$, the classical solution to the Landau equation. To do so, we first show that $f$ solves the Landau equation in some suitable weak sense, and then claim uniqueness of such weak solutions. Uniqueness does not hold without some regularity on $f$: we rely on Guérin and Fournier's results \cite{GuerinFournier2008, Fournier2010} which requires weak solutions to lie in $L^1([0,T],L^p(\mathbb{R}^3))$ for $p>3/(3+\gamma)$ and $L^1([0,T],L^\infty(\mathbb{R}^3))$ for $\gamma=-3$. We will show that for any value of $\gamma$, $f\in L^1([0,T],L^\infty(\mathbb{R}^3))$ almost surely.

Other results exist in the literature such as the Prodi-Serrin-type criteria \cite{AlonsoBaglandDesvillettes2024, ChernGualdani2022}. A uniqueness property for H-solutions (in the sense of Villani \cite{Villani1998}) without any integrability assumptions was proven by the author in \cite{Tabary2025}. However all these results rely on the \textit{H-theorem}: $f$ is supposed to be a weak solution (or H-solution) with the additional assumption that the formal a priori estimate on the entropy given by the Landau equation~\eqref{eq:landau} holds. We stress that such a H-theorem is not automatic for weak solutions, so checking that $f$ lies in the appropriate functional space would not be enough.
\\

\textbf{Behavior of functionals in the infinite dimension limit.} We detail what we consider both one of the most crucial and most original parts of this work: the extension of typical properties of the entropy and Fisher information to more general and higher-order functionals. As described in the previous paragraph, to claim uniqueness we need to recover some regularity or integrability of the limit $f_t$ of the empirical measures $\mu^N_t$. Because empirical measures enjoy virtually no regularity, we have little hope of getting good estimates directly from this limit. What is common in similar proofs of propagation of chaos \cite{FournierHauray2015,Salem2019,Salem2019a} is to recover estimates on the expectation $\mathbb{E}(\mathcal{F}(f_t))$ for some norm or regularity-quantifying functional $\mathcal{F}$, by using uniform-in-$N$ estimates on $\mathcal{F}(F^N_t)$. Indeed, if we can show the finiteness of $\mathbb{E}(\mathcal{F}(f_t))$, then we know that, almost surely, $\mathcal{F}(f_t)$ is finite. Provided that the functional $\mathcal{F}$ controls enough integrability, we can conclude that the limit solution $f_t$ to the Landau equation is unique.

Because the dimension increases as $N\rightarrow\infty$, not every functional can be easily passed to the limit (think for example of the $L^p$ norm of a tensor product $f^{\otimes N}$, which scales as $\Vert f^{\otimes N} \Vert_{L^p}=\Vert f \Vert_{L^p}^{N}$: we have little hope of recovering much as $N\rightarrow+\infty$). Two better-suited candidates are the entropy $H$ and the Fisher information $I$ of a probability measure $F^N$ on $\mathbb{R}^{3N}$:
\begin{align}
\label{eq:defHandI}
    H(F^N):= \frac{1}{N}\int_{\mathbb{R}^{3N}} F^N \log F^N dV,
    &&I(F^N):= \frac{1}{N}\int_{\mathbb{R}^{3N}} \frac{\vert \nabla F^N \vert^2}{F^N} dV,
\end{align}
taking the value $+\infty$ if $F^N$ has no density (or its density has no gradient in the case of $I$). Both $H$ and $F$ are convex and lower-semicontinuous for the weak convergence of probabilities, which is very convenient in our setting. Their nice behavior comes from the following \textit{super-additivity} property: for any symmetric probability $F^N$ on $\mathbb{R}^{3N}$, and any $1\leq j \leq N$, if $F^{N:j}$ is the $j$-th marginal of $F^N$,
\begin{align}
\label{eq:introsuperadditivity}
    H(F^{N:j})\leq  H(F^N), && I(F^{N:j})\leq  I(F^N)
\end{align}
with equality occurring on tensor products
\begin{align}
    H(f^{\otimes N})=H(f), && I(f^{\otimes N})=I(f).
\end{align}
The above scaling on tensor products suggests that these functionals behave better than Lebesgue or Sobolev norms with respect to the dimension.

As seen before, the natural limit space for probabilities on $\mathbb{R}^{3N}$ as $N\rightarrow\infty$ is the probability space $\mathcal{P}(\mathcal{P}(\mathbb{R}^{3}))$. Given $\nu \in \mathcal{P}(\mathcal{P}(\mathbb{R}^3))$, we can consider the sequence $(\nu^j)_j$ obtained by taking the marginals $\nu^j=\int\rho^{\otimes j} d\nu$, and super-additivity entails that $H(\nu^j)$ and $I(\nu^j)$ are increasing in $j$ (because $\nu^{j:k}=\nu^k$ for any $k\leq j$). One can then consider the limit in $j\rightarrow\infty$ and introduce the so-called \textit{level 3}, or \textit{mean} quantities
\begin{align*}
    \mathcal{H}(\nu):=\sup_j H(\nu^j)= \lim_j H(\nu^j), && \mathcal{I}(\nu)=\sup_j I(\nu^j)= \lim_j I(\nu^j).
\end{align*}
Those are the natural extensions of $H$ and $I$ to the limit space $\mathcal{P}(\mathcal{P}(\mathbb{R}^{3}))$. As a supremum of convex functions, these quantities are convex in $\nu$. The remarkable property is that they are actually \textit{affine} \cite{HaurayMischler2012, RobinsonRuelle1967, Rougerie2020}: they satisfy
\begin{align}
\label{eq:introaffinity}
    \mathcal{H}(\nu)=\int_{\mathcal{P}(\mathbb{R}^3)} H(\rho) \nu(d\rho), && \mathcal{I}(\nu)=\int_{\mathcal{P}(\mathbb{R}^3)} I(\rho) \nu(d\rho).
\end{align}
If $\nu=\pi_t$ is the law of the random variable $f_t$, the right-hand side is $\mathbb{E}(\mathcal{F}(f_t))$ for either $\mathcal{F}=H$ or $\mathcal{F}=I$, precisely the quantity we want to show to be finite. The equality above means that to control this expectation, it suffices to control the sequence $\mathcal{F}(\pi_t^j)$ for all $j$. But $\pi_t^j$ is actually the weak limit of $F_t^{N:j}$, the $j$-marginal of the law of the $N$-particle system. Hence, by lower semi-continuity, we only need to control $\mathcal{F}(F_t^{N:j})$, and by super-additivity, that amounts to a control of $\mathcal{F}(F_t^{N})$. Hence the super-additivity~\eqref{eq:introsuperadditivity} and the affinity in infinite-dimension~\eqref{eq:introaffinity} are two key properties that allow us to pass properties of the particle system to the limit $N\rightarrow +\infty$.

However, for our proof, we need higher integrability than what the Fisher information provides by Sobolev embeddings. In particular, we want to be able to pass to the $N\rightarrow +\infty$ limit in (a part of) the dissipation of the Fisher information, which is close to a second-order generalized Fisher information as in Theorem~\ref{thm:side}. We will also need to pass to the limit in the entropy production, which can be seen as a first-order generalized Fisher information. This is why we were lead to prove Theorem~\ref{thm:side}. The super-additivity of the entropy production and the Fisher dissipation (point (2) in Theorem~\ref{thm:side}) can be shortly proven along the same lines as for the Fisher information. To show infinite-dimensional affinity (point (3)), we expand in Section~\ref{sec:infdimlimofDandK} on the framework developed by Rougerie \cite{Rougerie2020} for the Fisher information and its fractional versions, to provide an abstract theorem (Theorem~\ref{thm:abstractlvl3affinity}) that we then apply to the Fisher information and the entropy production. Note that we are unable to prove the exact affinity of the second order Fisher information but only up to the explicit constant $16$ (which suffices for our needs). The origin of this constant will be apparent in the proof.

We believe the nice behavior in infinite dimension of those different Fisher-type functionals is a novelty of this work and could prove useful beyond the study of the Landau equation.
\\

\textbf{Comparison with Feng and Wang \cite{FengWang2025}.} The day before the submission of the first version of this work on arXiv, X. Feng and Z. Wang submitted the remarkable preprint \cite{FengWang2025}, in which they show propagation of chaos for the Landau equation with Coulomb interactions for the first time in the literature, and their method is also able to treat very soft potentials with little to no modification. The author pledges that he did not know of the work \cite{FengWang2025} prior to its submission, and that the present work was pursued in parallel to theirs.

The method in \cite{FengWang2025} has very little in common with the current paper and provides another point of view on propagation of chaos. The authors of \cite{FengWang2025} work solely at the level of the law of the particle system \eqref{eq:kolintro}, without regularizing the potential. Their starting point is the recent duality method introduced by Bresch, Duerinckx and Jabin in \cite{BreschDuerinckxJabin2025}, which they elegantly tailor to the Landau setting. To handle the singularity of the system, they expand on the method by combining it with key estimates on the Landau equation. This clever use of the duality approach ultimately reduces the problem to the question of uniqueness for a certain dual hierarchy of equations, which seems easier to handle than the Landau-BBGKY hierarchy described earlier. The main similarity with the present paper is that both proofs crucially rely on some second-order version of the Fisher information, indicating that the high regularity it provides is certainly essential to handle very singular potentials.

\subsection{Notations and layout of the proof}

\textbf{Definitions and notations.}

The space of (Borel) probability measures on a complete separable metric space $\mathcal{X}$ is denoted by $\mathcal{P}(\mathcal{X})$. Endowed with the topology of weak convergence against continuous bounded functions, it is itself a complete separable metric space. A probability measure $F^n\in\mathcal{P}(\mathbb{R}^{3n})$ is said to be \textit{symmetric} if, for any permutation $\sigma$ of $\{1,...,n\}$, $$F^n(dv_1,..,dv_n)=F^n(dv_{\sigma(1)},...,dv_{\sigma(n)}).$$
For any such symmetric probability and $1\leq j \leq n$ we can consider the marginal $F^{n:j}\in\mathcal{P}(\mathbb{R}^{3j})$ obtained by
$$F^{n:j}(dv_1,...dv_j)=\int_{\mathbb{R}^{3(n-j)}} F^{n} dv_{j+1}...dv_n,$$
or by integrating over any other set of $n-j$ distinct variables.

By \textit{energy} of a probability $\mu\in\mathcal{P}(\mathbb{R}^{3})$ we mean the second moment $\int_{\mathbb{R}^3} \vert v\vert^2 \mu(dv)$, and by momentum the vector-valued first moment $\int_{\mathbb{R}^3} v \mu(dv)$.

The $\nabla$ operator applied to any function is the gradient in all variables, unless an index is specified, such as $\nabla_i$ w.r.t. $v_i$, $\nabla_{ij}$ w.r.t. $(v_i,v_j)$, and so on.

We also recall here the definitions of some objects that will be properly introduced when they are needed, so that the reader can consult this section directly to refresh their memory:

\noindent \textit{Vector fields and derivation operators.}
For $k=1,2,3$, the $3$-dimensional and $6$-dimensional vector fields $b_k=b_k(v_1,v_2)$ and $\tilde{b}_k$ are defined as:
\begin{align*}
b_k(v_1,v_2):=e_k \times (v_1-v_2)\in \mathbb{R}^3, &&\tilde{b}_k=\tilde{b}_k(v_1,v_2):=(b_k,-b_k) \in \mathbb{R}^6,
\end{align*}
with $e_k$ the canonical basis of $\mathbb{R}^3$ and $\times$ the usual cross product. They are introduced in Section~\ref{sec:entropy_and_fisher_in_kolmogorov}.
For $n\geq 2$, $k=1,2,3$, the derivation operator $\partial^n_{b_k}$ is
$$\partial^n_{b_k}:=\vert v_1-v_2\vert^{-\frac{1}{2}} (\alpha^n(\vert v_1-v_2\vert ))^{\frac{1}{4}}\tilde{b}_k(v_1,v_2)\cdot \nabla_{12},$$
and $\partial_{b_k}$ is defined with $\alpha$ in place of $\alpha^n$.

\noindent \textit{Functionals.} We recall the four important functionals used in this work. For non-negative functions $F^n$ on $\mathbb{R}^{3n}$, the entropy and Fisher information are respectively given by
\begin{align*}
    H(F^n):= \frac{1}{n}\int_{\mathbb{R}^{3n}} F^n \log F^n dV,
    &&I(F^n):= \frac{1}{n}\int_{\mathbb{R}^{3n}} \frac{\vert \nabla F^n \vert^2}{F^n} dV.
\end{align*}
For any $n\geq 2$, the (regularized) entropy production is
\begin{align*}
    D^n(F^n):&= \frac{1}{2}\int_{\mathbb{R}^{3n}} \alpha^n a(v_1-v_2):\left[(\nabla_1-\nabla_2)\log{F^n}\right]^{\otimes 2} F^n dV\\
    &=\sum_{k=1}^3 \frac{1}{2}\int_{\mathbb{R}^{3n}} \left(( \sqrt{\alpha^n} \tilde{b}_k\cdot \nabla_{12})F^n \right)^2 F^n dV,
\end{align*}
and the (regularized and convex version of the) Fisher dissipation is
\begin{align*}
    \mathcal{K}^n(F^n):&=\sum_{k=1}^3 \int_{\mathbb{R}^{3n}} \frac{\alpha^n(v_1-v_2)}{\vert v_1-v_2 \vert^2} \frac{\left( (\tilde{b}_k\cdot \nabla_{12})( \tilde{b}_k\cdot \nabla_{12})F^n \right)^2}{F^n} dV
    =\sum_{k=1}^3 \int_{\mathbb{R}^{3n}} \frac{\left( \partial^n_{b_k} \partial^n_{b_k}F^n \right)^2}{F^n} dV.
\end{align*}
The non-regularized versions $D$ and $\mathcal{K}$ are defined with $\alpha$ instead of $\alpha^n$. These functionals are introduced in Section~\ref{sec:infinite-dimension limit of the regularity estimates}. We call the $\mathcal{K}$ the \textit{convex version} of the Fisher dissipation because it is note exactly the dissipation of the Fisher information along the flow of the Landau or master equation, see Section \ref{sec:entropy_and_fisher_in_kolmogorov}.
\\

\noindent \textbf{Strategy and Layout.}

We begin with Section~\ref{sec:on the fisher dissipation and aux}, that contains preliminary results on different formulations of second-order Fisher information, their equivalence, and gives the proof of Theorem~\ref{thm:side} (1) and (2).

In the very short Section~\ref{sec:regularized particle system}, we show, for any number of particles $N\geq 2$, the well-posedness of the particle system~\eqref{eq:particlesystem} for the velocities $(V^N_1,...,V^N_N)$.

Section~\ref{sec:entropy_and_fisher_in_kolmogorov} focuses on the law $F^N_t$ of $(V^N_i(t))_{1\leq i \leq N}$, obeying the Kolmogorov equation~\eqref{eq:kolintro} associated to the particle system. We derive some uniform-in-$N$ estimates on $F^N_t$ on the entropy production and the dissipation of the Fisher information, in view of passing them to the $N\rightarrow +\infty$ limit. 

In Section~\ref{sec:tightness}, we use the boundedness of the Fisher information of $F^N_t$ to show the tightness of the empirical measure. Up to a subsequence, the laws of the empirical measures converge to a cluster point $\pi$:
$$\mathcal{L}((\mu^N_t)_t) \rightharpoonup \pi \in \mathcal{P}(C([0,T],\mathcal{P}(\mathbb{R}^3))).$$
The goal of the remaining sections is to show that $\pi=\delta_{(g_t)_t}$, so that the whole sequence $(\mu^N_t)_t$ converges to the deterministic $(g_t)_t$.

First, in Section~\ref{sec:infinite-dimension limit of the regularity estimates}, we pass the estimates of Section~\ref{sec:entropy_and_fisher_in_kolmogorov} to the $N\rightarrow +\infty$ limit to obtain the so-called \textit{mean} or \textit{level-3} estimates on $\pi$. It entails that any $\pi$-distributed random variable $f$ enjoys some improved regularity. This strongly relies on Theorem~\ref{thm:side}.

Second, in Section~\ref{sec:almostsureweaksolution}, we show that $f$ is almost surely a weak solution of the Landau equation. This is not enough to claim that $f=g$ because we do not have enough regularity on $f$ to apply known uniqueness results.

In the last Section~\ref{sec:a final estimate}, we derive new estimates showing that the Fisher dissipation of $f$ (that we control thanks to Section~\ref{sec:infinite-dimension limit of the regularity estimates}) bounds its $L^1([0,T],L^\infty(\mathbb{R}^3))$ norm, provided $f$ has enough moments and is not too concentrated. The finiteness of this norm is enough to apply the uniqueness results \cite{GuerinFournier2008, Fournier2010} and hence conclude the whole proof. We first show that the moments of $f$ propagate by using the H-formulation of the Landau equation (from \cite{Villani1998}) and adapting results on moments of H-solutions (from \cite{CarrapatosoDesvillettesHe2015}). To show that $f$ is not too concentrated, we cannot rely on the entropy (as is usually done) because we cannot control $H(f_t)$ uniformly in time. We hence measure non-concentration by a different, more general quantity introduced in \cite{FournierHauray2015}, which has the remarkable advantage of being continuous on $\mathcal{P}(\mathbb{R}^3)$.

In Section~\ref{sec:infdimlimofDandK}, we show the affinity in infinite dimension of the generalized Fisher information, i.e. point (3) in Theorem~\ref{thm:side}.

Finally, in Appendix~\ref{sec:derivkol}, we recall the derivation of the Kolmogorov equation~\eqref{eq:kolintro} from the particle system~\eqref{eq:particlesystem}. 

\begin{center}
\begin{large}
\textbf{Acknowledgements}
\end{large}
\end{center}

The author is thankful to his PhD advisors Cyril Imbert and Clément Mouhot for the many insightful discussions that lead to this work, and for proofreading of the manuscript. The author also wishes to thank Nicolas Fournier for an interesting and instructive exchange on an earlier version of this paper and for suggesting a proof of the tightness of the particle system that covers the Coulomb case.

\section{Preliminaries on Fisher information}
\label{sec:on the fisher dissipation and aux}

At the heart of our proof of propagation of chaos lies the monotonicity of the Fisher information for the Landau equation and the associated particle system \cite{GuillenSilvestre2023,CarrilloGuo2025}. In fact, it will —maybe even more crucially— rely on the dissipation term: the time-derivative of the Fisher information along the flow is a second-order term (the same way the time-derivative of the entropy is akin to a first order Fisher information). That is why we begin this work with preliminary results on second-order Fisher information, seldom studied before in the literature (although some arbitrary order Fisher information functionals played a role in \cite{LionsToscani1995}) Since the proof of the properties below also apply to the first-order case, we will in fact be able to prove points (1) and (2) in Theorem~\ref{thm:main}, for both orders.

We place ourselves in the general case: let $k_0\geq 1$, and let $b=b(v_1,...,v_{k_0})\in\mathbb{R}^{3k_0}$ be a smooth, bounded, divergence-free vector field, and we set $\partial=b\cdot\nabla_{1...k_0}$.

For any symmetric function $F:\mathbb{R}^{3N}\rightarrow \mathbb{R}_+$, $N\geq k_0$, recall that the generalized Fisher information along $\partial$ satisfies
$$I^\partial(F)=\int_{\mathbb{R}^{3N}} \frac{( \partial F)^2}{F} =\int_{\mathbb{R}^{3N}} F( \partial \log F)^2 =4\int_{\mathbb{R}^{3N}} ( \partial \sqrt{F})^2.$$
All these definitions admit straightforward second-order versions, which turn to be all different: let us define
\begin{align*}
    K^\partial_1(F)&:=K^\partial(F)=\int_{\mathbb{R}^{3N}}\! \frac{( \partial^2 F)^2}{F},\\
    K^\partial_{0}(F)&:=\int_{\mathbb{R}^{3N}}\! F( \partial^2 \log F)^2,\\
    K^\partial_{1/2}(F)&:=4\int_{\mathbb{R}^{3N}}\! ( \partial^2 \sqrt{F})^2.
\end{align*}
More generally for $\beta\in(0,1]$ we can define:
\begin{align*}
    K^\partial_{\beta}(F)&:=\beta^{-2}\int_{\mathbb{R}^{3N}} F^{1-2\beta}(\partial^2 F^\beta)^2\\
\end{align*}
One can wonder how these functionals relate to each other. It turns out that they are all comparable to one another (excepted $K^\partial_{1/3}$), and the difference can be expressed in terms of the first order functional
\begin{equation}
\label{eq:defJ}
    J^\partial(F):=\int_{\mathbb{R}^{3N}} F( \partial \log F)^4,
\end{equation}
that will play a crucial work at the end of this work. Moreover, once again at the exception of $K^\partial_{1/3}$, they all control a multiple of $J^\partial$.  Indeed, we have:
\begin{prop}
    \label{prop:func_comparison}
    For any smooth, bounded, divergence-free vector field $b(v_1,...,v_{k_0})\in\mathbb{R}^{3k_0}$, and $\partial=b\cdot\nabla_{1...k_0}$, and for any $\beta\in[0,1]$:
    \begin{align*}
    K^\partial_\beta&= K^\partial_1 +(\beta-1)(\beta+\frac{1}{3}) J^\partial\\
    &=K^\partial_{1/3} +(\beta-\frac{1}{3}) ^2J^\partial,
    \end{align*}
    so that the following equivalence holds:
    $$\frac{9}{4}(\beta-\frac{1}{3})^2K^\partial_1 \leq K_\beta^\partial \leq K_1^\partial.$$
\end{prop}
\begin{proof}
    We omit the superscript $\partial$. For $\beta>0$, we write
    $$K_{\beta}(F)=\int_{\mathbb{R}^{3N}} F\left( \frac{\partial^2 F^\beta}{\beta F^\beta}\right)^2$$
    and use the identity
    $$\frac{\partial^2 F^\beta}{\beta F^\beta}=\frac{\partial^2 F}{F}+(\beta-1)\frac{(\partial F)^2}{F^2}.$$
    For $\beta=0$, it also holds that
    $$\partial^2 \log F=\frac{\partial^2 F}{F}+(\beta-1)\frac{(\partial F)^2}{F^2}.$$
    So, in both cases,
    \begin{align*}
       K_\beta (F)&= \int_{\mathbb{R}^{3N}} F\left( \frac{\partial^2 F}{F}+(\beta-1)\frac{(\partial F)^2}{F^2}\right)^2\\
       &=\int_{\mathbb{R}^{3N}} \frac{(\partial^2 F)^2}{F} + 2(\beta-1)\int_{\mathbb{R}^{3N}} \frac{\partial^2 F(\partial F)^2}{F^2}+(\beta-1)^2\int_{\mathbb{R}^{3N}} \frac{(\partial F)^4}{F^3}.
    \end{align*}
    Notice that the first integral is $K_1(F)$ and the last integral is $J(F)$. Since $b$ is divergence-free, we can integrate by parts with $\partial$. We have, by integrating $\partial^2 F$ by parts, that the middle term writes
    \begin{align*}
    \int_{\mathbb{R}^{3N}} \frac{\partial^2 F(\partial F)^2}{F^2} =-2\int_{\mathbb{R}^{3N}} \frac{\partial^2 F(\partial F)^2}{F^2} +2 \int_{\mathbb{R}^{3N}} \frac{(\partial F)^4}{F^3},
    \end{align*}
    so that it equals $\frac{2}{3}J^N(F)$. Hence
    \begin{align*}
       K_\beta (F)
       &=\int_{\mathbb{R}^{3N}} \frac{(\partial^2 F)^2}{F} + \left(\frac{4}{3}(\beta-1)+(\beta-1)^2\right)J(F)\\
       &=K_1(F) + (\beta-1)(\beta+\frac{1}{3})J(F).
    \end{align*}
    In particular, 
    $K_{1/3}=K_1-\frac{4}{9}J$, so that
    $$K_\beta = K_{1/3} + \left(\frac{4}{9}+ (\beta-1)(\beta+\frac{1}{3})\right)J= K_{1/3} + (\beta-\frac{1}{3})^2J. $$
    Finally,
    \begin{align*}
        \frac{9}{4}(\beta-\frac{1}{3})^2K_1
        &=\frac{9}{4}(\beta-\frac{1}{3})^2 \left(K_\beta+(1-\beta)(\beta+\frac{1}{3})J\right)\\
        &\leq \frac{9}{4}(\beta-\frac{1}{3})^2 K_\beta +\frac{9}{4}(1-\beta)(\beta+\frac{1}{3})K_\beta \\
        &=K_\beta,
    \end{align*}
    because $(\beta-\frac{1}{3})^2 J \leq K_\beta$.
\end{proof}
\begin{remark}
    The key integration by parts above has appeared in many works over the years, the earliest mention we could find of it is in a footnote in \cite{Mckean1963}.
\end{remark}
This result allows us to freely exchange between formulations. It will turn useful because they have different advantages: The functional $K_0^\partial$ is the one actually related to the Fisher information dissipation, but the functional $K^\partial=K_1^\partial$ is more convenient because it is the one that is convex, lower-semi-continuous and super-additive (or at least the only one we were able to prove it is). Finally $K^\partial_{1/2}$ fits nicely in the Hilbert structure in which lies $\sqrt{F}$, which will be exploited to show the infinite-dimension affinity (Point (3) in Theorem~\ref{thm:side}).

We now show some relevant properties of the generalized Fisher information, that is points (1) and (2) in Theorem~\ref{thm:side}.

\begin{proof}[Proof of Theorem~\ref{thm:side} (1) and (2)]
We can prove these points nearly all at once by adapting one of the proofs for the Fisher information, found in \cite[Lemmas 3.5 and 3.7]{HaurayMischler2012}. We begin with the second order $K^\partial$, and the proof will be nearly identical for $I^\partial$. Let $N\geq k_0$ and $F$ be a non-negative function on $\mathbb{R}^{3N}$.
For any smooth function $\psi \in C^2(\mathbb{R}^{3N})$, we have
    $$\left\vert \frac{\partial^2 F}{F} \right\vert^2 \geq \left( \frac{\partial^2 F}{F} \right) \psi -\frac{\vert \psi\vert^2}{4},$$
    so that
    \begin{align*}
       K^\partial(F) &\geq \int_{\mathbb{R}^{3N}}\left(\left( \frac{\partial^2 F}{F} \right) \psi -\frac{\vert \psi\vert^2}{4}\right)F\\
        &=\int_{\mathbb{R}^{3N}}\partial^2 F  \psi -\frac{\vert \psi\vert^2}{4}F\\
        &=\int_{\mathbb{R}^{3N}}   \left(\partial^2\psi -\frac{\vert \psi\vert^2}{4}\right)F\\
    \end{align*}
    the last identity holding by two integration by parts, possible since $b$ is divergence free.
    Using smooth $\psi$ approximating $2(\partial^2 F)/F$, we can saturate this inequality, so that
    
    $$K^\partial(F) = \sup_{\psi \in C^2(\mathbb{R}^{3N})}\int_{\mathbb{R}^{3N}}   \left(\partial^2\psi -\frac{\vert \psi\vert^2}{4}\right)F.$$
    
    This formula makes sense in $[0,+\infty]$ for any probability measure $F\in \mathcal{P}(\mathbb{R}^{3N})$, because the integrals are against continuous bounded functions (recall that $b$ is smooth and bounded). Since $K^\partial$ is a pointwise supremum of linear continuous functionals, it is convex and lower semi-continuous for the weak convergence of probability measures, so point (1) is proven.
    
    Restricting the supremum to functions $\psi$ depending only on $v_1,...,v_j$ for some $k_0\leq j\leq N$, we get
    \begin{align*}
        K^\partial(F) &\geq \sup_{\psi \in C^2(\mathbb{R}^{3j})} \int_{\mathbb{R}^{3N}}   \left(\partial^2\psi -\frac{\vert \psi\vert^2}{4}\right)F^{:j}=K^\partial(F^{:j})
    \end{align*}
    because $(\partial^2\psi -\vert \psi\vert^2/4)$ does not depend on $v_{j+1},...,v_{N}$. This is exactly super-additivity, so point (2) is proven.

    We now turn to $I^\partial$. Similarly, for any smooth functions $\psi \in C^1(\mathbb{R}^{3N})$, we have
    \begin{align*}
       I^\partial(F) &\geq \int_{\mathbb{R}^{3N}}\left(\left( \frac{\partial F}{F} \right) \psi -\frac{\vert \psi\vert^2}{4}\right)F=\int_{\mathbb{R}^{3N}}   \left(\partial\psi -\frac{\vert \psi\vert^2}{4}\right)F\\
    \end{align*}
    so that taking the supremum
    $$I^\partial(F)=\sup_{\psi \in C^1(\mathbb{R}^{3N})}\int_{\mathbb{R}^{3N}}   \left(\partial\psi -\frac{\vert \psi\vert^2}{4}\right)F.$$
    From this formula we proceed exactly as for $K^\partial$ and the proof is concluded. 
\end{proof}
The proof of point (3) in Theorem~\ref{thm:side} requires the introduction of a suited Hilbertian framework, so we relegate it to Section~\ref{sec:infdimlimofDandK}. We can now start the proof of propagation of chaos itself, with the study of the particle system.

\section{The regularized particle system}
\label{sec:regularized particle system}

We fix $\gamma \in (-3,-2]$ and a final time $T>0$ for the remainder of this paper. We consider any sequence of regularized potentials $(\alpha^N)_{N\geq 2}$ as in Section~\ref{ssec:mainresult}. 

\begin{remark}
\label{rem:regularizedpotential}
An example of a sequence $(\alpha^N)_{N\geq 2}$ can be built as follows:
let $\chi:\mathbb{R_+}\rightarrow\mathbb{R^*_+}$ be a smooth increasing function such that $\chi(r)=0.99$ for $r\leq \frac{1}{2}$, $\chi(r)=r$ for $r\geq 1$, $0\leq \chi'(r)\leq 1$, and $\chi(r)\geq r$ for all $r$. For a regularization parameter $\eta>0$, consider $\chi_\eta(r):=\eta\chi(\frac{r}{\eta})$ and $\alpha_\eta(r):=\chi_\eta (r)^\gamma$.

We now fix, for any number of particles $N\geq 2$, a regularization parameter $\eta_N$ which is non-increasing in $N$ and goes to zero at infinity. We can take $\alpha^N=\alpha_{\eta_N}$, and we let the reader check that $\alpha^N \nearrow \alpha$, that $\alpha=\alpha^N$ outside the ball $B(0,\eta_N)$ and that it satisfies the condition~\eqref{eq:fishercond}.
\end{remark}

This short section is dedicated to the well-posedness of the particle system:
\begin{prop}
\label{prop:wellposednessparticlesystem}
    For any $N\geq 2$, the system~\eqref{eq:particlesystem} admits a pathwise unique solution $(V^N_i)_{1\leq i\leq N}$ defined for all positive times $t\geq 0$ and which is exchangeable. Furthermore, it conserves total momentum and energy: a.s., for any $t\geq 0$, $\sum_{i=1}^N V^N_i(t)=\sum_{i=1}^N V^N_{i,0}$ and $\sum_{i=1}^N (V^N_i(t))^2=\sum_{i=1}^N (V^N_{i,0})^2$.
\end{prop}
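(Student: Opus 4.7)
The plan is to cast the system as a standard Itô SDE in $\mathbb{R}^{3N}$ driven by the $\frac{N(N-1)}{2}$ independent Brownian motions $(B^{ij})_{i<j}$, and verify that the drift and diffusion vectors are globally Lipschitz and bounded. Strong well-posedness then follows from the classical existence/uniqueness theorem for SDEs with Lipschitz coefficients. Conservation of momentum and energy, as well as exchangeability, are then obtained pathwise from the algebraic structure of $b^N$ and $\sigma^N$ (oddness/evenness and the degeneracy $a(v)v=0$).

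\textbf{Step 1: regularity of the coefficients.} Since $\alpha^N$ is smooth, bounded, bounded away from $0$, and coincides with $|v|^\gamma$ (with $\gamma<0$) outside $B(0,\eta_N)$, the functions $\alpha^N(v)$ and $\sqrt{\alpha^N(v)}$ are smooth with bounded derivatives on $\mathbb{R}^3$. Hence $b^N(v)=-2\alpha^N(v)v$ is smooth, bounded and globally Lipschitz (note $|v|^{\gamma+1}\to 0$ at infinity). For $\sigma^N(v)=\sqrt{\alpha^N(v)}\,|v|^{-1}a(v)$ one writes $|v|^{-1}a(v)=|v|\,\Id - v\otimes v/|v|$, which is globally Lipschitz on $\mathbb{R}^3$ (it is $C^\infty$ away from $0$, continuous at $0$, and both summands are $1$-Lipschitz by elementary estimates). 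Multiplying by the smooth bounded $\sqrt{\alpha^N}$ preserves this, so $\sigma^N$ is globally Lipschitz and bounded. This Lipschitz check at the origin is the only slightly technical point; everything else is essentially algebraic.

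\textbf{Step 2: existence and uniqueness.} The vector field $B:\mathbb{R}^{3N}\to\mathbb{R}^{3N}$ whose $i$-th block is $\frac{2}{N-1}\sum_{j\neq i}b^N(v_i-v_j)$ and the matrix $\Sigma$ whose $(i,(i,j))$ block is $\frac{\sqrt 2}{\sqrt{N-1}}\sigma^N(v_i-v_j)$ (and $0$ elsewhere) are globally Lipschitz and bounded by Step~1. The classical Itô theorem then yields a pathwise unique strong solution defined on $[0,\infty)$, adapted to the filtration generated by $(V^N_{i,0})_{i}$ and $(B^{ij})_{i<j}$.

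\textbf{Step 3: momentum and energy.} Summing the SDE over $i$: the drift contribution involves $\sum_{i\neq j}b^N(V^N_i-V^N_j)$, which vanishes pairwise by oddness of $b^N$ (since $\alpha^N$ is radial). For the diffusion, pair $(i,j)$ with $(j,i)$: using $\sigma^N(-v)=\sigma^N(v)$ and $B^{ji}=-B^{ij}$ the stochastic integrals cancel exactly, so $\sum_i V^N_i(t)=\sum_i V^N_{i,0}$ almost surely. For the energy, apply Itô to $\sum_i|V^N_i|^2$. The stochastic term contains $\sum_{i\neq j}(V^N_i-V^N_j)\cdot\sigma^N(V^N_i-V^N_j)\,dB^{ij}$ (after pairing), which vanishes pathwise because $a(z)z=0$ entails $\sigma^N(z)z=0$. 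For the bounded-variation part, the drift contribution equals $-\frac{8}{N-1}\sum_{i<j}\alpha^N(V^N_i-V^N_j)|V^N_i-V^N_j|^2\,dt$ after symmetrization, and the Itô correction equals $+\frac{8}{N-1}\sum_{i<j}\alpha^N(V^N_i-V^N_j)|V^N_i-V^N_j|^2\,dt$ using $\Tr a(z)=2|z|^2$ and the block structure of $\Sigma$. The two cancel, so $\sum_i|V^N_i(t)|^2$ is a.s.\ constant.

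\textbf{Step 4: exchangeability.} For any permutation $\tau$ of $\{1,\dots,N\}$, define $\widetilde V^N_i(t):=V^N_{\tau(i)}(t)$ and $\widetilde B^{ij}:=B^{\tau(i)\tau(j)}$ (with the sign convention $\widetilde B^{ji}=-\widetilde B^{ij}$ preserved). Then $(\widetilde V^N_i)_i$ solves the same SDE~\eqref{eq:particlesystem} driven by $(\widetilde B^{ij})_{i<j}$, with initial data $(V^N_{\tau(i),0})_i$. The family $(\widetilde B^{ij})_{i<j}$ is again a family of independent $\mathbb{R}^3$-valued Brownian motions, and by the i.i.d.\ assumption on $g_0^{\otimes N}$ the permuted initial data has the same law as the original. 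By the uniqueness in law that follows from Step~2, $(V^N_i)_i$ and $(\widetilde V^N_i)_i$ have the same law, which is exactly exchangeability.
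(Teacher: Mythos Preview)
Your proof is correct and follows the same approach as the paper's: Lipschitz and bounded coefficients for global strong well-posedness, oddness of $b^N$ and the antisymmetry $B^{ji}=-B^{ij}$ for momentum, and It\^o's formula (with the cancellation between the drift and the quadratic variation via $\Tr a(z)=2|z|^2$ and $a(z)z=0$) for energy; the paper simply refers to \cite{NicolasFournierArnaudGuillin2017} for this last computation and does not spell out the exchangeability argument. One small imprecision: $\alpha^N$ is \emph{not} bounded away from $0$ (it decays like $|v|^\gamma$ at infinity), but this does not affect your argument since $\sqrt{\alpha^N}$ is nonetheless smooth with bounded derivatives, being strictly positive and smooth on compacts and equal to $|v|^{\gamma/2}$ for $|v|\geq\eta_N$.
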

\begin{proof}
    The Lipschitz behaviour and boundedness of $b$ and $\sigma$ classically imply the existence of a unique strong solution (we refer to \cite{PardouxRascanu2014} for the theory of stochastic differential equations). Summing over $i$ the equation~\eqref{eq:particlesystem} directly yields the conservation of momentum using that $b(-v)=-b(v)$ and $B^N_{ji}=-B^N_{ij}$. The conservation of energy is obtained by applying the Itô formula to compute the evolution of $\sum_{i=1}^N (V^N_i(t))^2$. The details of the computation can be found in \cite[Proof of Proposition 3]{NicolasFournierArnaudGuillin2017} (in the hard potential case, but it is identical).
\end{proof}

\section{Entropy and Fisher information in the Kolmogorov equation}
\label{sec:entropy_and_fisher_in_kolmogorov}

In this section we study the law of the particle system. We gather estimates on the energy, entropy and Fisher information. In particular, the remarkable monotonicity of the Fisher information, which was proven in \cite{GuillenSilvestre2023} for $N=2$ (to obtain monotonicity for the non-linear Landau equation), and extended to general $N$ in \cite{CarrilloGuo2025}, will be enough to show tightness of the particle system in the next section. The other estimates will turn useful later on when studying the cluster points of the particle system.

For any $t\geq 0$, any $N\geq 2$, we let $F^N_t$ be the law of $(V^N_i(t))_{1\leq i\leq N}$, so that $F^N_t$ is a deterministic probability measure on $\mathbb{R}^{3N}$.
Using the Itô Formula to compute the derivative $\frac{d}{dt} \mathbb{E}\left[\varphi(V^N_1(t),...,V^N_N(t))\right]$ for a smooth test function $\varphi$, we obtain that $F^N_t$ is a distribution solution to
\begin{equation}
\label{eq:kol}
    \partial_t F^N_t = \mathcal{Q}^N(F^N_t):= \frac{1}{(N-1)} \sum_{\substack{i, j = 1 \\ i< j}}^N Q^N_{ij}(F^N_t) 
\end{equation}
where the operator $Q^N_{ij}$ acts on functions of $\mathbb{R}^{3N}$ through the $v_i$ and $v_j$ variables only:
\begin{equation}
\label{eq:defQij}
    Q_{ij}^N(F):= (\nabla_i - \nabla_j) \cdot \left[ \alpha^Na(v_i - v_j) (\nabla_i - \nabla_j) F\right].
\end{equation}
Note that the only dependence on $N$ of $Q_{ij}$ is through the regularization of the interaction potential. Two integration by parts show that $Q^N_{ij}$ is formally self-adjoint. The full derivation of~\eqref{eq:kol} is done in Appendix~\ref{sec:derivkol}.

\begin{remark}
    The theory of degenerate parabolic equations ensures uniqueness of weak solutions to~\eqref{eq:kol}. Equation~\eqref{eq:kolgeneral} in Appendix~\ref{sec:derivkol} shows that~\eqref{eq:kol} can be put in a more obvious divergence-form $\partial_tF^N = \frac{1}{2}\nabla\cdot(\Gamma \Gamma^T\nabla F^N)$ for some matrix $\Gamma$, which is exactly the setting of the theory of existence and uniqueness of Le Bris and Lions \cite{LeBrisLions2008,LeBrisLions2019}. One can check that $\Gamma \Gamma^T$ is not uniformly elliptic so the equation is truly degenerate.
\end{remark}

We first study the elementary conservation laws: the Kolmogorov equation preserves mass, momentum and kinetic energy:
\begin{prop}
    \label{prop:kol_conservations}
    The following quantities are constant for all times $t\in[0,T]$:
    \begin{align*}
        \int_{\mathbb{R}^{3N}} F^N_tdV&=1,\\
        \frac{1}{N}\int_{\mathbb{R}^{3N}} \sum_{i=1}^N v_iF^N_tdV&=\int_{\mathbb{R}^{3}} v g_0dv, \\
        \frac{1}{N}\int_{\mathbb{R}^{3N}} \vert V\vert^2 F^N_tdV&=E_0.
    \end{align*}
\end{prop}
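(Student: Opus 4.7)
My approach is to reduce the three identities to the pathwise conservation laws already established at the particle level in Proposition~\ref{prop:wellposednessparticlesystem}, and then simply take expectations. Mass conservation is immediate: $F^N_t$ is by construction the probability law of the random vector $(V^N_i(t))_{1\leq i\leq N}$, so $\int F^N_t\, dV = 1$ for all $t$ with nothing to prove. For momentum and energy, Proposition~\ref{prop:wellposednessparticlesystem} guarantees that almost surely, for all $t\geq 0$, $\sum_i V^N_i(t) = \sum_i V^N_{i,0}$ and $\sum_i |V^N_i(t)|^2 = \sum_i |V^N_{i,0}|^2$. I would take $\mathbb{E}[\cdot]$ on both sides. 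Under the initial chaos hypothesis $F^N_0 = g_0^{\otimes N}$, the $(V^N_{i,0})_{1\leq i\leq N}$ are i.i.d.\ with law $g_0$, hence $\mathbb{E}\bigl[\sum_i V^N_{i,0}\bigr] = N\int v\, g_0(dv)$ and $\mathbb{E}\bigl[\sum_i |V^N_{i,0}|^2\bigr] = N E_0$. Dividing by $N$ gives exactly the two remaining identities, valid for all $t\geq 0$.

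\textbf{Alternative route and main obstacle.} For completeness one could instead verify the identities directly on the Kolmogorov equation~\eqref{eq:kol} by testing against $\varphi \in \{1,\ \sum_\ell v_\ell^{(k)},\ |V|^2\}$ and integrating by parts on each $Q^N_{ij}$: the resulting integrand vanishes because $(\nabla_i - \nabla_j)1 = 0$ for mass; because $\sum_\ell(\delta_{\ell i} - \delta_{\ell j}) = 0$ for the $k$-th total momentum coordinate; and because $a(z)\, z = 0$ (as $a(z)$ is the projection onto $z^\perp$ rescaled by $|z|^2$) for the total energy, making the pairwise interaction term vanish pointwise. I do not anticipate any genuine obstacle here: the statement is essentially a corollary of the pathwise result, and the PDE route would only require standard justification of the integration by parts for test functions of linear or quadratic growth, which is harmless given the boundedness of $\alpha^N$ and the moment assumptions on $g_0$. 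I will favor the probabilistic route as by far the shortest.
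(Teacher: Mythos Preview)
Your proposal is correct. Your primary route is genuinely different from the paper's: you take expectations of the pathwise conservation laws from Proposition~\ref{prop:wellposednessparticlesystem}, whereas the paper works directly on the Kolmogorov equation, testing~\eqref{eq:kol} against $\varphi\in\{1,\tfrac{1}{N}\sum_i v_i,|V|^2\}$ and checking that $\mathcal{Q}^N(\varphi)=0$ by a direct computation (this is exactly your ``alternative route''). Your probabilistic argument is arguably cleaner here since the pathwise result is already in hand and taking expectations requires only the finiteness of the initial energy $E_0$; the paper's PDE route is equally short but implicitly relies on the weak formulation being valid for unbounded test functions of linear and quadratic growth, which you rightly flag as needing a word of justification. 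Either way the content is minimal.
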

\begin{proof}

    Since
    $$\int_{\mathbb{R}^{3N}}\varphi F^N_t dV =\int_{\mathbb{R}^{3N}}\varphi F^N_0 dV+ \int_0^t \int_{\mathbb{R}^{3N}}\mathcal{Q}^N(\varphi)F^N_s dV,$$
    the conservation laws are a consequence of $\mathcal{Q}^N(\varphi)=0$ for $\varphi(V)=1,\frac{1}{N}\sum_i v_i, \vert V\vert^2$, which is easily obtained by a direct computation.
\end{proof}

We will now study in more details the production of entropy and the dissipation of Fisher information. For the moment, we keep explicit expressions for the dissipation terms, making them more easily recognizable to any reader familiar with the Landau equation. We will later rewrite them with vector fields and derivation operators to fit the framework of Theorem~\ref{thm:side}. We begin with the entropy:
\begin{prop}
\label{prop:entropy_dissipation}
For any $N\geq 2$ the entropy of $F^N_t$ is non-increasing, and the following estimate holds for any $t\in[0,T]$:
$$H(F^N_t)+\frac{1}{2}\int_0^t \int_{\mathbb{R}^{3N}}\alpha^N a(v_1-v_2):\left[ (\nabla_1 -\nabla_2) \log F^N_s\right]^{\otimes 2} F^N_s dV ds\leq H_0.$$
\end{prop}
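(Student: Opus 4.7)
The plan is to formally differentiate $H(F^N_t)$ in time, apply the Kolmogorov equation~\eqref{eq:kol}, integrate by parts using the divergence structure of each $Q^N_{ij}$, and then exploit the exchangeability of $F^N_t$ to reduce everything to the $(1,2)$ pair. Concretely, one computes
$$\frac{d}{dt} H(F^N_t) = \frac{1}{N} \int_{\mathbb{R}^{3N}} (\log F^N_t + 1) \mathcal{Q}^N(F^N_t)\, dV = \frac{1}{N(N-1)} \sum_{i<j} \int_{\mathbb{R}^{3N}} \log F^N_t \, Q^N_{ij}(F^N_t)\, dV,$$
where the constant term vanishes by mass conservation (Proposition~\ref{prop:kol_conservations}). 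Two integrations by parts against the divergence $(\nabla_i - \nabla_j) \cdot [\alpha^N a(v_i-v_j)(\nabla_i-\nabla_j) F^N_t]$, combined with $\nabla F = F \nabla \log F$, give
$$\int \log F^N_t \, Q^N_{ij}(F^N_t)\, dV = -\int \alpha^N a(v_i-v_j) : \bigl[(\nabla_i - \nabla_j)\log F^N_t\bigr]^{\otimes 2} F^N_t \, dV.$$

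Exchangeability of $F^N_t$ (inherited from the symmetric initial data $g_0^{\otimes N}$ and symmetric dynamics, Proposition~\ref{prop:wellposednessparticlesystem}) makes each of the $\binom{N}{2}$ pair terms equal to the $(1,2)$ contribution. Dividing by $N(N-1)$ yields
$$\frac{d}{dt} H(F^N_t) = -\frac{1}{2}\int_{\mathbb{R}^{3N}} \alpha^N a(v_1-v_2) : \bigl[(\nabla_1 - \nabla_2)\log F^N_t\bigr]^{\otimes 2} F^N_t \, dV,$$
and this is non-positive since $\alpha^N \geq 0$ and $a$ is positive semi-definite. Integration in time from $0$ to $t$ gives both the monotonicity of the entropy and the claimed inequality.

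\textbf{Main obstacle.} The formal computation above uses $\log F^N_t$ as a test function, which is neither bounded nor smooth, and $F^N_t$ has no decay at infinity a priori. The standard remedy is a double approximation: replace $\log$ by $\log_\varepsilon(x) := \log(x+\varepsilon)$ and multiply by a smooth compactly supported cutoff $\chi_R$ in $\mathbb{R}^{3N}$, in order to legitimately integrate by parts on the expression $\int \log_\varepsilon(F^N_t) \chi_R \, Q^N_{ij}(F^N_t)\, dV$. Taking $R \to +\infty$ uses the uniform energy bound from Proposition~\ref{prop:kol_conservations} to control the boundary and cross-terms generated by $\nabla \chi_R$, together with the boundedness of $\alpha^N$; then $\varepsilon \to 0$ is handled by monotone/dominated convergence on the left-hand side and Fatou's lemma on the non-negative entropy production on the right. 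Since $\alpha^N$ and $a$ are smooth with $\alpha^N$ bounded, the solution $F^N_t$ has enough regularity away from the sphere degeneracies of $a$ to carry through these manipulations, and the inequality (rather than equality) is all that is needed.
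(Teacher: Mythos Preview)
Your proof is correct and follows essentially the same approach as the paper: differentiate $H(F^N_t)$, substitute the Kolmogorov equation, integrate by parts using the divergence form of $Q^N_{ij}$, and invoke symmetry to reduce to the $(1,2)$ pair. The paper applies symmetry before the integration by parts and omits the explicit $+1$ term (which vanishes by mass conservation as you note), and it defers the regularization discussion to a separate remark rather than detailing the $\log_\varepsilon$ and cutoff procedure, but these are purely cosmetic differences.
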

\begin{proof}
    We compute
    \begin{align*}
        \frac{d}{dt}H(F^N_t) &= \frac{1}{N}\int_{\mathbb{R}^{3N}} \mathcal{Q}^N( F^N_t) \log F^N_t dV\\
        &=\frac{1}{N(N-1)}\sum_{i<j}\int_{\mathbb{R}^{3N}} Q^N_{ij}( F^N_t) \log F^N_t dV\\
        &=\frac{1}{2}\int_{\mathbb{R}^{3N}} Q^N_{12}( F^N_t) \log F^N_t dV
    \end{align*}
    by symmetry. Recalling the expression of $Q^N_{12}$ and integrating by parts,
    \begin{align*}
        \frac{d}{dt}H(F^N_t) &=-\frac{1}{2}\int_{\mathbb{R}^{3N}} \left(\alpha^N a(v_1-v_2) (\nabla_1 -\nabla_2) \log F^N_t\right)\cdot  (\nabla_1 -\nabla_2) F^N_tdV\\
        &=-\frac{1}{2}\int_{\mathbb{R}^{3N}} \alpha^N a(v_1-v_2):\left[ (\nabla_1 -\nabla_2) \log F^N_t\right]^{\otimes 2} F^N_tdV.
    \end{align*}
    Integrating from $0$ to $t$ yields the result.
\end{proof}
\begin{remark}
    The dissipation for the Landau equation~\eqref{eq:landau} formally writes:
    $$H(f_t)+\frac{1}{2}\int_0^t \int_{\mathbb{R}^{3N}}\alpha^N a(v_1-v_2):\left[ (\nabla_1 -\nabla_2) \log f_s(v_1)f_s(v_2) \right]^{\otimes 2} f_s(v_1)f_s(v_2) dv_1dv_2 ds = H_0.$$
    It is exactly the entropy production for the Kolmogorov equation assuming $F_t^N=f_t^{\otimes N}$.
\end{remark}

In order to study the Fisher information, we first present some tools introduced by \cite{GuillenSilvestre2023}.
We define three vector fields $b_k=b_k(v_1,v_2)\in\mathbb{R}^3$ for $k=1,2,3$ as:
$$b_k(v_1,v_2):=e_k \times (v_1-v_2).$$
Here, $e_k$ is the canonical basis of $\mathbb{R}^3$.
We also define their $6$-dimensional counterparts
$$\tilde{b}_k=\tilde{b}_k(v_1,v_2):=(b_k,-b_k) \in \mathbb{R}^6.$$
One can check that $b_k$ and $\tilde{b}_k$ are divergence free. Furthermore they are orthogonal to $v_1-v_2$, so that for any function $\beta$ of $\vert v_1-v_2\vert$
\begin{equation}
\label{eq:bkandradial}
b_k\cdot\nabla_1 \beta(\vert v_1-v_2\vert) = \tilde{b}_k\cdot\nabla_{12} \beta(\vert v_1-v_2\vert)=0
\end{equation}
Their link with the Landau equation is the decomposition
$$a(v_1-v_2)=\sum_{k=1}^3 b_k\otimes b_k,$$
which in turn yields, using~\eqref{eq:bkandradial}, the decomposition
\begin{align}
\label{eq:decompQinQk}
Q^N_{12}(F)&=\alpha^N(\vert v_1-v_2\vert) \sum_{k=1}^3 (\tilde{b}_k\cdot \nabla_{12})(\tilde{b}_k\cdot \nabla_{12})F\\
&=\sum_{k=1}^3 \left(\sqrt{\alpha^N}\tilde{b}_k\cdot \nabla_{12}\right)\left(\sqrt{\alpha^N}\tilde{b}_k\cdot \nabla_{12}\right)F.\nonumber
\end{align}
Expressing the dissipation of the Fisher information in the Landau equation is made easier with these vector fields.
As observed in \cite{CarrilloGuo2025}, the Fisher information is non-increasing along the flow of the Kolmogorov equation, and this is exactly the linear counterpart of the monotonicity of the Fisher information for the Landau equation. It will allow us to show tightness of the particle system in the following section. We also isolate one term from the dissipation, which will be useful later on, to obtain regularity of the cluster points of the particle system. Exploiting the dissipation and not only the monotonicity of the Fisher information was first done in \cite{Ji2024} at the level of the non-linear Landau equation.
\begin{prop}
\label{prop:fisher_dissipation}
      For any $N\geq 2$, the Fisher information of $F^N_t$ is non-increasing, and the following estimate holds for any $t\in[0,T]$:
      \begin{equation}
      \label{eq:fisher_dissipation}
          I(F^N_t) +\frac{1}{4}\sum_{k=1}^3 \int_0^t \int_{\mathbb{R}^{3N}} \frac{\alpha^N(v_1-v_2)}{\vert v_1-v_2 \vert^2} \left( (\tilde{b}_k\cdot \nabla_{12})( \tilde{b}_k\cdot \nabla_{12})\log F^N_s \right)^2 F^N_s dV ds\leq I_0.
      \end{equation}
\end{prop}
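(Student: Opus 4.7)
The result is the Guillen--Silvestre monotonicity of the Fisher information for the $2$-body equation $\partial_t F = Q_{12}(F)$ (cf.~\cite{GuillenSilvestre2023}), extended to the full Kolmogorov equation as in \cite{CarrilloGuo2025}, with the additional twist that one must retain one positive square in the dissipation --- with the stated $\tfrac14$ coefficient --- rather than merely proving $-\tfrac{d}{dt}I(F^N_t)\geq 0$.

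First I would compute $\tfrac{d}{dt}I(F^N_t)$ by differentiating under the integral and invoking the Kolmogorov equation~\eqref{eq:kol}; this produces an integral of $\mathcal{Q}^N(F^N_t)$ against a second-order functional of $\log F^N_t$. By the exchangeability of $F^N_t$ (preserved from $g_0^{\otimes N}$ by Proposition~\ref{prop:wellposednessparticlesystem}), every pair contribution in $\mathcal{Q}^N$ equals the $(1,2)$ contribution, so the expression collapses to a single integral involving $Q^N_{12}$. Next, using the decomposition~\eqref{eq:decompQinQk} of $Q^N_{12}$ along the divergence-free, radially orthogonal vector fields $\tilde{b}_k$, and integrating by parts, I would transfer derivatives onto $\log F^N_t$. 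Because the $\tilde{b}_k$ are divergence-free and $\tilde{b}_k\cdot\nabla_{12}$ annihilates radial functions of $v_1-v_2$, most boundary and curvature terms vanish, and one is left with terms that look like $(\tilde{b}_k\cdot\nabla_{12})(\tilde{b}_{k'}\cdot\nabla_{12})\log F^N_t$, plus a few cross terms carrying a factor of $(\alpha^N)'$.

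The heart of the argument is then the Guillen--Silvestre pointwise sum-of-squares identity, adapted from $\alpha(r)=r^\gamma$ to the regularized $\alpha^N$. Organizing the $3\times 3$ matrix $M_{kk'}:=(\tilde{b}_k\cdot\nabla_{12})(\tilde{b}_{k'}\cdot\nabla_{12})\log F^N_t$ and using the identity $a(v_1-v_2)=\sum_k b_k\otimes b_k$, the dissipation reassembles into a trace of a positive-semidefinite pairing; schematically one gets
$$-\frac{d}{dt}I(F^N_t) = \sum_{k} \int (\text{full squares}) F^N_t\, dV + \frac14\sum_k \int \frac{\alpha^N}{|v_1-v_2|^2} M_{kk}^{\,2}\, F^N_t\, dV - R^N,$$
where $R^N$ is the contribution of the derivative $(\alpha^N)'$, vanishing when $\alpha^N$ is a pure power. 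The key point, and the reason the assumption~\eqref{eq:fishercond} is phrased as it is, is that $R^N$ is bounded by a fixed fraction (controlled by $|r(\alpha^N)'|/\alpha^N\leq -\gamma/0.99$) of the full sum-of-squares, so that after absorption one still has the positive $\frac14$-term on the right-hand side. Integrating the resulting differential inequality from $0$ to $t$ and using $I(F^N_0)=I(g_0^{\otimes N})=I_0$ gives~\eqref{eq:fisher_dissipation}.

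The main obstacle is the sum-of-squares step: one has to reproduce the Guillen--Silvestre algebraic identity while tracking carefully which square is isolated (we want the specific $M_{kk}^{\,2}$ term, not the full dissipation) and quantifying the penalty $R^N$ from the regularization --- this is where the calibration $0.99$ (and the sharper threshold $(-\gamma)/\sqrt{22}$ mentioned in Remark~\ref{rem:on0.99}) comes from. A minor additional technicality is that the chain-rule manipulations on $I(F^N_t)$ must be justified by a standard density/mollification argument, since $F^N_t$ is only a distributional solution of~\eqref{eq:kol}.
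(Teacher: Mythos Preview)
Your plan is essentially the paper's proof, and the citations to \cite{GuillenSilvestre2023} and \cite{CarrilloGuo2025} are apt. Two points are worth making explicit, since your sketch glosses over them.

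First, after reducing to $\langle I'(F^N_t),Q^N_{12}(F^N_t)\rangle$, you still have contributions from $\nabla_j F^N_t$ for \emph{all} $j$, not just $j=1,2$. The paper splits $I=I_{12}+I_{other}$ and handles $\langle I_{other}'(F),Q^N_{12}F\rangle\leq 0$ by a separate, elementary argument: since $\nabla_j$ commutes with $\mathcal{L}_k=\sqrt{\alpha^N}\tilde b_k\cdot\nabla_{12}$ for $j\geq 3$, one finds $\langle I_{other}'(F),\mathcal{L}_k F\rangle=0$, differentiates again in the $\mathcal{L}_k$ direction, and invokes convexity of $I_{other}$. Only $\langle I_{12}'(F),Q^N_{12}F\rangle$ feeds into the Guillen--Silvestre machinery. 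Your reference to \cite{CarrilloGuo2025} presumably covers this, but the step should appear in the write-up.

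Second, the mechanism producing the $\tfrac14$ is not quite the schematic ``absorb $R^N$ into the full sum-of-squares'' that you describe. The paper quotes \cite[Lemma~8.4]{GuillenSilvestre2023} in the form
\[
\langle I_{12}'(F),Q^N_{12}F\rangle \leq -D_{spherical}+\Big(\sup_{r>0}\tfrac{r^2((\alpha^N)')^2}{2(\alpha^N)^2}\Big)R_{spherical},
\]
(dropping the non-negative $D_{parallel},D_{radial}$), then uses the Poincar\'e-type inequality $D_{spherical}\geq 2\Lambda R_{spherical}$ with Ji's constant $\Lambda\geq\tfrac{11}{2}$. Condition~\eqref{eq:fishercond} gives $\sup(\cdots)\leq \tfrac{\gamma^2}{2(0.99)^2}\leq \Lambda$, so the remainder is absorbed by \emph{half} of $D_{spherical}$, leaving $-\tfrac12 D_{spherical}$. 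After the $\tfrac{N}{2}$ from~\eqref{eq:prooffisher} and the $\tfrac{2}{N}$ in the normalization of $I_{12}$, this becomes the $\tfrac14$ in~\eqref{eq:fisher_dissipation}. This is also where the threshold $(-\gamma)/\sqrt{22}$ in Remark~\ref{rem:on0.99} comes from: it is exactly $\gamma^2/(2\theta^2)<\Lambda=\tfrac{11}{2}$.
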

\begin{proof}
    The monotonicity of the Fisher information for the Kolmogorov equation (with $\alpha$ instead of our regularized $\alpha^N$) was first observed in \cite{CarrilloGuo2025}. We do the proof again to extract (some of) the dissipation term.
    
    \textit{Step 1.} We write, using symmetry:
    \begin{align}
    \label{eq:prooffisher}
        \frac{d}{dt} I(F^N_t) &= \langle I'(F^N_t) , \mathcal{Q}^N (F^N_t)\rangle\nonumber\\
        &=\frac{1}{N-1}\sum_{i<j}\langle I'(F^N_t) , Q^N_{ij} (F^N_t)\rangle\nonumber\\
        &=\frac{N}{2}\langle I'(F^N_t) , Q^N_{12} (F^N_t)\rangle
    \end{align}
    We cut the Fisher information in two
    $$I(F)  = \frac{1}{N}\int_{\mathbb{R}^{3N}} \frac{\vert \nabla_1 F \vert^2+\vert \nabla_2 F \vert^2}{F} dV + \sum_{j=3}^N\int_{\mathbb{R}^{3N}} \frac{\vert \nabla_j F \vert^2}{F} dV  =:I_{12}(F)+I_{other}(F),$$
    and show the monotonicity of each part separately.
    
    \textit{Step 2.} We claim that $\langle I'_{other} (F^N_t) , Q^N_{12} (F^N_t)\rangle \leq 0$. Indeed, because of~\eqref{eq:bkandradial}, the vector fields $\sqrt{\alpha^N}\tilde{b}_k$ are divergence free. 
    Since, for $j\geq 3$, the operators $\nabla_j$ and $\mathcal{L}_k:=\sqrt{\alpha^N}\tilde{b}_k \cdot \nabla_{12}$ commute (because they depend on different variables), a direct computation yields
    \begin{align*}
    \langle I_{other}'(F) , \mathcal{L}_k F \rangle &=\sum_{j\geq 3} \int 2 \frac{\nabla_j F \cdot \nabla_j \mathcal{L}_k F}{F} - \frac{\vert \nabla_j F \vert^2}{F^2}\mathcal{L}_k F dV\\
    &=\sum_{j\geq 3} \int 2 \frac{\nabla_j F \cdot \mathcal{L}_k \nabla_j F} {F} - \frac{\vert \nabla_j F \vert^2}{F^2}\mathcal{L}_k F dV\\
    &=\sum_{j\geq 3} \int \mathcal{L}_k \left[\frac{\vert \nabla_j F \vert^2}{F}\right] dV\\
    &=0,
    \end{align*}
    using that $\sqrt{\alpha^N}\tilde{b}_k$ is divergence free for the last line. Differentiating again in the direction $\mathcal{L}_k$,
    we get
    $$\langle I_{other}''(F) \mathcal{L}_k F, \mathcal{L}_k F \rangle + \langle I_{other}'(F) , \mathcal{L}_k \mathcal{L}_k F \rangle=0 $$
    so that using~\eqref{eq:decompQinQk},
    $$\langle I_{other}'(F^N_t) , Q^N_{12} (F^N_t)\rangle = \sum_{k=1}^3\langle I_{other}'(F) , \mathcal{L}_k \mathcal{L}_k F^N_t \rangle = - \langle I_{other}''(F) \mathcal{L}_k F, \mathcal{L}_k F \rangle \leq 0, $$
    by convexity of $I_{other}$ (which can be seen, for instance, by computing $I_{other}''$ explicitly).
    
    \textit{Step 3.} To deal with $\langle I'_{12}(F^N_t) , Q^N_{12} (F^N_t)\rangle$, we rely on the breakthrough result \cite{GuillenSilvestre2023}, which is essentially the $N=2$ case. It is shown there \cite[Lemma 8.4]{GuillenSilvestre2023} that for any smooth symmetric $F:\mathbb{R}^2 \rightarrow \mathbb{R}_+$,
    $$\langle I'_{12}(F), Q^N_{12}F\rangle \leq -D_{parallel}-D_{radial}-D_{spherical} + \sup_{r>0} \left(\frac{r^2((\alpha^N)'(r))^2}{2\alpha^N(r)^2}\right)R_{spherical}$$
    where $D_{parallel},D_{radial},D_{spherical},R_{spherical}\geq 0$ are integral expressions corresponding to the derivative of different Fisher informations in different directions (there is a $\frac{1}{2}$ difference with \cite[Lemma 8.4]{GuillenSilvestre2023} because of our normalization of the Fisher information).
    The term $D_{spherical}$ can be used to control the error $R_{spherical}$. Indeed, it holds that
    $$D_{spherical}\geq 2\Lambda R_{spherical},$$
    where $\Lambda$ is the largest constant in a Poincaré-type inequality on the sphere (see \cite[Lemma 9.1]{GuillenSilvestre2023}). There, it is shown that $\Lambda\geq \frac{19}{4}$, and we know that $\Lambda\geq \frac{11}{2}$ as improved by Ji in \cite{Ji2024a}.
    By hypothesis~\eqref{eq:fishercond} on the regularized potential:
    $$\sup_{r>0} \left(\frac{r^2((\alpha^N)'(r))^2}{2\alpha^N(r)^2}\right)\leq\frac{\gamma^2}{2(0.99)^2}\leq \frac{11}{2} \leq \Lambda, $$
    so that dropping the other terms
    \begin{align}
    \label{eq:boundwith1/2}
        \langle I_{12}'(F), Q^N_{12}F\rangle \leq -D_{spherical} + \sup_{r>0} \left(\frac{r^2((\alpha^N)'(r))^2}{2\alpha^N(r)^2}\right)\frac{1}{2\Lambda}D_{spherical}\leq -\frac{1}{2}D_{spherical}.
    \end{align}
    We want to apply this result to $F=F^N_t(\cdot,\cdot,v_3,...,v_N)$ and integrate in $\bar{V}=(v_3,...,v_N)$.
    With the $2/N$ factor coming from our normalization of the Fisher information, we have
    $$I_{12}(F^N_t) =\frac{2}{N}\int I_{12}(F^N_t(\cdot,\bar{V})) d\bar{V}.$$
    Hence it holds that, plugging in the expression of $D_{spherical}$ \cite[(8.3)]{GuillenSilvestre2023} integrated over $\bar{V}$,
    \begin{align*}
        \langle I'_{12}(F^N_t) , Q^N_{12} (F^N_t)\rangle&=\frac{2}{N}\int\langle I'_{12}(F^N_t(\cdot,\bar{V}) , Q^N_{12} (F^N_t(\cdot,\bar{V}))\rangle d\bar{V}\\
        &\leq- \frac{1}{N} \sum_{k,l=1}^3 \int_{\mathbb{R}^{3N}} \frac{\alpha^N(v_1-v_2)}{2\vert v_1-v_2 \vert^2} \left( (\tilde{b}_k\cdot \nabla_{12})( \tilde{b_l}\cdot \nabla_{12})\log F^N_t \right)^2 F^N_t dV.
    \end{align*}
    Going back to~\eqref{eq:prooffisher} with the results of Steps 2 and 3,
    \begin{align*}
        \frac{d}{dt} I(F^N_t) &\leq \frac{N}{2}\langle I'_{12}(F^N_t) , Q^N_{12} (F^N_t)\rangle \\
        &\leq -\frac{1}{4}\sum_{k,l=1}^3 \int_{\mathbb{R}^{3N}} \frac{\alpha^N(v_1-v_2)}{\vert v_1-v_2 \vert^2} \left( (\tilde{b}_k\cdot \nabla_{12})( \tilde{b_l}\cdot \nabla_{12})\log F^N_t \right)^2 F^N_t dV.
    \end{align*}
    Keeping only the $k=l$ terms and integrating from $0$ to $t$ yields the result.
    \end{proof}

\begin{remark}
    \label{rem:on0.99}
    The hypothesis~\eqref{eq:fishercond} on the regularized potential allows us to get a clean $\frac{1}{2}$ in the bound~\eqref{eq:boundwith1/2} so we get a $\frac{1}{4}$ in~\eqref{eq:fisher_dissipation}. However, \eqref{eq:fishercond} can be replaced by the more general condition
    \begin{equation*}
    \frac{r \vert (\alpha^{N})'(r) \vert}{\alpha^{N}(r)}\leq \frac{-\gamma}{\theta},
    \end{equation*}
    with $\theta> -\gamma /\sqrt{22}$, which yields~\eqref{eq:fisher_dissipation} with a positive constant depending on $\theta$ rather than
    $\frac{1}{4}$.
\end{remark}

\begin{remark}
    In all rigor, a regularization argument is necessary to perform the computations above on weak solutions. We can work with a particle system with Kolmogorov equation $\partial_t F^{N,\varepsilon}_t = \mathcal{Q}^N( F^{N,\varepsilon}_t)+\varepsilon \Delta F^{N,\varepsilon}_t$, which has a unique \textit{smooth} solution for any $\varepsilon>0$, by classical uniformly parabolic theory. Entropy and Fisher information also decrease along the heat flow, so we can write the same estimates with an additional dissipation term that we can ignore. Using the tightness provided by the Fisher information, we can then let $\varepsilon\rightarrow 0$ and the well-posedness of the particle system ensures that we recover the law $F^N$ in the limit. We can easily pass the estimates on the entropy and the Fisher information to the limit because all the terms are lower semi-continuous for the weak convergence of probability measures (it is well known for $I$ and $H$, and proven in Proposition~\ref{prop:convexlscsuperadditve} for the production/dissipation terms). The lower semi-continuity makes it so we only obtain an inequality rather than an equality in the estimates.
    
    We kept a formal approach to lighten the presentation. For a more detailed study of~\eqref{eq:kol} without any regularization on $\alpha$, see \cite{CarrilloGuo2025}. The same regularization technique was invoked in \cite[Proof of Proposition 5.1]{FournierHaurayMischler2014}.
\end{remark}

\section{Tightness of the particle system}
\label{sec:tightness}
We are now ready to show that the sequence of empirical measures of the particle system is tight.

We begin with a key estimate on the proximity of particles, that will be used to control the singularities of the system. It relies strongly on the control of the Fisher information.
\begin{lemma}
\label{lem:keyestimate}
    For any $N\geq 2$:
    $$\sup_{t\geq 0} \mathbb{E}\left[ \left\vert V^N_1(t) - V^N_2(t)\right\vert^{-2} \right] \leq I_0.$$
\end{lemma}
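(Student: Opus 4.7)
The plan is to reduce the expectation to a density estimate on the two-particle marginal $F^{N:2}_t$ and then combine three classical inputs: Hardy's inequality, super-additivity of the Fisher information, and the monotonicity of $I(F^N_t)$ already established in Proposition~\ref{prop:fisher_dissipation}. All three produce constants that combine to give exactly $I_0$ with no slack to spare.

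First I would write
\[
\mathbb{E}\bigl[|V^N_1(t)-V^N_2(t)|^{-2}\bigr]=\int_{\mathbb{R}^6} \frac{F^{N:2}_t(v_1,v_2)}{|v_1-v_2|^2}\,dv_1dv_2,
\]
where $F^{N:2}_t$ is the second marginal of $F^N_t$. For each fixed $v_2$, Hardy's inequality in $\mathbb{R}^3$ (with center $v_2$, constant $(2/(d-2))^2=4$) applied to $u(v_1):=\sqrt{F^{N:2}_t(v_1,v_2)}$ (which lies in $H^1(\mathbb{R}^3_{v_1})$ for a.e.\ $v_2$ once $I(F^{N:2}_t)<\infty$, by a standard slicing/density argument) gives
\[
\int_{\mathbb{R}^3}\frac{F^{N:2}_t(v_1,v_2)}{|v_1-v_2|^2}\,dv_1\ \leq\ 4\int_{\mathbb{R}^3}|\nabla_{v_1}\sqrt{F^{N:2}_t}|^2\,dv_1=\int_{\mathbb{R}^3}\frac{|\nabla_1 F^{N:2}_t|^2}{F^{N:2}_t}\,dv_1,
\]
and integrating in $v_2$ yields the same bound on $\mathbb{R}^6$.

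Next I would exploit the symmetry of $F^{N:2}_t$ (which is inherited from the exchangeability in Proposition~\ref{prop:wellposednessparticlesystem}): $\int|\nabla_1 F^{N:2}_t|^2/F^{N:2}_t = \int|\nabla_2 F^{N:2}_t|^2/F^{N:2}_t$, so the right-hand side is exactly $I(F^{N:2}_t)$ with the normalization of~\eqref{eq:defHandI}. I would then invoke super-additivity of the Fisher information (cf.~\eqref{eq:introsuperadditivity}) to get $I(F^{N:2}_t)\leq I(F^N_t)$, and the monotonicity along the Kolmogorov flow already proven in Proposition~\ref{prop:fisher_dissipation} to get $I(F^N_t)\leq I(F^N_0)$. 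Since $F^N_0=g_0^{\otimes N}$, the tensorization property $I(g_0^{\otimes N})=I(g_0)=I_0$ closes the chain at exactly $I_0$, uniformly in $t\geq 0$.

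There is no real obstacle: the bound is almost sharp (both Hardy and the Fisher-information chain are tight up to constants that cancel), which is why the statement is an equality-type bound with constant $1$. The only minor technical point is justifying Hardy's inequality for $\sqrt{F^{N:2}_t}$ when we only know $I(F^N_t)<\infty$; this is handled by approximating $F^{N:2}_t$ (say by $(F^{N:2}_t\ast \rho_\varepsilon+\varepsilon)$ cut off and renormalized) so that the square root becomes Schwartz, applying Hardy, and passing to the limit using Fatou on the left-hand side and the lower-semicontinuity of the Fisher information on the right.
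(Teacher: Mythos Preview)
Your proof is correct and reaches the same sharp constant $I_0$ as the paper, but by a slightly different and in fact more direct route. The paper first performs the unitary rotation $(v_1,v_2)\mapsto\frac{1}{\sqrt{2}}(v_1-v_2,v_1+v_2)$, takes the one-variable marginal $f$ in the difference direction, applies the 3D Hardy inequality to $\sqrt{f}$, and then uses super-additivity twice (from the 1-marginal back to the rotated 2-density, and from the 2-marginal up to $F^N_t$) together with unitary invariance of $I$. You instead apply Hardy slice-by-slice in $v_1$ for fixed $v_2$, integrate in $v_2$, and use the symmetry of $F^{N:2}_t$ to identify $\int|\nabla_1 F^{N:2}_t|^2/F^{N:2}_t$ directly with the normalized $I(F^{N:2}_t)$; this avoids the change of variables and one super-additivity step. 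Both arguments hinge on exactly the same three ingredients (Hardy with constant $4$, super-additivity, and Proposition~\ref{prop:fisher_dissipation}), and your handling of the only delicate point---justifying Hardy for $\sqrt{F^{N:2}_t(\cdot,v_2)}\in H^1$ for a.e.\ $v_2$ via Fubini and an approximation---is adequate.
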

\begin{proof}
    We write the expectation using the law $F_{t}^{N:2}$ of $(V^N_1(t),V^N_2(t))$:
    \begin{align}
        \mathbb{E}\left[ \left\vert V^N_i(t) - V^N_j(t)\right\vert^{-2} \right] = \int_{\mathbb{R}^6}\left\vert v_1 - v_2\right\vert^{-2}F_{t,2}^N(v_1,v_2)dv_1dv_2
    \end{align}
    where $F_{t}^{N:2}$ is given by the $2$-marginal $$F_{t}^{N:2}(v_1,v_2) = \int_{\mathbb{R}^{3(N-2)}}F^N_t(v_1,v_2,v_3,\ldots,v_N)dv_3 \ldots dv_N.$$
    Then we make use of the unitary change of variables:
    $$\phi: (v_1,v_2) \mapsto \frac{1}{\sqrt{2}}(v_1-v_2,v_1+v_2),$$ which allows us to write
    \begin{align*}
         \int_{\mathbb{R}^6}\left\vert v_1 - v_2\right\vert^{-2}F_{t}^{N:2}(v_1,v_2)dv_1dv_2& = \frac{1}{2}\int_{\mathbb{R}^6}\left\vert w_1\right\vert^{-2}F_{t}^{N:2}\circ \phi^{-1}(w_1,w_2)dw_1dw_2 \\&= \frac{1}{2}\int_{\mathbb{R}^6}\left\vert w_1\right\vert^{-2}f(w_1)dw_1,
    \end{align*}
    by letting $f(w_1) = \int_{\mathbb{R}^3} F_{t}^{N:2}\circ \phi^{-1}(w_1,w_2)dw_2$.
    The Hardy inequality applied to $\sqrt{f}$ writes:
    \begin{align*}
        \frac{1}{2}\int_{\mathbb{R}^6}\left\vert w_1\right\vert^{-2}f(w_1)dw_1 = \frac{1}{2}\left\Vert \frac{\sqrt{f}}{\vert\cdot\vert}\right\Vert_{L^2(\mathbb{R}^3)}^2\leq 2 \left\Vert \nabla \sqrt{f}\right\Vert_{L^2(\mathbb{R}^3)}^2 = \frac{1}{2}I(f).
    \end{align*}
    Using super-additivity of Fisher information, $I(f)$ is less than two times (because of our normalization) the Fisher information of $F_{t}^{N:2}\circ \phi^{-1}$, since it is its $1$-marginal. The Fisher information being invariant under the unitary transform $\phi$, we finally obtain
    $$\frac{1}{2}I(f) \leq I(F_{t}^{N:2}\circ \phi^{-1}) = I(F_{t}^{N:2}) \leq  I(F^N_t),$$
    where super-additivity was used again for the last inequality.
    Using the monotonicity of the Fisher information (Proposition~\ref{prop:fisher_dissipation}) to bound this quantity by $I_0$ concludes.
\end{proof}

This control is enough to show tightness of the sequence of empirical measures. In fact, one can check that the following proof extends to $\gamma>-4$ with little to no adaptation.

Let us first define a suitable distance on $\mathcal{P}(\mathbb{R}^3)$.
We claim that there exists a sequence $(\varphi_n)_{n\geq 1}$ such that for every $n\geq 1$, $\varphi_n \in C^2_0(\mathbb{R}^3)$, with
\begin{equation}
\label{eq:phi_nbounds}
    \Vert \varphi_n \Vert_{L^\infty} + \Vert \nabla \varphi_n \Vert_{L^\infty}+ \Vert \nabla^2 \varphi_n \Vert_{L^\infty}\leq 1,
\end{equation}
and such that the distance
$$d(\mu,\nu):= \sum_{n\geq 1} 2^{-n} \left\vert \int_{\mathbb{R}^3} \varphi_n (d\mu-d\nu) \right\vert$$
metricizes the weak convergence of measures on $\mathcal{P}(\mathbb{R}^3)$. In fact, it suffices to choose $(\varphi_n)_{n\geq 1}$ such that its span is dense in $C_0(\mathbb{R}^3)$ for the uniform topology (see \cite[Chapter 6]{Villani2009}, and \cite{Tardy2023} where this distance is used in a similar context).

Recall that the \textit{random} empirical measure at time $t$ is $\mu^N_t=\frac{1}{N}\sum_{i=1}^N \delta_{V^N_i(t)}$. The main result of this section is the following:
\begin{prop}
\label{prop:tightness2}
Consider the random variable $\mu^N=(\mu^N_t)_{t\in[0,T]} \in C\left([0,T],\mathcal{P}(\mathbb{R}^3)\right)$. The family $\left(\mathcal{L}\left( \mu^N\right) \right)_{N\geq 2 }$ is tight in $\mathcal{P} \left(C\left([0,T],\mathcal{P}(\mathbb{R}^3)\right)\right)$.
\end{prop}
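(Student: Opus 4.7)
I will apply Aldous' tightness criterion for $(\mathcal{P}(\mathbb{R}^3),d)$-valued càdlàg processes: it suffices to show (i) for each $t\in[0,T]$, the laws $(\mathcal{L}(\mu^N_t))_N$ are tight in $\mathcal{P}(\mathcal{P}(\mathbb{R}^3))$, and (ii) a uniform modulus-of-continuity estimate
\begin{equation*}
\mathbb{E}\bigl[d(\mu^N_{\tau+\theta},\mu^N_\tau)\bigr]\leq C\sqrt{\theta}
\end{equation*}
for all stopping times $\tau\leq T$, all $\theta\geq 0$ and all $N\geq 2$; via Markov's inequality this yields Aldous' condition, and path continuity upgrades tightness in $D([0,T],\mathcal{P}(\mathbb{R}^3))$ to tightness in $C([0,T],\mathcal{P}(\mathbb{R}^3))$. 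Condition (i) is immediate from Proposition~\ref{prop:kol_conservations}: $\mathbb{E}[\int\vert v\vert^2\,d\mu^N_t]=E_0$, so Markov applied to $\vert v\vert^2$ concentrates $\mu^N_t$ on large balls uniformly in $N,t$.

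\textbf{Itô decomposition.} For (ii), by the definition of $d$ it suffices to bound $Z^n_{s,t}:=\frac{1}{N}\sum_{i=1}^N[\varphi_n(V^N_i(t))-\varphi_n(V^N_i(s))]$ uniformly in $n,N$ and over stopping-time pairs. Applying Itô's formula to $\varphi_n(V^N_i(\cdot))$ in~\eqref{eq:particlesystem} and summing over $i$, one decomposes $Z^n_{s,t}=A^n_{s,t}+B^n_{s,t}+\mathcal{M}^n_{s,t}$ into drift, Itô-correction and martingale terms. The antisymmetries $b^N(-v)=-b^N(v)$ and $B^{ji}=-B^{ij}$, together with the symmetry $\sigma^N(-v)=\sigma^N(v)$, rewrite each contribution as a pair-indexed sum involving the difference $\nabla\varphi_n(V^N_i)-\nabla\varphi_n(V^N_j)$, for which the Hessian bound~\eqref{eq:phi_nbounds} provides a crucial extra factor $\vert V^N_i-V^N_j\vert$. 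Combined with $\vert b^N(v)\vert\leq 2\vert v\vert^{\gamma+1}$, $\vert\alpha^Na(v)\vert_{\mathrm{op}}\leq 2\vert v\vert^{\gamma+2}$ and $\vert\sigma^N(v)\vert_{\mathrm{op}}^2\leq\vert v\vert^{\gamma+2}$, this yields the a.s.\ pointwise-in-$u$ bounds that the drift and Itô-correction integrands are at most $\frac{C}{N^2}\sum_{i<j}\vert V^N_i-V^N_j\vert^{\gamma+2}$, while the quadratic-variation density of $\mathcal{M}^n$ is at most $\frac{C}{N^2}\sum_{i<j}\vert V^N_i-V^N_j\vert^{\gamma+4}$.

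\textbf{Estimates and conclusion.} For the martingale, using $\gamma+4\in[1,2]$, the inequality $\vert v\vert^{\gamma+4}\leq 1+\vert v\vert^2$ and the almost sure conservation of energy $\sum_i\vert V^N_i(u)\vert^2=\sum_i\vert V^N_{i,0}\vert^2$, the quadratic-variation density is a.s.\ bounded by $C/N+CN^{-2}\sum_i\vert V^N_{i,0}\vert^2$; taking expectation and integrating over $[\tau,\tau+\theta]$ yield $\mathbb{E}[\vert\mathcal{M}^n_{\tau,\tau+\theta}\vert^2]\leq C\theta/N$ for any stopping time $\tau$. For the drift, Cauchy-Schwarz on the pair sum bounds the squared integrand by $\frac{C}{N^2}\sum_{i<j}\vert V^N_i-V^N_j\vert^{2(\gamma+2)}$; since $2(\gamma+2)\in[-2,0]$ exactly matches the moment Lemma~\ref{lem:keyestimate} controls, $\mathbb{E}[\text{(drift integrand)}^2]\leq C(1+I_0)$ uniformly in $N,u$, and Cauchy-Schwarz in time then gives $\mathbb{E}[\vert A^n_{\tau,\tau+\theta}\vert^2]\leq C\theta$ uniformly over stopping times (and the same for $B^n$). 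Summing the three contributions yields $\mathbb{E}[\vert Z^n_{\tau,\tau+\theta}\vert]\leq C\sqrt{\theta}$ uniformly in $n,N,\tau$; integration against $\sum_n 2^{-n}$ then closes~(ii).

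\textbf{Main obstacle.} The crux is absorbing the singularity $\vert V^N_i-V^N_j\vert^\gamma$ uniformly in the regularization $\eta_N\to 0$; in the Coulomb case $\gamma=-3$, Lemma~\ref{lem:keyestimate} supplies only a single controlled inverse moment. Pair antisymmetrization (one power of $\vert v\vert$) combined with the Hessian bound (one more) and Cauchy-Schwarz on the pair sum (doubling the remaining negative exponent) maps the drift's original singularity $\vert v\vert^{\gamma+1}$ precisely onto $\vert v\vert^{2(\gamma+2)}=\vert v\vert^{-2}$ when $\gamma=-3$, landing exactly on the single available moment; this narrow alignment is why the proof covers the whole range $\gamma\in[-3,-2]$ without loss.
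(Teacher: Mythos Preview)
Your proof is correct and reaches the same conclusion, but by a genuinely different route. The paper proves tightness via Arzel\`a--Ascoli: it bounds an expected H\"older-$1/8$ seminorm of $t\mapsto\mu^N_t$ (Lemma~\ref{lem:holdercont}) and then builds explicit compacts $\mathcal{S}_R\subset C([0,T],\mathcal{P}(\mathbb{R}^3))$. For the martingale part this requires fourth moments (Burkholder--Davis--Gundy) followed by a Sobolev embedding into $C^{1/8}$; for the drift it uses a H\"older-in-time splitting with exponent $-(\gamma+2)/2$ before invoking Lemma~\ref{lem:keyestimate}. Your Aldous argument replaces the pathwise-supremum estimate by an $L^2$ increment bound at stopping times, so the martingale needs only the It\^o isometry, and your drift bound (Cauchy--Schwarz on the pair sum, doubling $\gamma+2$ to $2(\gamma+2)\in[-2,0]$, then Cauchy--Schwarz in time) lands on Lemma~\ref{lem:keyestimate} just as cleanly. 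Both proofs share the essential inputs---the symmetrisation in $i,j$, the Hessian bound~\eqref{eq:phi_nbounds}, energy conservation, and Lemma~\ref{lem:keyestimate}---but yours is more elementary (no BDG, no Sobolev embedding), while the paper's version yields the extra information of an explicit H\"older modulus in expectation.

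One small quibble: in your ``Main obstacle'' paragraph the power-counting narration is slightly off (symmetrisation together with the Hessian bound gains a single factor $|v|$, not two), though the actual estimates you wrote are correct.
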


Before proving Proposition~\ref{prop:tightness2}, we show that we can bound a Hölder seminorm of $\mu^N$ in expectation.
\begin{lemma}
    \label{lem:holdercont}
    There exists $C_{T,I_0,\gamma}>0$ depending on $T,I_0$ and $\gamma$ such that 
    \begin{align*}
   \mathbb{E}\left[  \sup_{s\neq t\in[0,T]} \frac{d(\mu^N_s,\mu^N_t)}{\vert s-t \vert^\frac{1}{8}} \right]\leq C_{T,I_0,\gamma}.
\end{align*}
\end{lemma}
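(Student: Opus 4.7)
The plan is to apply Itô's formula to $\varphi_n(V^N_i(t))$ for each test function $\varphi_n$ appearing in the definition of the metric $d$, sum over $i$, and then use a \emph{symmetrization trick} in the pair indices $(i,j)$ to tame the singularity of the coefficients. For each $n$, this yields a semimartingale decomposition
\[
\int \varphi_n \, d\mu^N_t - \int \varphi_n \, d\mu^N_s = \int_s^t A^n_u \, du + \mathcal{M}^n_{s,t},
\]
where $A^n_u$ collects the drift and Itô correction and $\mathcal{M}^n_{s,t}$ is a martingale increment. Exploiting the antisymmetry $b^N(-v) = -b^N(v)$ together with the symmetry of $\sigma^N(\sigma^N)^T$, I pair the $(i,j)$ and $(j,i)$ contributions to rewrite the drift as $\sum_{i<j} [\nabla \varphi_n(V^N_i) - \nabla \varphi_n(V^N_j)] \cdot b^N(V^N_i - V^N_j)$. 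The $C^2$-bound $\|\nabla^2 \varphi_n\|_{L^\infty} \leq 1$ then controls the gradient difference by $|V^N_i - V^N_j|$, absorbing exactly one factor of $|V^N_i - V^N_j|$ in $b^N$.

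Uniformly in $n$, I thus obtain $|A^n_u| \leq B(u)$ with $B(u) := \frac{C}{N(N-1)}\sum_{i<j}|V^N_i(u) - V^N_j(u)|^{\gamma+2}$. Since $\sum_n 2^{-n} = 1$, the drift contribution to $d(\mu^N_t, \mu^N_s)$ is bounded by $\int_s^t B(u) \, du \leq |t-s|^{1/2}\|B\|_{L^2([0,T])}$ by Cauchy--Schwarz in time, producing the factor $|t-s|^{3/8}$ after dividing by $|t-s|^{1/8}$. Jensen's inequality on the pair average gives $\mathbb{E}[B(u)^2] \leq C\,\mathbb{E}[|V^N_1(u) - V^N_2(u)|^{2(\gamma+2)}]$; since $2(\gamma+2) \in [-2,0]$ for $\gamma \in [-3,-2]$, this is bounded uniformly in $u$ and $N$ by interpolating between Lemma~\ref{lem:keyestimate} (exponent $-2$) and the conserved energy (exponent $2$). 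The drift contribution to the expected Hölder-$1/8$ seminorm is therefore controlled by $C_{T,I_0,\gamma}$.

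For the martingale part, the same symmetrization shows $\langle \mathcal{M}^n \rangle_{s,t} \leq CN^{-1}\int_s^t \bar Y(u)\,du$ with $\bar Y(u) = \frac{2}{N(N-1)}\sum_{i<j}|V^N_i - V^N_j|^{\gamma+4}$, the extra squared power coming from $|\nabla\varphi_n(V^N_i) - \nabla\varphi_n(V^N_j)|^2$. Burkholder--Davis--Gundy at some exponent $p \in (8/3, 4]$, combined with $(\sum_n 2^{-n} a_n)^p \leq \sum_n 2^{-n} a_n^p$ (Jensen), gives $\mathbb{E}[(\sum_n 2^{-n}|\mathcal{M}^n_{s,t}|)^p] \leq C(t-s)^{p/2}/N^{p/2}$, provided $\mathbb{E}[|V^N_1 - V^N_2|^{(\gamma+4)p/2}]$ is uniformly bounded --- which holds for $p$ in that range by the moment hypothesis on $g_0$ (together with a propagation estimate for moments along the particle system). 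Since $p > 8/3$ ensures $1/2 - 1/p > 1/8$, Kolmogorov's continuity theorem then converts this into an expected Hölder-$1/8$ bound on the martingale part that is in fact of order $N^{-1/2}$.

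The main obstacle is the symmetrization step itself. Without it, the raw drift bound $|b^N(v)| \leq C|v|^{\gamma+1}$ has exponent $-2$ in the Coulomb case, and its squared expectation $\mathbb{E}[|V^N_1 - V^N_2|^{-4}]$ is \emph{precisely} not controlled by the Fisher information. The $C^2$-smoothness of the $\varphi_n$ baked into the definition of $d$ (through $\|\nabla^2\varphi_n\|_{L^\infty} \leq 1$) is exactly what allows the reduction by one power, yielding $\gamma+2 \in [-1,0]$ so that $2(\gamma+2) \in [-2, 0]$ lands in the sweet spot covered by Lemma~\ref{lem:keyestimate}.
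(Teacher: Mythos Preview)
Your overall architecture (It\^o decomposition, symmetrization in the pair indices, using $\|\nabla^2\varphi_n\|_{L^\infty}\le 1$ to gain one power of $|V^N_i-V^N_j|$) matches the paper's, and your drift treatment is a legitimate variant: you use Cauchy--Schwarz in time and then Jensen on the pair average to land on $\mathbb{E}[|V^N_1-V^N_2|^{2(\gamma+2)}]$, which is indeed controlled by Lemma~\ref{lem:keyestimate} since $2(\gamma+2)\in[-2,0]$. The paper instead applies H\"older in time with the $\gamma$-dependent exponent $-(\gamma+2)/2$ before taking expectations; either route works.

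The genuine difference, and the place where your argument has a gap, is the martingale term. The paper does \emph{not} rely on any moment of order larger than $2$: it interpolates
\[
|\nabla\varphi(v)-\nabla\varphi(w)|\le \min\bigl(2\|\nabla\varphi\|_{L^\infty},\,\|\nabla^2\varphi\|_{L^\infty}|v-w|\bigr)\lesssim |v-w|^{-(\gamma+2)/2},
\]
which cancels exactly against $\|\sigma^N(v-w)\|\lesssim|v-w|^{(\gamma+2)/2}$, making the integrand of the quadratic variation \emph{uniformly bounded}. This yields $[\mathcal{M}^N]_{s,t}\lesssim N(N-1)|t-s|$ with no random factor, and the constant in the lemma truly depends only on $T,I_0,\gamma$. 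Your route keeps $|V^N_i-V^N_j|^{\gamma+4}$ in the bracket and then needs $\sup_{u,N}\mathbb{E}[|V^N_1-V^N_2|^{(\gamma+4)p/2}]$ with $(\gamma+4)p/2>2$ whenever $\gamma>-3$. You justify this by a ``propagation estimate for moments along the particle system'', but no such estimate is proved in the paper (and it is not obvious). The argument \emph{can} be rescued, but differently: since $\gamma+4\le 2$, one has pathwise $\bar Y(u)\le C\bigl(1+\tfrac{2}{N}\sum_i|V^N_{i}(u)|^2\bigr)=C\bigl(1+\tfrac{2}{N}\sum_i|V^N_{i,0}|^2\bigr)$ by the almost-sure energy conservation of Proposition~\ref{prop:wellposednessparticlesystem}, and then Minkowski on the i.i.d.\ initial data gives $\mathbb{E}[\bar Y(u)^{p/2}]\le C(1+\int|v|^p g_0)$. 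This closes your estimate, but the constant now depends on a moment of $g_0$ of order $p\le 4$, not only on $T,I_0,\gamma$ as the lemma asserts. For the downstream tightness argument that extra dependence is harmless, but as written your proof does not deliver the statement of the lemma, and the key step you invoke (moment propagation) is not the correct mechanism.
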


\begin{proof}
We need to show the Holder continuity of $\mu^N$ for the distance $d$, which involves testing $\mu^N$ against the $\varphi_n$. Let us first work with a given $\varphi \in C^2_b(\mathbb{R}^3)$. For any $s,t\in[0,T]$, the Itô formula yields
\begin{align}
        &\int_{\mathbb{R}^3} \varphi (d\mu^N_t-d\mu^N_s)\nonumber\\
        &=
       \frac{1}{N}\sum_{i=1}^N\varphi(V^N_i(t))-\frac{1}{N}\sum_{i=1}^N\varphi(V^N_i(s))\nonumber \\
       \label{eq:Itôempiricalmeasures6}
       &= \frac{2}{N(N-1)}\int_s^t \sum_{\substack{i,j=1\\ i\neq j}}^N  b^N\left(V^N_i(u) -V^N_j(u)\right) \cdot \nabla\varphi\left(V^N_i(u)\right) du\ \\
       \label{eq:Itôempiricalmeasures7}
       &+ \frac{1}{N(N-1)} \int_s^t \sum_{\substack{i,j=1\\ i\neq j}}^N  \alpha^Na\left(V^N_i(u) -V^N_j(u)\right):\nabla^2\varphi\left(V^N_i(u)\right)du\\
       \label{eq:Itôempiricalmeasures8}
       &+\sqrt{\frac{2}{N-1}}\frac{1}{N}\sum_{\substack{i,j=1\\ i\neq j}}^N \int_s^t  \nabla \varphi\left(V^N_i(u)\right)\cdot \sigma^N \left(V^N_i(u) - V^N_j(u)\right)dB^N_{ij}(u).
    \end{align}
We bound each term one by one.

\textit{Step 1.} For the first term \eqref{eq:Itôempiricalmeasures6}, we first symmetrize the sum using that $b(-v)=-b(v)$:
\begin{align*}
    &\frac{2}{N(N-1)}\int_s^t \sum_{\substack{i,j=1\\ i\neq j}}^N  b^N\left(V^N_i(u) -V^N_j(u)\right) \cdot \nabla\varphi\left(V^N_i(u)\right) du\\
    &=\frac{2}{N(N-1)}\int_s^t \sum_{\substack{i,j=1\\ i<j}}^N  b^N\left(V^N_i(u) -V^N_j(u)\right) \cdot \left(\nabla\varphi\left(V^N_i(u)\right)-\nabla\varphi\left(V^N_j(u)\right)\right) du.
\end{align*}
Thanks to the Lipschitz regularity of $\nabla \varphi$,
$$\left\vert\nabla\varphi\left(V^N_i(u)\right)-\nabla\varphi\left(V^N_j(u)\right)\right\vert \leq \Vert \nabla^2 \varphi\Vert_{L^\infty}  \vert V^N_i(u)-V^N_j(u) \vert,$$ so using this altogether with $\vert b^N(v)\vert \leq 2 \vert v \vert^{\gamma+2}$, we get
\begin{multline*}
\frac{2}{N(N-1)}\left\vert \int_s^t \sum_{\substack{i,j=1\\ i<j}}^N  b^N\left(V^N_i(u) -V^N_j(u)\right) \cdot \left(\nabla\varphi\left(V^N_i(u)\right)-\nabla\varphi\left(V^N_j(u)\right)\right) du\right\vert\\
\leq \frac{4\Vert \nabla^2 \varphi\Vert_{L^\infty}}{N(N-1)}\sum_{\substack{i,j=1\\ i<j}}^N \int_s^t   \vert V^N_i(u)-V^N_j(u) \vert^{\gamma+2}.
\end{multline*}
By the Hölder inequality,
\begin{multline*}
\frac{4\Vert \nabla^2 \varphi\Vert_{L^\infty}}{N(N-1)}\sum_{\substack{i,j=1\\ i<j}}^N \int_s^t   \vert V^N_i(u)-V^N_j(u) \vert^{\gamma+2}\\
\leq \frac{4\Vert \nabla^2 \varphi\Vert_{L^\infty}}{N(N-1)}\sum_{\substack{i,j=1\\ i<j}}^N \left(\int_s^t   \vert V^N_i(u)-V^N_j(u) \vert^{-2}du\right)^{-\frac{\gamma+2}{2}}\vert t-s\vert^{1+\frac{\gamma+2}{2}},
\end{multline*}
so that dividing by $\vert t-s\vert^{1+\frac{\gamma+2}{2}}$, and taking the supremum in $t$ and $s$, we get
\begin{multline*}
    \sup_{s\neq t\in[0,T]} \vert t-s\vert^{-(1+\frac{\gamma+2}{2})}\frac{2}{N(N-1)}\left\vert \int_s^t \sum_{i \neq j}  b^N\left(V^N_i(u) -V^N_j(u)\right) \cdot \nabla\varphi\left(V^N_i(u)\right) du \right\vert\\
    \leq \frac{4\Vert \nabla^2 \varphi\Vert_{L^\infty}}{N(N-1)}\sum_{\substack{i,j=1\\ i<j}}^N \left(\int_0^T   \vert V^N_i(u)-V^N_j(u) \vert^{-2}du\right)^{-\frac{\gamma+2}{2}}
\end{multline*}
Taking expectations, and using that $0\leq -\frac{\gamma+2}{2} \leq 1$ to ensure that $\mathbb{E}(\vert X\vert^{-\frac{\gamma+2}{2}}) \leq (\mathbb{E}\vert X\vert)^{-\frac{\gamma+2}{2}}$, we get
\begin{align}
    &\mathbb{E}\left[\sup_{s\neq t\in[0,T]} \vert t-s\vert^{-(1+\frac{\gamma+2}{2})}\frac{2}{N(N-1)}\left\vert \int_s^t \sum_{i \neq j}  b^N\left(V^N_i(u) -V^N_j(u)\right) \cdot \nabla\varphi\left(V^N_i(u)\right) du \right\vert\right]\nonumber\\
    &\leq \mathbb{E}\left[\frac{4\Vert \nabla^2 \varphi\Vert_{L^\infty}}{N(N-1)}\sum_{\substack{i,j=1\\ i<j}}^N \left(\int_0^T   \vert V^N_i(u)-V^N_j(u) \vert^{-2}du\right)^{-\frac{\gamma+2}{2}}\right]\nonumber\\
    &\leq \frac{4\Vert \nabla^2 \varphi\Vert_{L^\infty}}{N(N-1)}\sum_{\substack{i,j=1\\ i<j}}^N \left(\int_0^T   \mathbb{E}\left[\vert V^N_i(u)-V^N_j(u) \vert^{-2}\right]du\right)^{-\frac{\gamma+2}{2}}\nonumber\\
    &\leq 2\Vert \nabla^2 \varphi\Vert_{L^\infty}\left(\int_0^T   \mathbb{E}\left[\vert V^N_1(u)-V^N_2(u) \vert^{-2}\right]du\right)^{-\frac{\gamma+2}{2}}\nonumber\\
    \label{eq:tightcond2}
    &\leq 2\Vert \nabla^2 \varphi\Vert_{L^\infty}\left(TI_0\right)^{-\frac{\gamma+2}{2}}
\end{align}
by exchangeability, and using Lemma~\ref{lem:keyestimate} for the last line. Remark that the Hölder exponent $(1+\frac{\gamma+2}{2})$ is larger than the $\frac{1}{8}$ we seek.

\textit{Step 2.} Following the same steps (without symmetrizing at the beginning) and using $\vert \alpha^N a(v)\vert\leq  \vert v \vert^{\gamma+2}$ , we can bound the second line \eqref{eq:Itôempiricalmeasures7} by
\begin{align}
\label{eq:tightcond3}
    \mathbb{E}\Bigg[\sup_{s\neq t\in[0,T]} \frac{\vert t-s\vert^{-(1+\frac{\gamma+2}{2})}}{N(N-1)}\Bigg\vert \int_s^t \sum_{i\neq j}  \alpha^Na\left(V^N_i(u) -V^N_j(u)\right)&:\nabla^2\varphi\left(V^N_i(u)\right)du \Bigg\vert\Bigg]\nonumber\\
    &\leq \Vert \nabla^2 \varphi\Vert_{L^\infty}\left(TI_0\right)^{-\frac{\gamma+2}{2}}.
\end{align}
\textit{Step 3.} Finally, we treat the martingale term \eqref{eq:Itôempiricalmeasures8}, that we rewrite using $B^N_{ji}=-B^N_{ij}$:
    \begin{align*}
        & \sqrt{\frac{2}{N-1}}\frac{1}{N}\sum_{i<j} \int_s^t  \nabla \varphi\left(V^N_i(u)\right)\cdot \sigma^N \left(V^N_i(u) - V^N_j(u)\right)dB^N_{ij}(u)\\
    &= \frac{\sqrt{2}}{N\sqrt{(N-1)}}\sum_{i<j} \int_s^t  \left(\nabla\varphi\left(V^N_i(u)\right)-\nabla\varphi\left(V^N_j(u)\right)\right)\cdot \sigma^N \left(V^N_i(u) - V^N_j(u)\right)dB^N_{ij}(u)
    \end{align*}
    since $\sigma^N(-v)=\sigma^N(v)$.
    We introduce the process
    $$\mathcal{M}^N(t):=\sum_{i<j} \int_0^t  \left(\nabla\varphi\left(V^N_i(u)\right)-\nabla\varphi\left(V^N_j(u)\right)\right)\cdot \sigma^N \left(V^N_i(u) - V^N_j(u)\right)dB^N_{ij}(u).$$
    Using bracket notation $\left[ X\right]_t$ for the quadratic variation at time $t$ of a process $X$, the Burkholder-Davis-Gundy inequality \cite[Corollary 4.2]{RevuzYor1999} writes:
\begin{align*}
    \mathbb{E} \vert \mathcal{M}^N(t)-\mathcal{M}^N(s) \vert^4 \leq C \mathbb{E} \left[ \mathcal{M}^N(\cdot)-\mathcal{M}^N(s)\right]_t^2.
\end{align*}
The independence of the Brownian motions allows us to write the variation of the sum as the sum of the variations:
\begin{align*}
    \big[ \mathcal{M}^N(\cdot)&-\mathcal{M}^N(s)\big]_t\\
    &=\left[ \sum_{i<j}\int_{s}^\cdot \left(\nabla\varphi\left(V^N_i(u)\right)-\nabla\varphi\left(V^N_j(u)\right)\right) \cdot \sigma^N(V^N_1(u) - V^N_j(u)) dB^{N}_{ij}(u)\right]_t\\
    &=\sum_{i<j} \int_{s}^t \left\vert\sigma^N(V^N_i(u) - V^N_j(u)) \left(\nabla\varphi\left(V^N_i(u)\right)-\nabla\varphi\left(V^N_j(u)\right)\right) \right\vert^2  du.
\end{align*}
But the estimate
\begin{align*}\left(\nabla\varphi\left(v\right)-\nabla\varphi\left(w\right)\right) &\leq  \min(2\Vert \nabla \varphi\Vert_{L^\infty},\Vert \nabla^2 \varphi\Vert_{L^\infty}\vert v-w\vert)\\
&\leq C(\Vert \nabla \varphi\Vert_{L^\infty}+\Vert \nabla^2 \varphi\Vert_{L^\infty})\vert v-w\vert^{-\frac{\gamma+2}{2}}
\end{align*}
holds for some absolute constant $C>0$, since $0\leq -\frac{\gamma+2}{2}\leq 1$, and we also have $\Vert \sigma^N(v-w)\Vert \lesssim  \vert v-w\vert^{\frac{\gamma+2}{2}}.$
Hence the integrand above is bounded and
$$\big[ \mathcal{M}^N(\cdot)-\mathcal{M}^N(s)\big]_t \leq C(\Vert \nabla \varphi\Vert_{L^\infty}+\Vert \nabla^2 \varphi\Vert_{L^\infty})^2 \frac{N(N-1)}{2}\vert t-s\vert.$$
We thus reach
\begin{equation}
\label{eq:holdercontexp}
\mathbb{E} \vert \mathcal{M}^N(t)-\mathcal{M}^N(s) \vert^4 \leq C(\Vert \nabla \varphi\Vert_{L^\infty}+\Vert \nabla^2 \varphi\Vert_{L^\infty})^{4} \left(\frac{N(N-1)}{2}\right)^{2}\vert t-s\vert^2.
\end{equation} 
Notice that this also yields a control of $\mathcal{M}^N(t)$ since $\mathcal{M}^N(0)=0$.
We now rely on the following Sobolev embedding \cite[Theorem 8.2]{NezzaPalatucciValdinoci2011}: for any $f$,
\begin{equation*}
    \sup_{s\neq t \in [0,T]} \frac{\vert f(t)-f(s)\vert}{\vert t-s\vert^{\frac{1}{8}}}\leq C_{\mathrm{Sob}}\left(\int_0^T \vert f(t)\vert^4 dt+\iint_{[0,T]^2} \frac{\vert f(t)-f(s)\vert^4}{\vert t-s\vert^{\frac{5}{2}}}ds dt\right)^{\frac{1}{4}}.
\end{equation*}
With $f=\mathcal{M}^N$, taking expectations and using $\mathbb{E}(\vert X\vert^{\frac{1}{4}}) \leq (\mathbb{E}\vert X\vert)^{\frac{1}{4}}$, we get
\begin{align*}
    \mathbb{E}&\left[ \sup_{s\neq t \in [0,T]} \frac{\vert \mathcal{M}^N(t)-\mathcal{M}^N(s)\vert}{\vert t-s\vert^{\frac{1}{8}}}\right]\\&\leq C_{\mathrm{Sob}}\left(\int_0^T \mathbb{E}(\vert \mathcal{M}^N (t)\vert^4)dt+\iint_{[0,T]^2} \frac{\mathbb{E}(\vert \mathcal{M}^N(t)-\mathcal{M}^N (s)\vert^4)}{\vert t-s\vert^{\frac{5}{2}}}ds dt\right)^{\frac{1}{4}}\\
    &\leq C_{\mathrm{Sob}}(\Vert \nabla \varphi\Vert_{L^\infty}+\Vert \nabla^2 \varphi\Vert_{L^\infty}) (N(N-1)/2)^{\frac{1}{2}}\left(\int_0^T t^2dt+\iint_{[0,T]^2} \vert t-s\vert^{-\frac{1}{2}}ds dt\right)^{\frac{1}{4}}\\
    &\leq C_T(\Vert \nabla \varphi\Vert_{L^\infty}+\Vert \nabla^2 \varphi\Vert_{L^\infty}) (N(N-1)/2)^{\frac{1}{2}}
\end{align*}
thanks to \eqref{eq:holdercontexp}. We finally obtain the following control of the third term \eqref{eq:Itôempiricalmeasures8}:
\begin{align}
         \mathbb{E}\Bigg[\sup_{s\neq t \in [0,T]}\frac{\sqrt{2}\vert t-s\vert^{-\frac{1}{8}}}{N\sqrt{N-1}}&\Bigg\vert \sum_{i\neq j} \int_s^t  \nabla \varphi\left(V^N_i(u)\right)\cdot \sigma^N \left(V^N_i(u) - V^N_j(u)\right)dB^N_{ij}(u)\Bigg\vert\Bigg]\nonumber\\
    &= \mathbb{E}\left[\sup_{s\neq t \in [0,T]}\frac{\sqrt{2}}{N\sqrt{N-1}} \frac{\vert \mathcal{M}^N(t)-\mathcal{M}^N(s)\vert}{\vert t-s\vert^{\frac{1}{8}}}\right]\nonumber\\
    \label{eq:tightcond4}
    &\leq  \frac{C_T}{\sqrt{N}}(\Vert \nabla \varphi\Vert_{L^\infty}+\Vert \nabla^2 \varphi\Vert_{L^\infty}).
\end{align}
\textit{Step 4.} Using that $(1+\frac{\gamma+2}{2})\geq \frac{1}{8}$, combining \eqref{eq:tightcond2}, \eqref{eq:tightcond3} and \eqref{eq:tightcond4}, we get a control of the Hölder seminorm of exponent $\frac{1}{8}$:
\begin{align*}
   \mathbb{E}\bigg[ \sup_{s\neq t \in [0,T]}\vert t-s\vert^{-\frac{1}{8}}&\left\vert \int_{\mathbb{R}^3} \varphi (d\mu^N_t-d\mu^N_s)\right\vert \bigg]\\
    &\leq C_{T,I_0,\gamma}(1+N^{-\frac{1}{2}} )(\Vert \nabla \varphi\Vert_{L^\infty}+\Vert \nabla^2 \varphi\Vert_{L^\infty}).
\end{align*}
Taking $\varphi=\varphi_n$ and summing, we get that for all $N\geq 2$,
\begin{align*}
   \mathbb{E}\left[  \sup_{s\neq t\in[0,T]} \frac{d(\mu^N_s,\mu^N_t)}{\vert s-t \vert^\frac{1}{8}} \right]\leq C_{T,I_0,\gamma},
\end{align*}
because $\Vert \nabla \varphi_n\Vert_{L^\infty}+\Vert \nabla^2 \varphi_n\Vert_{L^\infty}\leq 1$ by hypothesis. This is the desired result.
\end{proof}

We can now use this Lemma to show tightness of the empirical measures, that is to say prove Proposition~\ref{prop:tightness2}.
\begin{proof}[Proof of Proposition~\ref{prop:tightness2}]
Let $\varepsilon>0$. We need to find a compact set $\mathcal{S} \subset C\left([0,T],\mathcal{P}(\mathbb{R}^3)\right)$ such that for all $N \geq 2$, $\mathbb{P}(\mu^N \in \mathcal{S})\geq 1-\varepsilon.$

We first find a candidate for $\mathcal{S}$. Recall that for the weak topology, for any $R>0$, the set
$$S_R = \left\{ \nu \in \mathcal{P}(\mathbb{R}^3) \Bigg\vert \int_{\mathbb{R}^3}\vert v\vert^2 \nu(dv) \leq R\right\}$$
is compact in $\mathcal{P}(\mathbb{R}^3)$ as a consequence of Markov's inequality. For $R>0$, we will consider the set
\begin{equation}
\label{eq:defcompactS}
\mathcal{S}=\mathcal{S}_{R}:= \left\{ \nu=(\nu_t)_t \in C\left([0,T],\mathcal{P}(\mathbb{R}^3)\right) \Bigg \vert (\forall t \in [0,T], \nu_t \in S_R)  \text{ and } \sup_{s\neq t\in[0,T]} \frac{d(\nu_s,\nu_t)}{\vert s-t \vert^\frac{1}{8}}\leq R\right\},
\end{equation}
which is compact by the Arzèla-Ascoli theorem for continuous functions with values in a general metric space. We treat the two conditions in the two steps below.

We show that there exists $R>0$ such that $\forall t,\ \mu^N_t \in S_R$, with high probability. Thanks to Proposition~\ref{prop:wellposednessparticlesystem}, almost surely, for any $t\in [0,T]$, $$\int_{\mathbb{R}^3} \vert v\vert^2 \mu^N_t(dv)= \frac{1}{N}\sum_{i=1}^N (V^N_i(t))^2=\frac{1}{N}\sum_{i=1}^N (V^N_{i,0})^2,$$
so, since $\mathcal{L}(V^N_{i,0})=g_0$, taking the supremum over $[0,T]$ and then the expectation, we get 
$$\mathbb{E}\left[\sup_{t\in[0,T]}\int_{\mathbb{R}^3} \vert v\vert^2 \mu^N_t(dv)\right]=\int \vert v \vert^2 g_0 dv=E_0.$$
Hence, by Markov's inequality,
\begin{equation*}
\label{eq:tightnesscond1}
    \mathbb{P}\left( \sup_{t\in[0,T]}\int_{\mathbb{R}^3} \vert v\vert^2 \mu^N_t(dv) > \frac{2E_0}{\varepsilon} \right) \leq \frac{\varepsilon}{2},
\end{equation*}
so that for any $R\geq \varepsilon/(2E_0)$, $\mathbb{P}(\forall t, \mu^N_t \in S_{R})\geq 1-\frac{\varepsilon}{2}.$

We finally deal with the Hölder continuity. By Lemma~\ref{lem:holdercont},
\begin{align*}
   \mathbb{E}\left[  \sup_{s\neq t\in[0,T]} \frac{d(\mu^N_s,\mu^N_t)}{\vert s-t \vert^\frac{1}{8}} \right]\leq C_{T,I_0,\gamma}.
\end{align*}
Hence, by a similar application of Markov's inequality,
\begin{equation*}
\label{eq:tightnesscond1}
    \mathbb{P}\left( \sup_{s\neq t\in[0,T]} \frac{d(\mu^N_s,\mu^N_t)}{\vert s-t \vert^\frac{1}{8}} \leq \frac{2C_{T,I_0,\gamma}}{\varepsilon} \right) \geq 1-\frac{\varepsilon}{2},
\end{equation*}
so that, for $R\geq 2\max{(E_0,C_{T,I_0,\gamma})}/\varepsilon$, it holds that
$$\mathbb{P}\left( \mu^N \in \mathcal{S}_R \right) \geq 1-\varepsilon,$$ and the proof is concluded.
\end{proof}
Knowing the tightness of the random variables $(\mu^N)_{N\geq 2}$, Prokhorov's Theorem implies that this sequence admits convergent-in-law subsequences.
We now want to show in the next sections that any cluster point $\pi$ of $(\mathcal{L}(\mu^N))_{N\geq 2 }$ charges only the classical solution $(g_t)_t$ to the Landau equation with initial data $g_0$, \textit{i.e.} $\pi=\delta_{(g_t)_t}$.

\section{Infinite-dimension limit of the entropy and Fisher information estimates}
\label{sec:infinite-dimension limit of the regularity estimates}
We fix $\pi \in \mathcal{P}\left(C([0,T],\mathcal{P}(\mathbb{R}^3)) \right)$ a cluster point of $(\mathcal{L}(\mu^N))_{N\geq 2 }$ for the rest of the proof. This means that up to a subsequence (that we omit to write) $\mathcal{L}(\mu^N)$ converges weakly to $\pi$. The goal is to show that $\pi=\delta_{(g_t)_t}$, so that the whole sequence converges and propagation of chaos holds. Let $f=(f_t)_t$ be any $\pi$-distributed random variable. In this section we want to recover some properties of $f$ using the estimates on the law $F^N$ of the particle system that we established in Section~\ref{sec:entropy_and_fisher_in_kolmogorov}. As announced in the introduction, to show that $f$ enjoys almost surely some regularity, we will bound $\mathbb{E}(\mathcal{F}(f))$ for some regularity-quantifying functional $\mathcal{F}$ (typically, the Fisher information). To do so, we will rely on the super-additivity and affinity in infinite dimension of the functionals at play: for the entropy and Fisher information, those are known properties, and for the dissipation terms we will make use of Theorem~\ref{thm:side}.

We begin by relating more directly the cluster point $\pi$ and the law $(F^N_t)_t$. We denote by $\pi_t \in \mathcal{P}(\mathcal{P}(\mathbb{R}^3))$ the law of $f_t$, \textit{i.e.} the pushforward of $\pi$ by the evaluation map $f\mapsto f_t$. To any $\nu \in \mathcal{P}(\mathcal{P}(\mathbb{R}^3))$, we recall that we can associate a hierarchy of symmetric marginals $\nu^j \in \mathcal{P}(\mathbb{R}^{3j})$ by letting
\begin{equation}
\label{eq:defhsmarg}
    \nu^j:=\int_{\mathcal{P}(\mathbb{R}^3)} \rho^{\otimes j} \nu(d\rho).
\end{equation}
This hierarchy is compatible, in the sense that the marginals $\nu^{j:k}=\nu^k$ for any $1\leq k \leq j$. (The Hewitt-Savage theorem \cite{HewittSavage1955} asserts that such a compatible hierarchy actually \textit{uniquely characterizes} an element of $\mathcal{P}(\mathcal{P}(\mathbb{R}^3))$, but we will not need such a result.) The limit $\pi_t$ and the $F^N_t$ are linked through their respective marginals:
\begin{prop}
\label{prop:convFNj}
   For any $j\in\mathbb{N}^*$, for any $t\in[0,T]$,
   $$F^{N:j}_t \xrightharpoonup[N\rightarrow \infty]{}\pi_t^j,$$
   where $\pi_t^j$ is given by \eqref{eq:defhsmarg}, and the convergence holds in the sense of weak convergence of probability measures (along the common subsequence in $N$ for which $\mathcal{L}(\mu^N)\rightharpoonup \pi$, that we omit to write).
\end{prop}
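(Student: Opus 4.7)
The plan is to identify $F^{N:j}_t$ as essentially the expectation of the $j$-fold tensor power of the empirical measure $\mu^N_t$, so that the known convergence $\mathcal{L}(\mu^N)\rightharpoonup\pi$ in $\mathcal{P}(C([0,T],\mathcal{P}(\mathbb{R}^3)))$ can be transferred, via continuity, to convergence of the $j$-th marginal at a fixed time $t$. The first step is to note that the evaluation map $(\nu_s)_{s\in[0,T]}\mapsto\nu_t$ is continuous from $C([0,T],\mathcal{P}(\mathbb{R}^3))$ (with the uniform topology) to $\mathcal{P}(\mathbb{R}^3)$, so pushing forward the convergence $\mathcal{L}(\mu^N)\rightharpoonup\pi$ gives $\mathcal{L}(\mu^N_t)\rightharpoonup\pi_t$ in $\mathcal{P}(\mathcal{P}(\mathbb{R}^3))$.

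Next, for any bounded continuous $\varphi:\mathbb{R}^{3j}\to\mathbb{R}$, I would expand
$$\int_{\mathbb{R}^{3j}}\varphi\, d(\mu^N_t)^{\otimes j}=\frac{1}{N^j}\sum_{i_1,\ldots,i_j=1}^N \varphi(V^N_{i_1}(t),\ldots,V^N_{i_j}(t)),$$
split the sum according to whether the indices $i_1,\ldots,i_j$ are pairwise distinct or not, and take expectations. The exchangeability of the particles (Proposition~\ref{prop:wellposednessparticlesystem}) yields
$$\mathbb{E}\!\left[\int\varphi\, d(\mu^N_t)^{\otimes j}\right]=\frac{N(N-1)\cdots(N-j+1)}{N^j}\int\varphi\, dF^{N:j}_t+R_N,$$
where the remainder $R_N$ collects the $O(N^{j-1})$ "diagonal" tuples (each contributing at most $\|\varphi\|_\infty/N^j$), so $|R_N|\leq C_j\|\varphi\|_\infty/N\to 0$.

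To conclude, I would invoke the classical fact that the tensorization map $\rho\mapsto\rho^{\otimes j}$ is continuous from $\mathcal{P}(\mathbb{R}^3)$ to $\mathcal{P}(\mathbb{R}^{3j})$ for the weak topologies (products of continuous bounded functions suffice to test weak convergence on product spaces). In particular $\rho\mapsto\int\varphi\, d\rho^{\otimes j}$ is bounded continuous on $\mathcal{P}(\mathbb{R}^3)$, so testing it against $\mathcal{L}(\mu^N_t)\rightharpoonup\pi_t$ gives
$$\mathbb{E}\!\left[\int\varphi\, d(\mu^N_t)^{\otimes j}\right]\xrightarrow[N\to\infty]{}\int_{\mathcal{P}(\mathbb{R}^3)}\left(\int\varphi\, d\rho^{\otimes j}\right)\pi_t(d\rho)=\int\varphi\, d\pi_t^j,$$
by the very definition of $\pi_t^j$. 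Combining this with the combinatorial identity of the previous step (whose prefactor tends to $1$) yields $\int\varphi\, dF^{N:j}_t\to\int\varphi\, d\pi_t^j$ for every bounded continuous $\varphi$, which is the claimed weak convergence. There is no real obstacle here: the argument is standard in the propagation-of-chaos literature, with the only mildly subtle point being the continuity of $\rho\mapsto\rho^{\otimes j}$, which however needs no integrability assumption since the test functions are bounded.
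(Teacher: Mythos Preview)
Your proof is correct and follows essentially the same route as the paper: both first push the convergence $\mathcal{L}(\mu^N)\rightharpoonup\pi$ through the continuous evaluation map to get $\mathcal{L}(\mu^N_t)\rightharpoonup\pi_t$, and then pass from convergence of the law of the empirical measure to convergence of the $j$-marginals. The only difference is cosmetic: the paper invokes \cite[Theorem 5.3, (1)]{HaurayMischler2012} as a black box for this last step, whereas you spell out its proof directly via the combinatorial expansion of $\mathbb{E}[\int\varphi\,d(\mu^N_t)^{\otimes j}]$ and the continuity of $\rho\mapsto\rho^{\otimes j}$.
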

\begin{proof}
    Let $\hat{F}^N_t\in\mathcal{P}(\mathcal{P}(\mathbb{R}^3))$ be the law of the random empirical measure $\mu^N_t$, or in other words, the pushforward of $F^N_t$ by the empirical measure map
    $(v_1,...,v_n)\mapsto \frac{1}{N}\sum_i \delta_{v_i}$.
    By \cite[Theorem 5.3, (1)]{HaurayMischler2012}, the convergence claimed in the proposition is equivalent to showing $\hat{F}^N_t \rightharpoonup \pi_t$. But since $\mathcal{L}(\mu^N) \rightharpoonup \pi$ by definition, we obtain the desired convergence by pushing forward $\mathcal{L}(\mu^N)$ and $\pi$ by the evaluation map at $t$ (which is continuous from $C([0,T],\mathcal{P}(\mathbb{R}^3))$ to $\mathcal{P}(\mathbb{R}^3)$).
\end{proof}

The first estimate we pass to the infinite-dimension limit is the energy conservation:
\begin{prop}
\label{prop:lvl3energy}
    For any $t\in[0,T]$, $\pi_t$ has finite mean second moment, bounded by the initial energy:
    $$\int_{\mathbb{R}^3} \vert v \vert^2 \pi^1_t(dv) \leq E_0.$$
\end{prop}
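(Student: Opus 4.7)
The plan is to combine three ingredients already at hand: the conservation of energy for the particle system (Proposition~\ref{prop:kol_conservations}), the weak convergence of marginals (Proposition~\ref{prop:convFNj}), and lower semi-continuity of the second-moment functional under weak convergence.

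First, I would translate the conservation law from Proposition~\ref{prop:kol_conservations} into a statement about the one-particle marginal. By exchangeability of $F^N_t$,
\begin{equation*}
    \int_{\mathbb{R}^3} |v|^2 F^{N:1}_t(dv) = \frac{1}{N} \int_{\mathbb{R}^{3N}} |V|^2 F^N_t \, dV = E_0
\end{equation*}
for every $N \geq 2$ and every $t \in [0,T]$.

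Next, by Proposition~\ref{prop:convFNj} applied with $j=1$, we have $F^{N:1}_t \rightharpoonup \pi_t^1$ weakly in $\mathcal{P}(\mathbb{R}^3)$. Since $v \mapsto |v|^2$ is continuous and non-negative (hence lower semi-continuous and bounded below), the Portmanteau theorem yields
\begin{equation*}
    \int_{\mathbb{R}^3} |v|^2 \pi^1_t(dv) \leq \liminf_{N\to\infty} \int_{\mathbb{R}^3} |v|^2 F^{N:1}_t(dv) = E_0.
\end{equation*}
If one prefers to avoid quoting Portmanteau for unbounded lower-semicontinuous test functions, the same bound follows by a truncation argument: for every $M > 0$, the function $v \mapsto |v|^2 \wedge M$ is bounded continuous, so weak convergence gives $\int (|v|^2 \wedge M)\, \pi^1_t(dv) = \lim_N \int (|v|^2 \wedge M)\, F^{N:1}_t(dv) \leq E_0$, and monotone convergence as $M \to \infty$ concludes.

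There is no real obstacle here; the only subtle point is that $|v|^2$ is unbounded, so weak convergence only provides an inequality (not an equality), which is why the statement is written as $\leq E_0$ rather than $= E_0$. This is consistent with the fact that mass can escape to infinity in the limit, although a posteriori we will of course recover the equality once $\pi$ is identified with $\delta_{(g_t)_t}$.
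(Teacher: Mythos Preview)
Your proof is correct and follows essentially the same approach as the paper: both use exchangeability and Proposition~\ref{prop:kol_conservations} to compute $\int |v|^2 F^{N:1}_t(dv)$, then invoke Proposition~\ref{prop:convFNj} and the Portmanteau theorem for the lower semi-continuous function $|v|^2$. Your additional truncation argument and the closing remark about why only an inequality survives are helpful clarifications but not essential departures.
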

\begin{proof}
    Using Proposition~\ref{prop:kol_conservations} and symmetry, for any $t$
    $$\int_{\mathbb{R}^3} \vert v \vert^2 F^{N:1}_t(dv)=\frac{1}{N}\int_{\mathbb{R}^3} \vert V \vert^2 F^{N}_t(dV)\leq E_0.$$
    The portemanteau theorem applied to the bounded below and lower semi-continuous function $v \mapsto \vert v \vert^2$ yields the result since $F^{N:1}_t \rightharpoonup\pi_t^1$ by Proposition~\ref{prop:convFNj}.
\end{proof}

We now want to pass to the limit in the entropy, the Fisher information and their production/dissipation terms. We begin by defining functionals for the dissipation terms. First, for a symmetric non-negative function $F$ on $\mathbb{R}^{3N}$ ($N\geq 2$), we let the entropy production be:
\begin{equation}
\label{eq:defDn}
    D^N(F):= \frac{1}{2}\int_{\mathbb{R}^{3N}} \alpha^N a(v_1-v_2):\left[(\nabla_1-\nabla_2)\log{F}\right]^{\otimes 2} \ F dV.
\end{equation}
Hence Proposition~\ref{prop:entropy_dissipation} exactly rewrites:
\begin{equation}
\label{eq:entropy_dissipation_rewrite}
    H(F^N_t)+\int_0^t D^N(F^N_s)ds \leq H_0.
\end{equation}
We also define $D$ as $D^N$ but with $\alpha$ instead of $\alpha^N$, that is, without cut-off. Remark that the functional $D^N$ is obviously monotone with respect to $\alpha^N$.

We want to do the same for the dissipation of Fisher information. Since the expression is a little more convoluted, we first introduce the following derivation operator:
\begin{equation}
\label{eq:defpartialNbk}
    \partial^N_{b_k}:=\vert v_1-v_2\vert^{-\frac{1}{2}} (\alpha^N(\vert v_1-v_2\vert ))^{\frac{1}{4}}\tilde{b}_k(v_1,v_2)\cdot \nabla_{12}.
\end{equation}
The vector-field in this derivation is still divergence-free (as a direct consequence of~\eqref{eq:bkandradial}). We also define the operator $\partial_{b_k}$ as $\partial^N_{b_k}$ but with $\alpha$ instead of $\alpha^N$. With this derivation operator we can rewrite the Fisher dissipation estimate~\eqref{eq:fisher_dissipation} in a more concise form as
\begin{equation}
\label{eq:fisher_dissip_rewrite1}
    I(F^N_t)+\frac{1}{4}\sum_{k=1}^3\int_0^t  \int_{\mathbb{R}^{3N}}  \left(\partial^N_{b_k} \partial^N_{b_k}\log F^N_s\right)^2F^N_s dVds \leq I_0,
\end{equation}
(using~\eqref{eq:bkandradial} to freely move $\vert v_1-v_2\vert^{-2} \alpha^N(\vert v_1-v_2\vert )$ in and out of the derivations). We see that the second term is the sum over $k$ of the second order Fisher information $K_0^{\partial^N_{b_k}}$ that we studied in the preliminary Section~\ref{sec:on the fisher dissipation and aux}. We rather want to use $K^{\partial^N_{b_k}}=K_1^{\partial^N_{b_k}}$ because we know it will be l.s.c., convex and super-additive. We hence define
\begin{align}
\label{eq:defK}
    \mathcal{K}^N(F):=\sum_{k=1}^3 K_1^{\partial^N_{b_k}}(F)&=\sum_{k=1}^3 \int_{\mathbb{R}^{3N}} \frac{\left( \partial^N_{b_k} \partial^N_{b_k}F \right)^2}{F} dV\\
    &=\sum_{k=1}^3 \int_{\mathbb{R}^{3n}} \frac{\alpha^n(v_1-v_2)}{\vert v_1-v_2 \vert^2} \frac{\left( (\tilde{b}_k\cdot \nabla_{12})( \tilde{b}_k\cdot \nabla_{12})F^n \right)^2}{F^n} dV.\nonumber
\end{align}
We also define a version without cut-off, $K$, with $\alpha$ instead of $\alpha^N$ (\textit{i.e.} with $\partial_{b_k}$ instead of $\partial^N_{b_k}$).
By applying the equivalence result in Proposition~\ref{prop:func_comparison} with $\beta=0$ and $\partial=\partial^N_{b_k}$, we have
$$\frac{1}{4}\mathcal{K}^N(F) \leq \sum_{k=1}^3 K_0^{\partial^N_{b_k}}(F) = \sum_{k=1}^3 \int_{\mathbb{R}^{3N}}  \left(\partial^N_{b_k} \partial^N_{b_k}\log F\right)^2F dV \leq \mathcal{K}^N(F) . $$
Using the left hand side, the Fisher dissipation estimate~\eqref{eq:fisher_dissip_rewrite1} implies:
\begin{equation}
\label{eq:fisher_dissipation_rewrite}
    I(F^N_t)+\frac{1}{16}\int_0^t \mathcal{K}^N(F^N_s)ds \leq I_0.
\end{equation}

\begin{remark}
    In all rigour, we cannot directly apply Proposition~\ref{prop:func_comparison} because the vector field has the singularity $\vert v_1-v_2\vert^{-\frac{1}{2}}$. This is not a real problem because we can approximate from below the singularity by a smooth function, apply the proposition to obtain the inequality and then let the approximation vanish. We do not detail it because we will do the same in the proof of Proposition~\ref{prop:convexlscsuperadditve} below.
\end{remark}
Now that are estimates are rewritten with the four functionals $H$, $I$, $D^N$ a,d $\mathcal{K}^N$, we want to pass them to the $N\rightarrow \infty$ limit. As explained in introduction, the two key properties to do so are \textit{super-additivity} and \textit{infinite-dimensional affinity}. This is known for the entropy and Fisher information, and using Theorem~\ref{thm:side} we can obtain \textit{super-additivity} of the dissipation terms:
\begin{prop}
\label{prop:convexlscsuperadditve}
    For any $N\geq 2$, the functionals $D^N$ and $\mathcal{K}^N$ can be defined for any symmetric probability measure $F\in\mathcal{P}(\mathbb{R}^{3N})$, and are convex and lower semi-continuous for the weak convergence of probability measures. They are super-additive in the following sense: for any $2\leq j \leq N$, if $F^{:j}$ is the $j$-th marginal of a symmetric $F\in \mathcal{P}(\mathbb{R}^{3N})$, then
    \begin{align*}
        D^{j}(F^{:j}) &\leq D^{N}(F)\\
        \mathcal{K}^{j}(F^{:j}) &\leq \mathcal{K}^{N}(F).
    \end{align*}
\end{prop}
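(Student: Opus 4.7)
The strategy is to rewrite both $D^N$ and $\mathcal{K}^N$ in the generalized Fisher form $\int (LF)^2/F\, dV$ for a suitable linear differential operator $L$, and then apply a duality argument for convexity and lower semi-continuity together with a Jensen/Cauchy-Schwarz argument for super-additivity. For the representation, the decomposition $a(v_1-v_2) = \sum_k b_k \otimes b_k$ combined with the identity~\eqref{eq:bkandradial} yields
$$D^N(F) = \frac{1}{2}\sum_{k=1}^3 \int_{\mathbb{R}^{3N}} \frac{\bigl((\sqrt{\alpha^N}\,\tilde{b}_k \cdot \nabla_{12}) F\bigr)^2}{F}\, dV,$$
while $\mathcal{K}^N$ is already in this form by definition, with $L = \partial^N_{b_k} \partial^N_{b_k}$. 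Applying~\eqref{eq:bkandradial} to the radial prefactors $\sqrt{\alpha^N}$ and $|v_1-v_2|^{-1/2}(\alpha^N)^{1/4}$ shows that the underlying vector fields are divergence-free, so each such $L$ admits a well-behaved formal adjoint obtained by repeated integration by parts.

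For convexity and weak lower semi-continuity, I would use the classical variational representation
$$\int \frac{(LF)^2}{F}\, dV = \sup_{\phi \in C_c^\infty(\mathbb{R}^{3N})} \left\{ 2\int (LF)\,\phi\, dV - \int F\,\phi^2\, dV\right\}.$$
After integration by parts (legitimate thanks to the divergence-free structure of $L$), each term inside the supremum becomes a continuous affine functional of $F$ for the weak topology on $\mathcal{P}(\mathbb{R}^{3N})$, since its coefficients involve only bounded continuous functions of $(v_1,v_2)$ built from $\phi$ and $L\phi$. A supremum of continuous affine functionals is automatically convex and lower semi-continuous, and this formula simultaneously extends the definition of $D^N$ and $\mathcal{K}^N$ to arbitrary symmetric $F \in \mathcal{P}(\mathbb{R}^{3N})$, taking values in $[0,+\infty]$.

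For super-additivity, fix $2 \leq j \leq N$ and $F$ symmetric. Because each operator $L$ only involves the variables $(v_1,v_2)$, it commutes with integration over $v_{j+1},\dots,v_N$, so $LF^{:j}(v_1,\ldots,v_j) = \int LF\, dv_{j+1}\cdots dv_N$. Writing $LF = (LF/\sqrt{F})\cdot \sqrt{F}$ and applying Cauchy-Schwarz gives $(LF^{:j})^2 \leq F^{:j}\int (LF)^2/F\, dv_{j+1}\cdots dv_N$, so dividing by $F^{:j}$ and integrating in $(v_1,\ldots,v_j)$ yields the Jensen-type inequality
$$\int \frac{(LF^{:j})^2}{F^{:j}}\, dv_1\cdots dv_j \leq \int \frac{(LF)^2}{F}\, dV.$$
This establishes super-additivity when both sides use the regularization $\alpha^N$; to pass to the inequality stated in the proposition, I would invoke the monotonicity $\alpha^j \leq \alpha^N$ (from $\alpha^N \nearrow \alpha$), which, since $\alpha^j$ appears as a positive linear prefactor in the integrand, gives $D^j(F^{:j}) \leq D^j_{\alpha^N}(F^{:j})$ and $\mathcal{K}^j(F^{:j}) \leq \mathcal{K}^j_{\alpha^N}(F^{:j})$.

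The main technical obstacle is to justify the Cauchy-Schwarz step rigorously for non-smooth $F$, where $LF$ is only a distribution and $F^{:j}$ may vanish. The standard remedy is to regularize by convolution, $F_\varepsilon = F * \rho_\varepsilon$, carry out the argument on the smooth $F_\varepsilon$ (where every object is well-defined and the division by $F^{:j}_\varepsilon>0$ is harmless), and then send $\varepsilon\to 0$ using the lower semi-continuity established above on both sides of the inequality.
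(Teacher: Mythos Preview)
Your approach to convexity and lower semi-continuity is essentially the same as the paper's: both write the functional as a supremum of affine functionals via the inequality $x^2 \geq 2x\phi - \phi^2$ (the paper uses the equivalent $x^2 \geq x\psi - \psi^2/4$), integrate by parts using the divergence-free structure, and observe that the resulting integrands are continuous and bounded.

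For super-additivity you take a genuinely different route. The paper simply restricts the supremum in the dual formula to test functions $\psi_k$ depending only on $(v_1,\dots,v_j)$: then the integral against $F$ collapses to an integral against $F^{:j}$, and using $\alpha^N \geq \alpha^j$ immediately gives $D^N(F) \geq D^j(F^{:j})$ (and likewise for $\mathcal{K}$). Your argument instead works on the primal side, commuting $L$ with the marginalization and applying Cauchy--Schwarz directly. Both are correct and classical; the paper's dual-restriction argument has the minor advantage that it applies directly to arbitrary probability measures without the convolution-regularization step you flag at the end, since the dual formula is already well-defined on all of $\mathcal{P}(\mathbb{R}^{3N})$.
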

\begin{proof}
    We begin with $D^N$. Define the vector field
    \begin{equation}
    \label{eq:defbbark}
        \bar{b}_k(v_1,v_2)=\sqrt{\alpha^N(\vert v_1-v_2\vert)}\tilde{b}_k(v_1,v_2),
    \end{equation}
    which is smooth, bounded and divergence-free.
    Then
    $$D^N=\frac{1}{2}\sum_{k=1}^3 I^{\bar{b}_k\cdot\nabla_{12}}.$$
    We can hence apply Theorem~\ref{thm:side}. By point (1), we get the claimed lower semi-continuity and convexity. By point (2), we get super-additivity for a fixed $N$:
    $D^N(F)\geq D^N(F^{:j})$. But $ D^N(F^{:j})\geq  D^j(F^{:j})$ since $\alpha^N\geq \alpha^j$ (this is clear from the definition \ref{eq:defDn}).
    
    We now turn to $\mathcal{K}^{N}$. We cannot apply Theorem~\ref{thm:side} directly because the vector field in $\partial^N_{b_k}$ features the singularity $\vert v_1-v_2\vert^{-\frac{1}{2}}$. We instead consider a smooth bounded cut-off function $\chi$ such that $0\leq \chi(r)\leq r^{-2}$ for all $r\geq 0$. We define the smooth, bounded and divergence-free
    $$\partial^{N,\chi}_{b_k}:=\chi(\vert v_1-v_2\vert) (\alpha^N(\vert v_1-v_2\vert ))^{\frac{1}{4}}\tilde{b}_k(v_1,v_2)\cdot \nabla_{12},$$
    and now we can apply Theorem~\ref{thm:side} to each term in
    $$\mathcal{K}^{N,\chi}=\sum_{k=1}^3 K^{\partial^{N,\chi}_{b_k}}.$$
    We remark that
    $$\mathcal{K}^N=\sup_{\chi\leq (\cdot)^{-2}}\mathcal{K}^{N,\chi}$$
    by monotone convergence, so that $\mathcal{K}^N$ is l.s.c and convex because the $\mathcal{K}^{N,\chi}$ are. From point (2) applied to $\mathcal{K}^{N,\chi}$ and using $\alpha^N\geq \alpha^j$ again, we get
    $\mathcal{K}^{N,\chi}(F)\geq \mathcal{K}^{j,\chi}(F^{:j})$ and then take the supremum in $\chi$ to get the claimed super-additivity. .
\end{proof}

\begin{remark}
    Using the same regularization trick, we can also show the property also holds for the versions without cut-off, $\mathcal{K}$ and $D$, by letting $\alpha^N \nearrow\alpha$.
\end{remark}

As discussed in the introduction, the second property that goes hand in hand with super-additivity is \textit{affinity in infinite dimension}. By using point (3) of Theorem~\ref{thm:side}, we can get:

\begin{prop}
    \label{thm:level3affinity}
    Let $\nu\in \mathcal{P}(\mathcal{P}(\mathbb{R}^3))$ have finite mean Fisher information and mean second moment, \textit{i.e.} such that
    \begin{align*}
        \sup_{j\geq 1} I(\nu_j) < +\infty,&&\int_{\mathbb{R}^3} \vert v \vert^2 \nu_1(dv) < +\infty.
    \end{align*}
    (Recall that the $\nu_j$s are defined in \eqref{eq:defhsmarg}.) Then it holds that
    \begin{align*}
        \int_{\mathcal{P}(\mathbb{R}^3)} H(\rho) \nu(d\rho) &=\lim_{j\geq 1} H(\nu^j),\\
        \int_{\mathcal{P}(\mathbb{R}^3)} I(\rho) \nu(d\rho) &=\lim_{j\geq 1} I(\nu^j),\\
        \int_{\mathcal{P}(\mathbb{R}^3)} D(\rho \otimes \rho) \nu(d\rho) &=\lim_{j\geq 2} D^j(\nu^j),\\
        \lim_{j\geq 2} \mathcal{K}^j(\nu^j)\leq \int_{\mathbb{P}(\mathbb{R}^3)} \mathcal{K}(\rho \otimes \rho) \nu(d\rho) &\leq 16 \lim_{j\geq 2} \mathcal{K}^j(\nu^j), 
    \end{align*}
    and all the limits are along increasing-in-$j$ sequences.
\end{prop}
\begin{proof}
    The theorem is already known to hold for the entropy \cite{RobinsonRuelle1967} and the Fisher information \cite{HaurayMischler2012,Rougerie2020}. The proof for $D$ and $\mathcal{K}$ is essentially an application of point (3) in Theorem~\ref{thm:side}. We begin with $D$. Using the vector fields $\bar{b}_k$ defined in \eqref{eq:defbbark}, we can apply it to $I^{\bar{b}_k\cdot\nabla_{12}}$ for $k=1,2,3$ and sum to get, for any $N$:
    $$\int_{\mathcal{P}(\mathbb{R}^3)} D^N(\rho \otimes \rho) \nu(d\rho) =\lim_{j\geq 2} D^N(\nu^j).$$
    We then remark that, because the sequence $D^N(\nu^j)$ is increasing in both $j$ and $N$,
    $$\lim_{j\geq 2} D^j(\nu^j)=\lim_{j\geq 2}\lim_{N\geq 2} D^N(\nu^j),$$
    so taking the limit in $N$ of the result above,
    $$\lim_{j\geq 2} D^j(\nu^j)=\lim_{N\geq 2}\int_{\mathcal{P}(\mathbb{R}^3)} D^N(\rho \otimes \rho) \nu(d\rho)=\int_{\mathcal{P}(\mathbb{R}^3)} D(\rho \otimes \rho) \nu(d\rho)$$
    by monotone convergence under all the integrals, since $\alpha^N \nearrow \alpha$. The proof for $\mathcal{K}$ is the same, dealing with the singularity in the derivation $\partial^N_{b_k}$ as in the previous proposition: first proving the version with a cut-off $\chi(r)$ and then taking the supremum in $\chi$.
\end{proof}
Using Properties~\ref{prop:convexlscsuperadditve} and \ref{thm:level3affinity}, we can pass our estimates on the $N$-particle system to the infinite-dimension limit, to obtain the following \textit{mean} (or \textit{level-3}) estimates:

\begin{prop}
    \label{prop:level3estimates}
     The following two estimates hold for all $t\in[0,T]$:
    \begin{equation}
    \label{eq:entropydissipation_pi}
    \int_{\mathbb{P}(\mathbb{R}^3)} H(\rho) \pi_t(d\rho) + \int_0^t \int_{\mathbb{P}(\mathbb{R}^3)}D(\rho \otimes \rho) \pi_s(d\rho) ds \leq H_0,
    \end{equation}
    and
    \begin{equation}
    \label{eq:fisherdissipation_pi}
    \int_{\mathbb{P}(\mathbb{R}^3)} I(\rho) \pi_t(d\rho) + \frac{1}{256} \int_0^t \int_{\mathbb{P}(\mathbb{R}^3)}\mathcal{K}(\rho \otimes \rho) \pi_s(d\rho) ds \leq I_0.
    \end{equation}
\end{prop}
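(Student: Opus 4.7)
The plan is to start from the dissipation estimates at the $N$-particle level --- rewriting Propositions~\ref{prop:entropy_dissipation} and~\ref{prop:fisher_dissipation} with the compact functionals $D^N$ and $\mathcal{K}^N$ yields
\begin{align*}
H(F^N_t)+\int_0^t D^N(F^N_s)ds\leq H_0, \qquad I(F^N_t)+\tfrac{1}{16}\int_0^t \mathcal{K}^N(F^N_s)ds\leq I_0
\end{align*}
--- and to transport them to the mean quantities of $\pi_t$ in two stages: first descend to fixed-$j$ marginals by super-additivity and pass $N\to\infty$, then let $j\to\infty$ using Theorem~\ref{thm:level3affinity}.

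For the first stage, fix $j\geq 2$ and take $N\geq j$. Applying the super-additivity bounds of Proposition~\ref{prop:convexlscsuperadditve} (together with the classical ones for $H$ and $I$) to $F^N_s$ yields
\begin{align*}
H(F^{N:j}_t)+\int_0^t D^j(F^{N:j}_s)ds\leq H_0, \qquad I(F^{N:j}_t)+\tfrac{1}{16}\int_0^t \mathcal{K}^j(F^{N:j}_s)ds\leq I_0.
\end{align*}
Proposition~\ref{prop:convFNj} gives $F^{N:j}_t\rightharpoonup \pi^j_t$ (and similarly at each $s$); combining this with the lower semi-continuity of the four functionals and Fatou's lemma applied to the non-negative time integrands, sending $N\to\infty$ produces the same two estimates with $\pi^j$ in place of $F^{N:j}$, uniformly in $j$.

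For the second stage, I send $j\to\infty$. The compatibility $\pi^{j:k}=\pi^k$ combined with super-additivity makes $j\mapsto H(\pi^j_t), I(\pi^j_t), D^j(\pi^j_s), \mathcal{K}^j(\pi^j_s)$ non-decreasing, so monotone convergence exchanges the supremum in $j$ with the time integral. Before invoking Theorem~\ref{thm:level3affinity} on $\nu=\pi_s$, one checks its hypotheses: the bound $\sup_j I(\pi^j_s)\leq I_0$ is exactly what the first stage provides, and finite mean second moment comes from Proposition~\ref{prop:lvl3energy}. The theorem then identifies $\sup_j H(\pi^j_t)=\int H(\rho)\pi_t(d\rho)$, $\sup_j I(\pi^j_t)=\int I(\rho)\pi_t(d\rho)$, and $\sup_j D^j(\pi^j_s)=\int D(\rho\otimes\rho)\pi_s(d\rho)$ as equalities, yielding \eqref{eq:entropydissipation_pi} directly. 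For $\mathcal{K}$ the theorem only produces $\sup_j \mathcal{K}^j(\pi^j_s)\geq \tfrac{1}{16}\int \mathcal{K}(\rho\otimes\rho)\pi_s(d\rho)$, and this extra factor $1/16$ multiplies the $1/16$ already present to give the constant $1/256$ in \eqref{eq:fisherdissipation_pi}.

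The proposition itself is essentially a bookkeeping assembly; the real work is in the two ingredients that have been installed upstream, namely the super-additivity and lower semi-continuity of $D^N$ and $\mathcal{K}^N$ (Proposition~\ref{prop:convexlscsuperadditve}), and especially the infinite-dimension affinity of Theorem~\ref{thm:level3affinity} for the new functionals $D$ and $\mathcal{K}$, whose proof is deferred to Appendix~\ref{sec:infdimlimofDandK}. The only technical subtlety I anticipate is the loss of a factor $16$ in the Fisher dissipation, which is intrinsic to Theorem~\ref{thm:level3affinity} and comparable to the gap between $\mathcal{K}^N(F)$ and $\sum_k \int (\partial^N_{b_k}\partial^N_{b_k}\log F)^2 F\, dV$ noted after~\eqref{eq:defK}; this is harmless for all subsequent uses.
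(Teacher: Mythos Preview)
Your proof is correct and follows essentially the same approach as the paper's: descend to $j$-marginals via super-additivity, pass $N\to\infty$ by lower semi-continuity and Fatou, then $j\to\infty$ by monotone convergence and Theorem~\ref{thm:level3affinity}. The paper's version is terser and does not spell out the hypothesis check for Theorem~\ref{thm:level3affinity}, but the substance is identical, including the origin of the $1/256$ constant.
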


\begin{proof}
    The proof is identical for both estimates, starting respectively from~\eqref{eq:entropy_dissipation_rewrite} and~\eqref{eq:fisher_dissipation_rewrite}. We prove the second one.
    Let $j\leq N$, by super-additivity (see Proposition~\ref{prop:convexlscsuperadditve}) we have:
    $$I(F^{N:j}_t)+\frac{1}{16}\int_0^t \mathcal{K}^j(F^{N:j}_s)ds \leq I_0$$
    We take the $\liminf$ in $N$ (along the common subsequence for which $F^{N:j}_s\rightharpoonup \pi^j_s$ for all $s$), together with Fatou's lemma and lower semi-continuity of $I$ and $K^j$, to obtain:
    $$I(\pi^{j}_t)+\frac{1}{16}\int_0^t \mathcal{K}^j(\pi^{j}_s)ds \leq I_0.$$
    We take the limit in $j$, by monotone convergence in the integral (since $(\mathcal{K}^j(\pi^{j}_s))_j$ is increasing by super-additivity), we get
    $$\lim_jI(\pi^{j}_t)+\frac{1}{16}\int_0^t \lim_j \mathcal{K}^j(\pi^{j}_s)ds \leq I_0.$$
    Using Proposition~\ref{thm:level3affinity}, we obtain the result.    
\end{proof}

We can formulate these estimates in terms of random variables rather than laws, leading to some almost sure regularity of $\pi$-distributed random variables:
\begin{prop}
    \label{prop:level3estimatesforf}
    Recall that $\pi \in \mathcal{P}\left(C([0,T],\mathcal{P}(\mathbb{R}^3)) \right)$ is a cluster point of $\left(\mathcal{L}\left( \mu^N\right) \right)_{N\geq 2 }$. Let $f$ be a $\pi$-distributed random variable. Then, almost surely, the following quantities are finite:
    \begin{align*}
    \begin{array}{cccc}
        \int_0^T H(f_t)dt, &  \int_0^T I(f_t)dt, & \int_0^T D(f_t\otimes f_t)dt, & \int_0^T \mathcal{K}(f_t\otimes f_t)dt.
    \end{array}
    \end{align*}
\end{prop}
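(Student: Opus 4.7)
The plan is to rewrite Proposition~\ref{prop:level3estimates} in terms of expectations of $f$, which is direct since by definition of push-forward, for any non-negative measurable $\mathcal{F}$ we have
\[
\int_{\mathcal{P}(\mathbb{R}^3)} \mathcal{F}(\rho)\,\pi_t(d\rho) = \mathbb{E}\big[\mathcal{F}(f_t)\big].
\]
Once $\mathbb{E}\big[\int_0^T \mathcal{F}(f_t)\,dt\big]$ is shown to be finite, the a.s.\ finiteness of $\int_0^T \mathcal{F}(f_t)\,dt$ follows immediately.

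For the three non-negative functionals $I$, $D(\cdot\otimes\cdot)$ and $\mathcal{K}(\cdot\otimes\cdot)$, I would simply apply Fubini--Tonelli to the estimates \eqref{eq:entropydissipation_pi} and \eqref{eq:fisherdissipation_pi} from Proposition~\ref{prop:level3estimates}, after dropping the (non-negative) marginal term. For instance
\[
\mathbb{E}\!\left[\int_0^T I(f_t)\,dt\right] = \int_0^T \int_{\mathcal{P}(\mathbb{R}^3)} I(\rho)\,\pi_t(d\rho)\,dt \leq T I_0,
\]
and similarly $\mathbb{E}[\int_0^T D(f_t\otimes f_t)\,dt] \leq TH_0$ and $\mathbb{E}[\int_0^T \mathcal{K}(f_t\otimes f_t)\,dt] \leq 256\,TI_0$. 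Each integrand is therefore a.s.\ integrable in $t$.

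The entropy case is the only subtle one because $H$ can take negative values, so the bound $\int H(\rho)\,\pi_t(d\rho)\leq H_0$ alone does not prevent $\int_0^T H(f_t)\,dt$ from being $-\infty$ on an event of positive probability. The main obstacle is thus to produce a lower bound on $H(f_t)$ that is a.s.\ integrable in $t$. I would use the standard lower bound coming from Gaussian minimization of the entropy under a second-moment constraint: there exist constants $C,C'>0$ such that for every $\rho\in\mathcal{P}(\mathbb{R}^3)$ with finite second moment $M_2(\rho):=\int |v|^2\rho(dv)$,
\[
H(\rho) \geq -C - C'\log\bigl(1+M_2(\rho)\bigr).
\]
By Proposition~\ref{prop:lvl3energy}, $\mathbb{E}[M_2(f_t)] = \int |v|^2 \pi_t^1(dv) \leq E_0$, so Jensen's inequality gives $\mathbb{E}[\log(1+M_2(f_t))] \leq \log(1+E_0)$, uniformly in $t\in[0,T]$. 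Combining this with the upper bound $\mathbb{E}[H(f_t)]\leq H_0$ from \eqref{eq:entropydissipation_pi}, the positive and negative parts of $H(f_t)$ are both integrable, hence so is $|H(f_t)|$, uniformly in $t$. A final Fubini argument then yields $\mathbb{E}\big[\int_0^T |H(f_t)|\,dt\big]<+\infty$, which gives the a.s.\ finiteness of $\int_0^T H(f_t)\,dt$ and closes the proof.
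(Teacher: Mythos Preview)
Your treatment of $I$ and $\mathcal{K}$ is essentially the paper's (modulo a harmless extra factor of $T$ in the $\mathcal{K}$ bound: from \eqref{eq:fisherdissipation_pi} at $t=T$ you get $256\,I_0$, not $256\,T I_0$). The handling of $H$ via a lower bound in terms of the second moment is also the paper's strategy, though the paper uses the cruder linear bound $H(\rho)\geq -C(1+M_2(\rho))$ rather than your logarithmic one; both work.

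There is, however, a genuine gap in your argument for $D$. You write ``dropping the (non-negative) marginal term'' from \eqref{eq:entropydissipation_pi}, but that term is $\int H(\rho)\,\pi_t(d\rho)=\mathbb{E}[H(f_t)]$, and entropy is \emph{not} non-negative---precisely the difficulty you correctly flag later for $H$. So the claimed inequality $\mathbb{E}\big[\int_0^T D(f_t\otimes f_t)\,dt\big]\leq T H_0$ is not justified. The fix is immediate with your own ingredients: take \eqref{eq:entropydissipation_pi} at $t=T$ to obtain
\[
\mathbb{E}\Big[\int_0^T D(f_s\otimes f_s)\,ds\Big]\leq H_0-\mathbb{E}[H(f_T)],
\]
and then bound $\mathbb{E}[H(f_T)]$ from below via your second-moment inequality together with Proposition~\ref{prop:lvl3energy}. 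This is exactly what the paper does.
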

\begin{proof}
    We can rewrite~\eqref{eq:entropydissipation_pi} as
    \begin{align}
    \label{eq:expectofHtheroem}
    \mathbb{E}\left[H(f_t)+\int_0^t D(f_s\otimes f_s)ds\right]\leq H_0.
    \end{align}
    It is classical \cite[Lemma 3.1]{HaurayMischler2012} that the energy $E(f)=\int \vert v \vert^2f(dv)$ bounds the entropy from below: for some $C>0$,
    $$H(f)\geq -C(1+E(f)).$$
    Hence $\mathbb{E}[H(f_T)]\geq -C(1+\mathbb{E}[E(f_T)])\geq -C(1+E_0)$ by Proposition~\ref{prop:lvl3energy} and Remark~\ref{rem:lvl3energy}. Hence $\mathbb{E}\left[\int_0^T D(f_s\otimes f_s)ds\right]$ is finite, meaning that the quantity inside the expectation is almost surely finite.
    
    Similarly
    $\int_0^T \mathcal{K}(f_t\otimes f_t)dt$ is a.s. finite by using~\eqref{eq:fisherdissipation_pi} and that $I(f_T)\geq 0$. For $I$ and $H$, we integrate over $[0,T]$ the estimates~\eqref{eq:entropydissipation_pi} and~\eqref{eq:fisherdissipation_pi}, and drop the nonnegative dissipation terms.
\end{proof}
\begin{remark}
    The equation~\eqref{eq:expectofHtheroem} is exactly the H-theorem in expectancy: $H(f_t)+\int_0^t D(f_s\otimes f_s)ds \leq H_0$ is the formal apriori estimate on the entropy that stems from the Landau equation~\eqref{eq:landau}.
\end{remark}
\begin{remark}
    Even if $\mathbb{E}(H(f_t))$ (as well as $\mathbb{E}(I(f_t))$) are bounded in time, we have little hope to recover $(H(f_t))_t\in L^\infty([0,T])$ because it would require bounding $\mathbb{E}(\sup_{[0,T]} H(f_t))$, with the supremum inside the expectation, a quantity that we cannot easily relate to the particle system. A lot of the difficulties we will encounter when showing that $(f_t)_t$ is the strong solution $(g_t)_t$ stem from this issue.
\end{remark}
In the next section, we will use Proposition~\ref{prop:level3estimatesforf} for the Fisher information to show that $f$ is almost surely a weak solution of the Landau equation. The last section will use the estimates on $D$ and $\mathcal{K}$ to (essentially) show that $f\in L^1([0,T],L^\infty(\mathbb{R}^3))$, which will allow us to claim uniqueness.

\section{The cluster points only charge almost sure weak solutions}
\label{sec:almostsureweaksolution}
Recall that $\pi \in \mathcal{P}\left(C([0,T],\mathcal{P}(\mathbb{R}^3)) \right)$ is a cluster point of $\left(\mathcal{L}\left( \mu^N\right) \right)_{N\geq 2 }$. In this section we show that a $\pi$-distributed random variable is almost surely a weak solution of the Landau equation.

We make our notion of weak solution precise. This is particularly important as we will manipulate another formulation later on.

\begin{defi}
\label{def:weaksolution}
    A \textbf{weak solution} to the Landau equation on $[0,T]$ with initial data $g_0$ is a function $$f=(f_t)_t\in L^1([0,T], L^p(\mathbb{R}^3)) \cap L^\infty([0,T], L^1(\mathbb{R}^3))$$
    for some $p>\frac{3}{\gamma + 5}$ such that, for any $\varphi\in C^2_b(\mathbb{R}^3)$, for any $t\in[0,T]$:
    \begin{align}
    \label{eq:weakform}
     \int_{\mathbb{R}^3} \varphi(v)f_t(v)dv &= \int_{\mathbb{R}^3} \varphi(v)g_0(v)dv\nonumber\\
    &+ \int_0^t \int_{\mathbb{R}^6} b(v-w) \cdot (\nabla\varphi(v)-\nabla\varphi(w))\ f_s(dv)f_s(dw)\ ds\\
    &+ \int_0^t \int_{\mathbb{R}^6} \alpha a(v-w):\nabla^2\varphi(v)\ f_s(dv)f_s(dw)\ ds,\nonumber
    \end{align}
where we recall that $b(z)=\nabla \cdot (\alpha a(z))=-2\alpha(\vert z\vert) z$.
\end{defi}
\begin{remark}
\label{rem:defwsol}
    This definition is mainly taken from Guerin and Fournier \cite[Definition 1.2]{GuerinFournier2008}, because we want in the end to apply their uniqueness result \cite{GuerinFournier2008} for $\gamma\in(-3,-2)$, and \cite{Fournier2010} for $\gamma=-3$. The only difference is that we symmetrized the $b$ term with respect to the test function: we replaced $\nabla\varphi(v)$ by $(\nabla\varphi(v)-\nabla\varphi(w))$. One can check that it allows one to make sense of every term in~\eqref{eq:weakform} under the integrability condition we impose on $f$ (it will be done in the proof of Lemma~\ref{aswsolution-feps f limit} below). Without symmetry, one needs $p>\frac{3}{\gamma + 4}$ to make sense of the $b$ term, which we do not know yet at this point of the proof in the Coulomb case. However, when we will apply the uniqueness result \cite{GuerinFournier2008,Fournier2010} at the end of this work, we will have already shown that $L^1([0,T], L^\infty(\mathbb{R}^3))$, so that the unsymmetrized formulation is also properly defined.
    
    In \cite[Definition 1.2]{GuerinFournier2008}, the authors also require the energy to remain bounded, which is in fact not needed for $\gamma\leq -2$. However, we will show later that the conservation of energy holds anyway (this is done in Proposition~\ref{prop:propofenergy}).
\end{remark}

The goal of this section is to prove the following:
\begin{prop}
\label{prop:aswsolution}
    Let $f=(f_t)_t$ be a $\pi$-distributed random variable. Then, almost surely,
    $$f\in L^1([0,T], L^3(\mathbb{R}^3))$$
    and $f$ is a weak solution of the Landau equation with initial data $g_0$, in the sense of Definition~\ref{def:weaksolution}.
\end{prop}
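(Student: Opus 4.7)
The plan is to apply It\^o's formula to $\langle \mu^N_t, \varphi\rangle$ for $\varphi \in C^2_b(\mathbb{R}^3)$ (as already set up in Section~\ref{sec:tightness}), then regularize by truncating the singularity near the diagonal $\{v=w\}$, pass to the limit $N\to\infty$ using $\mathcal{L}(\mu^N)\rightharpoonup \pi$, and finally remove the truncation as $\varepsilon\to 0$. The integrability $f \in L^1([0,T], L^3(\mathbb R^3))$ follows directly from Proposition~\ref{prop:level3estimatesforf}: $\int_0^T I(f_t)\, dt < +\infty$ almost surely, and the Sobolev embedding $H^1(\mathbb R^3)\hookrightarrow L^6(\mathbb R^3)$ applied to $\sqrt{f_t}$ yields $\|f_t\|_{L^3} = \|\sqrt{f_t}\|_{L^6}^2 \lesssim \|\nabla\sqrt{f_t}\|_{L^2}^2 \lesssim I(f_t)$.

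After symmetrizing the drift term (using $b^N(-z)=-b^N(z)$) and observing that diagonal $i=j$ contributions vanish because $b^N(0)=0$ and $a(0)=0$, the It\^o identity reads
$$\langle \mu^N_t, \varphi\rangle = \langle \mu^N_0, \varphi\rangle + \frac{N}{N-1}\int_0^t \langle \mu^N_s \otimes \mu^N_s, \tilde K^N(\varphi)\rangle\, ds + M^N_t,$$
with $\tilde K^N(\varphi)(v,w) := b^N(v-w)\cdot(\nabla\varphi(v)-\nabla\varphi(w)) + \alpha^N a(v-w):\nabla^2\varphi(v)$, and $M^N$ a martingale satisfying $\mathbb{E}|M^N_t|^2 = O(1/N)$ by the Burkholder-Davis-Gundy estimate of Section~\ref{sec:tightness}. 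Using the Lipschitz regularity of $\nabla\varphi$, both contributions to $\tilde K^N(\varphi)$ are pointwise bounded by $C_\varphi|v-w|^{\gamma+2}$, which is singular at the diagonal. The initial term $\langle \mu^N_0,\varphi\rangle$ converges to $\langle g_0,\varphi\rangle$ in probability by the law of large numbers for the i.i.d.\ $V^N_{i,0}$.

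To handle the singularity, pick a smooth cutoff $\chi_\varepsilon$ equal to $0$ on $B(0,\varepsilon/2)$ and to $1$ outside $B(0,\varepsilon)$, and define the continuous and bounded map
$$G_\varepsilon(\mu) := \langle \mu_t,\varphi\rangle - \langle \mu_0,\varphi\rangle - \int_0^t \langle \mu_s\otimes\mu_s, \chi_\varepsilon\tilde K(\varphi)\rangle\, ds,$$
where $\tilde K$ uses $\alpha$ in place of $\alpha^N$. For $N$ large enough that $\eta_N<\varepsilon/2$, $\alpha^N$ and $\alpha$ agree on $\mathrm{supp}(\chi_\varepsilon)$, so the It\^o identity gives
$$|G_\varepsilon(\mu^N)| \leq |M^N_t| + \frac{T C_{\varphi,\varepsilon}}{N-1} + \frac{N}{N-1}\int_0^t |\langle \mu^N_s\otimes\mu^N_s, (1-\chi_\varepsilon)\tilde K(\varphi)\rangle|\, ds.$$
The truncation error in the last term is controlled uniformly in $N$ by the simple bound $|z|^{\gamma+2} \leq \varepsilon^{\gamma+4} |z|^{-2}$ on $\{|z|\leq\varepsilon\}$ (since $\gamma+4 \in [1,2]$) combined with Lemma~\ref{lem:keyestimate}: taking expectations and reducing to the second marginal $F^{N:2}_s$ by exchangeability yields a uniform bound of order $C_\varphi T I_0 \varepsilon^{\gamma+4}$.

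Since $G_\varepsilon$ is continuous and bounded on $C([0,T],\mathcal{P}(\mathbb{R}^3))$ and $\mathcal L(\mu^N)\rightharpoonup \pi$, we pass $N\to\infty$ to obtain $\mathbb E|G_\varepsilon(f)| \leq C_\varphi T I_0 \varepsilon^{\gamma+4}$. As $\varepsilon \to 0$, $G_\varepsilon(f)$ converges almost surely to the untruncated weak-formulation defect $F(f)$, provided $\int_0^T \iint |v-w|^{\gamma+2} f_s(v) f_s(w)\, dv\, dw\, ds < +\infty$ a.s.; this follows from Hardy--Littlewood--Sobolev together with the fact that $f_s \in L^1 \cap L^3$ a.s.\ suffices to control the required $L^p$ norm in the whole range $\gamma \in [-3,-2]$. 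Fatou's lemma then gives $\mathbb E|F(f)|=0$, so $F(f)=0$ almost surely for the fixed pair $(t,\varphi)$. A countable-dense argument in $\varphi\in C^2_b(\mathbb R^3)$ and $t\in[0,T]\cap\mathbb Q$, combined with the $t$-continuity of each term (which uses the integrability just established), upgrades the identity to an almost sure weak solution. The main obstacle is precisely the uniform-in-$N$ control of the truncation error near the diagonal, which crucially relies on the Fisher-information-based estimate of Lemma~\ref{lem:keyestimate}.
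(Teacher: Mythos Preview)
Your proof is correct and follows essentially the same strategy as the paper: apply It\^o's formula, regularize near the diagonal (you use a cutoff $\chi_\varepsilon$, the paper a regularized potential $\alpha_\varepsilon$), control the truncation error uniformly in $N$ via Lemma~\ref{lem:keyestimate}, pass to the limit by continuity and boundedness of the regularized functional, remove the regularization, and finish with a density argument. One small caveat: $C^2_b(\mathbb{R}^3)$ is not separable, so the countable-dense step should go through $C^2_0(\mathbb{R}^3)$ first (as the paper does) before extending to $C^2_b$ by truncation.
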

We fix a $\pi$-distributed random variable $f$ and cut the proof of Proposition~\ref{prop:aswsolution} into several lemmas. We begin by showing the claimed regularity of $f$:
\begin{lemma}
    \label{lem:aswsolution-regularity}
    Almost surely, 
    $$f\in L^1([0,T], L^3(\mathbb{R}^3)) \cap L^\infty([0,T], L^1(\mathbb{R}^3)).$$
\end{lemma}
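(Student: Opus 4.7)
The proof is essentially a direct corollary of Proposition~\ref{prop:level3estimatesforf}, whose bound on $\int_0^T I(f_t)\,dt$ will give both claimed regularities. The plan is to combine this Fisher information control with a standard Sobolev embedding applied to $\sqrt{f_t}$ in three dimensions.

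First I would extract from Proposition~\ref{prop:level3estimatesforf} the almost-sure finiteness of $\int_0^T I(f_t)\,dt$. In particular, for almost every $t\in[0,T]$ we have $I(f_t)<+\infty$, so that $f_t$ admits a density (by the convention that $I$ equals $+\infty$ on measures without an $L^1$ density). Since $f_t$ is a probability measure, its $L^1$ norm is $1$ for a.e.\ such $t$, which immediately yields
\[
\|f\|_{L^\infty([0,T],L^1(\mathbb{R}^3))} = 1 \qquad \text{a.s.}
\]

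The $L^1([0,T],L^3(\mathbb{R}^3))$ part relies on the identity
\[
I(f_t) \;=\; \int_{\mathbb{R}^3}\frac{|\nabla f_t|^2}{f_t}\,dv \;=\; 4\,\|\nabla\sqrt{f_t}\|_{L^2(\mathbb{R}^3)}^2,
\]
valid whenever $I(f_t)<+\infty$. Since $\sqrt{f_t}\in L^2(\mathbb{R}^3)$ with $\|\sqrt{f_t}\|_{L^2}^2=1$, the Sobolev embedding $\dot{H}^1(\mathbb{R}^3)\hookrightarrow L^6(\mathbb{R}^3)$ gives a constant $C>0$ (independent of $t$ and of $f$) such that
\[
\|f_t\|_{L^3(\mathbb{R}^3)} \;=\; \|\sqrt{f_t}\|_{L^6(\mathbb{R}^3)}^2 \;\leq\; C\,\|\nabla\sqrt{f_t}\|_{L^2(\mathbb{R}^3)}^2 \;=\; \tfrac{C}{4}\,I(f_t).
\]
Integrating this pointwise-in-$t$ inequality over $[0,T]$ and invoking Proposition~\ref{prop:level3estimatesforf} one more time yields
\[
\int_0^T \|f_t\|_{L^3(\mathbb{R}^3)}\,dt \;\leq\; \tfrac{C}{4}\int_0^T I(f_t)\,dt \;<\; +\infty \qquad \text{a.s.,}
\]
which is exactly the desired integrability.

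There is no real obstacle here: all the analytic work has been done upstream, in the passage to the $N\to\infty$ limit of the Fisher information (via super-additivity and the level-3 affinity of Theorem~\ref{thm:level3affinity}) that produced Proposition~\ref{prop:level3estimatesforf}. The only point to handle with minor care is the almost-sure quantifier, i.e.\ applying Fubini on the set of $(\omega,t)$ for which $I(f_t(\omega))<\infty$ to guarantee that the density exists for a.e.\ $t$ on a full-probability event, which is straightforward.
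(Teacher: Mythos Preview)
Your proof is correct and follows essentially the same approach as the paper: invoke Proposition~\ref{prop:level3estimatesforf} to get $\int_0^T I(f_t)\,dt<\infty$ a.s., deduce the existence of a density for a.e.\ $t$ (hence the $L^\infty_t L^1_v$ bound), and then apply the Sobolev embedding $H^1(\mathbb{R}^3)\hookrightarrow L^6(\mathbb{R}^3)$ to $\sqrt{f_t}$ to obtain the $L^1_t L^3_v$ bound. Your write-up is in fact slightly more explicit than the paper's, spelling out the chain of inequalities and the handling of the almost-sure quantifier.
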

\begin{proof}
    By Proposition~\ref{prop:level3estimatesforf}, almost surely, the Fisher information of $f$ is integrable in time: $I(f_t)\in L^1([0,T])$. This means that for almost all $t$, $f_t$ has finite Fisher information. It hence admits a density (still denoted $f_t$). Since $f_t$ is a probability measure for all $t$, this implies $f\in L^\infty([0,T], L^1(\mathbb{R}^3))$.

    Since $I(f)=4\Vert \nabla \sqrt{f} \Vert_{L^2(\mathbb{R}^3)}$, by the Sobolev embedding $ H^1(\mathbb{R}^3) \hookrightarrow L^6(\mathbb{R}^3)$ in three dimensions, we have
    $$f\in L^1([0,T], L^3(\mathbb{R}^3)),$$
    as desired.
\end{proof}

We then deal with the initial time.
\begin{lemma}
    \label{lem:aswsolution-initial time}
    Almost surely, $f_0=g_0$.
\end{lemma}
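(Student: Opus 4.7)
The plan is to identify $\pi_0$, the time-zero marginal of $\pi$, and show directly that it equals $\delta_{g_0}$. Since $f_0$ has law $\pi_0$, this immediately yields $f_0 = g_0$ almost surely.

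First, I would use the assumed initial chaos. By construction, the variables $(V^N_{i,0})_{1\leq i \leq N}$ are i.i.d. with common law $g_0$, so the random empirical measure $\mu^N_0 = \frac{1}{N}\sum_{i=1}^N \delta_{V^N_{i,0}}$ converges weakly to $g_0$ almost surely by Varadarajan's (Glivenko--Cantelli) theorem. In particular, the laws $\mathcal{L}(\mu^N_0) \in \mathcal{P}(\mathcal{P}(\mathbb{R}^3))$ converge weakly to the Dirac mass $\delta_{g_0}$.

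On the other hand, we already know that $\mathcal{L}(\mu^N) \rightharpoonup \pi$ in $\mathcal{P}(C([0,T],\mathcal{P}(\mathbb{R}^3)))$ (along the fixed subsequence). The evaluation map $e_0 : C([0,T],\mathcal{P}(\mathbb{R}^3)) \to \mathcal{P}(\mathbb{R}^3)$ sending $(\nu_t)_{t\in[0,T]}$ to $\nu_0$ is continuous, so pushing forward gives
\begin{equation*}
\mathcal{L}(\mu^N_0) = (e_0)_*\mathcal{L}(\mu^N) \xrightharpoonup[N\to\infty]{} (e_0)_* \pi = \pi_0.
\end{equation*}
By uniqueness of the weak limit in $\mathcal{P}(\mathcal{P}(\mathbb{R}^3))$, we conclude $\pi_0 = \delta_{g_0}$. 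Since $f_0$ is by definition a $\pi_0$-distributed random variable in $\mathcal{P}(\mathbb{R}^3)$, this means $f_0 = g_0$ almost surely.

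There is no real obstacle here: the entire content lies in the initial molecular chaos assumption $F^N_0 = g_0^{\otimes N}$ together with the continuity of evaluation at $t=0$. The only minor technical point is to invoke Varadarajan's theorem (rather than just checking marginals via Proposition~\ref{prop:convFNj}, which would only give $\pi_0^j = g_0^{\otimes j}$ and would require an additional de Finetti--type argument to conclude $\pi_0 = \delta_{g_0}$); using the almost sure convergence of empirical measures of i.i.d. samples bypasses this step entirely.
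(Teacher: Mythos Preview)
Your proof is correct and follows essentially the same approach as the paper: both identify $\pi_0$ by combining the law-of-large-numbers convergence of the i.i.d.\ initial empirical measures with the continuity of the evaluation map at $t=0$. The only cosmetic difference is that you invoke Varadarajan's theorem directly, whereas the paper spells out the strong law of large numbers for each test function and mentions a separability trick to make the almost-sure event uniform in $\varphi$.
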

\begin{proof}
    \textit{Step 1.} We show that the random initial empirical measures $\mu^N_0$ converge in law to the deterministic $g_0$.
    Recall that $\mathcal{L}(V^N_{1,0},...,V^N_{N,0})=g_0^{\otimes N}$. Let $\varphi$ be a continuous bounded function on $\mathbb{R}^3$, the random variable
    $$\int_{\mathbb{R}^3} \varphi(v)\mu^N_0(dv)=\frac{1}{N}\sum_{i=0}^N \varphi(V^N_{i,0})$$
    is the sum of $N$ bounded, independent, identically distributed random variables. By the Strong Law of Large numbers, it converges almost surely to
    $$\mathbb{E}(\varphi(V^N_{1,0}))=\int_{\mathbb{R}^3} \varphi(v)g_0(dv).$$
    A priori, the event of probability $1$ on which  $\int_{\mathbb{R}^3} \varphi(v)\mu^N_0(dv) \rightarrow \int_{\mathbb{R}^3} \varphi(v)g_0(dv)$ depends on $\varphi$, but it can be made independent of $\varphi$ by a standard trick that we omit here because it will be showcased in the proof of Proposition~\ref{prop:aswsolution} below.

    \textit{Step 2.} Recall that, up to a subsequence, $(\mu^N_t)_t$ converges in law to $(f_t)_t$. By continuity of the evaluation map $$C([0,T],\mathcal{P}(\mathbb{R}^3))\ni (f_t)_t\mapsto f_0 \in\mathcal{P}(\mathbb{R}^3),$$
    $\mu^N_0$ converges in law to $f_0$ along the same subsequence. So by Step 1, the law $\pi_0$ of $f_0$ is equal to $\delta_{g_0}$.
\end{proof}

We now turn to the weak formulation~\eqref{eq:weakform} itself. Consider $\varphi\in C^2_b(\mathbb{R}^3)$ a test function and $t\in[0,T]$. We formally define a functional $\mathcal{F}_{\varphi,t}$ which tests against $\varphi$: for $h=(h_t)_t\in C([0,T],\mathcal{P}(\mathbb{R}^3))$, under the assumption that the terms below are well-defined:
    \begin{align}
    \label{eq:def_f}
    \mathcal{F}_{\varphi,t}(h) &:=  -\int_{\mathbb{R}^3} \varphi(v)h_t(v)dv + \int_{\mathbb{R}^3} \varphi(v)h_0(v)dv \nonumber\\
    &+ \int_0^t \int_{\mathbb{R}^6} \mathbf{1}_{v\neq w}b(v-w) \cdot (\nabla\varphi(v)-\nabla\varphi(w))\ h_s(dv)h_s(dw) ds\\
    &+ \int_0^t \int_{\mathbb{R}^6} \mathbf{1}_{v\neq w}\alpha a(v-w):\nabla^2\varphi(v)\ h_s(dv)h_s(dw) ds.\nonumber
    \end{align}
Since $\alpha$ and $b$ are unbounded, the hypothesis $h_t \in \mathcal{P}(\mathbb{R}^3))$ alone is not enough to make sense of every term above.  However $\mathcal{F}$ will be well-defined for empirical measures, thanks to the indicator functions.

We use that the particle system approximates the Landau equation to show:
\begin{lemma}
    \label{aswsolution-empirical measures}
    For any $\varphi\in C^2_b(\mathbb{R}^3)$ and any $t\in[0,T]$,
    $$\mathbb{E}\vert \mathcal{F}_{\varphi,t}(\mu^N)\vert\xrightarrow[N\rightarrow+\infty]{}0.$$
\end{lemma}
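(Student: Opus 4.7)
The plan is to apply Itô's formula to $\varphi(V^N_i(t))$ for each $i$, sum over $i$, divide by $N$, and obtain a pathwise identity for $\int_{\mathbb{R}^3}\varphi\, d\mu^N_t-\int_{\mathbb{R}^3}\varphi\, d\mu^N_0$. The Itô correction in $i$ gives $\frac{1}{N-1}\sum_{j\neq i} \alpha^N a(V_i-V_j):\nabla^2\varphi(V_i)$ (since $\sigma^N(\sigma^N)^T=\alpha^N a$ and there are $N-1$ independent Brownian motions, each weighted by $\sqrt{2/(N-1)}$). After symmetrizing the drift using $b^N(-v)=-b^N(v)$, exactly as in Step 1 of the proof of Lemma~\ref{lem:holdercont}, the resulting identity reads
\begin{align*}
\int_{\mathbb{R}^3}\varphi\, d\mu^N_t-\int_{\mathbb{R}^3}\varphi\, d\mu^N_0
&=\frac{1}{N(N-1)}\sum_{i\neq j}\int_0^t b^N(V^N_i-V^N_j)\cdot(\nabla\varphi(V^N_i)-\nabla\varphi(V^N_j))\,du\\
&\quad+\frac{1}{N(N-1)}\sum_{i\neq j}\int_0^t\alpha^N a(V^N_i-V^N_j):\nabla^2\varphi(V^N_i)\,du+\mathcal{M}_{\varphi,t}^N,
\end{align*}
where $\mathcal{M}_{\varphi,t}^N$ is the stochastic integral from~\eqref{eq:Itôempiricalmeasures8}. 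Comparing with $\mathcal{F}_{\varphi,t}(\mu^N)$, in which the double integrals against $\mu^N\otimes\mu^N$ have prefactor $\frac{1}{N^2}$ and feature the unregularized $b$ and $\alpha$, we conclude that $\mathcal{F}_{\varphi,t}(\mu^N)$ equals the sum of the martingale, a prefactor-mismatch error $\bigl(\tfrac{1}{N(N-1)}-\tfrac{1}{N^2}\bigr)\sum_{i\neq j}\cdots$, and a regularization error involving $b^N-b$ and $(\alpha^N-\alpha)a$. It suffices to show each tends to $0$ in $L^1$.

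For the martingale, the Itô isometry together with the quadratic variation estimate derived in Step 3 of the proof of Lemma~\ref{lem:holdercont} gives $\mathbb{E}|\mathcal{M}_{\varphi,t}^N|^2\lesssim t(\|\nabla\varphi\|_\infty+\|\nabla^2\varphi\|_\infty)^2/N$, which is $O(1/\sqrt N)$ in $L^1$. For the other two error terms, the common pointwise input is that the integrands are bounded by $C_\varphi|V^N_i-V^N_j|^{\gamma+2}$: indeed $|b^N(v)\cdot(\nabla\varphi(v')-\nabla\varphi(v''))|\leq 2\|\nabla^2\varphi\|_\infty|v|^{\gamma+1}|v'-v''|$ after the Lipschitz bound on $\nabla\varphi$, and $|\alpha^N a(v)|\leq |v|^{\gamma+2}$, both with similar bounds for $b,\alpha$. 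Since $\gamma+2\in[-1,0]$, Jensen's inequality combined with Lemma~\ref{lem:keyestimate} yields the uniform-in-$N$ control $\mathbb{E}|V^N_i-V^N_j|^{\gamma+2}\leq I_0^{-(\gamma+2)/2}$. The prefactor mismatch contributes $\tfrac{1}{N^2(N-1)}\cdot N(N-1)$ times this bound times $t$, hence is $O(1/N)$ in $L^1$.

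The main obstacle is the regularization error, which requires exploiting the Fisher control more delicately. Since $b^N=b$ and $\alpha^N=\alpha$ outside $B(0,\eta_N)$ and $0\leq\alpha^N\leq\alpha$, the differences $|b^N-b|$ and $|(\alpha^N-\alpha)a|$ are supported on $\{|v|\leq\eta_N\}$ and bounded there by $2|v|^{\gamma+2}$. The crucial step is to rewrite
\[
|v|^{\gamma+2}\mathbf{1}_{|v|\leq\eta_N}=|v|^{-2}\cdot|v|^{\gamma+4}\mathbf{1}_{|v|\leq\eta_N}\leq\eta_N^{\gamma+4}|v|^{-2},
\]
which is licit precisely because $\gamma+4\in[1,2]>0$ under our hypothesis $\gamma\in[-3,-2]$. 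Applying this with $v=V^N_i-V^N_j$, using Lipschitzness of $\nabla\varphi$ for the drift and boundedness of $\nabla^2\varphi$ for the diffusion, then invoking Lemma~\ref{lem:keyestimate} and exchangeability, the regularization error is bounded in $L^1$ by $C_\varphi\eta_N^{\gamma+4}I_0\,t$, which vanishes since $\eta_N\to 0$. Summing the three vanishing contributions gives $\mathbb{E}|\mathcal{F}_{\varphi,t}(\mu^N)|\to 0$, concluding the proof.
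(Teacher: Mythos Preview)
Your proof is correct and follows essentially the same approach as the paper: apply It\^o's formula, then separately control the martingale term via the It\^o isometry, the $\tfrac{1}{N(N-1)}$ versus $\tfrac{1}{N^2}$ prefactor mismatch, and the regularization error via the key Fisher bound of Lemma~\ref{lem:keyestimate}. One minor slip: the pointwise bound $|b^N-b|\leq 2|v|^{\gamma+2}$ is not correct (it should be $2|v|^{\gamma+1}$, since $b=-2\alpha v$), but since you then invoke Lipschitzness of $\nabla\varphi$ to recover the missing factor $|v|$, the subsequent estimate $|v|^{\gamma+2}\mathbf{1}_{|v|\leq\eta_N}\leq\eta_N^{\gamma+4}|v|^{-2}$ and the final conclusion are unaffected.
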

\begin{proof}
    \textit{Step 1.} Using that
    $$\int \varphi(v)\mu^N_t(dv)=\frac{1}{N}\sum_{i=1}^N\varphi(V^N_i(t))$$
    by definition of empirical measures, we have:
    \begin{align}
    \label{eq:Fagainstempiricalmeasures1}
    \mathcal{F}_{\varphi,t}&(\mu^N) = -\frac{1}{N} \sum_{i=1}^N\varphi(V^N_i(t)) + \frac{1}{N} \sum_{i=1}^N\varphi(V^N_i(0))\\
    \label{eq:Fagainstempiricalmeasures2}
    &+ \frac{1}{N^2}\int_0^t \sum_{\substack{i,j=1\\ i\neq j}}^N \mathbf{1}_{V^N_i(s)\neq V^N_j(s)}b(V^N_i(s)-V^N_j(s)) \cdot (\nabla\varphi(V^N_i(s))-\nabla\varphi(V^N_j(s)))\ ds\\
    \label{eq:Fagainstempiricalmeasures3}
    &+\frac{1}{N^2} \int_0^t \sum_{\substack{i,j=1\\ i\neq j}}^N \mathbf{1}_{V^N_i(s)\neq V^N_j(s)}\alpha a(V^N_i(s)-V^N_j(s)) \cdot \nabla^2\varphi(V^N_i(s))\ ds.
    \end{align}  
    Note that the cases $i=j$ are excluded by the indicator functions. We now make use that the particle system approximates the Landau equation. Indeed, applying the Itô formula to compute the differential of $t \mapsto \frac{1}{N}\sum_{i=1}^N\varphi(V^N_i(t))$ (as we did at the beginning of the proof of Lemma~\ref{lem:holdercont}), we obtain the following identity, which closely resembles $\mathcal{F}(\mu^N)$:
    \begin{align}
    \label{eq:Itôempiricalmeasures1}
       0 &= -\frac{1}{N}\sum_{i=1}^N\varphi(V^N_i(t))+\frac{1}{N}\sum_{i=1}^N\varphi(V^N_i(0)) \\
       \label{eq:Itôempiricalmeasures2}
       &+ \frac{1}{N(N-1)}\int_0^t \sum_{\substack{i,j=1\\ i\neq j}}^N  b^N\left(V^N_i(s) -V^N_j(s)\right) \cdot (\nabla\varphi\left(V^N_i(s)\right)-\nabla\varphi\left(V^N_j(s)\right)) ds\ \\
       \label{eq:Itôempiricalmeasures3}
       &+ \frac{1}{N(N-1)} \int_0^t \sum_{\substack{i,j=1\\ i\neq j}}^N  \alpha^Na\left(V^N_i(s) -V^N_j(s)\right):\nabla^2\varphi\left(V^N_i(s)\right)ds\\
       \label{eq:Itôempiricalmeasures4}
       &+\sqrt{\frac{2}{N-1}}\frac{1}{N}\sum_{\substack{i,j=1\\ i\neq j}}^N \int_0^t  \nabla \varphi\left(V^N_i(s)\right)\cdot \sigma^N \left(V^N_i(s) - V^N_j(s)\right)dB^N_{ij}(s)
    \end{align}
    We compare each line. Remark that the first lines~\eqref{eq:Fagainstempiricalmeasures1} and~\eqref{eq:Itôempiricalmeasures1} are the same.
    
    \textit{Step 2.} We compare the lines~\eqref{eq:Fagainstempiricalmeasures2} and~\eqref{eq:Itôempiricalmeasures2}, that contain the terms in $b$, which are the most singular. With the shorthand notations $\psi^N_{ij}(s):=\nabla\varphi(V^N_i(s))-\nabla\varphi(V^N_j(s))$, and $\mathbf{1}b(v-w):=\mathbf{1}_{v-w}b(v-w)$ we study the difference
    \begin{align*}
        \frac{1}{N(N-1)}&\int_0^t \sum_{\substack{i,j=1\\ i\neq j}}^N  b^N\left(V^N_i(s) -V^N_j(s)\right) \cdot \psi^N_{ij}(s) ds\\
        &-\frac{1}{N^2}\int_0^t \sum_{\substack{i,j=1\\ i\neq j}}^N \mathbf{1}b(V^N_i(s)-V^N_j(s)) \cdot \psi^N_{ij}(s)\ ds\\=
        &\frac{1}{N(N-1)} \int_0^t \sum_{\substack{i,j=1\\ i\neq j}}^N  (b^N-\mathbf{1}b)\left(V^N_i(s) -V^N_j(s)\right) \cdot \psi^N_{ij}(s) ds \ + \mathcal{R}
    \end{align*}
    with the remainder
    $$\mathcal{R}= \frac{1}{N^2(N-1)} \int_0^T \sum_{\substack{i,j=1\\ i\neq j}}^N \mathbf{1}b(V^N_i(t)-V^N_j(t)) \cdot \psi^N_{ij}(t)\ dt, $$
    which we quickly deal with.
    The estimate $\mathbf{1}b(v)\lesssim \vert v \vert^{\gamma+1}$, the boundedness of $\nabla\varphi$, as well as exchangeability allow us to easily control:
    $$\mathbb{E}\vert \mathcal{R} \vert \leq   \frac{2\Vert \nabla\varphi\Vert_{L^\infty}}{N^2(N-1)} \int_0^t \sum_{\substack{i,j=1\\ i\neq j}}^N \mathbb{E}\vert V^N_i(s)-V^N_j(s) \vert^{\gamma+1}\ ds \leq \frac{2\Vert \nabla\varphi\Vert_{L^\infty}}{N} \int_0^t \mathbb{E}\vert V^N_1(s)-V^N_2(s) \vert^{\gamma+1}\ ds$$
    Using $\vert v \vert^{\gamma+1} \leq 1 + \vert v \vert^{-2}$ and Lemma~\ref{lem:keyestimate} to bound the expectation we get
    $$\mathbb{E}\vert \mathcal{R} \vert \leq \frac{C_{\varphi,I_0,t}}{N}.$$
    Now for the main term, we first write a pointwise estimate, using that $b^N(v)$ and $b(v)$ coincide for $\vert v\vert\geq \eta_N$:
    $$\vert (b^N - \mathbf{1}b)(v)\vert \leq 4 \mathbf{1}_{\vert v \vert \leq \eta_N}\vert v \vert^{\gamma + 1}  \leq 4 \vert v \vert^{-3} \eta_N^{\gamma+4}.$$
    Notice that the above bound also holds for $v=0$ because $b^N(0)=0$, since $b^N(v)=-2\alpha^N(\vert v\vert)v$.
    We rely on the $C^2$ behaviour of $\varphi$ to write
    $$\vert \psi^N_{ij}(s)\vert=\vert\nabla\varphi(V^N_i(s))-\nabla\varphi(V^N_j(s))\vert \leq C_\varphi\vert V^N_i(s)-V^N_j(s)\vert.  $$
    With that estimate we cancel one order of the singularity and we have the following bound:
    \begin{align*}
        \frac{2}{N(N-1)} &\mathbb{E}\left\vert \int_0^t \sum_{\substack{i,j=1\\ i\neq j}}^N  (b^N-\mathbf{1}b)\left(V^N_i(s) -V^N_j(s)\right)  \cdot \psi^N_{ij}(s) ds\right\vert \\
        &\leq \frac{\eta_N^{\gamma+4}C_\varphi}{N(N-1)} \sum_{\substack{i,j=1\\ i\neq j}}^N\int_0^t \mathbb{E} \left[\vert V^N_i(s) -V^N_j(s)\vert^{-2}\right]   ds\\
        &\leq \eta_N^{\gamma+4}C_\varphi \int_0^t \mathbb{E} \left[\vert V^N_1(s) -V^N_2(s)\vert^{-2}\right]   ds\\
        &\leq  \eta_N^{\gamma+4} C_{\varphi,I_0,t}
    \end{align*}
    where the third line is obtained by exchangeability and the last line thanks to Lemma~\ref{lem:keyestimate} to bound the expectation.

    \textit{Step 3.}
    We compare~\eqref{eq:Fagainstempiricalmeasures3} and~\eqref{eq:Itôempiricalmeasures3}, exactly as in the previous step.
    This time we use the two bounds
    $$\Vert\mathbf{1}a(v)\Vert \leq C \vert v \vert^{\gamma+2} \leq C(1 + \vert v \vert^{-2})$$ 
    and
    $$\Vert a^N - \mathbf{1}a(v)\Vert \leq C \mathbf{1}_{\vert v \vert \leq \eta_N}\vert v \vert^{\gamma + 2}  \leq C \vert v \vert^{-2} \eta_N^{\gamma+4}$$
    (which still holds for $v=0$ because $\alpha^N a(0)=0$ since $a(0)=0$.)
    They readily give:
    \begin{align*}
    \mathbb{E}\Bigg\vert &\frac{1}{N^2} \int_0^t \sum_{\substack{i,j=1\\ i\neq j}}^N \mathbf{1}_{V^N_i(s)\neq V^N_j(s)}\alpha a(V^N_i(s)-V^N_j(s)) \cdot \nabla^2_v\varphi(V^N_i(s))\ ds\\
    &- \frac{1}{N(N-1)} \int_0^T \sum_{\substack{i,j=1\\ i\neq j}}^N  \alpha^Na\left(V^N_i(s) -V^N_j(t)\right):\nabla^2_v\varphi\left(V^N_i(s)\right)ds \Bigg\vert\leq C_{\varphi,I_0,t}\left(\eta_N^{\gamma+4}+\frac{1}{N}\right) .
    \end{align*}

    \textit{Step 4.} It remains to control the martingale term~\eqref{eq:Itôempiricalmeasures4}, that we rewrite using $B^N_{ji}=-B^N_{ij}$:
    \begin{align*}
        \mathcal{B} &:= \sqrt{\frac{2}{N-1}}\frac{1}{N}\sum_{\substack{i,j=1\\ i\neq j}}^N \int_0^t  \nabla_v \varphi\left(V^N_i(s)\right)\cdot \sigma^N \left(V^N_i(s) - V^N_j(s)\right)dB^N_{ij}(s)\\
    &= \sqrt{\frac{2}{N^2(N-1)}}\sum_{\substack{i,j=1\\ i< j}}^N \int_0^t  \xi^N_{ij}(s)\cdot \sigma^N \left(V^N_i(s) - V^N_j(s)\right)dB^N_{ij}(s)
    \end{align*}
    since $\sigma^N(-v)=\sigma^N(v)$, with $\psi^N_{ij}(s) = \nabla\varphi(V^N_i(s))-\nabla \varphi(V^N_j(s))$ as before, .
    
    We use independence of the Brownian motions $(B^N_{ij})_{1\leq i<j \leq N}$ to cancel cross terms in the square expectation:
    \begin{align*}
        (\mathbb{E}\vert \mathcal{B}\vert)^2 &\leq\mathbb{E} (\mathcal{B}^2)\\
        &= \frac{2}{N^2(N-1)} \sum_{\substack{i,j=1\\ i< j}}^N \mathbb{E}\left[\int_0^t  \psi^N_{ij}(s)\cdot \sigma^N \left(V^N_i(s) - V^N_j(s)\right)dB^N_{ij}(s) \right]^2
    \end{align*}
    By the Iso isometry we can let the square inside the integral, and we obtain:
    \begin{align*}
        \mathbb{E}\left[\int_0^t  \psi^N_{ij}(s)\cdot \sigma^N \left(V^N_i(s) - V^N_j(s)\right)dB^N_{ij}(s) \right]^2 &\leq C_\varphi \int_0^t\mathbb{E}\left[ \Vert\sigma^N \left(V^N_i(s) - V^N_j(s)\right)\Vert^2 \right]ds
    \end{align*}
    We conclude as before, with the bound $\Vert\sigma^N \left(v\right)\Vert^2 \lesssim \vert v \vert^{\gamma+2} \leq 1 + \vert v \vert^{-2}$, exchangeability and Lemma~\ref{lem:keyestimate} yielding
    \begin{align*}
        (\mathbb{E}\vert \mathcal{B}\vert)^2 &\leq \frac{C_{\varphi,I_0,t}}{N}.
    \end{align*}
    Combining all steps, we have obtained
    $$\mathbb{E}\vert \mathcal{F}_{\varphi,t}(\mu^N)\vert \leq C_{\varphi,I_0,t}\left(\eta_N^{\gamma+3} + \eta_N^{\gamma+2} + \frac{1}{N}+\frac{1}{\sqrt{N}}\right)\xrightarrow[N\rightarrow\infty]{} 0,$$
    and the proof is concluded.
\end{proof}

The functional $\mathcal{F}_{\varphi,t}$ does not allow us to pass $\mathcal{F}_{\varphi,t}(\mu^N_t)$ to the limit in $N$ because it is not continuous on $C([0,T],\mathcal{P}(\mathbb{R}^3))$ (it is not even properly defined on this space). We want to replace it by a better-behaved functional $\mathcal{F}^\varepsilon_{\varphi,t}$, which is defined by regularizing $\alpha$ again, this time independently of $N$.
\begin{lemma}
    \label{aswsolution-fepsilon}
    For any $0<\varepsilon<1$, consider the smooth and bounded regularization $\alpha_\varepsilon$ as defined in Remark~\ref{rem:regularizedpotential}. Let $b_\varepsilon=\nabla\cdot(\alpha_\varepsilon a)$, and, for any $\varphi\in C^2_b(\mathbb{R}^3)$ and any $t\in[0,T]$,
    let $\mathcal{F}^\varepsilon_{\varphi,t}$ be defined for $h\in C([0,T],\mathcal{P}(\mathbb{R}^3))$ as
    \begin{align*}
    \mathcal{F}^\varepsilon_{\varphi,t}(h) &:=  -\int_{\mathbb{R}^3} \varphi(v)h_t(v)dv + \int_{\mathbb{R}^3} \varphi(v)h_0(v)dv \nonumber\\
    &+ \int_0^t \int_{\mathbb{R}^6} b_\varepsilon(v-w) \cdot (\nabla\varphi(v)-\nabla\varphi(w))\ h_s(dv)h_s(dw) ds\\
    &+ \int_0^t \int_{\mathbb{R}^6}\alpha_\varepsilon a(v-w):\nabla^2\varphi(v)\ h_s(dv)h_s(dw) ds.\nonumber
    \end{align*}
    Then $\mathcal{F}^\varepsilon_{\varphi,t}$ is continuous and bounded on $C([0,T],\mathcal{P}(\mathbb{R}^3))$.
\end{lemma}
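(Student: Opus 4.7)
The plan is to first verify that the two coefficient functions $\alpha_\varepsilon a$ and $b_\varepsilon$ are bounded continuous on $\mathbb{R}^3$, and then reduce the claim to standard weak-convergence arguments. By construction in Remark~\ref{rem:regularizedpotential}, $\alpha_\varepsilon$ is smooth, positive, bounded above, and coincides with $r^\gamma$ for $r \geq \varepsilon$. Since $|a(z)| \lesssim |z|^2$ and $b_\varepsilon(z) = -2\alpha_\varepsilon(|z|)z$, we have for $|z| \geq \varepsilon$ that $|\alpha_\varepsilon a(z)| \lesssim |z|^{\gamma+2}$ and $|b_\varepsilon(z)| \lesssim |z|^{\gamma+1}$. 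The hypothesis $\gamma \leq -2$ makes both exponents non-positive, so these functions are bounded at infinity; near the origin they are smooth by construction, and both vanish at $0$ (since $a(0)=0$). Consequently, the two maps
$$(v,w) \mapsto \alpha_\varepsilon a(v-w):\nabla^2\varphi(v), \qquad (v,w) \mapsto b_\varepsilon(v-w)\cdot(\nabla\varphi(v)-\nabla\varphi(w))$$
belong to $C_b(\mathbb{R}^6)$, since $\varphi \in C^2_b(\mathbb{R}^3)$.

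With this in hand, boundedness of $\mathcal{F}^\varepsilon_{\varphi,t}(h)$ is immediate: each of the four terms is controlled in absolute value by a constant depending only on $\varphi$, $\varepsilon$ and $t$, uniformly in $h$, because each amounts to integrating a bounded continuous function against a probability measure (and then possibly integrating a bounded quantity in $s$ over $[0,t]$).

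For continuity, suppose $h^n \to h$ in $C([0,T],\mathcal{P}(\mathbb{R}^3))$ with the uniform topology over the weak topology. Then $h^n_s \rightharpoonup h_s$ for every $s\in[0,T]$, and in particular for $s=0$ and $s=t$, which handles the two boundary terms since $\varphi \in C_b(\mathbb{R}^3)$. For each fixed $s$, the standard fact that weak convergence tensorizes (if $\mu_n \rightharpoonup \mu$ and $\nu_n \rightharpoonup \nu$ in $\mathcal{P}(\mathbb{R}^3)$, then $\mu_n \otimes \nu_n \rightharpoonup \mu \otimes \nu$ in $\mathcal{P}(\mathbb{R}^6)$) gives $h^n_s \otimes h^n_s \rightharpoonup h_s \otimes h_s$, and testing against the bounded continuous integrands identified above yields pointwise convergence of the inner double-integrals. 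Dominated convergence in $s$ (the integrands in $s$ being uniformly bounded by the supremum of the coefficient functions times $\|\nabla^2\varphi\|_\infty$ or $\|\nabla\varphi\|_\infty$) then propagates this to the $[0,t]$-integrals.

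There is no real obstacle here; the only subtle point worth flagging is that $\gamma \leq -2$ is precisely what makes $\alpha_\varepsilon a$ bounded at infinity (for $\gamma > -2$ the factor $|z|^{\gamma+2}$ would blow up and the functional would fail to be globally bounded on $C([0,T],\mathcal{P}(\mathbb{R}^3))$, requiring instead uniform moment control on the measures). This is why the regularization is carried out at the level of $\alpha$ only, while the unbounded factor $a(z)$ is tolerated.
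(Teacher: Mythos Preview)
Your proof is correct and follows essentially the same approach as the paper: verify that the integrands are in $C_b(\mathbb{R}^6)$, deduce boundedness, then use tensorization of weak convergence plus dominated convergence in time for continuity. You actually supply more detail than the paper does, which simply asserts that $b_\varepsilon$ and $\alpha_\varepsilon a$ are bounded; your explicit invocation of $\gamma \leq -2$ to handle the behavior at infinity is a useful clarification.
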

\begin{remark}
    The definition of $\mathcal{F}^\varepsilon_{\varphi,t}$ is exactly the one~\eqref{eq:def_f} of $\mathcal{F}_{\varphi,t}$ but with $\alpha_\varepsilon$ and $b_\varepsilon$, and without the indicator functions.
\end{remark}
\begin{proof}
    Since $\varphi, b_\varepsilon$ and $\alpha_\varepsilon a$ are continuous and bounded, we can integrate them against any probability measure and $\mathcal{F}^\varepsilon_{\varphi,t}(h)$ is always well defined, and bounded by some $C_{\varphi,\varepsilon}$.
    Consider a converging sequence $h^N \rightarrow h$ in $C([0,T],\mathcal{P}(\mathbb{R}^3))$. It implies pointwise weak convergence $h^N_t\rightharpoonup h_t$ in $\mathcal{P}(\mathbb{R}^3)$, which in turn implies $h^N_t(dv)h^N_t(dw)\rightharpoonup h_t(dv) h_t(dw). $
    (To see this, consider the dense sub-algebra of $C_b(\mathbb{R}^6)$ generated by functions with separated variables $\phi(v)\psi(w)$.) Hence, all the integrals over $\mathbb{R}^3$ and $\mathbb{R}^6$ in the definition of $\mathcal{F}^\varepsilon_{\varphi,t}$ converge. A direct dominated convergence in time concludes that $\mathcal{F}^\varepsilon_{\varphi,t}(h^N)\rightarrow\mathcal{F}^\varepsilon_{\varphi,t}(h)$.
\end{proof}

The weak convergence $\mathcal{L}(\mu^N)\rightharpoonup\pi=\mathcal{L}(f)$ in $\mathcal{P}(C([0,T],\mathcal{P}(\mathbb{R}^3)))$ can be tested with the continuous bounded function $\vert \mathcal{F}^\varepsilon_{\varphi,t}\vert$ to get
$$\mathbb{E}\vert \mathcal{F}^\varepsilon_{\varphi,t}(\mu^N)\vert\xrightarrow[N\rightarrow\infty]{}\mathbb{E}\vert \mathcal{F}^\varepsilon_{\varphi,t}(f)\vert.$$
We would like the same to hold for $\mathcal{F}_{\varphi,t}$. We hence want to compare $\mathcal{F}^\varepsilon_{\varphi,t}$ and $\mathcal{F}_{\varphi,t}$ on both the empirical measures and on the limit. This is done in the following two lemmas.

\begin{lemma}
    \label{aswsolution-feps f empirical measures}
    For any $\varphi\in C^2_b(\mathbb{R}^3)$ and any $t\in[0,T]$, there exists a constant $C_{\varphi,t}$ such that for any $0<\varepsilon<1$, for any $N\geq 2$,
    \begin{align*}
        \mathbb{E} \left\vert\mathcal{F}_{\varphi,t}(\mu^N)-\mathcal{F}^\varepsilon_{\varphi,t}(\mu^N) \right\vert \leq C_{\varphi,I_0,t} \varepsilon^{\gamma+4}.
    \end{align*}
    We emphasize that $ C_\varphi$ is independent of $N$. 
\end{lemma}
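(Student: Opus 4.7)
The plan is to proceed exactly as in Steps 2 and 3 of the proof of Lemma~\ref{aswsolution-empirical measures}: after evaluating both functionals on an empirical measure, the difference reduces to a sum over pairs of particles whose integrand is controlled by $|(b-b_\varepsilon)(v-w)|$ and $\|\alpha a(v-w)-\alpha_\varepsilon a(v-w)\|$ times a second-order variation of $\varphi$. Both of these vanish outside $B(0,\varepsilon)$ by our construction of $\alpha_\varepsilon$ in Remark~\ref{rem:regularizedpotential}.

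First I would plug $h=\mu^N$ into the two functionals and subtract: the $-\int\varphi\,dh_t+\int\varphi\,dh_0$ pieces cancel, so
\begin{align*}
\mathcal{F}_{\varphi,t}(\mu^N)-\mathcal{F}^\varepsilon_{\varphi,t}(\mu^N)
&=\frac{1}{N^2}\int_0^t\!\!\sum_{i\neq j}\bigl(b-b_\varepsilon\bigr)(V^N_i(s)-V^N_j(s))\cdot\bigl(\nabla\varphi(V^N_i(s))-\nabla\varphi(V^N_j(s))\bigr)\,ds\\
&\quad+\frac{1}{N^2}\int_0^t\!\!\sum_{i\neq j}\bigl((\alpha-\alpha_\varepsilon)a\bigr)(V^N_i(s)-V^N_j(s)):\nabla^2\varphi(V^N_i(s))\,ds,
\end{align*}
where the restriction $i\neq j$ comes from the indicator $\mathbf 1_{v\neq w}$ in $\mathcal{F}_{\varphi,t}$ and from the identities $b_\varepsilon(0)=0$ and $\alpha_\varepsilon a(0)=0$, so the $i=j$ contributions on both sides vanish identically.

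Next I would establish the two pointwise bounds
\begin{align*}
|b(v)-b_\varepsilon(v)|\leq 4\,\mathbf 1_{|v|\leq\varepsilon}|v|^{\gamma+1}\leq 4\,\varepsilon^{\gamma+4}|v|^{-3},
&&\|(\alpha-\alpha_\varepsilon)a(v)\|\leq C\,\mathbf 1_{|v|\leq\varepsilon}|v|^{\gamma+2}\leq C\,\varepsilon^{\gamma+4}|v|^{-2},
\end{align*}
the first using $b=b_\varepsilon$ on $\{|v|\geq\varepsilon\}$ and $|b(v)|\lesssim |v|^{\gamma+1}$, and similarly for the second. Combined with the $C^2$ estimates $|\nabla\varphi(v_1)-\nabla\varphi(v_2)|\leq\|\nabla^2\varphi\|_{L^\infty}|v_1-v_2|$ and $\|\nabla^2\varphi\|_{L^\infty}$ on the Hessian, each summand in both integrals is bounded by $C_\varphi\,\varepsilon^{\gamma+4}|V^N_i(s)-V^N_j(s)|^{-2}$.

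Finally I would take expectation, use exchangeability to replace each summand by its $(1,2)$ counterpart and factor out the $N(N-1)/N^2\leq 1$, and apply Lemma~\ref{lem:keyestimate} to get
\begin{equation*}
\mathbb{E}\bigl|\mathcal{F}_{\varphi,t}(\mu^N)-\mathcal{F}^\varepsilon_{\varphi,t}(\mu^N)\bigr|
\leq C_\varphi\,\varepsilon^{\gamma+4}\int_0^t\mathbb{E}\bigl[|V^N_1(s)-V^N_2(s)|^{-2}\bigr]\,ds
\leq C_{\varphi,I_0,t}\,\varepsilon^{\gamma+4},
\end{equation*}
uniformly in $N$. There is no real obstacle here beyond reusing the ingredients already developed for Lemma~\ref{aswsolution-empirical measures}; the only thing to be careful about is checking that the diagonal contributions cancel on both sides (which they do, thanks to $b_\varepsilon(0)=0$ and $a(0)=0$), so the indicator $\mathbf 1_{v\neq w}$ produces no spurious term when evaluating on empirical measures.
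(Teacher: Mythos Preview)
Your proposal is correct and follows essentially the same approach as the paper's own proof: subtract the two functionals on the empirical measure, reduce to a sum over $i\neq j$ (the diagonal contributions vanishing because $b_\varepsilon(0)=0$ and $a(0)=0$), apply the pointwise bounds $|b-b_\varepsilon|\lesssim\varepsilon^{\gamma+4}|v|^{-3}$ and $\|(\alpha-\alpha_\varepsilon)a\|\lesssim\varepsilon^{\gamma+4}|v|^{-2}$ together with the $C^2$ regularity of $\varphi$, then conclude by exchangeability and Lemma~\ref{lem:keyestimate}. The only cosmetic difference is that the paper writes $\mathbf{1}b-b_\varepsilon$ rather than $b-b_\varepsilon$, which is immaterial on the off-diagonal.
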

\begin{proof}
    The proof is very similar to the Steps 2 and 3 of the proof of Lemma~\ref{aswsolution-empirical measures}. We have
    \begin{align*}
        \mathcal{F}_{\varphi,t}(\mu^N)&-\mathcal{F}^\varepsilon_{\varphi,t}(\mu^N)\\
        &=
    \frac{1}{N^2}\int_0^t \sum_{\substack{i,j=1\\ i\neq j}}^N (\mathbf{1}b - b_\varepsilon)(V^N_i(s)-V^N_j(s)) \cdot (\nabla\varphi(V^N_i(s))-\nabla\varphi(V^N_j(s)))\ ds\\
    &+\frac{1}{N^2} \int_0^t \sum_{\substack{i,j=1\\ i\neq j}}^N (\mathbf{1}\alpha a - \alpha_\varepsilon a)(V^N_i(s)-V^N_j(s)) \cdot \nabla^2\varphi(V^N_i(s))\ ds.
    \end{align*}
    Remark that we can still ignore the terms $i=j$ in the sums because both $\alpha_\varepsilon a(0)=0$ and $b_\varepsilon(0)=0$.
    By the two following bounds which we already used
    \begin{align*}
    \vert b_\varepsilon - \mathbf{1}b(v)\vert &\leq C \mathbf{1}_{\vert v \vert \leq \varepsilon}\vert v \vert^{\gamma + 1}  \leq C \vert v \vert^{-3} \varepsilon^{\gamma+4}\\
    \Vert \alpha_\varepsilon a - \mathbf{1}\alpha a(v)\Vert &\leq C \mathbf{1}_{\vert v \vert \leq \varepsilon}\vert v \vert^{\gamma + 2}  \leq C \vert v \vert^{-2}  \varepsilon^{\gamma+4}
    \end{align*}
    and the fact that
    $$\vert \nabla\varphi(V^N_i(s))-\nabla\varphi(V^N_j(s))\vert\leq C_\varphi \vert V^N_i(s)-V^N_j(s)\vert$$
    we have, taking expectations:
    \begin{align*}
        \mathbb{E} \left\vert\mathcal{F}_{\varphi,t}(\mu^N)-\mathcal{F}_{\varphi,t}^\varepsilon(\mu^N) \right\vert &\leq
    \varepsilon^{\gamma+4}\frac{C_\varphi}{N^2}\sum_{\substack{i,j=1\\ i\neq j}}^N\int_0^t  \mathbb{E}\left\vert V^N_i(s)-V^N_j(s) \right\vert^{-2}\ dt\\
    &\leq C_{\varphi,I_0,t} \varepsilon^{\gamma+4}.
    \end{align*}
    The last line was obtained thanks to exchangeability and Lemma~\ref{lem:keyestimate}.
\end{proof}

We finally compare $\mathcal{F}_{\varphi,t}^\varepsilon(f)$ and $\mathcal{F}_{\varphi,t}(f)$ (and check that $\mathcal{F}_{\varphi,t}(f)$ is well-defined).
\begin{lemma}
    \label{aswsolution-feps f limit}
    For any $\varphi\in C^2_b(\mathbb{R}^3)$ and any $t\in[0,T]$, $\mathcal{F}_{\varphi,t}(f)$ is a.s. well-defined ; and there exists a constant $C_{\varphi,I_0,t}$ such that for any $0<\varepsilon<1$,
    \begin{align*}
        \mathbb{E} \left\vert\mathcal{F}_{\varphi,t}(f)-\mathcal{F}^\varepsilon_{\varphi,t}(f) \right\vert \leq C_{\varphi,I_0,t} \varepsilon^{\frac{\gamma+4}{2}}.
    \end{align*} 
\end{lemma}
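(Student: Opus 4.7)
My plan is to first establish the almost-sure well-definedness of $\mathcal{F}_{\varphi,t}(f)$ using the regularity $f \in L^\infty([0,T], L^1(\mathbb{R}^3)) \cap L^1([0,T], L^3(\mathbb{R}^3))$ granted by Lemma~\ref{lem:aswsolution-regularity}, and then to bound the difference $\mathcal{F}_{\varphi,t}(f) - \mathcal{F}^\varepsilon_{\varphi,t}(f)$ by a convolution estimate that exploits the fact that the regularization error vanishes outside the ball of radius $\varepsilon$ around $v=w$.

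For well-definedness, I would estimate the integrands in~\eqref{eq:def_f} pointwise. Using $|b(z)| \leq 2|z|^{\gamma+1}$ together with the Lipschitz bound $|\nabla\varphi(v)-\nabla\varphi(w)| \leq \|\nabla^2\varphi\|_\infty |v-w|$, and $\|\alpha a(z)\| \leq |z|^{\gamma+2}$ with $|\nabla^2\varphi|$ bounded, both integrands are dominated by $C_\varphi |v-w|^{\gamma+2}$. For $|v-w|\geq 1$ this is $\leq 1$ since $\gamma \leq -2$, so the contribution is bounded by $\|f_s\|_{L^1}^2 = 1$. For $|v-w| \leq 1$, Young's convolution inequality controls the double integral by $\|\mathbf{1}_{|\cdot|\leq 1}|\cdot|^{\gamma+2}\|_{L^{3/2}(\mathbb{R}^3)} \|f_s\|_{L^1} \|f_s\|_{L^3}$, the $L^{3/2}$ norm being finite because $\gamma > -4$. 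Integrating in time yields a.s.\ finiteness from $\int_0^T \|f_s\|_{L^3}\,ds < +\infty$.

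For the main estimate, I would directly subtract $\mathcal{F}^\varepsilon_{\varphi,t}(f)$ from $\mathcal{F}_{\varphi,t}(f)$. Since $b(0) = b_\varepsilon(0) = 0$ and $\alpha a(0) = \alpha_\varepsilon a(0) = 0$, the indicator $\mathbf{1}_{v\neq w}$ in $\mathcal{F}_{\varphi,t}$ can be dropped. Using the pointwise bounds $|b - b_\varepsilon|(z) \leq C \mathbf{1}_{|z|\leq\varepsilon}|z|^{\gamma+1}$ and $\|\alpha a - \alpha_\varepsilon a\|(z) \leq C \mathbf{1}_{|z|\leq\varepsilon}|z|^{\gamma+2}$, combined with the Lipschitz cancellation for the $b$ term and the $L^\infty$ bound on $\nabla^2\varphi$ for the $\alpha a$ term, both contributions reduce to bounding
\[
\int_0^t \int\!\!\int \mathbf{1}_{|v-w|\leq\varepsilon} |v-w|^{\gamma+2} f_s(v) f_s(w)\,dv\,dw\,ds.
\]
A direct scaling computation gives $\|\mathbf{1}_{|\cdot|\leq\varepsilon}|\cdot|^{\gamma+2}\|_{L^{3/2}(\mathbb{R}^3)} \leq C\varepsilon^{\gamma+4}$, so by Young's inequality and $\|f_s\|_{L^1} = 1$ the inner double integral is at most $C\varepsilon^{\gamma+4} \|f_s\|_{L^3}$. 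Taking expectations and using the Sobolev embedding $\|f_s\|_{L^3} \leq C(1 + I(f_s))$ together with the mean Fisher estimate $\mathbb{E}\int_0^t I(f_s)\,ds \leq I_0 t$ from Proposition~\ref{prop:level3estimates} then produces a bound of order $C_{\varphi,I_0,t}\varepsilon^{\gamma+4}$, which implies the stated $\varepsilon^{(\gamma+4)/2}$ bound since $\gamma+4 \geq 1$ and $\varepsilon \leq 1$.

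The main obstacle is the integrability of the singularity at $v = w$ in the Coulomb case $\gamma = -3$. The key point is the Lipschitz symmetrization $\nabla\varphi(v) - \nabla\varphi(w)$: it raises the effective singularity in the $b$ term from $|v-w|^{\gamma+1}$, which fails to be locally $L^{3/2}$ at $\gamma=-3$, up to $|v-w|^{\gamma+2}$, which remains locally $L^{3/2}$ all the way down to $\gamma = -4$. This is precisely the reason the author symmetrized the $b$-term in the weak formulation (cf.\ Remark~\ref{rem:defwsol}); without this trick, the $L^3$ regularity provided by the Fisher information would not suffice either to make sense of the weak solution or to close this regularization estimate.
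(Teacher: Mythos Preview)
Your argument is correct and follows essentially the same route as the paper: reduce both integrands to a multiple of $\mathbf{1}_{|v-w|\leq\varepsilon}|v-w|^{\gamma+2}$ via the Lipschitz bound on $\nabla\varphi$, control the resulting convolution by the $L^{3/2}$--$L^3$ pairing, and close with the Sobolev embedding and the level-3 Fisher bound $\mathbb{E}[I(f_s)]\leq I_0$.

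There is one genuine (though harmless) difference in the bookkeeping. You compute $\bigl\|\mathbf{1}_{|\cdot|\leq\varepsilon}|\cdot|^{\gamma+2}\bigr\|_{L^{3/2}}$ directly and obtain $C\varepsilon^{\gamma+4}$, which is sharper than the claimed $\varepsilon^{(\gamma+4)/2}$. The paper instead first bounds $\mathbf{1}_{|z|\leq\varepsilon}|z|^{\gamma+2}\leq \varepsilon^{(\gamma+4)/2}\,\mathbf{1}_{|z|\leq 1}|z|^{-2+(\gamma+4)/2}$ and then takes the $L^{3/2}$ norm of the $\varepsilon$-independent weight, thereby sacrificing half the exponent. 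Both suffice; your version simply keeps track of the small support and yields a cleaner power.

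One minor slip: the reason the indicator $\mathbf{1}_{v\neq w}$ can be dropped is not that ``$b(0)=0$'' (this is undefined since $\alpha(0)=+\infty$), but that $f_s$ a.s.\ has a density for a.e.\ $s$ (finite Fisher information), so $f_s\otimes f_s$ does not charge the diagonal. The paper makes this point explicitly elsewhere; you should replace your justification by that one.
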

\begin{proof}
    To see that $\mathcal{F}_{\varphi,t}(f)$ is a.s. well defined, recall that $f\in L^1([0,T],L^3(\mathbb{R}^3))$ a.s by Lemma~\ref{lem:aswsolution-regularity}. Since, once again because $\varphi$ is $C^2$,
    $$\vert b(v-w) \cdot (\nabla\varphi(v)-\nabla\varphi(w))\vert \leq C\Vert \nabla^2\varphi \Vert_{L^\infty}\vert v-w\vert^{\gamma+2},$$
    for any fixed $v$, it lies in $L^{\frac{3}{2}}(\mathbb{R}^3)+L^{\infty}(\mathbb{R}^3)$ as a function of $w$. Using the Hölder inequality,
    $$\int_{\mathbb{R}^6} \vert \mathbf{1} b(v-w) \cdot (\nabla\varphi(v)-\nabla\varphi(w))\vert f_s(dw) \leq C\Vert \nabla^2\varphi \Vert_{L^\infty}(1+ \Vert f_s\Vert_{L^3(\mathbb{R}^3)}).$$
    We hence get that the term in $b$ in~\eqref{eq:def_f} is integrable:
    \begin{align}
    \label{eq:defintegraltermwsol}
        \int_0^t \int_{\mathbb{R}^6} \vert \mathbf{1}b(v-w) \vert &\vert \nabla\varphi(v)-\nabla\varphi(w)\vert f_s(dw)  f_s(dv)\ ds\\
        &\leq C \Vert \nabla^2\varphi \Vert_{L^\infty}\int_0^t \left(1+ \Vert f_s\Vert_{L^3(\mathbb{R}^3)}\right)ds.\nonumber
    \end{align}
    The $a$ term is similarly well-defined, since $\mathbf{1}\alpha a (v-\cdot) \lesssim \vert v-\cdot\vert^{\gamma+2}$.
    
    We now turn to bounding $\mathcal{F}_{\varphi,t}(f)-\mathcal{F}_{\varphi,t}^\varepsilon(f)$:
    \begin{align*}
        \mathcal{F}_{\varphi,t}(f)-\mathcal{F}_{\varphi,t}^\varepsilon(f)&= \int_0^t \int_{\mathbb{R}^6} (\mathbf{1}b - b_\varepsilon)(v-w) \cdot (\nabla\varphi(v)-\nabla\varphi(w) ) f_s(dv)f_s(dw) ds\\
    &+ \int_0^t \int_{\mathbb{R}^6} (\mathbf{1}\alpha a - \alpha_\varepsilon a)(v-w):\nabla^2\varphi(v) f_s(dv)f_s(dw) ds.
    \end{align*}
    We sharpen a little bit the estimates from the last lemma so that the right hand side lies in $L^{\frac{3}{2}}(\mathbb{R}^3)$:
    \begin{align*}
    \vert (b_\varepsilon - \mathbf{1}b)(v-w)\cdot (\nabla\varphi(v)-\nabla\varphi(w))\vert &\leq C_\varphi \mathbf{1}_{\vert v \vert \leq \varepsilon}\vert v-w \vert^{\gamma + 2}  \leq C_\varphi\mathbf{1}_{\vert v-w \vert \leq 1} \vert v-w \vert^{-2+\delta} \varepsilon^{\gamma+4-\delta}\\
    \Vert( \alpha_\varepsilon a - \mathbf{1}\alpha) a(v-w)\Vert &\leq C \mathbf{1}_{\vert v-w \vert \leq \varepsilon}\vert v -w\vert^{\gamma + 2}  \leq C \mathbf{1}_{\vert v-w \vert \leq 1} \vert v -w\vert^{-2+\delta}  \varepsilon^{\gamma+4-\delta}
    \end{align*}
    with $\delta=\frac{\gamma + 4}{2}$. Plugging those in, we get:
    \begin{align*}
        \left\vert \mathcal{F}_{\varphi,t}(\mu)-\mathcal{F}_{\varphi,t}^\varepsilon(\mu) \right\vert&\leq C_\varphi \varepsilon^{\frac{\gamma+4}{2}}\int_0^t \int_{\mathbb{R}^6} \mathbf{1}_{\vert v-w \vert \leq 1} \vert v-w \vert^{-2+\delta}  \ f_s(dv)f_s(dw) ds\\
    &\leq C_\varphi \varepsilon^{\frac{\gamma+4}{2}}\int_0^t \int_{\mathbb{R}^3} \Vert f_s\Vert_{L^3(\mathbb{R}^3)} f_s(dv) ds\\
    &\leq C_\varphi \varepsilon^{\frac{\gamma+4}{2}}\int_0^t \Vert f_s\Vert_{L^3(\mathbb{R}^3)} ds
    \end{align*}
    where the second line is obtained by applying the Hölder inequality in $w$ (since the $L^\frac{3}{2}$ norm of $\mathbf{1}_{\vert v-\cdot \vert \leq 1} \vert v-\cdot \vert^{-2+\delta}$ is a constant).
    Taking expectations and using the Sobolev embedding to control the $L^3$ norm by the Fisher information, one gets
    \begin{align*}
        \mathbb{E}\left\vert \mathcal{F}_{\varphi,t}(\mu)-\mathcal{F}_{\varphi,t}^\varepsilon(\mu) \right\vert&\leq C_\varphi \varepsilon^{\frac{\gamma+4}{2}} \mathbb{E}\left[\int_0^t \Vert f_s\Vert_{L^3(\mathbb{R}^3)}  ds \right]\\
        &\leq C_\varphi \varepsilon^{\frac{\gamma+4}{2}} \int_0^t \mathbb{E}\left[I(f_t)\right]  dt \\
        &\leq C_\varphi \varepsilon^{\frac{\gamma+4}{2}} t I_0,
    \end{align*}
    using that $\mathbb{E}\left[I(f_t)\right]=\int I(\rho)\pi_t(d\rho)\leq I_0$ by Proposition~\ref{prop:level3estimates}.
\end{proof}

We are now all set to show that $f$ satisfies the weak formulation almost surely. Remark that in the following lemma, the almost sure event depends a priori on $\varphi$ and on $t$.
\begin{lemma}
\label{aswsolution:weakformholds}
    Let $\varphi\in C^2_b(\mathbb{R}^3)$ and $t\in[0,T]$.
    Almost surely,~\eqref{eq:weakform} holds.
\end{lemma}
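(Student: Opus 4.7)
The plan is to combine the four preceding lemmas through a triangle inequality, fixing a regularization parameter $\varepsilon>0$, passing to the limit $N\to\infty$, and then sending $\varepsilon\to 0$. More precisely, by the triangle inequality,
\begin{align*}
\mathbb{E}\vert\mathcal{F}_{\varphi,t}(f)\vert
&\leq \mathbb{E}\vert\mathcal{F}_{\varphi,t}(f)-\mathcal{F}^\varepsilon_{\varphi,t}(f)\vert
+ \mathbb{E}\vert\mathcal{F}^\varepsilon_{\varphi,t}(f)\vert,
\end{align*}
and it suffices to bound each term by something that vanishes as $\varepsilon\to 0$. The first term is directly controlled by Lemma~\ref{aswsolution-feps f limit}, which yields the bound $C_{\varphi,I_0,t}\,\varepsilon^{(\gamma+4)/2}$.

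For the second term, the strategy is to rewrite it as a limit over the particle system. Since $\vert\mathcal{F}^\varepsilon_{\varphi,t}\vert$ is continuous and bounded on $C([0,T],\mathcal{P}(\mathbb{R}^3))$ by Lemma~\ref{aswsolution-fepsilon}, and since $\mathcal{L}(\mu^N)\rightharpoonup\pi=\mathcal{L}(f)$ along the extracted subsequence, I can test the weak convergence against $\vert\mathcal{F}^\varepsilon_{\varphi,t}\vert$ to obtain
\begin{align*}
\mathbb{E}\vert\mathcal{F}^\varepsilon_{\varphi,t}(f)\vert
=\lim_{N\to\infty}\mathbb{E}\vert\mathcal{F}^\varepsilon_{\varphi,t}(\mu^N)\vert.
\end{align*}
Another triangle inequality then gives
\begin{align*}
\mathbb{E}\vert\mathcal{F}^\varepsilon_{\varphi,t}(\mu^N)\vert
\leq \mathbb{E}\vert\mathcal{F}^\varepsilon_{\varphi,t}(\mu^N)-\mathcal{F}_{\varphi,t}(\mu^N)\vert
+ \mathbb{E}\vert\mathcal{F}_{\varphi,t}(\mu^N)\vert,
\end{align*}
where the first summand is at most $C_{\varphi,I_0,t}\,\varepsilon^{\gamma+4}$ uniformly in $N$ by Lemma~\ref{aswsolution-feps f empirical measures}, and the second summand tends to $0$ as $N\to\infty$ by Lemma~\ref{aswsolution-empirical measures}. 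Passing to the limit in $N$ yields $\mathbb{E}\vert\mathcal{F}^\varepsilon_{\varphi,t}(f)\vert\leq C_{\varphi,I_0,t}\,\varepsilon^{\gamma+4}$.

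Combining, I get $\mathbb{E}\vert\mathcal{F}_{\varphi,t}(f)\vert\leq C_{\varphi,I_0,t}\bigl(\varepsilon^{(\gamma+4)/2}+\varepsilon^{\gamma+4}\bigr)$ for every $\varepsilon\in(0,1)$. Since $\gamma+4>0$ when $\gamma\in[-3,-2]$, sending $\varepsilon\to 0$ forces $\mathbb{E}\vert\mathcal{F}_{\varphi,t}(f)\vert=0$, so $\mathcal{F}_{\varphi,t}(f)=0$ almost surely, which is precisely~\eqref{eq:weakform}. I do not anticipate any real obstacle here, as all the substantive work (controlling the singularity at the diagonal via the Fisher information bound of Lemma~\ref{lem:keyestimate}) has been done in the preceding four lemmas; the only thing to be careful about is the order of limits ($N\to\infty$ first, $\varepsilon\to 0$ second), which is essential because the bound on $\mathbb{E}\vert\mathcal{F}^\varepsilon_{\varphi,t}(\mu^N)-\mathcal{F}_{\varphi,t}(\mu^N)\vert$ is uniform in $N$ but blows up in $N$ for fixed potential if $\varepsilon$ is sent to $0$ first.
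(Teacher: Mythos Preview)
Your Step~1 argument is correct and matches the paper's proof essentially line for line. However, you stop too early when you say that $\mathcal{F}_{\varphi,t}(f)=0$ ``is precisely~\eqref{eq:weakform}.'' It is not quite: the functional $\mathcal{F}_{\varphi,t}$ in~\eqref{eq:def_f} differs from the weak formulation~\eqref{eq:weakform} in two ways. First, $\mathcal{F}_{\varphi,t}$ carries the indicator functions $\mathbf{1}_{v\neq w}$, which are absent from~\eqref{eq:weakform}. Second, $\mathcal{F}_{\varphi,t}(f)$ involves $f_0$, whereas~\eqref{eq:weakform} involves the prescribed initial datum $g_0$.

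The paper closes this gap in a short Step~2: since $f\in L^\infty([0,T],L^1(\mathbb{R}^3))$ almost surely (Lemma~\ref{lem:aswsolution-regularity}), for almost every $s$ the measure $f_s(dv)f_s(dw)$ is absolutely continuous and gives zero mass to the diagonal $\{v=w\}$, so the indicators can be dropped; and $f_0=g_0$ almost surely by Lemma~\ref{lem:aswsolution-initial time}. You should add this sentence to complete the argument.
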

\begin{proof}
    \textit{Step 1.} We first show that $\mathcal{F}_{\varphi,t}(f)=0$ a.s. (because, up to the indicator functions, it is the weak formulation).
    Let $0<\varepsilon<1$,
    \begin{align*}
        \mathbb{E}\vert \mathcal{F}_{\varphi,t}(f) \vert &\leq \mathbb{E}\vert \mathcal{F}_{\varphi,t}(f) -\mathcal{F}_{\varphi,t}^\varepsilon(f) \vert +\mathbb{E}\vert  \mathcal{F}_{\varphi,t}^\varepsilon(f)\vert.
    \end{align*}
    By Lemma~\ref{aswsolution-feps f limit},
    $$\mathbb{E}\vert \mathcal{F}_{\varphi,t}(f) -\mathcal{F}_{\varphi,t}^\varepsilon(f) \vert  \leq C_{\varphi,I_0,t}\varepsilon^{\frac{\gamma+4}{2}}, $$
    and by continuity and boundedness of $\mathcal{F}_{\varphi,t}^\varepsilon$ (Lemma~\ref{aswsolution-fepsilon}),
    $$\mathbb{E}\vert  \mathcal{F}_{\varphi,t}^\varepsilon(f)\vert = \lim_{N\rightarrow \infty}\mathbb{E}\vert  \mathcal{F}_{\varphi,t}^\varepsilon(\mu^N)\vert$$
    (up to a subsequence) by testing the weak convergence $\mathcal{L}(\mu^N)\rightharpoonup \pi=\mathcal{L}(f)$. Hence
    \begin{align}
    \label{eq:aswsolutionproof1}
        \mathbb{E}\vert \mathcal{F}_{\varphi,t}(f) \vert &\leq C_{\varphi,I_0,t}\varepsilon^{\frac{\gamma+4}{2}}+\lim_{N\rightarrow \infty}\mathbb{E}\vert  \mathcal{F}_{\varphi,t}^\varepsilon(\mu^N)\vert.
    \end{align}
    Now, for any $N\geq 2$:
    \begin{align*}
        \mathbb{E}\vert \mathcal{F}_{\varphi,t}^\varepsilon(\mu^N) \vert &\leq \mathbb{E}\vert  \mathcal{F}_{\varphi,t}^\varepsilon(\mu^N)-\mathcal{F}_{\varphi,t}(\mu^N)\vert+\mathbb{E}\vert  \mathcal{F}_{\varphi,t}(\mu^N)\vert \\
        &\leq C_{\varphi,I_0,t}\varepsilon^{\gamma+4}+\mathbb{E}\vert  \mathcal{F}_{\varphi,t}(\mu^N)\vert
    \end{align*}
    by Lemma~\ref{aswsolution-feps f empirical measures}. But $\mathbb{E}\vert  \mathcal{F}_{\varphi,t}(\mu^N)\xrightarrow[N\rightarrow\infty]{} \infty$ by Lemma~\ref{aswsolution-empirical measures}, so that taking the limit
    $$\lim_{N\rightarrow \infty}\mathbb{E}\vert  \mathcal{F}_{\varphi,t}^\varepsilon(\mu^N)\vert\leq C_{\varphi,I_0,t}\varepsilon^{\gamma+4}.$$
    Plugging this back in~\eqref{eq:aswsolutionproof1},
    \begin{align*}
        \mathbb{E}\vert \mathcal{F}_{\varphi,t}(f) \vert &\leq C_{\varphi,I_0,t}\varepsilon^{\frac{\gamma+4}{2}}+C_{\varphi,t}\varepsilon^{\gamma+4},
    \end{align*}
    and letting $\varepsilon\rightarrow 0$ yields $\mathbb{E}\vert \mathcal{F}_{\varphi,t}(f) \vert=0$, so almost surely $\mathcal{F}_{\varphi,t}(f)=0$.

    \textit{Step 2.} Since $f\in L^\infty([0,T],L^1(\mathbb{R}^3))$ by Lemma~\ref{lem:aswsolution-regularity}, it admits a density for almost all $s$, so $f_s(dv)f_s(dw)$ does not weight the set of $0$ Lebesgue measure $\{v=w\}$. We can then remove the indicator functions in the definition~\eqref{eq:def_f} of $\mathcal{F}_{\varphi,t}$ without changing the values of the integral. Moreover, by Lemma~\ref{lem:aswsolution-initial time}, $f_0=g_0$ almost surely so $\mathcal{F}_{\varphi,t}(f)=0$ a.s. entails that the weak formulation~\eqref{eq:weakform} holds.
\end{proof}

We can now end the proof of Proposition~\ref{prop:aswsolution}.
\begin{proof}[Proof of Proposition~\ref{prop:aswsolution}]
The random variable $f$ almost surely satisfies the claimed regularity by Lemma~\ref{lem:aswsolution-regularity}.
By Lemma~\ref{aswsolution:weakformholds}, for any $t\in[0,T]$ and any $\varphi\in C^2_b(\mathbb{R}^3)$,~\eqref{eq:weakform} holds almost surely. We want to make this almost sure event be the same for all $t$ and $\varphi$.

The space of $C^2$ bounded functions $C^2_b(\mathbb{R}^3)$ is not separable, but the space $C^2_0(\mathbb{R}^3)$ of functions that vanish at infinity is, so we can consider a countable dense set $\{\varphi_n\}_n$. We also consider a dense countable subset $\{t_k\}_k$ of $[0,T]$. Then, almost surely,~\eqref{eq:weakform} holds for all $t_k$ and all $\varphi_n$ simultaneously.

Because $f\in C([0,T],\mathcal{P}(\mathbb{R}^3))$,  $t\mapsto\int \varphi(v) f_t(dv)$ is continuous and the other terms in~\eqref{eq:weakform} are also continuous in $t$ (one is constant, the other two are integrals on $[0,t]$ of a function in $L^1([0,T])$, by a reasoning similar to~\eqref{eq:defintegraltermwsol}.) Hence, for any $\varphi_n$, the weak formulation holds for all $t\in[0,T]$ by continuity.

It is then easy to see that a.s., the weak formulation holds for any $\varphi\in C^2_0(\mathbb{R}^3)$ by a density argument: indeed, if $\Vert \varphi - \varphi_n\Vert_{C^2}\leq \epsilon$, the weak formulation with $\varphi$ is smaller than $  C\epsilon \left(1 + \Vert f \Vert_{L^1([0,T],L^3(\mathbb{R}^3))} \right) $,
again by a computation similar to~\eqref{eq:defintegraltermwsol}.

To show the weak formulation for a $\varphi\in C^2_b(\mathbb{R}^3)$, consider a smooth bump function $\chi$ and consider the truncation $\varphi^R:= \varphi \chi(\frac{\cdot}{R})\in C^2_0(\mathbb{R}^3)$. A dominated convergence in the weak formulation for $\varphi^R$ yields the result. Hence, almost surely, the weak formulation holds on $C^2_b(\mathbb{R}^3)$ as desired.
\end{proof}

\begin{remark}
    At this point, if we knew a uniqueness theorem for the Landau equation in the class $L^1([0,T],L^3(\mathbb{R}^3))$, we could conclude that $f$ is a.s. the unique solution, so that $\pi=\delta_{(g_t)_t}$. However, the results from \cite{GuerinFournier2008,Fournier2010} require higher integrability in space, going all the way to $L^\infty$ as $\gamma\rightarrow-3$.
    Even if we managed to show that $f\in L^\infty([0,T],L^3(\mathbb{R}^3))$ a.s. (which would be more natural since the Fisher information of the particle system is $L^\infty$ in time), the known uniqueness theorems \cite{AlonsoBaglandDesvillettes2024,ChernGualdani2022} require some H-theorem, which does not trivially hold for our notion of weak solutions.
\end{remark}
In the next final section, we will use the -yet unexploited- dissipation of the Fisher information to essentially show that $f\in L^1([0,T],L^\infty(\mathbb{R}^3))$ which will allow us to conclude that $f$ is the regular solution $g$.

\section{A final $L^1_t L^\infty_v$ estimate and uniqueness of the cluster point}
\label{sec:a final estimate}
We are still working on $\pi \in \mathcal{P}\left(C([0,T],\mathcal{P}(\mathbb{R}^3)) \right)$ a cluster point of $\left(\mathcal{L}\left( \mu^N\right) \right)_{N\geq 2 }$. We know from the previous section that a $\pi$-distributed random variable $f$ is almost surely a weak solution to the Landau equation, and from Section~\ref{sec:infinite-dimension limit of the regularity estimates} we know that $f$ enjoys a.s. some improved regularity. In particular, recall that by Proposition~\ref{prop:level3estimatesforf}, the Fisher dissipation is finite:
\begin{equation}
\label{eq:Kasfinite}
     \text{a.s.,}\int_0^T \mathcal{K}(f_t\otimes f_t)ds <+\infty.
\end{equation}
The functional $\mathcal{K}$ measures smoothness of $f\otimes f$, but only in the particuliar directions given by the $\tilde{b}^i$s. Using the tensorized structure, we can hope to recover some regularity on $f$ itself in all directions. We want to show in the end that $f\in L^1([0,T],L^\infty(\mathbb{R}^3))$, which is enough to apply Guérin and Fournier's uniqueness results \cite{GuerinFournier2008,Fournier2010} and claim that $f=g$. This entails uniqueness of the cluster point $\pi=\delta_{(g_t)_t}$, so that the whole sequence $\left(\mathcal{L}\left( \mu^N\right) \right)_{N\geq 2 }$ converges and propagation of chaos holds. The goal of this section is to reach the 

\begin{prop}
    \label{prop:asfequalg}
    The random variable $(f_t)_t$ is almost surely equal to the strong solution $(g_t)_t$. In other words, $\pi=\delta_{(g_t)_t}$.
\end{prop}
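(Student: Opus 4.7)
The plan is to show that, almost surely, $(f_t)_t \in L^1([0,T], L^\infty(\mathbb{R}^3))$. Combined with Proposition~\ref{prop:aswsolution}, which says $f$ is a.s.\ a weak solution of the Landau equation with initial data $g_0$, the uniqueness results of \cite{GuerinFournier2008} (for $\gamma \in (-3,-2]$) and \cite{Fournier2010} (for the Coulomb case $\gamma = -3$) then force $f = g$ almost surely, so $\pi = \delta_{(g_t)_t}$ as required.

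To obtain the $L^1_t L^\infty_v$ control I would combine three ingredients. First, the a.s.\ finiteness of $\int_0^T \mathcal{K}(f_t \otimes f_t)\,dt$ provided by Proposition~\ref{prop:level3estimatesforf}. Second, propagation of moments of order $m > 3(2-\gamma)$: since the entropy production $D(f_s \otimes f_s)$ is also a.s.\ integrable on $[0,T]$ by Proposition~\ref{prop:level3estimatesforf}, $f$ is a.s.\ an H-solution in the sense of \cite{Villani1998}, and one can then adapt the moment-propagation arguments developed for H-solutions in \cite{CarrapatosoDesvillettesHe2015} to conclude that $\sup_{t \in [0,T]} \int |v|^m f_t(dv) < \infty$ almost surely.

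Third, a non-concentration bound on $f_t$ uniform in $t$. Because we only control the mean entropy $\mathbb{E}[H(f_t)]$ pointwise in $t$ and not uniformly, the entropy cannot serve as a concentration measure here (it is also not even finite on empirical measures). Following \cite{FournierHauray2015}, I would instead use a concentration functional that is \emph{continuous} on $\mathcal{P}(\mathbb{R}^3)$. Continuity is the key feature: Lemma~\ref{lem:keyestimate} provides a uniform-in-$N$ bound on $\mathbb{E}|V_1^N(t)-V_2^N(t)|^{-2}$, which translates into a uniform-in-$N$ and in-$t$ bound on this functional evaluated on the random empirical measure $\mu^N_t$, and continuity then transfers the bound to $f_t$ via the weak convergence $\mathcal{L}(\mu^N) \rightharpoonup \pi$.

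The main obstacle is the final analytical step: deriving an integrable-in-time bound on $\|f_t\|_{L^\infty}$ from $\mathcal{K}(f_t \otimes f_t)$ together with the moment and non-concentration estimates. The difficulty is that the derivation $\partial_{b_k} = |v_1-v_2|^{-1/2}\alpha^{1/4}\tilde{b}_k \cdot \nabla_{12}$ is tangential (it annihilates any function of $|v_1-v_2|$), so $\mathcal{K}$ only sees second derivatives of $f_t \otimes f_t$ in directions orthogonal to $v_1-v_2$. The tensorized structure $f_t \otimes f_t$ is what saves the day: by varying $(v_1,v_2)$ the three vector fields $\tilde b_k$ span enough directions to reconstruct genuine Sobolev regularity of $f_t$ itself, but the estimates are quantitatively weighted by $|v_1-v_2|^{-2}$ and by $\alpha(|v_1-v_2|)$, so one needs non-concentration to prevent blow-up near the diagonal and moments to handle the tails. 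Interpolating the resulting weighted second-order information with the first-order control provided by the Fisher information $I(f_t)$ and invoking a Sobolev embedding into $L^\infty(\mathbb{R}^3)$ in dimension three should then yield $\|f_t\|_{L^\infty}$ bounded by a power of $\mathcal{K}(f_t \otimes f_t)$ times quantities uniformly controlled in $t$, whose integration in time gives the desired $L^1_t L^\infty_v$ bound.
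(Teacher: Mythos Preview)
Your overall strategy matches the paper's, but the mechanism you propose for non-concentration has a genuine gap, and the analytical step you sketch is not the one that works. First, Lemma~\ref{lem:keyestimate} bounds an expectation over the joint law of two particles; it is not a functional of $\mu^N_t$, and even the natural surrogate $\iint |v-w|^{-2}\mu(dv)\mu(dw)$ does not prevent concentration on a \emph{line}, which is exactly what the ellipticity estimate (Lemma~\ref{lem:ellipticityestimate}) requires for the $b_k(v-w)$ to span all directions. The paper exploits continuity of the Fournier--Hauray functional $\iota_{\bar v}$ \emph{in time}, not in $N$: since $f_0=g_0$ has finite entropy, three non-aligned balls are charged at $t=0$; the H-formulation together with $\int_0^T D<\infty$ yields $\tfrac12$-H\"older continuity of $t\mapsto\iota_{\bar v}(t)$ (Lemma~\ref{lem:Höldercontof3aligned}), so the charging persists on $[0,\tau]$; uniqueness there gives $f_\tau=g_\tau$, whose entropy is again $\le H_0$, and one iterates. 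This bootstrap is unavoidable precisely because no uniform-in-$t$ pathwise bound on $H(f_t)$ is available.

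Second, the paper does not use second-order information from $\mathcal K$ directly. It extracts instead the \emph{first-order quartic} functional $\mathcal J(f)=\sum_k\int (f\otimes f)(\partial_{b_k}\log(f\otimes f))^4$ via Proposition~\ref{prop:func_comparison} (Lemma~\ref{lem:Jasfinite}), decouples $v$ from $w$ at the cost of a Fisher-information error (Lemma~\ref{lem:Jcoerciveerrorcut}), and then uses non-concentration to reach $\int f\langle v\rangle^{\gamma-2}|\nabla\log f|^4$, which feeds into $W^{1,p}\hookrightarrow L^\infty$ for $p\in(3,4)$ (Lemma~\ref{lem:fisher4controlsLinf}); the moment hypothesis $m>3(2-\gamma)$ enters here. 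Finally, a small but important correction: $f$ is \emph{not} known to be an H-solution, because the pathwise H-theorem is not established (only its expectation is, see Remark~\ref{rem:hsol}); the paper proves the H-formulation separately (Lemma~\ref{lem:ashsol}) and adapts the moment argument of \cite{CarrapatosoDesvillettesHe2015} to use only $\int_0^T D<\infty$.
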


\begin{remark}
    The strategy we will follow is by now classical in the study of the Landau equation: Desvillettes \cite{Desvillettes2014a} showed that $D(f \otimes f)$, which is a partial Fisher information in the directions $(\tilde{b}_k)_{k=1,2,3}$, controls a full (weighted) Fisher information of $f$. Ji \cite{Ji2024} proved a second order version of this result, showing that a functional that is very similar to $\mathcal{K}(f\otimes f)$ (but with second derivatives in more directions) controls a full (weighted) second order Fisher information $\int f\jap{v}^{\gamma-2}\vert \nabla^2 \log f\vert^2$. We were not able to apply this strategy to $\mathcal{K}$ itself, but there exists a first order term $J\leq \mathcal{K}$ for which we can do so, and it is enough to conclude.
    
    The estimates cited above hold under the assumption that $f$ has finite entropy (and energy), with a constant exponential in $H$. Even if this is to be expected for any reasonable solution of the Landau equation, we do not have a uniform-in-time bound on the entropy of $f$: it is not known to be smooth enough for the H-theorem to hold. Using the strategy introduced in \cite{FournierHauray2015}, we will replace the entropy of $f$ by a weaker quantity measuring that the mass of $f$ is not too concentrated (\textit{i.e.} not distributed along a single line). For this quantity uniform bounds are achievable for short times by continuity arguments.
\end{remark}

\subsection{A conditional $L^1_t L^\infty_v$ bound}

We show that starting from~\eqref{eq:Kasfinite}, we can get a bound on the $L^1_t L^\infty_v$ norm of $(f_t)_t$. However the bound will require two additional quantities to be finite, which we will deal with in the next two sections.
We begin by isolating a first order term from $\mathcal{K}$, on which we will work for the rest of this section:
\begin{lemma}
\label{lem:Jasfinite}
    Let
    \begin{equation}
    \label{eq:Jasfinite}
        \mathcal{J}(f):=\sum_{k=1}^3  \int_{\mathbb{R}^{6}}f(v)f(w)  \frac{\alpha(v-w)}{\vert v - w\vert^2} \left( (\tilde{b}_k(v-w)\cdot \nabla)\log (f(v)f(w)) \right)^4 dvdw.
    \end{equation}
    Almost surely $\int_0^T\mathcal{J}(f_t)dt<+\infty$.
\end{lemma}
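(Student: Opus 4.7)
The plan is to prove the pointwise comparison
\[
\mathcal{J}(f_t) \leq \tfrac{9}{4}\,\mathcal{K}(f_t\otimes f_t)
\]
for almost every $t\in[0,T]$, on the almost sure event from Proposition~\ref{prop:level3estimatesforf} where $\int_0^T\mathcal{K}(f_t\otimes f_t)\,dt<+\infty$. Integrating in $t$ then yields the lemma directly.

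\textbf{Main computation.} Fix such a $t$ and abbreviate $F := f_t\otimes f_t$, $u := \log F$, $\partial_k := \tilde{b}_k\cdot\nabla_{12}$, $v_k := \partial_k u$ (so that $\partial_k F = F v_k$), and $W(v,w):=\alpha(|v-w|)/|v-w|^2$. Two structural facts drive the proof: first, $\tilde{b}_k$ is divergence-free so $\partial_k$ is formally skew-adjoint under integration; second, $\tilde{b}_k$ is orthogonal to $v-w$ while $W$ depends only on $|v-w|$, so $\partial_k W = 0$. Integration by parts gives
\[
\int W F v_k^4\,dV = \int W (\partial_k F)\,v_k^3\,dV = -3\int W F v_k^2(\partial_k v_k)\,dV.
\]
The identity $\partial_k\partial_k F = F(\partial_k v_k + v_k^2)$ rewrites $F v_k^2(\partial_k v_k) = v_k^2\,\partial_k\partial_k F - F v_k^4$, and after substitution and regrouping,
\[
\int W F v_k^4 \,dV = \tfrac{3}{2}\int W v_k^2\,\partial_k\partial_k F\,dV \leq \tfrac{3}{2}\Bigl(\int W F v_k^4\,dV\Bigr)^{1/2}\Bigl(\int W\,\tfrac{(\partial_k\partial_k F)^2}{F}\,dV\Bigr)^{1/2}
\]
by the Cauchy--Schwarz inequality. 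Rearranging yields $\int W F v_k^4 \leq \tfrac{9}{4}\int W(\partial_k\partial_k F)^2/F$, and summing in $k=1,2,3$ delivers the claim.

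\textbf{Justification and main obstacle.} The manipulations above use $\log F$ and an integration by parts; they require $F$ to be smooth and strictly positive, while all that is known almost surely is $f_t\in L^3\cap L^1$ with finite Fisher information. The main technical obstacle is therefore justifying these identities rigorously. I would handle this by a standard regularization of $f_t$ (mollification plus a small Gaussian to ensure strict positivity), performing the computation on the regularized density, and passing to the limit. Both $\mathcal{J}$ and $\mathcal{K}$ admit variational representations in the spirit of Proposition~\ref{prop:convexlscsuperadditve} which make them lower semi-continuous, so the regularized inequality transfers to the limit once one checks that a suitable mollification does not increase $\mathcal{K}$. Integrating the resulting pointwise bound in $t$ and invoking the almost sure finiteness of $\int_0^T\mathcal{K}(f_t\otimes f_t)\,dt$ from~\eqref{eq:Kasfinite} then concludes.
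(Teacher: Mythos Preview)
Your proof is correct and follows essentially the same route as the paper. The paper invokes Proposition~\ref{prop:func_comparison} (with $\partial=\partial_{b_k}$) to obtain $K_1^{\partial_{b_k}}\geq \tfrac{4}{9}J^{\partial_{b_k}}$ from the identity $K_1=K_{1/3}+\tfrac{4}{9}J$ and the nonnegativity of $K_{1/3}$; your direct integration-by-parts plus Cauchy--Schwarz reproduces exactly this inequality with the same constant $9/4$, and summing over $k$ gives the same bound $\mathcal{J}(f)\leq\tfrac{9}{4}\mathcal{K}(f\otimes f)$. The underlying integration by parts (your identity $\int W F v_k^4=\tfrac{3}{2}\int W v_k^2\,\partial_k\partial_k F$) is precisely the computation carried out in the proof of Proposition~\ref{prop:func_comparison}. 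One small caveat: your regularization plan hinges on ``mollification does not increase $\mathcal{K}$'', which is not obvious for these directional functionals; the paper treats the same computation at an equally formal level, so this is not a discrepancy, but be aware that making it fully rigorous would require a little more care than stated.
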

\begin{proof}
     We use Proposition~\ref{prop:func_comparison} with $\beta=1$ and $\partial=\partial_{b_k}$ (defined with the unregularized potential $\alpha$, see~\eqref{eq:defpartialNbk}). Since by definition $K_{1/3}^{\partial_{b_k}}\geq 0$, we have by summing Proposition~\ref{prop:func_comparison} for $k=1,2,3$:
     $$\mathcal{K}=\sum_{k=1}^3  K_1^{\partial_{b_k}}\geq \sum_{k=1}^3\frac{4}{9}J^{\partial_{b_k}},$$
     where
     $$J^{\partial_{b_k}}(F)=\int_{\mathbb{R}^{6}}F(\partial_{b_k}\log F)^4.$$
     But $\sum_{k=1}^3J^{\partial_{b_k}}(f\otimes f) = \mathcal{J}(f)$, so this control and~\eqref{eq:Kasfinite} imply
     $\int_0^T \mathcal{J}(f_t)dt<+\infty$ a.s.
\end{proof}

From $\mathcal{J}(f)$ we can isolate a main coercive term with derivatives only on $f(v)$, up to an error of the magnitude of the Fisher information. For this we introduce once again a cut-off potential $\bar{\alpha}=\alpha_\eta$ (with $\eta=1$ to fix ideas) using the notations of Remark~\ref{rem:regularizedpotential}. What matters is that we have $\bar{\alpha}(r)\leq \alpha(r)$, with equality for $r\geq 1$, and $\alpha \leq 0.99^\gamma$.
\begin{lemma}
    \label{lem:Jcoerciveerrorcut}
    The following inequality holds:
    \begin{align}
        \label{eq:Jcoerciveerrorcut}
        \mathcal{J}(f)
    &\geq \mathcal{J}_1(f)
    -0.99^{\gamma}\cdot 432 I(f),
    \end{align}
    with
    $$ \mathcal{J}_1(f):=\sum_{k=1}^3\int_{\mathbb{R}^{6}}f(v)f(w)  \frac{\bar{\alpha}(v - w)}{\vert v - w\vert^2} \left( b_k(v-w)\cdot \nabla_v\log f(v) \right)^4 dvdw$$
\end{lemma}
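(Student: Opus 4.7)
The approach is to exploit the regularization cutoff to reduce $\mathcal{J}$ to an integral with $\bar\alpha$, then expand $(A_v-A_w)^4$ and absorb the cross terms via a carefully scaled Young's inequality combined with a key pointwise bound on $A_w$.

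As a first step, since $\chi\ge \mathrm{Id}$ on $\mathbb{R}_+$ and $\gamma<0$, we have $\alpha\ge\bar\alpha$ pointwise, so the integrand corresponding to $(\alpha-\bar\alpha)$ is non-negative and can be dropped, giving
\[
\mathcal{J}(f) \;\ge\; \sum_{k=1}^3 \int_{\mathbb{R}^6} f(v)f(w)\,\frac{\bar\alpha(v-w)}{|v-w|^2}\,(A_v-A_w)^4\,dvdw,
\]
where I write $A_v := b_k(v-w)\cdot\nabla_v\log f(v)$ and $A_w := b_k(v-w)\cdot\nabla_w\log f(w)$, so that $(\tilde b_k\cdot\nabla_{12})\log(f(v)f(w))=A_v-A_w$.

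Next I expand $(A_v-A_w)^4=A_v^4-4A_v^3A_w+6A_v^2A_w^2-4A_vA_w^3+A_w^4$. The kernel $f(v)f(w)\bar\alpha(v-w)/|v-w|^2$ is symmetric under $v\leftrightarrow w$, which acts on the cross terms by $(A_v,A_w)\mapsto(-A_w,-A_v)$ because $b_k(v-w)=-b_k(w-v)$; this equates the integrals of $A_v^4$ and $A_w^4$, each of which reproduces $\mathcal{J}_1$ after summing over $k$. The lower bound then reduces to controlling the contribution of $-8A_v^3A_w+6A_v^2A_w^2$.

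The key estimate is a parametrized Young's inequality
\[
|A_v|^3|A_w|\;\le\;\tfrac{3\lambda}{4}|A_v|^4+\tfrac{1}{4\lambda^3}|A_w|^4,
\]
specialized to $\lambda=1/12$: this produces the characteristic coefficients $1/16$ on $|A_v|^4$ and exactly $432$ on $|A_w|^4$—the latter being the constant appearing in the statement. The small-coefficient $|A_v|^4$ piece can be reabsorbed into $\mathcal{J}_1$ with room to spare. For the $|A_w|^4$ piece, I split $|A_w|^4=|A_w|^2\cdot|A_w|^2$ and apply the pointwise bound $|A_w|^2\le|v-w|^2|\nabla\log f(w)|^2$ (immediate from $|b_k(v-w)|\le|v-w|$) to one of the two factors. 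Combined with the essential property $\bar\alpha(v-w)|v-w|^2\le 0.99^{\gamma}$ coming from the construction of $\bar\alpha$, this cancels the singular $1/|v-w|^2$ and, using the identity $\sum_k A_w^2 = a(v-w){:}(\nabla_w\log f(w))^{\otimes 2}\le|v-w|^2|\nabla\log f(w)|^2$ and $\int f(v)dv=1$, produces a final contribution bounded by $0.99^{\gamma} I(f)$.

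I anticipate the main obstacle to be organizing the various contributions so that the error stays \emph{linear} in $I(f)$ rather than becoming quadratic: naively applying the pointwise bound to both copies of $|A_w|$ would yield an $\int f|\nabla\log f|^4$ term that is not controlled by $I(f)$ alone. The non-negative $6A_v^2A_w^2$ term from the expansion must be used to absorb one of the $A_w^2$ factors that would otherwise need a second pointwise bound, so that only a single factor of $|\nabla\log f|^2$ survives per integration; the bookkeeping of the remaining $A_v A_w^3$ term proceeds symmetrically by the same Young's–pointwise combination.
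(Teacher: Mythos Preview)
Your setup---replacing $\alpha$ by $\bar\alpha$, expanding $(A_v-A_w)^4$, and using the $v\leftrightarrow w$ symmetry---is correct and matches the paper. However, the core of your argument has a genuine gap: a purely pointwise Young's-inequality route cannot produce an error controlled by $I(f)$. Following your own steps on the $|A_w|^4$ piece---bounding one factor by $|v-w|^2|\nabla\log f(w)|^2$, then summing $\sum_k A_w^2\le|v-w|^2|\nabla\log f(w)|^2$ on the other, and using $\bar\alpha|v-w|^2\le 0.99^\gamma$---yields $0.99^\gamma\int f|\nabla\log f|^4$, which is the fourth-order quantity $J(f)$, \emph{not} $I(f)$. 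Your suggestion to use the $+6A_v^2A_w^2$ term to ``absorb one $A_w^2$ factor'' cannot repair this: after Young's with $\lambda=1/12$ the $|A_w|^4$ coefficient (times the $8$ from the expansion) is $3456$, far beyond what the coefficient $6$ can compensate, and in any case $6A_v^2A_w^2-C A_w^4$ has no sign. The underlying obstruction is homogeneity: every term in the quartic expansion is degree four in $\nabla\log f$, and pointwise inequalities cannot lower that degree.

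The paper supplies the missing mechanism: an \emph{integration by parts} in $w$ on the cross term $J_2=\int\cdots A_v^3A_w$. Writing $f(w)A_w=b_k\cdot\nabla_w f(w)$ and using that $b_k$ is divergence-free while $\bar\alpha/|v-w|^2$ is radial (hence annihilated by $b_k\cdot\nabla_w$), one moves the $w$-derivative onto $A_v^3$. Since $A_v$ depends on $w$ only through $b_k(v-w)$, this produces $3A_v^2\,(b_k\cdot\nabla_w b_k)\cdot\nabla_v\log f(v)$, which is degree \emph{three} in $\nabla\log f$. A Young's split $3ab\le a^2/8+18b^2$ then leaves an error $\big((b_k\cdot\nabla_w b_k)\cdot\nabla_v\log f(v)\big)^2\le|v-w|^2|\nabla\log f(v)|^2$, of degree two, which is genuinely bounded by $I(f)$ after using $\bar\alpha\le 0.99^\gamma$. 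The constant $432=8\times 18\times 3$ arises from this route; its appearance from your Young's parameter $\lambda=1/12$ is coincidental.
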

\begin{proof}
    Recall that $\tilde{b}_k=(b_k,-b_k)$. Replacing $\alpha$ by the smaller $\bar{\alpha}$, we hence have:
    $$\mathcal{J}(f)\geq \sum_{k=1}^3  \int_{\mathbb{R}^{6}}f(v)f(w)  \frac{\bar{\alpha}(v-w)}{\vert v - w\vert^2} \left( (b_k\cdot \nabla_v\log (f(v)) - b_k \cdot \nabla_w f(w)) \right)^4 dvdw,$$
    where we have dropped the $(v-w)$ argument of $b_k$ for concision.
    Expanding and using symmetry between $v$ and $w$, we get:
    \begin{align*}  
    \mathcal{J}(f)&\geq   2\sum_{k=1}^3\int_{\mathbb{R}^{6}}f(v)f(w)  \frac{\bar{\alpha}}{\vert v - w\vert^2} \left( b_k\cdot \nabla_v\log f(v) \right)^4 dvdw\\
    &-8\sum_{k=1}^3\int_{\mathbb{R}^{6}}f(v)f(w)  \frac{\bar{\alpha}}{\vert v - w\vert^2} \left( b_k\cdot \nabla_v\log f(v) \right)^3 \left(b_k\cdot \nabla_w\log f(w)\right) dvdw\\
    &+6\sum_{k=1}^3\int_{\mathbb{R}^{6}}f(v)f(w)  \frac{\bar{\alpha}}{\vert v - w\vert^2} \left( b_k\cdot \nabla_v\log f(v) \right)^2 \left( b_k\cdot \nabla_w\log f(w) \right)^2 dvdw\\
    &=2\mathcal{J}_1(f) - 8J_2 +6J_3
    \end{align*}
    We can drop $J_3$ since it is nonnegative. $\mathcal{J}_1(f)$ is the term we want to keep and $J_2$ the error to control.

    In $J_2$, we combine $f(w)$ and $b^i\cdot \nabla_w\log f(w)$ and then integrate by parts:
    \begin{align*} 
        J_2&=\sum_{k=1}^3\int_{\mathbb{R}^{6}}f(v) \frac{\bar{\alpha}}{\vert v - w\vert^2} \left( b_k\cdot \nabla_v\log f(v) \right)^3 b_k\cdot \nabla_w f(w) dvdw\\
        &=-\sum_{k=1}^3\int_{\mathbb{R}^{6}}f(v)f(w)  b_k\cdot \nabla_w \left[\frac{\bar{\alpha}}{\vert v - w\vert^2} \left( b_k\cdot \nabla_v\log f(v) \right)^3\right] dvdw\\
        &=-\sum_{k=1}^3\int_{\mathbb{R}^{6}}f(v)f(w)  \frac{\bar{\alpha}}{\vert v - w\vert^2} 3\left( b_k\cdot \nabla_v\log f(v) \right)^2(b_k\cdot \nabla_w b_k)\cdot\nabla_w \log f(v) dvdw.
    \end{align*}
    For the last line, we used that the gradient $\nabla_w (\bar{\alpha}\vert v-w\vert^{-2} )$ is parallel to $v-w$ so vanishes against $b_k$.

    Using Young's inequality to say $3ab \leq \frac{a^2}{8} + 18b^2$, 
    \begin{align*}
    J_2
    \leq& \sum_{k=1}^3\int_{\mathbb{R}^{6}}f(v)f(w)  \frac{\bar{\alpha}}{\vert v - w\vert^2} \bigg[\frac{1}{8}\left( b_k\cdot \nabla_v\log f(v) \right)^4\\
    &\; \; \; \; + 18\left((b_k\cdot \nabla_w b_k)\cdot\nabla_v \log f(v)\right)^2\bigg] dvdw\\
    =&\frac{1}{8}\mathcal{J}_1(f) + 18\sum_{k=1}^3\int_{\mathbb{R}^{6}}f(v)f(w)  \frac{\bar{\alpha}}{\vert v - w\vert^2} ((b_k\cdot \nabla_w b_k)\cdot\nabla_v \log f(v))^2 dvdw.
\end{align*}
On the second term we use the bound
$$((b_k\cdot \nabla_w b_k)\cdot\nabla_v \log f(v))^2 \leq \vert v-w\vert^2 \vert \nabla_v \log f(v) \vert^2, $$consequence of a direct computation, and that $\bar{\alpha} \leq 0.99^{-\gamma}$ to get:
\begin{align*}
J_2
&\leq \frac{1}{8}\mathcal{J}_1(f)+ 0.99^{-\gamma}18\sum_{k=1}^3  \int_{\mathbb{R}^{6}}f(v)f(w)  \vert \nabla \log f(v) \vert^2 dvdw \\
&= \frac{1}{8}\mathcal{J}_1(f)+0.99^{-\gamma}54  \int_{\mathbb{R}^{3}}f(v)  \vert \nabla \log f(v) \vert^2 dv,
\end{align*}
the integral being nothing but the Fisher information. Combining, we get:
\begin{align*}
\mathcal{J}(f)&\geq 2\mathcal{J}_1(f)-8J_2\geq \mathcal{J}_1(f)-0.99^{-\gamma}\cdot 432I(f)
\end{align*}
and the proof is over.
\end{proof}
\begin{remark}
    The cut-off potential is only introduced so that it is possible to bound the error term $J_2$. The same is done in \cite{Desvillettes2014a,Ji2024} to prove similar estimates.
\end{remark}

One would like to claim that $\mathcal{J}_1(f)$ controls $\int f\jap{v}^l\vert \nabla_v\log f\vert^4 dv$ for some weight $l$. The classical way to show this would be to use the finite entropy of $f$ to say that $f$ cannot be too concentrated, so that the integral over $w$ makes the vector $b_k(v-w)$ span all directions. However, we cannot show better than $H(f_t)\in L^1([0,T])$ a.s., which is not enough. To circumvent the use of the entropy, we will apply the technique developed in \cite{FournierHauray2015} based on the notion of triplets of $\delta$-non-aligned points:
\begin{defi}\cite[Definition 6.1]{FournierHauray2015}
\label{def:deltanonaligned}
    Let $\delta>0$. We say that three points $v_1,v_2,v_3$ are \textbf{$\delta$-non-aligned} if
    \begin{equation}
        \label{eq:defdeltana1}
        \vert v_2-v_1\vert \geq 6\sqrt{\delta}
    \end{equation}
    and, if $p_{(v_2-v_1)^\perp}$ is the projection on the plane $(v_2-v_1)^\perp$:
    \begin{equation}
        \label{eq:defdeltana2}
        \vert p_{(v_2-v_1)^\perp}( v_3-v_1 )\vert  \geq 24\delta + 2\sqrt{\delta}\vert v_3-v_1\vert.
    \end{equation}
\end{defi}
If a probability $\mu$ has finite entropy, it cannot be too concentrated on a line: it must charge the neighborhoods of some $\delta$-non-aligned points. If it has finite energy, we can also ensure that most of the mass of $\mu$ is in a large ball centered on the origin, so that the three points can be chosen in it. This is expressed in the following lemma, which justifies the notion of \textit{charging non aligned points} as a suitable generalization of the entropy.
We fix a smooth bump function $h$ such that $0\leq h \leq 1$, $h=1$ on $B(0,1)$ and $h=0$ outside $B(0,3/2)$.
\begin{lemma}\cite[Lemma 6.3]{FournierHauray2015}
    \label{lem:entropyimpliesnonaligned}
    Let $H^*>0$ and $E^*>0$. There exists $\delta\in(0,1)$, $R>0$ and $\kappa>0$ (depending only on $H^*$ and $E^*$) such that for any $\mu\in\mathcal{P}(\mathbb{R}^3)$ satisfying $H(\mu)\leq H^*$ and $\int\vert v \vert^2 \mu(dv) \leq E^*$, there exists three points $v_1,v_2,v_3\in B(0,R)$, which are $\delta$-non-aligned, and such that for $k=1,2,3$,
    \begin{equation}
    \label{eq:entropyimpliesnonaligned}
            \int_{\mathbb{R}^3}h\left(\frac{w-v_k}{\delta}\right)\mu(dw) \geq \int_{B(v_k,\delta)}\mu(dw) \geq \kappa.
    \end{equation}
\end{lemma}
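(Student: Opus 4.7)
My plan is to combine a pigeonhole argument on a cover by $\delta$-balls with a uniform absolute continuity estimate for $\mu$ with respect to Lebesgue measure, extracted from the entropy-energy bound. First, by Markov's inequality on the second moment I would choose $R = \sqrt{8 E^*}$ so that $\mu(B(0,R)) \geq 7/8$. The key preparatory step is then to exhibit $\eta = \eta(H^*, E^*) > 0$ such that any measurable set $A$ with $\vert A\vert \leq \eta$ satisfies $\mu(A) \leq 1/4$. To get this, I would use the Gaussian-comparison lower bound $H(\mu) \geq -C(E^*)$ together with $H(\mu) \leq H^*$ to control the tail $\int_{\{f > M\}} f \log f \leq H^* + C(E^*)$, whence $\int_{\{f > M\}} f \, dv \leq (H^* + C(E^*))/\log M$; splitting $\mu(A) \leq M\vert A\vert + \int_{\{f > M\}} f$ and choosing first $M$ large, then $\vert A\vert$ small, yields the desired uniform absolute continuity.

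With $R$ and $\eta$ fixed, I would pick $\delta \in (0, 1)$ small enough, depending only on $H^*$ and $E^*$, so that any ball $B(v, 6\sqrt{\delta})$ and any tube $T$ of radius $\rho := 24\delta + 4R\sqrt{\delta}$ around a line, intersected with $B(0,R)$, has Lebesgue measure at most $\eta$. The first requires $\delta^{3/2}\lesssim \eta$; the second, whose Lebesgue measure is of order $R\rho^2 = O(R^3\delta)$, requires $\delta \lesssim \eta/R^3$. I would then cover $B(0,R)$ by $N \leq C(R/\delta)^3$ balls of radius $\delta$ centered in $B(0,R)$ and apply pigeonhole three times. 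The first produces $v_1 \in B(0,R)$ with $\mu(B(v_1, \delta)) \geq 7/(8N)$. Since $\mu(B(v_1, 6\sqrt{\delta})) \leq 1/4$, the set $B(0,R) \setminus B(v_1, 6\sqrt{\delta})$ still carries mass $\geq 5/8$, and covering it by $\delta$-balls centered in it yields $v_2$ with $\vert v_2 - v_1\vert \geq 6\sqrt{\delta}$ and $\mu(B(v_2, \delta)) \geq 5/(8N)$. Finally, the tube $T$ of radius $\rho$ around the line through $v_1, v_2$ satisfies $\mu(T \cap B(0,R)) \leq 1/4$, so the remaining set $B(0,R) \setminus (B(v_1, 6\sqrt{\delta}) \cup T)$ carries mass $\geq 3/8$, and a third pigeonhole furnishes $v_3$ outside this excluded region with $\mu(B(v_3, \delta)) \geq 3/(8N)$.

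Since $v_3 \notin T$, one has $\vert p_{(v_2-v_1)^\perp}(v_3 - v_1)\vert \geq \rho = 24\delta + 4R\sqrt{\delta}$; combined with $\vert v_3 - v_1\vert \leq 2R$, this gives~\eqref{eq:defdeltana2}, while $\vert v_2 - v_1\vert \geq 6\sqrt{\delta}$ is~\eqref{eq:defdeltana1}, so the three points are $\delta$-non-aligned in the sense of Definition~\ref{def:deltanonaligned}. Setting $\kappa := 3/(8N)$, all three centers satisfy $\mu(B(v_k, \delta)) \geq \kappa$, and since $h \geq \mathbf{1}_{B(0,1)}$ one concludes $\int h((w - v_k)/\delta)\, \mu(dw) \geq \mu(B(v_k, \delta)) \geq \kappa$ as required.

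I expect the main obstacle to be the first step, namely producing a quantitative modulus of absolute continuity for $\mu$ against Lebesgue measure that depends only on $H^*$ and $E^*$. Once this is established, the rest is a routine successive pigeonhole: the smallness of $\delta$ is used solely to guarantee that the geometric exclusions (a ball around $v_1$, a tube around the line through $v_1$ and $v_2$) have Lebesgue measure below the continuity threshold $\eta$, hence $\mu$-measure below $1/4$, leaving enough mass to find the next concentration point.
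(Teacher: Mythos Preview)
The paper does not prove this lemma itself; it simply cites \cite{FournierHauray2015} for the bound $\mu(B(v_k,\delta))\geq\kappa$ and observes that the bump-function inequality is immediate from $h(\cdot/\delta)\geq\mathbf{1}_{B(0,1)}(\cdot/\delta)$. Your proposal is a correct and complete reconstruction of the underlying argument, following precisely the natural strategy: extract uniform absolute continuity from the entropy/energy bound, then run a three-stage pigeonhole on a $\delta$-cover of $B(0,R)$, excluding at each stage a set (a ball, then a tube) whose Lebesgue measure is below the absolute-continuity threshold.

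One minor technical wrinkle: when you pass from the first to the second pigeonhole, you want the \emph{center} $v_2$ at distance $\geq 6\sqrt{\delta}$ from $v_1$, so you should remove from your fixed cover those balls with centers in $B(v_1,6\sqrt{\delta})$; their union is contained in $B(v_1,6\sqrt{\delta}+\delta)$, and it is this slightly enlarged ball whose Lebesgue measure must be $\leq\eta$. The same remark applies to the tube exclusion for $v_3$. This only affects the choice of $\delta$ by a harmless constant and does not change your argument.
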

\begin{proof}
    See \cite{FournierHauray2015} for a proof that $\int_{B(v_k,\delta)}\mu(dw) \geq \kappa$. The other inequality is simply because $h\left(\frac{\cdot-v_k}{\delta}\right)\geq \mathbf{1}_{B(v_k,\delta)}$.
\end{proof}
Whereas the entropy is not continuous with respect to the weak topology on $\mathcal{P}(\mathbb{R}^3)$, the left hand side of~\eqref{eq:entropyimpliesnonaligned} is, which will allow us to propagate~\eqref{eq:entropyimpliesnonaligned} with $\mu=f_0$ (which, recall, is a.s. equal to the regular $g_0$) over a small time interval $[0,\tau]$. We will not use this lemma immediately, but we still stated it here to showcase that the quantity in the left hand side of~\eqref{eq:entropyimpliesnonaligned}, which will appear in the following lemmas, is really a (more general) replacement of bounds on the entropy of $\mu$.

We hence show how a control like~\eqref{eq:entropyimpliesnonaligned} is sufficient to extract from $\mathcal{J}_1(f)$ the bound we want.
The key lemma is that any narrow enough cone cannot intersect all three balls around a $\delta$-non-aligned triplet:
\begin{lemma}\cite[adapted from Lemma 6.2]{FournierHauray2015}
    \label{lem:conenonaligned}
    Let $v_1,v_2,v_3\in B(0,R)$ be $\delta$-non-aligned.
    Let $v\in\mathbb{R}^3$ and $\xi$ a unit vector and define the (double-sided) cone
    $$C=\left\{ w\in \mathbb{R}^3 \;\bigg\vert\; \frac{\vert p_{\xi^\perp}(v-w)\vert}{\vert v-w\vert} \leq \frac{\delta}{2+R+\vert v\vert} \right\},$$
    centered on $v$ with axis $\xi$. There exists $k\in\{1,2,3\}$ such that $C$ does not intersect $B(v_k,2\delta)$.
\end{lemma}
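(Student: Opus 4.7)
The plan is to argue by contradiction: assume $C$ meets each ball $B(v_k,2\delta)$ at some point $w_k$, and derive that $v_1,v_2,v_3$ cannot be $\delta$-non-aligned. The geometric intuition is that $C$ is a doubly-sided cone whose aperture is tuned precisely so that, within a region of diameter comparable to $|v|+R$, it is a very thin slab around the line $v+\mathbb{R}\xi$; consequently, any collection of points inside $C$ that are not too far from the origin must be almost collinear with $\xi$, which is incompatible with the quantitative non-alignment of $v_1,v_2,v_3$.

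I would proceed in four short steps. First, for any $w_k \in C \cap B(v_k,2\delta)$, the triangle inequality together with $v_k \in B(0,R)$ and $\delta < 1$ gives $|v - w_k| \leq |v| + R + 2$; plugging this into the cone condition yields $|p_{\xi^\perp}(v - w_k)| \leq \delta$. Second, since $|p_{\xi^\perp}(w_k - v_k)| \leq |w_k - v_k| \leq 2\delta$, the linearity of $p_{\xi^\perp}$ gives $|p_{\xi^\perp}(v - v_k)| \leq 3\delta$, and hence by subtracting two such estimates
\begin{equation*}
|p_{\xi^\perp}(v_j - v_k)| \leq 6\delta \quad \text{for all } j,k \in \{1,2,3\}.
\end{equation*}
Third, setting $u := (v_2 - v_1)/|v_2 - v_1|$ and using the first non-alignment condition $|v_2 - v_1| \geq 6\sqrt{\delta}$, one gets $|p_{\xi^\perp}(u)| \leq \sqrt{\delta}$, i.e.\ $u$ is almost parallel to $\pm\xi$.

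The main technical step is the fourth: converting the bound on $|p_{\xi^\perp}(v_3 - v_1)|$ into a bound on $|p_{u^\perp}(v_3 - v_1)|$, since the non-alignment hypothesis~\eqref{eq:defdeltana2} lives on the latter. Writing $u = \alpha\xi + \beta e$ with $e \perp \xi$ a unit vector, $\beta = |p_{\xi^\perp}(u)| \leq \sqrt{\delta}$ and $\alpha = \sqrt{1 - \beta^2}$, a direct computation of $p_{u^\perp}(x) - p_{\xi^\perp}(x) = (x\cdot\xi)\xi - (x\cdot u)u$ shows that its norm equals $\beta\sqrt{(x\cdot\xi)^2 + (x\cdot e)^2} \leq \beta|x|$. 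Applying this with $x = v_3 - v_1$ and combining with Step 2 yields
\begin{equation*}
|p_{(v_2 - v_1)^\perp}(v_3 - v_1)| \leq 6\delta + \sqrt{\delta}\,|v_3 - v_1|,
\end{equation*}
which directly contradicts~\eqref{eq:defdeltana2}. The numerical constants $6$ and $24$ appearing in Definition~\ref{def:deltanonaligned} are evidently chosen to leave enough room for precisely this contradiction. I expect this last comparison between the two orthogonal projections to be the only place requiring care; everything else is a triangle inequality.
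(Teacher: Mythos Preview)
Your argument is correct and complete; the paper itself does not give a proof but simply defers to \cite[Step 1 in the proof of Lemma 6.2]{FournierHauray2015}, and what you wrote is essentially that argument. One small remark: you invoke $\delta<1$ in Step~1 without comment --- this is not part of the lemma's hypotheses, but in the paper $\delta$ always comes from Lemma~\ref{lem:entropyimpliesnonaligned}, which guarantees $\delta\in(0,1)$, so the assumption is harmless in context.
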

\begin{proof}
    This lemma is exactly Step 1 in the proof of \cite[ Lemma 6.2]{FournierHauray2015}.
\end{proof}

Using this Lemma, we derive the following estimate. Notice the closeness of the left-hand side with the integral in $w$ in $\mathcal{J}_1(f)$.
\begin{lemma}
    \label{lem:ellipticityestimate}
    Let $v_1,v_2,v_3\in B(0,R)$ be $\delta$-non-aligned.
    For any $v\in\mathbb{R}^3$, any unit vector $\xi$, any $\mu\in\mathcal{P}(\mathbb{R}^3)$,
    $$\sum_{k=1}^3\int_{\mathbb{R}^{3}}\frac{\bar{\alpha}(v-w)}{\vert v - w\vert^2} \left( b_k(v-w)\cdot \xi \right)^4 \mu(dw) \geq c_{\delta,R}\jap{v}^{\gamma-2}\min_{k\in\{1,2,3\} }  \int_{\mathbb{R}^3}h\left(\frac{w-v_k}{\delta}\right)\mu(dw),$$
    where $c_{\delta,R}$ depends on $\delta$ and $R$ only.
\end{lemma}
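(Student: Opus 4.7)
The plan is to reduce the sum over $k$ to a scalar elliptic quantity via a vector identity, then combine the cone avoidance of Lemma~\ref{lem:conenonaligned} with a separation estimate $|v-w|\geq\delta/2$ on the support of $h((\cdot-v_{k^*})/\delta)$ that follows from the $\delta$-non-alignment. Writing $b_k(v-w)=e_k\times(v-w)$ and using $|\xi\times u|^2=|u|^2-(\xi\cdot u)^2$, one directly computes
$$\sum_{k=1}^3(b_k(v-w)\cdot\xi)^2=|\xi\times(v-w)|^2=|p_{\xi^\perp}(v-w)|^2,$$
and the power-mean inequality then gives $\sum_k(b_k\cdot\xi)^4\geq\tfrac13|p_{\xi^\perp}(v-w)|^4$. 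So the LHS of the claim is at least $\tfrac13\int\bar\alpha(v-w)|v-w|^{-2}|p_{\xi^\perp}(v-w)|^4\mu(dw)$, and it suffices to lower bound this scalar integral.

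Next I pick $k^*\in\{1,2,3\}$ with $C\cap B(v_{k^*},2\delta)=\emptyset$ via Lemma~\ref{lem:conenonaligned}. Since the defining inequality of $C$ holds trivially at $w=v$ (both sides vanish), $v\in C$, so the chosen $k^*$ must satisfy $|v-v_{k^*}|\geq 2\delta$. I then restrict the integral to the support of $h((\cdot-v_{k^*})/\delta)$, which lies in $B(v_{k^*},3\delta/2)$. On this support, $w\notin C$ gives $|p_{\xi^\perp}(v-w)|\geq\tfrac{\delta}{2+R+|v|}|v-w|$, hence
$$\frac{|p_{\xi^\perp}(v-w)|^4}{|v-w|^2}\geq\left(\frac{\delta}{2+R+|v|}\right)^4|v-w|^2,$$
while the triangle inequality yields $|v-w|\geq|v-v_{k^*}|-3\delta/2\geq\delta/2$.

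It remains to estimate $\bar\alpha(v-w)|v-w|^2$ from below on this set. From the properties of $\bar\alpha$ recalled in the excerpt (equal to $r^\gamma$ for $r\geq 1$ and bounded above by $0.99^\gamma$), we split two cases: if $|v-w|\geq 1$, then $\bar\alpha(v-w)|v-w|^2=|v-w|^{\gamma+2}$ and, using $|v-w|\leq|v|+R+3\delta/2\leq(R+2)\sqrt2\,\jap{v}$ together with $\gamma+2\leq 0$, this is $\geq c_R\jap{v}^{\gamma+2}$; if $|v-w|\leq 1$, then $\bar\alpha(v-w)\geq 1$ (since $\chi\leq 1$ on $[0,1]$) and $|v-w|^2\geq\delta^2/4\geq(\delta^2/4)\jap{v}^{\gamma+2}$. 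In both cases $\bar\alpha(v-w)|v-w|^2\geq c_{\delta,R}\jap{v}^{\gamma+2}$. Bounding $(2+R+|v|)^{-4}\geq c'_R\jap{v}^{-4}$ and finally majorising $\int h((w-v_{k^*})/\delta)\mu(dw)\geq\min_k\int h((w-v_k)/\delta)\mu(dw)$ delivers the announced $\jap{v}^{\gamma-2}$ factor.

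The main subtlety is that the kernel $\bar\alpha(v-w)|v-w|^{-2}|p_{\xi^\perp}(v-w)|^4$ degenerates both when $v-w\to 0$ (near the diagonal) and when $v-w$ is parallel to $\xi$, so the argument needs both a directional restriction (cone exclusion) and a positive separation $|v-w|\geq\delta/2$. The trivial inclusion $v\in C$ is what forces $v\notin B(v_{k^*},2\delta)$ and thereby provides exactly the required separation; without it the bound would collapse when $v$ happens to lie inside the retained ball.
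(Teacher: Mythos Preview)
Your proof is correct and follows essentially the same route as the paper: reduce $\sum_k(b_k\cdot\xi)^4$ to $\tfrac13|p_{\xi^\perp}(v-w)|^4$ via the identity $\sum_k(b_k\cdot\xi)^2=|p_{\xi^\perp}(v-w)|^2$, invoke Lemma~\ref{lem:conenonaligned} to select a ball outside the cone, and then lower bound $\bar\alpha(v-w)|v-w|^2$ by splitting on $|v-w|\gtrless 1$. Your explicit observation that $v\in C$ (hence $|v-v_{k^*}|\geq 2\delta$, giving the separation $|v-w|\geq\delta/2$ on the support of $h$) is exactly what the paper uses, though the paper only invokes it inside the case $|v-w|<1$; and your treatment of that case via $\langle v\rangle^{\gamma+2}\leq 1$ is a slight streamlining of the paper's remark that $|v|\leq R+3$ there.
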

\begin{proof}
    By the Cauchy-Schwarz inequality,
    $$\sum_{k=1}^3\left( b_k(v-w)\cdot \xi \right)^4 \geq \frac{1}{3}\left(\sum_{k=1}^3\left( b_k(v-w)\cdot \xi \right)^2\right)^2.$$
    But since $a(v-w)=\sum_{k=1}^3 b_k\otimes b_k$,
    \begin{align*}
    \sum_{k=1}^3\left( b_k(v-w)\cdot \xi \right)^2&=a(v-w)\xi \cdot \xi\\
    &=\vert v-w\vert^2-((v-w)\cdot\xi)^2\\
    &=\vert p_{\xi^\perp}(v-w)\vert ^2.
    \end{align*}
    Hence
    \begin{align*}
        \sum_{k=1}^3\int_{\mathbb{R}^{3}}\frac{\bar{\alpha}(v-w)}{\vert v - w\vert^2} \left( b_k(v-w)\cdot \xi \right)^4 \mu(dw) \geq &\frac{1}{3}\int_{\mathbb{R}^{3}}\frac{\bar{\alpha}(v-w)}{\vert v - w\vert^2} \vert p_{\xi^\perp}(v-w)\vert^4 \mu(dw).
    \end{align*}
    By Lemma~\ref{lem:conenonaligned}, there exists $k\in\{1,2,3\}$ such that the cone $C$ does not intersect $B(v_k,2\delta)$. For any $w\in B(v_k,2\delta)$, by definition of $C$,
    $$\vert p_{\xi^\perp}(v-w)\vert > \frac{\delta \vert v-w\vert}{2+R+\vert v\vert},$$
    so that
    \begin{align*}
        \sum_{k=1}^3\int_{\mathbb{R}^{3}}\frac{\bar{\alpha}}{\vert v - w\vert^2} \left( b_k(v-w)\cdot \xi \right)^4 \mu(dw) \geq &\frac{\delta^4}{3(2+R+\vert v\vert)^4} \int_{B(v_k,2\delta)}\bar{\alpha}\vert v - w\vert^2\mu(dw)\\
        \geq&\frac{\delta^4}{3(2+R+\vert v\vert)^4} \int_{\mathbb{R}^{3}}\bar{\alpha}\vert v - w\vert^2h\left(\frac{w-v_k}{\delta}\right)\mu(dw),
    \end{align*}
    because $h(\cdot-v_k/\delta) \leq \mathbf{1}_{B(v_k,2\delta)}$
    We bound $\bar{\alpha}\vert v - w\vert^2$ from below. If $\vert v-w\vert \geq 1$, $\bar{\alpha}(\vert v-w\vert)=\alpha(\vert v-w\vert)$ so $\bar{\alpha}\vert v - w\vert^2=\vert v - w\vert^{2+\gamma}$. Since $2+\gamma\leq 0$,
    $$\bar{\alpha}\vert v - w\vert^2\geq ( \vert v\vert + \vert  w\vert )^{2+\gamma}\geq ( \vert v\vert + R + 2 )^{2+\gamma}$$
    because $\vert w\vert \leq R+2\delta \leq R+2$ for $w\in B(v_k,2\delta)$. Otherwise, if $\vert v-w\vert < 1$, $\bar{\alpha}\geq 1$ and since $h$ has support in $B(0,3/2)$, we can assume $w\in B(v_k,3\delta/2)$, and hence $\vert v-w\vert \geq \delta/2$ because $v\in C$. Then
    $$\bar{\alpha}\vert v - w\vert^2 \geq \frac{\delta^2}{4}. $$
    Furthermore, $\vert v-w\vert < 1$ and $\vert w \vert \leq R+2$ imply that $\vert v\vert\leq R+3$, so the behaviour for large $v$ is given by the other inequality.
    Combining, we obtain that for any $w$ in the support of $h\left(\frac{\cdot-v_k}{\delta}\right)$,
    $$\frac{\delta^4\bar{\alpha}\vert v - w\vert^2} {3(2+R+\vert v\vert)^4}\geq c_{\delta,R} \jap{v}^{\gamma-2},$$
    for some $c_{\delta,R}>0$. Plugging this in we obtain the result.
\end{proof}
Everything is ready to extract from $\mathcal{J}_1(f)$ a clean control on $\vert \nabla_v \log f\vert^4$ provided $f$ charges 3 non-aligned balls.
\begin{lemma}
    \label{lem:J_1controlsFisher4}
    Recall that $$\mathcal{J}_1(f):=\sum_{k=1}^3\int_{\mathbb{R}^{6}}f(v)f(w)  \frac{\bar{\alpha}}{\vert v - w\vert^2} \left( b_k(v-w)\cdot \nabla_v\log f(v) \right)^4 dvdw.$$
    For any triplet of points $v_1,v_2,v_3 \in B(0,R)$ that are $\delta$-non-aligned,
    $$\mathcal{J}_1(f) \geq c_{\delta,R}\left(\min_{k\in\{1,2,3\} }  \int_{\mathbb{R}^3}h\left(\frac{w-v_k}{\delta}\right)f(w)dw\right)\int_{\mathbb{R}^{3}} f(v) \jap{v}^{\gamma-2}\vert \nabla_v\log f(v)\vert^4,$$
    with $c_{\delta,R}$ from Lemma~\ref{lem:ellipticityestimate}, depending only on $\delta$ and $R$.
\end{lemma}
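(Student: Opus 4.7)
The plan is to apply Fubini to interchange the $v$ and $w$ integrals in $\mathcal{J}_1(f)$, and then apply Lemma~\ref{lem:ellipticityestimate} pointwise in $v$ with a suitable choice of unit vector $\xi$.

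First, I would rewrite
\begin{equation*}
\mathcal{J}_1(f) = \int_{\mathbb{R}^3} f(v) \Bigl[ \sum_{k=1}^3 \int_{\mathbb{R}^3} f(w) \frac{\bar{\alpha}(v-w)}{\vert v-w\vert^2} \bigl( b_k(v-w)\cdot \nabla_v \log f(v) \bigr)^4 dw \Bigr] dv,
\end{equation*}
treating $\nabla_v \log f(v)$ as a fixed vector when performing the inner integration in $w$. For each $v$ at which $\nabla_v \log f(v) \neq 0$, set $\xi := \nabla_v \log f(v)/\vert \nabla_v \log f(v)\vert$; by homogeneity of degree $4$ of the integrand in $\nabla_v\log f(v)$, the bracketed term factors as
\begin{equation*}
\vert \nabla_v \log f(v)\vert^4 \cdot \sum_{k=1}^3 \int_{\mathbb{R}^3} \frac{\bar{\alpha}(v-w)}{\vert v-w\vert^2} \bigl( b_k(v-w)\cdot \xi \bigr)^4 f(w) dw.
\end{equation*}
(At points where $\nabla_v \log f(v) = 0$ the integrand vanishes and contributes nothing.)

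Next I would apply Lemma~\ref{lem:ellipticityestimate} with $\mu = f(w)dw$ to bound this last integral from below by
\begin{equation*}
c_{\delta,R} \jap{v}^{\gamma-2} \min_{k\in\{1,2,3\}} \int_{\mathbb{R}^3} h\Bigl(\frac{w-v_k}{\delta}\Bigr) f(w) dw.
\end{equation*}
Crucially, the constant $c_{\delta,R}$ and the minimum on the right-hand side are uniform in $v$: the triplet $v_1,v_2,v_3$ is fixed independently of $v$, and only the choice of which $k$ makes the cone $C$ from Lemma~\ref{lem:conenonaligned} miss $B(v_k,2\delta)$ depends on $v$ and $\xi$. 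Taking the minimum over $k$ passes to a quantity that does not depend on $v$ at all.

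Finally, plugging this lower bound back and integrating against $f(v)\, dv$ yields
\begin{equation*}
\mathcal{J}_1(f) \geq c_{\delta,R} \Bigl( \min_{k} \int h\bigl(\tfrac{\cdot -v_k}{\delta}\bigr) f \Bigr) \int_{\mathbb{R}^3} f(v) \jap{v}^{\gamma-2} \vert \nabla_v \log f(v)\vert^4 dv,
\end{equation*}
which is the claimed estimate. I do not expect any serious obstacle: the whole argument is a bookkeeping exercise once Lemma~\ref{lem:ellipticityestimate} is available, and the only subtle point is that the dependence of $k$ on $v,\xi$ is absorbed harmlessly by taking a minimum.
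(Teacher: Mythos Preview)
Your proposal is correct and follows essentially the same argument as the paper: rewrite $\mathcal{J}_1(f)$ with the inner $w$-integral, factor out $|\nabla_v\log f(v)|^4$ via the unit vector $\xi(v)=\nabla_v\log f(v)/|\nabla_v\log f(v)|$, and apply Lemma~\ref{lem:ellipticityestimate} with $\mu=f(w)\,dw$. Your write-up is in fact slightly more careful, making explicit the trivial case $\nabla_v\log f(v)=0$ and the reason the minimum over $k$ is independent of $v$.
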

\begin{proof}
    We write $$\mathcal{J}_1(f):=\sum_{k=1}^3\int_{\mathbb{R}^{3}}\left(\int_{\mathbb{R}^{3}}  \frac{\bar{\alpha}}{\vert v - w\vert^2} \left( b_k(v-w)\cdot \xi(v) \right)^4 f(w)dw \right)f(v)\vert \nabla_v\log f(v) \vert^4 dv,$$
    with $$\xi(v)=\frac{\nabla_v\log f(v) }{\vert \nabla_v\log f(v)\vert},$$ and apply Lemma~\ref{lem:ellipticityestimate} with $\mu=f(dv)$.
\end{proof}
We conclude this section by showing that the finiteness of $\int f\jap{v}^{\gamma-2}\vert \nabla \log f\vert^4$ implies that $f$ is bounded (and continuous), by Sobolev embeddings. This requires $f$ to have finite moments of higher order than the energy.
\begin{lemma}
    \label{lem:fisher4controlsLinf}
    Let $p\in (3,4)$. There exists $c_p>0$ such that, for any $f\in\mathcal{P}(\mathbb{R}^3)$, 
    $$\Vert f\Vert_{L^\infty(\mathbb{R}^3)} \leq c_p\left[1+\left( \int_{\mathbb{R}^{3}}\jap{v}^{\gamma-2}f(v) \left\vert\nabla\log f(v) \right\vert^{4} dv \right)^{\frac{p}{4}}\left(\int_{\mathbb{R}^{3}}\jap{v}^{\frac{p(2-\gamma)}{4-p}}f(v)  dv\right)^{1-\frac{p}{4}}\right],$$
    and $f$ is in fact continuous on $\mathbb{R}^3$ if the right-hand side is finite.
\end{lemma}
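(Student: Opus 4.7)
The plan is to exploit the Morrey--Sobolev embedding $W^{1,p}(\mathbb{R}^3) \hookrightarrow C^0 \cap L^\infty(\mathbb{R}^3)$, which holds precisely when $p>3$. A direct attempt to bound $\Vert \nabla f\Vert_{L^p}$ by the two target quantities leaves a surplus power of $f$ one cannot absorb cheaply; the key idea is to rescale and work instead with $u := f^{1/p}$. Since $f$ is a probability density, $\Vert u\Vert_{L^p}^p = \int f = 1$, and the factor of $f^{1/p-1}$ hidden in $\nabla u$ is exactly what is needed to convert the extraneous power of $f$ into the prescribed moment.

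First I would write
\[
|\nabla u|^p \;=\; p^{-p}\, f\,|\nabla \log f|^p,
\]
and then factor the integrand as
\[
f|\nabla\log f|^p \;=\; \bigl(f\,|\nabla\log f|^4\,\jap{v}^{\gamma-2}\bigr)^{p/4}\,\cdot\,\bigl(f\,\jap{v}^{p(2-\gamma)/(4-p)}\bigr)^{(4-p)/4}.
\]
The powers of $f$ on the right sum to $p/4+(4-p)/4 = 1$, and the weights combine as $\jap{v}^{(\gamma-2)p/4+(2-\gamma)p/4}=\jap{v}^0$, so the identity is routine. Since $3<p<4$, Hölder's inequality with conjugate exponents $(4/p, 4/(4-p))$ then yields
\[
\Vert \nabla u\Vert_{L^p}^p \;\leq\; C_p\, A^{p/4}\, M^{(4-p)/4},
\]
with $A := \int \jap{v}^{\gamma-2}f|\nabla\log f|^4\,dv$ and $M := \int \jap{v}^{p(2-\gamma)/(4-p)}f\,dv$.

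Second, invoking Morrey's inequality (applicable because $p>3$ and $n=3$) gives $u\in C^0(\mathbb{R}^3)$, and hence $f = u^p$ is continuous, together with $\Vert u\Vert_{L^\infty}\leq C_p'(\Vert u\Vert_{L^p}+\Vert \nabla u\Vert_{L^p})$. Raising to the $p$-th power, using $\Vert u\Vert_{L^p}=1$, and the elementary bound $(a+b)^p\lesssim_p a^p+b^p$ then delivers
\[
\Vert f\Vert_{L^\infty} \;=\; \Vert u\Vert_{L^\infty}^{p} \;\leq\; c_p\bigl(1+A^{p/4}\,M^{(4-p)/4}\bigr),
\]
which is the claim.

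There is no real obstacle here: the only piece of insight is the choice of rescaling $u=f^{1/p}$, dictated precisely by the requirement that the power of $f$ in $|\nabla u|^p$ coincide with the $p/4$-power of the $A$-integrand. Everything else is a one-line Hölder computation plus a classical Sobolev embedding; the condition $p\in(3,4)$ is used exactly twice, once for Morrey ($p>3$) and once to ensure finiteness of the Hölder exponent $4/(4-p)$ ($p<4$).
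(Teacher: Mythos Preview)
Your proof is correct and follows essentially the same approach as the paper: the rescaling $u=f^{1/p}$, the H\"older inequality with exponents $(4/p,4/(4-p))$ to split the weighted $L^4$-Fisher term and the moment, and the Morrey--Sobolev embedding $W^{1,p}(\mathbb{R}^3)\hookrightarrow C^0\cap L^\infty$ for $p>3$ are exactly the steps in the paper's argument. Your presentation is slightly more streamlined in that you introduce $u$ from the outset rather than arriving at it, but the substance is identical.
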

\begin{proof}
We assume the right-hand side finite, which implies $f$ has a density.
We use Hölder inequality to get:
\begin{align*}
\int_{\mathbb{R}^{3}}f(v) &\left\vert\nabla\log f(v) \right\vert^{p} dv\\\leq
&\left( \int_{\mathbb{R}^{3}}\jap{v}^{\gamma-2}f(v) \left\vert\nabla\log f(v) \right\vert^{4} dv \right)^{\frac{p}{4}}\left(\int_{\mathbb{R}^{3}}\jap{v}^{\frac{p(2-\gamma)}{4-p}}f(v)  dv\right)^{\frac{4-p}{4}}.
\end{align*}
We rewrite the left-hand-side as a proper Sobolev seminorm:
$$\int_{\mathbb{R}^{3}}f(v) \left\vert\nabla\log f(v) \right\vert^{p} dv=p^p \int_{\mathbb{R}^{3}} \left\vert\nabla f^\frac{1}{p} \right\vert^{p} dv=p^p\Vert \nabla f^\frac{1}{p}\Vert_{L^p}^p.$$
Since $p>3$, the Sobolev space $W^{1,p}(\mathbb{R}^3)$ (of $L^p$ functions having their gradient in $L^p$) embeds into continuous bounded functions and
$$\Vert f^\frac{1}{p}\Vert_{L^\infty}^p\leq c_p \Vert f^\frac{1}{p}\Vert_{W^{1,p}}^p=c_p\left(\Vert f^\frac{1}{p}\Vert^p_{L^p}+\Vert \nabla f^\frac{1}{p}\Vert_{L^p}^p\right).$$
But $\Vert f^\frac{1}{p}\Vert^p_{L^p}=1$ since $f$ is a probability, so 
$$\Vert f\Vert_{L^\infty}=\Vert f^\frac{1}{p}\Vert_{L^\infty}^p \leq c_p\left(1+\Vert \nabla f^\frac{1}{p}\Vert_{L^p}^p\right)$$
and chaining the inequalities above we get the result.
\end{proof}
We nearly have the $L^1_t L^\infty_v$
bound we sought. 
Indeed, we have proven that $\Vert f\Vert_{L^\infty}$ is controlled by $\mathcal{J}_1(f_t)$, which is in turned controlled by $\mathcal{J}(f_t)$ and the Fisher information $I(f_t)$, both of which lie in $L^1([0,T])$ a.s. This holds provided $f_t$ satisfies two conditions: it \textit{charges three non-aligned balls} and \textit{has enough finite moments}. We show that $f_t$ has finite moments in the next section and deal with the bounds related to non-aligned points in the following one.

\subsection{Propagating moments}

The propagation of moments for the Landau equation was shown in \cite{Desvillettes2014a, CarrapatosoDesvillettesHe2015}. The proof in \cite{CarrapatosoDesvillettesHe2015} relies on both the "usual" weak formulation of the Landau equation (the one we used up to now) and the H-formulation introduced by Villani \cite{Villani1998} for H-solutions, and we have yet to show that it holds. For a test function $\varphi\in C^2_b(\mathbb{R}^3)$, the H-formulation is
\begin{align}
    \label{eq:Hform}
     \int_{\mathbb{R}^3} \varphi(v) f_t(v)dv-\int_{\mathbb{R}^3} &\varphi(v) f_0(v) dv\nonumber\\
     = -\frac{1}{2}
     \int_0^t \int_{\mathbb{R}^6}  (\nabla_v&\varphi(v) - \nabla_w\varphi(w))\\
     &\cdot \alpha a(v-w) (\nabla_v - \nabla_w)\log(f_s(v)f_s(w)) f_s(dv) f_s(dw)ds.\nonumber
\end{align}
The right-hand side of this formula only makes sense using the entropy production \cite{Villani1998}: 
by the Cauchy-Schwarz inequality,
\begin{align*}
    \bigg\vert \int_0^t \int_{\mathbb{R}^6} &(\nabla_v\varphi(v) - \nabla_w\varphi(w))\cdot \alpha a(v-w) (\nabla_v - \nabla_w)\log(f_s(v)f_s(w))f_s(v) f_s(w)\ dvdwds
    \bigg\vert \\\leq& \left(\int_0^t \int_{\mathbb{R}^6} \vert \nabla_v\varphi(v) - \nabla_w\varphi(w)\vert^2 \vert v-w\vert^{\gamma+2} f_s(v) f_s(w)\ dvdwds \right)^\frac{1}{2}\\
    &\cdot\left(\int_0^t \int_{\mathbb{R}^6}\alpha a(v-w): \left((\nabla_v - \nabla_w)\log(f_s(v) f_s(w))\right)^{\otimes 2} f_s(v) f_s(w)\ dvdwds \right)^\frac{1}{2},
\end{align*}
but the very last term is exactly (two times) the entropy production $\int_0^t D(f_s \otimes f_s)ds$. The other term can be shown to be finite using the $C^2$ regularity of $\varphi$, see \cite[Equation (42)]{Villani1998} for details. The equivalence between the H-formulation~\eqref{eq:Hform} and the usual weak formulation is simply an integration by parts (which require some care for rough functions, but that can be made rigorous using finiteness of the entropy dissipation). It leads to:
\begin{lemma}
    \label{lem:ashsol}
    Almost surely, the H-formulation~\eqref{eq:Hform} for the random variable $(f_t)_t$ holds for any test function $\varphi\in C^2_0(\mathbb{R}^3)$.
\end{lemma}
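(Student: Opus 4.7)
The plan is to derive the H-formulation directly from the weak formulation of Proposition~\ref{prop:aswsolution} by integration by parts, with the finiteness of the entropy dissipation providing the integrability needed to justify the manipulation. I would fix an event of full probability on which (i) $f$ is a weak solution in the sense of Definition~\ref{def:weaksolution}, (ii) $f \in L^1([0,T], L^3(\mathbb{R}^3))$ (both from Proposition~\ref{prop:aswsolution}), and (iii) $\int_0^T D(f_s \otimes f_s)\,ds < \infty$ (from Proposition~\ref{prop:level3estimatesforf}). On such an event the right-hand side of~\eqref{eq:Hform} is well-defined for any $\varphi \in C^2_0(\mathbb{R}^3)$, via the Cauchy--Schwarz argument already laid out before the lemma.

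Formally, using $b = \nabla \cdot (\alpha a)$, one integration by parts in $v$ rewrites the $\nabla^2\varphi$ term of the weak formulation as $-\int b(v-w) \cdot \nabla \varphi(v)\, f(v)f(w) - \int \alpha a(v-w) \nabla_v f(v) \cdot \nabla \varphi(v)\, f(w)$. Combined with the $b$-term of the weak formulation and a symmetrization $v \leftrightarrow w$ (exploiting $b(-z) = -b(z)$ and $a(v-w) = a(w-v)$), this produces exactly the H-formulation right-hand side. This is the classical equivalence established by Villani~\cite{Villani1998}.

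To make the above rigorous for our low-regularity $f$, I would regularize in two stages: first replace $\alpha$ by the bounded smooth $\alpha_\varepsilon$ of Remark~\ref{rem:regularizedpotential} with $b_\varepsilon = \nabla \cdot (\alpha_\varepsilon a)$, and mollify $f_s$ by convolution at scale $\delta$. For the smooth objects the integration by parts is classical; letting $\delta \to 0$ is handled by the $L^3$ regularity of $f_s$ together with the boundedness of $\alpha_\varepsilon$ and $b_\varepsilon$. The limit $\varepsilon \to 0$ is then passed on both sides: on the weak-formulation side via estimates analogous to those of Lemma~\ref{aswsolution-feps f limit}, and on the H-formulation side via Cauchy--Schwarz combined with $D^\varepsilon(f_s \otimes f_s) \nearrow D(f_s \otimes f_s)$ (by monotone convergence, since $\alpha_\varepsilon \nearrow \alpha$) and the standard control of $\int |\nabla \varphi(v) - \nabla\varphi(w)|^2 |v-w|^{\gamma+2} f(v)f(w)$ by $\Vert \nabla^2 \varphi \Vert_{L^\infty}^2$ times finite moments. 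The main delicacy is ensuring uniform-in-time integrability so that these limits can be interchanged with the time integral, which is precisely what the hypothesis (iii) guarantees.
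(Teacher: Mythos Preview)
Your proposal is correct and follows essentially the same route as the paper: regularize the potential so that the formal integration by parts (as in Villani~\cite{Villani1998}) becomes rigorous, then remove the regularization using that both the weak and the H-formulations are already known to be finite. The only cosmetic difference is that the paper phrases the regularization as a splitting $\alpha = \alpha_1^\epsilon + \alpha_2^\epsilon$ (smooth part plus small singular remainder) and argues that the singular pieces of each formulation vanish by dominated convergence, whereas you approximate $\alpha$ by $\alpha_\varepsilon$ directly and pass to the limit on each side; these are two equivalent ways to organize the same dominated-convergence argument.
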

\begin{proof}
    Almost surely, the weak formulation holds for $(f_t)_t$.
    Also, from Proposition~\ref{prop:level3estimatesforf}, the entropy production is a.s. in $L^1([0,T])$, that is to say
    $\int_0^T D(f_t \otimes f_t)dt <+\infty.$
    Hence we can make sense of the right-hand side in~\eqref{eq:Hform}, which we call below the (H-formulation with $\alpha$).

    We cut $\alpha=\alpha_1^\epsilon+\alpha_2^\epsilon$ into a small singular part $\alpha_2^\epsilon$ with support in $B(0,\epsilon)$ for some $\epsilon>0$, and a smooth $\alpha_1^ \epsilon$. Since the formulations are linear with respect to $\alpha$, we can write
    \begin{align*}
     \int_{\mathbb{R}^3} &\varphi(v) f_t(v)dv-\int_{\mathbb{R}^3} \varphi(v) f_0(v) dv\\
     &= (\text{weak formulation with }\alpha_1^\varepsilon)+(\text{weak formulation with }\alpha_2^\varepsilon)
    \end{align*}
    and
    \begin{align*}
     (\text{H-formulation with }\alpha)= (\text{H-formulation with }\alpha_1^\varepsilon)+(\text{H-formulation with }\alpha_2^\varepsilon).
    \end{align*}
    By the arguments of \cite[Section 6]{Villani1998} (more precisely, the paragraph with equation (52) therein), the two formulations are equivalent far from the singularity (the idea is to approximate $f(v)f(w)$ by smooth functions, for which the formal integration by parts is rigorous, and we can then pass to the limit \textit{because} the singularity is avoided, see \cite{Villani1998} for details). In other terms, for any $\epsilon>0$, the parts of each formulation with $\alpha_1^\epsilon$ coincide. But the parts with $\alpha_2^\epsilon$ vanish as $\epsilon\rightarrow 0$, by dominated convergence (dominating $\alpha_2^\epsilon$ by $\alpha$), because the full formulations are known to be finite. We conclude that 
    \begin{align*}
     \int_{\mathbb{R}^3} \varphi(v) f_t(v)dv-\int_{\mathbb{R}^3} &\varphi(v) f_0(v) dv\nonumber =(\text{H-formulation with }\alpha)
    \end{align*}
    as wanted.
\end{proof}
\begin{remark}
\label{rem:hsol}
    We emphasize that we have \textit{not} proven that $(f_t)_t$ is a H-solution in the sense of \cite{Villani1998} because it does not a priori satisfy the H-theorem
    $$H(f_t)+\int_0^t D(f_s\otimes f_s)ds \leq H(f_0),$$
    which is part of the definition of H-solutions. The definition also includes that the energy is conserved, which we do not know yet but will be able to show it below. If we knew that $(f_t)_t$ was a H-solution, we could apply the uniqueness result \cite{Tabary2025} and directly conclude that $(f_t)_t=(g_t)_t$. We only know the finiteness of $\int_0^t D(f_s\otimes f_s)ds$ (but with no quantitative bound), which is just enough for making sense of the formulation.

    Also, without finiteness of $\int_0^t D(f_s\otimes f_s)ds$, we cannot a priori use the H-formulation, and the dominated convergence in the proof above does not hold. In other terms, we did \textit{not} show that any weak solution satisfies the H-formulation.
\end{remark}

Knowing that we can rely on both formulations, we can prove the propagation of moments. We begin with the conservation of energy:
\begin{lemma}
    \label{prop:propofenergy}
    Almost surely, for all times $t\in[0,T]$,
    $$\int_{\mathbb{R}^3} \vert v \vert^2f_t(v)dv =E_0.$$
\end{lemma}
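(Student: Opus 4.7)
The plan is to apply the H-formulation of Lemma~\ref{lem:ashsol} with a $C^2_0$ truncation of $\varphi(v)=\vert v\vert^2$, and exploit the algebraic identity $a(v-w)(v-w)=0$ (since $a(z)$ is the $\vert z\vert^2$-weighted projection onto $z^\perp$) to kill the main contribution.

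Fix a smooth radially non-increasing bump $h:\mathbb{R}^3\to[0,1]$ with $h=1$ on $B(0,1)$ and $h=0$ outside $B(0,2)$, and set $\chi_R(v):=\vert v\vert^2 h(v/R)\in C^2_0(\mathbb{R}^3)$. A direct computation gives the uniform-in-$R$ bounds $\Vert\nabla^2\chi_R\Vert_\infty\leq C$ and $\vert\nabla\chi_R(v)\vert\leq C\vert v\vert$, and $\chi_R\nearrow\vert v\vert^2$ pointwise. Lemma~\ref{lem:ashsol} (upgraded to hold simultaneously for a countable dense family of $R$ and $t$, then continuity in $t$) yields, a.s., for every $t\in[0,T]$ and every $R\geq 1$,
\begin{equation*}
\int\chi_R f_t\,dv-\int\chi_R g_0\,dv=-\tfrac{1}{2}\int_0^t\!\!\int_{\mathbb{R}^6}\Delta^R\varphi\cdot\alpha a(v-w)(\nabla_v-\nabla_w)\log(f_sf_s)\,f_sf_s\,dvdwds,
\end{equation*}
where $\Delta^R\varphi:=\nabla\chi_R(v)-\nabla\chi_R(w)$, using Lemma~\ref{lem:aswsolution-initial time} to identify $f_0=g_0$.

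On $B(0,R)\times B(0,R)$ we have $\Delta^R\varphi=2(v-w)$ and $(v-w)\cdot a(v-w)=0$, so the integrand vanishes there. On the complement $E_R:=\mathbb{R}^6\setminus B(0,R)^2$, apply Cauchy–Schwarz for the nonnegative bilinear form $(\Phi,\Psi)\mapsto\int\Phi\cdot\alpha a\Psi\,f_sf_s$:
\begin{equation*}
\left\vert\int_0^t\!\!\int_{E_R}\Delta^R\varphi\cdot\alpha a(\nabla_v-\nabla_w)\log(f_sf_s)f_sf_s\right\vert\leq\mathcal{A}_R^{1/2}\Bigl(2\textstyle\int_0^t D(f_s\otimes f_s)ds\Bigr)^{1/2},
\end{equation*}
where $\mathcal{A}_R:=\int_0^t\int_{E_R}\alpha a(v-w):(\Delta^R\varphi)^{\otimes 2}f_sf_s$. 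The second factor is a.s. finite by Proposition~\ref{prop:level3estimatesforf}. For $\mathcal{A}_R$, combining $\vert\Delta^R\varphi\vert\leq C\vert v-w\vert$ with $\vert\Delta^R\varphi\vert\leq C(\vert v\vert+\vert w\vert)$ gives $\vert\Delta^R\varphi\vert^2\leq C\vert v-w\vert(\vert v\vert+\vert w\vert)$, and since $\alpha(\vert v-w\vert)\vert v-w\vert^2=\vert v-w\vert^{\gamma+2}$ with $\gamma+3\in[0,1]$, the integrand of $\mathcal{A}_R$ is dominated by $C(1+\vert v\vert^2+\vert w\vert^2)f_s(v)f_s(w)$.

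The dominating function is a.s. integrable on $[0,T]\times\mathbb{R}^6$: indeed, Proposition~\ref{prop:lvl3energy} and Fubini give $\mathbb{E}[\int_0^T\!\int\vert v\vert^2 f_s\,dv\,ds]\leq TE_0$. Since $\mathbf{1}_{E_R}\to 0$ pointwise as $R\to\infty$, dominated convergence yields $\mathcal{A}_R\to 0$ a.s., hence the right-hand side of the H-formulation vanishes in the limit. Monotone convergence on the left (using that $h$ is chosen radially non-increasing so $\chi_R\nearrow\vert v\vert^2$) concludes $\int\vert v\vert^2f_t\,dv=E_0$ a.s. The main obstacle is controlling the remainder $\mathcal{A}_R$ in spite of the singularity of $\alpha a$ near $v=w$, which is handled by the crucial interpolation between the two available bounds on $\Delta^R\varphi$ and the constraint $\gamma\geq -3$.
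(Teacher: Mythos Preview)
Your proof is correct and follows the same overall strategy as the paper: truncate $|v|^2$ by a $C^2_0$ test function $\chi_R$, observe that $a(v-w)(v-w)=0$ kills the contribution on $B(0,R)^2$, and control the remaining piece on $E_R=\mathbb{R}^6\setminus B(0,R)^2$ via Cauchy--Schwarz against the entropy production.

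The conclusion of the argument is however genuinely more direct than the paper's. The paper proceeds in two steps: first it applies $ab\leq\tfrac12(a^2+b^2)$ after Cauchy--Schwarz to obtain an additive bound, then runs a Gronwall argument to show $\sup_{[0,T]}E(t)<\infty$; only then does it revisit the tail term and show it vanishes via a weak-$L^2$ compactness argument on the family $(\Phi_R)_R$. You instead keep the Cauchy--Schwarz estimate as a product $\mathcal{A}_R^{1/2}(2\int_0^T D)^{1/2}$ and observe that the a.s.\ finiteness of $\int_0^T E(s)\,ds$ --- which follows directly from Proposition~\ref{prop:lvl3energy} --- is already sufficient to dominate the integrand of $\mathcal{A}_R$ uniformly in $R$ and apply dominated convergence. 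This bypasses both the Gronwall step and the weak-$L^2$ argument entirely. (Amusingly, the paper itself records the bound $\mathbb{E}\int_0^T E(s)\,ds\leq TE_0$ in a remark following the proof, but only to justify that Gronwall's lemma may be applied, rather than to short-circuit it.) Your interpolation $|\Delta^R\varphi|^2\leq C|v-w|(|v|+|w|)$ is the small extra ingredient that makes the domination work cleanly; the paper uses only the Lipschitz bound $|\Delta^R\varphi|\leq C|v-w|$, which leads to the coarser estimate $|v-w|^{\gamma+4}\lesssim 1+|v|^2+|w|^2$ and forces the detour through Gronwall.
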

\begin{proof}
    \textit{Step 1.} We begin by showing that the energy remains bounded by a Gronwall argument. Let 
    $$E(t)=\int_{\mathbb{R}^3} \vert v \vert^2f_t(v)dv.$$
    By Lemma~\ref{lem:ashsol}, almost surely, $f$ satisfies the H-formulation.
    Consider a smooth radially decreasing $\chi$, such that $0 \leq \chi \leq 1$, and which equals $1$ on $B(0,1)$ and vanishes outside $B(0,2)$. Fix a radius $R>0$ and consider the test function $$\varphi_R(v):= \chi\left(\frac{v}{R}\right)\vert v\vert^2.$$
    We can plug it in the H-formulation.
     We compute
    $$\nabla_v\varphi_R(v) = 2v \chi\left(\frac{v}{R}\right) + \frac{1}{R}\nabla\chi\left(\frac{v}{R}\right) \vert v\vert^2.$$
    For $(v,w) \in B(0,R)^2$, 
    $$\nabla_v\varphi_R(v) - \nabla_w\varphi_R(w)= 2(v-w),$$
    so since $a(v-w)(v-w)=0$, this disappears in the H-formulation~\eqref{eq:Hform} and we are only left with the integral over the complement $(B(0,R)^2)^c$:
    \begin{align}
    \label{eq:proofpropenergy1}
     \int_{\mathbb{R}^3} \varphi_R(v) f_t(v)dv-\int_{\mathbb{R}^3} &\varphi_R(v) f_0(v) dv\nonumber\\
     = -
     \frac{1}{2}\int_0^t &\int_{\mathbb{R}^6}  \mathbf{1}_{(B(0,R)^2)^c} \left( \nabla_v\varphi_R(v) - \nabla_w\varphi_R(w)\right)\\
     &\cdot \alpha a(v-w) (\nabla_v - \nabla_w)\log(f_s(v)f_s(w)) f_s(dv) f_s(dw)ds.\nonumber
    \end{align}
    We have for any $t\in[0,T]$
    $$\int_{\mathbb{R}^3} \varphi_R(v) f_t(v)dv \xrightarrow[R\rightarrow\infty]{} E(t)$$
    by monotone convergence, since $\chi\left(\frac{v}{R}\right)$ is increasing with $R$.

    We now want to control the right-hand side of~\eqref{eq:proofpropenergy1}. By Cauchy-Schwarz and using $ab\leq \frac{1}{2}(a^2+b^2)$ we can bound it by
    \begin{align}
    \label{eq:proofpropenergy2}
     \frac{1}{4}
        \int_0^t \int_{\mathbb{R}^6}\mathbf{1}_{(B(0,R)^2)^c} \vert \nabla_v\varphi_R(v) - \nabla_w\varphi_R(w)\vert^2 \vert v-w\vert^{\gamma+2} &f_s(v) f_s(w)\ dvdwds \nonumber\\
        &+\frac{1}{2}\int_0^T D(f_s\otimes f_s)ds .
    \end{align}
    As we already used in the previous lemma, the second integral is a.s. a finite constant (Proposition~\ref{prop:level3estimatesforf}).
    We differentiate again
    $$\nabla^2_v\varphi_R(v) = 2 \chi\left(\frac{v}{R}\right)\Id +\frac{2}{R}v \otimes \nabla\chi\left(\frac{v}{R}\right) +\frac{2}{R}\nabla\chi\left(\frac{v}{R}\right)\otimes v + \frac{1}{R^2}\nabla^2\chi\left(\frac{v}{R}\right) \vert v\vert^2,$$
    which is bounded uniformly in $v$ by some $C_\chi$ \textit{independent} of $R$, since $\vert v \vert \leq 2R$ on the support of $\chi$. Hence
    $$\left\vert \nabla_v\varphi_R(v) - \nabla_w\varphi_R(w)\right\vert \leq C_\chi \vert v-w\vert.$$
    So
    \begin{align}
    \label{eq:proofpropenergy3}
        \int_0^t \int_{\mathbb{R}^6} \mathbf{1}_{(B(0,R)^2)^c}&\vert \nabla_v\varphi_R(v) - \nabla_w\varphi_R(w)\vert^2 \vert v-w\vert^{\gamma+2} f_s(v) f_s(w)\ dvdwds\nonumber\\
        & \leq C_\chi \int_0^t \int_{\mathbb{R}^6} \vert v-w\vert^{\gamma+4} f_s(v) f_s(w) dvdwds\nonumber\\
        &\leq C_{\chi,\gamma}\int_0^t \int_{\mathbb{R}^6} (1+\vert v \vert^2+\vert w \vert^2) f_s(v) f_s(w)\nonumber\\
        &\leq C_{\chi,\gamma}\int_0^t (1+2E(s))ds.
    \end{align}
    since $0\leq \gamma+4 \leq 1$.
    Hence remembering that~\eqref{eq:proofpropenergy2} bounds the right-hand side of~\eqref{eq:proofpropenergy1} and using the above,
    \begin{align*}
     \int_{\mathbb{R}^3} \varphi_R(v) f_t(v)dv&\leq \int_{\mathbb{R}^3} \varphi_R(v) f_0(v) dv\\
     &+C_{\chi,\gamma} \left[\int_0^t (1+2E(s))ds +  \int_0^T D(f_s\otimes f_s)ds\right]
    \end{align*}
    which yields as $R\rightarrow +\infty$
    \begin{align*}
     E(t)\leq E(0)+C_{\chi,\gamma} \left[2\int_0^t E(s)ds + T + \int_0^T D(f_s\otimes f_s)ds\right].
    \end{align*}
    By Gronwall's lemma $E(t)$ remains bounded on $[0,T]$ by some constant depending on the (random but a.s. finite) quantity $\int_0^T D(f_s\otimes f_s)ds$.

    \textit{Step 2.} Now that we know that $E(t)$ is bounded on $[0,T]$,~\eqref{eq:proofpropenergy3} show that the functions
    $$\Phi_R(s,v,w)=\mathbf{1}_{(B(0,R)^2)^c}( \nabla_v\varphi_R(v) - \nabla_w\varphi_R(w)) \vert v-w\vert^{\frac{\gamma+2}{2}} \sqrt{f_s(v) f_s(w)}$$
    are bounded in $L^2([0,T]\times\mathbb{R}^6)$, uniformly in $R$. But because of the indicator function, $\Phi_R \rightharpoonup 0$ in the sense of distributions, so $\Phi_R\rightharpoonup 0$ weakly in $L^2$.
    As the right hand side of~\eqref{eq:proofpropenergy1} rewrites as
    $$-\frac{1}{2}\int_0^t \int_{\mathbb{R}^6} \Phi_R(s,v,w)
     \cdot \vert v-w\vert^{\frac{\gamma+2}{2}-2} a(v-w) (\nabla_v - \nabla_w)\log(f_s(v)f_s(w)) \sqrt{f_s(v) f_s(w)}dvdwds,$$
    which is the $L^2$ scalar product of $\Phi_R$ against another $L^2$ function (taking the square of this other function yields exactly the integrand of $D(f_t\otimes f_t)$). Hence it vanishes as $R\rightarrow+\infty$, so that $E(t)=E(0)=E_0$.
\end{proof}
\begin{remark}
    To apply Gronwall's lemma, we should in all rigor check that $\int_0^t E(s)ds$ is finite. This is the case almost surely because by Proposition~\ref{prop:lvl3energy} $$\mathbb{E}\left[\int_0^T E(t)dt\right]=\int_0^T\int_{\mathbb{R}^3} \vert v \vert^2 \pi^1_t(dv)\leq T E_0.$$
\end{remark}
\begin{remark}
    A similar proof shows that the H-formulation forces momentum conservation as well.
\end{remark}
We then bound moments of any order, essentially using the result of \cite{CarrapatosoDesvillettesHe2015}:
\begin{prop}\cite[adapted from Lemma 8]{CarrapatosoDesvillettesHe2015}
    \label{prop:propofmoments}
    Let $l\geq 2$ such that $g_0$ has finite moment, \textit{i.e.}
    $$M_l(g_0):=\int_{\mathbb{R}^3} \jap{v}^{l}g_0(v)dv<+\infty$$
    There exists a \textit{random} constant $c_{mom}>0$, a.s. finite, depending on $l,\gamma,T$, the energy $E_0$, and the random entropy production $\int_0^T D(f_t\otimes f_t)dt$, such that
    $$\sup_{t\in[0,T]}\int_{\mathbb{R}^3} \jap{v}^{l}f_t(v)dv < c_{mom}$$
\end{prop}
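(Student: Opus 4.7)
The plan is to apply the weak formulation~\eqref{eq:weakform} to the truncation $\varphi_R(v):=\jap{v}^l\chi(v/R)\in C^2_0(\mathbb{R}^3)$ of $\jap{v}^l$, where $\chi$ is the standard bump function from the proof of Proposition~\ref{prop:propofenergy}, derive a Gronwall-type estimate on $M_R(t):=\int\varphi_R(v)f_t(dv)$ that is uniform in $R$, and pass to the limit $R\to\infty$ by monotone convergence to recover $M_l(f_t)$. I would not try to exploit the sign of the drift (which is good for the untruncated $\jap{v}^l$ but is lost under truncation) and instead control both drift and diffusion by the same pointwise estimate.

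First I would show the uniform-in-$R$ bounds $|\nabla\varphi_R(v)|\leq C(l)\jap{v}^{l-1}$ and $|\nabla^2\varphi_R(v)|\leq C(l)\jap{v}^{l-2}$. These follow from the observation that on $\mathrm{supp}\,\chi(\cdot/R)$ one has $|v|\leq 2R$, so the derivative-of-cutoff terms of size $R^{-k}\jap{v}^{l-k+j}$ ($j\leq k$) are absorbed into $\jap{v}^{l-k}$. A Taylor expansion then yields $|\nabla\varphi_R(v)-\nabla\varphi_R(w)|\leq C(l)|v-w|(\jap{v}^{l-2}+\jap{w}^{l-2})$. Combining with the bounds $|b(v-w)|\leq 2|v-w|^{\gamma+1}$ and $\|\alpha a(v-w)\|\leq |v-w|^{\gamma+2}$, both integrands in the symmetrized weak formulation are controlled pointwise by $C(l)|v-w|^{\gamma+2}(\jap{v}^{l-2}+\jap{w}^{l-2})$.

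Next I would handle the singular kernel $|v-w|^{\gamma+2}$ by the usual splitting $|v-w|\leq 1$ versus $|v-w|\geq 1$. The far-field piece yields a bound of the form $C\,M_{l-2}(f)$. On the near field, Hölder's inequality with $f\in L^3(\mathbb{R}^3)$ (which holds almost surely, thanks to the Fisher-information bound of Proposition~\ref{prop:level3estimatesforf} and the Sobolev embedding, as in Lemma~\ref{lem:aswsolution-regularity}) gives $\int_{|v-w|\leq 1}|v-w|^{\gamma+2}f(w)dw\leq C_\gamma\|f\|_{L^3}$, which is finite precisely because $(\gamma+2)\cdot 3/2>-3$ holds whenever $\gamma>-4$; this is the critical integrability that makes the Coulomb case $\gamma=-3$ work. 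Using Jensen's inequality to absorb $M_{l-2}\leq 1+M_l$, I obtain
\begin{equation*}
M_R(t)\leq M_l(g_0)+C(l,\gamma)\int_0^t\bigl(1+\|f_s\|_{L^3(\mathbb{R}^3)}\bigr)\bigl(1+M_l(f_s)\bigr)\,ds,
\end{equation*}
uniformly in $R\geq 1$. Gronwall's inequality with the almost surely $L^1([0,T])$ weight $1+\|f_s\|_{L^3}$ then gives the uniform bound, which lifts to $\sup_{t\in[0,T]}M_l(f_t)$ by monotone convergence as $R\to\infty$.

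The hardest part will be checking that the uniform bounds on $\nabla^k\varphi_R$ genuinely absorb the truncation artifacts without $R$-dependent constants; the key enabler is the elementary observation $\jap{v}\leq CR$ on $\mathrm{supp}\,\chi(\cdot/R)$. One subtlety is that the random constant coming out of this argument naturally depends on $\int_0^T I(f_s)\,ds$ (through the $L^3$-bound), rather than directly on $\int_0^T D$ as the statement suggests. To match the dependence literally, one would replace the weak formulation by the H-formulation of Lemma~\ref{lem:ashsol}, apply Cauchy--Schwarz in the positive form $\alpha a$, and exploit the decomposition $a(v-w)=\sum_k b_k\otimes b_k$ together with the geometric bound $|v\times w|\leq |v-w|\min(|v|,|w|)$ to obtain a Bernoulli-type differential inequality $\dot{M}_R(t)\leq C(l,\gamma,E_0)\,M_R(t)^{(2l-3)/(2l)}\sqrt{D(f_t\otimes f_t)}$; integration via the substitution $M_R^{3/(2l)}$ then produces the desired bound controlled solely by $l,\gamma,T,E_0,M_l(g_0)$, and $\int_0^T D$, as in the argument of~\cite{CarrapatosoDesvillettesHe2015}.
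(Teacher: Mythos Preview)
Your second, sketched approach---H-formulation, Cauchy--Schwarz against $\sqrt{D}$, and the cross-product geometry of \cite{CarrapatosoDesvillettesHe2015}---is exactly the paper's route: the proof simply invokes \cite[Lemma~8]{CarrapatosoDesvillettesHe2015} verbatim and observes that the only place that argument uses the H-theorem is to bound $\int_0^T D$ by $H_0$, so one retains the raw dependence on $\int_0^T D(f_t\otimes f_t)\,dt$ instead.

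Your first, more detailed route via the weak formulation and the $L^3$ control from the Fisher information is a genuinely different argument; it bypasses both the H-formulation of Lemma~\ref{lem:ashsol} and the cross-product computation, at the price of producing a constant depending on $\int_0^T I(f_s)\,ds$ rather than on $\int_0^T D$. As you note, for the downstream application in Proposition~\ref{prop:asfequalg} either dependence suffices, since both integrals are a.s.\ finite by Proposition~\ref{prop:level3estimatesforf}. There is, however, one gap to close: your displayed inequality has the truncated $M_R(t)$ on the left but the full $M_l(f_s)$ on the right, so Gronwall does not apply directly---sending $R\to\infty$ first yields an inequality for $M_l(f_t)$, but Gronwall then requires a priori local integrability of $s\mapsto M_l(f_s)$, which is precisely what you are trying to establish. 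The clean fix is to keep the sharper intermediate estimate you obtain \emph{before} invoking $M_{l-2}\leq 1+M_l$, namely
\[
M_R(t)\leq M_l(g_0)+C(l,\gamma)\int_0^t\bigl(1+\|f_s\|_{L^3}\bigr)M_{l-2}(f_s)\,ds,
\]
and iterate: for $2\leq l\leq 4$ one has $M_{l-2}\leq M_2=E_0$ by the energy conservation of Lemma~\ref{prop:propofenergy}, so $\sup_t M_l(f_t)<\infty$; then for $4<l\leq 6$ use the previous step, and so on.
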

\begin{proof}
    We apply the proof in \cite[Lemma 8]{CarrapatosoDesvillettesHe2015}, which is for H-solutions. The random $(f_t)_t$ is not a H-solution because we do not have the H-theorem (see Remark~\ref{rem:hsol} above). However, in the proof, the H-theorem is used only to bound $\int_0^T D(f_t\otimes f_t)dt$ by the initial entropy $H_0$. We simply keep a dependence in $\int_0^T D(f_t\otimes f_t)dt$ instead of $H_0$ and obtain the claimed result.
\end{proof}

We have obtained boundedness of moments, which is one of the two conditions necessary to apply our $L^1_t L^\infty_v$ estimate. We deal with the other condition, related to non-aligned points, in the following penultimate section.

\subsection{Propagating that $f$ charges non-aligned points}

To obtain that $(f_t)_t\in L^1_t L^\infty_v$ almost surely, it only remains to show that it charges three non-aligned balls. The strategy is the following: we know that $f_0=g_0$ has finite entropy so by Lemma~\ref{lem:entropyimpliesnonaligned} we can find three suitable $\delta$-non-aligned balls. We can show that for small $t$, $f_t$ must still charge those three balls, by using the H-formulation of the Landau equation, which provides Hölder continuity in time of the integral of $f_t$ against test functions. We can thus obtain the $ L^1_t L^\infty_v$ estimate on a short time $[0,\tau]$. It provides uniqueness of the solution over $[0,\tau]$, so that $f$ is regular over this short time. Its entropy can hence only decrease and we can repeat the argument starting from $f_\tau$ instead of $f_0$. This will lead to uniqueness over $[\tau,2\tau]$, and so on.

We begin this program by proving the Hölder continuity claimed above. Recall that $h$ (defined just before Lemma~\ref{lem:entropyimpliesnonaligned}) is a smooth bump function with support on $B(0,3/2)$ that equals $1$ on $B(0,1)$, and that $\delta$-non-aligned points were introduced in Definition~\ref{def:deltanonaligned}.

\begin{lemma}
    \label{lem:Höldercontof3aligned}
    Let $\bar{v}=(v_1,v_2,v_3)$ be three $\delta$-non-aligned points. The function $\iota_{\bar{v}}$ defined by
    \begin{equation}
    \label{eq:defiota}
        \iota_{\bar{v}}(t):=\min_{k=1,2,3} \int_{\mathbb{R}^3}h\left(\frac{w-v_k}{\delta}\right)f_t(w)dw
    \end{equation}
    is almost surely $\frac{1}{2}$-Hölder continuous,
    with, for any $t,s>0$:
    $$\vert \iota_{\bar{v}}(t) - \iota_{\bar{v}}(s)\vert \leq c_\delta \vert t-s\vert^\frac{1}{2} \left( \int_0^T D(f_u\otimes f_u)du\right)^\frac{1}{2}.$$
    where $c_\delta>0$ depends only on $\delta$ (and $h$) but not on the three points.
\end{lemma}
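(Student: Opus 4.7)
The plan is to apply the H-formulation of the Landau equation (Lemma~\ref{lem:ashsol}) to the three test functions $\varphi_k(v) := h\left(\frac{v-v_k}{\delta}\right)$ for $k=1,2,3$, obtain a $\frac{1}{2}$-Hölder bound for each $t \mapsto \int \varphi_k f_t$, and then conclude using that the minimum of functions with a common modulus of continuity inherits that modulus (via the elementary inequality $\vert \min_k a_k - \min_k b_k\vert \leq \max_k \vert a_k - b_k\vert$).

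For each $k$, since $h\in C^2_c$, the rescaled bump $\varphi_k$ is compactly supported with $\Vert \nabla\varphi_k\Vert_{L^\infty} \leq c/\delta$ and $\Vert \nabla^2\varphi_k\Vert_{L^\infty} \leq c/\delta^2$, bounds which depend on $\delta$ but not on the point $v_k$. Plugging $\varphi_k$ into the H-formulation between times $s$ and $t$ and applying the Cauchy-Schwarz inequality (just as in the discussion preceding Lemma~\ref{lem:ashsol}), I would bound
\begin{align*}
\left\vert \int_{\mathbb{R}^3} \varphi_k (f_t - f_s)\right\vert &\leq \left(\int_s^t\! \int_{\mathbb{R}^6} \vert \nabla_v\varphi_k(v) - \nabla_w\varphi_k(w)\vert^2 \vert v-w\vert^{\gamma+2} f_u(v) f_u(w)\, dvdwdu\right)^{\!1/2} \\
&\quad \times \left(2\int_s^t D(f_u \otimes f_u)\, du\right)^{\!1/2}.
\end{align*}
The second factor is bounded by $\sqrt{2\int_0^T D(f_u\otimes f_u) du}$. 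For the first factor, I use $\vert \nabla_v\varphi_k(v) - \nabla_w\varphi_k(w)\vert \leq \Vert \nabla^2\varphi_k\Vert_{L^\infty} \vert v-w\vert \leq c\delta^{-2} \vert v-w\vert$, so the integrand is bounded pointwise by $c\delta^{-4} \vert v-w\vert^{\gamma+4} f_u(v)f_u(w)$. Since $\gamma + 4 \in [1,2]$ for $\gamma\in[-3,-2]$, we have $\vert v-w\vert^{\gamma+4} \leq C(1+\vert v\vert^2 + \vert w\vert^2)$, and the conservation of energy (Proposition~\ref{prop:propofenergy}) yields
\begin{equation*}
\int_{\mathbb{R}^6} \vert v-w\vert^{\gamma+4} f_u(v) f_u(w)\, dvdw \leq C(1 + E_0).
\end{equation*}
Thus the first factor is $\leq c_\delta \vert t-s\vert^{1/2}$, giving the desired bound
\begin{equation*}
\left\vert \int_{\mathbb{R}^3} \varphi_k (f_t - f_s)\right\vert \leq c_\delta \vert t-s\vert^{1/2} \left(\int_0^T D(f_u\otimes f_u)\, du\right)^{\!1/2}
\end{equation*}
with $c_\delta$ independent of the triple $(v_1,v_2,v_3)$ (it depends on $\delta$, $h$, $\gamma$, $T$ and $E_0$).

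Finally, taking the minimum over $k=1,2,3$ and using $\vert \min_k a_k - \min_k b_k\vert \leq \max_k \vert a_k - b_k\vert$ gives the stated Hölder estimate for $\iota_{\bar v}$. The only subtle points are (i) justifying that the H-formulation applies to the test functions $\varphi_k$ (which sit in $C^2_0(\mathbb{R}^3)$, covered by Lemma~\ref{lem:ashsol}), and (ii) verifying the constant $c_\delta$ is uniform in $\bar v$—both follow immediately from the invariance of $C^2$ norms of $\varphi_k$ under translation of $v_k$. No real obstacle arises; the proof is essentially a clean combination of the H-formulation, Cauchy-Schwarz, and the energy conservation shown in the previous subsection.
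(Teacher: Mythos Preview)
Your argument is correct and follows the same overall route as the paper: H-formulation, Cauchy--Schwarz, then the minimum inequality. The only difference is in bounding the first Cauchy--Schwarz factor. You use just the Lipschitz estimate $\vert\nabla\varphi_k(v)-\nabla\varphi_k(w)\vert \leq c\delta^{-2}\vert v-w\vert$, which leaves you with $\vert v-w\vert^{\gamma+4}$ and forces an appeal to energy conservation (Proposition~\ref{prop:propofenergy}); the paper instead combines this with the uniform bound $\vert\nabla\varphi_k(v)-\nabla\varphi_k(w)\vert \leq c\delta^{-1}$ and uses that $\gamma+2\in[-1,0]$ to get the pointwise bound $\vert\nabla\varphi_k(v)-\nabla\varphi_k(w)\vert^2\vert v-w\vert^{\gamma+2}\leq c_\delta$, after which $\int f_u\otimes f_u=1$ suffices. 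Both work, but the paper's trick yields a constant depending only on $\delta$ and $h$ as stated, whereas yours picks up $E_0$ (and $\gamma$); you correctly flag this, and it is harmless for the application in Proposition~\ref{prop:asfequalg}.
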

\begin{proof}
    By Proposition~\ref{prop:level3estimatesforf}, $\int_0^T D(f_u\otimes f_u)du$ is a.s. finite, and by Lemma~\ref{lem:ashsol}, $(f_t)_t$ satisfies the H-formulation. We plug $h(\frac{\cdot-v_k}{\delta})$ in the H-formulation~\eqref{eq:Hform} to get the evolution from $s$ to $t$:
    \begin{align*}
     \int_{\mathbb{R}^3} h\left(\frac{v-v_k}{\delta}\right) f_t(v)dv-\int_{\mathbb{R}^3} &h\left(\frac{v-v_k}{\delta}\right) f_s(v) dv\\
     = -\frac{1}{2}
     \int_s^t \int_{\mathbb{R}^6}  \frac{1}{\delta}&\left(\nabla h\left(\frac{v-v_k}{\delta}\right) - \nabla h\left(\frac{w-v_k}{\delta}\right)\right)\\
     &\cdot \alpha a(v-w) (\nabla_v - \nabla_w)\log(f_u(v)f_u(w)) f_u(dv) f_u(dw)du
    \end{align*}
    Using Cauchy-Schwarz the same way we did before, we get
    \begin{align*}
     \bigg\vert\int_{\mathbb{R}^3} h&\left(\frac{v-v_k}{\delta}\right) f_t(v)dv-\int_{\mathbb{R}^3} h\left(\frac{v-v_k}{\delta}\right) f_s(v) dv \bigg\vert\\
     \leq \frac{1}{2}&\bigg(
     \int_s^t \int_{\mathbb{R}^6}  \frac{1}{\delta}\left\vert\nabla h\left(\frac{v-v_k}{\delta}\right) - \nabla h\left(\frac{w-v_k}{\delta}\right)\right\vert^2\vert v-w\vert ^{\gamma+2} f_u(v) f_u(w)dvdwdu \bigg)^\frac{1}{2}\\
     &\times\left(2\int_s^t D(f_u \otimes f_u)du\right)^\frac{1}{2}.
    \end{align*}
    Using the Hessian of $h$, it is easy to show that
    $$\left\vert\nabla h\left(\frac{v-v_k}{\delta}\right)- \nabla h\left(\frac{w-v_k}{\delta}\right)\right\vert \leq \frac{C_h}{\delta}\vert v-w\vert,$$ so combined with a uniform bound on the gradient
    $$\left\vert\nabla h\left(\frac{v-v_k}{\delta}\right)- \nabla h\left(\frac{w-v_k}{\delta}\right)\right\vert \leq C_h,$$
    we get that
    $$\left\vert\nabla h\left(\frac{v-v_k}{\delta}\right) - \nabla h\left(\frac{w-v_k}{\delta}\right)\right\vert^2\vert v-w\vert ^{\gamma+2} \leq c_\delta$$
    since $-1\leq \gamma+2 \leq0$. Using that $\int_{\mathbb{R}^6} f_u(dv)f_u(dw)=1$, we get
    \begin{align*}
     \bigg\vert\int_{\mathbb{R}^3} h\left(\frac{v-v_k}{\delta}\right) f_t(v)dv&-\int_{\mathbb{R}^3} h\left(\frac{v-v_k}{\delta}\right) f_s(v) dv \bigg\vert
     \leq c_\delta \vert t-s\vert^\frac{1}{2}\left(\int_s^t D(f_u \otimes f_u)du\right)^\frac{1}{2}.
    \end{align*}
    It is easy to see that the minimum of three Hölder-continuous functions with the same Hölder constant is Hölder continuous with the same constant.
\end{proof}

We are now ready to give the proof of Proposition~\ref{prop:asfequalg}, \textit{i.e.} that a.s. $(f_t)_t=(g_t)_t$:
\begin{proof}[Proof of Proposition~\ref{prop:asfequalg}]
Let $\delta,R,\kappa$ be given by Lemma~\ref{lem:entropyimpliesnonaligned} with $H^*=H_0$ and $E^*=E_0$. These three constants will remain fixed for the remainder of the proof.

 Recall that $g_0$ has finite moment of order $m> 3(2-\gamma)$. Let $p\in(3,4)$ such that $l:=\frac{p(2-\gamma)}{4-p}\leq m$. Because $g_0$ has finite moments of order $l$, we can apply the propogation of moments (Lemma~\ref{prop:propofmoments}), to get an a.s. finite $c_{mom}$ such that $\sup_{[0,T]} \int \jap{v}^l f_t(v)dv < c_{mom}.$

By Lemma~\ref{lem:fisher4controlsLinf} (and using $a^{p/4} b^{1-p/4}\lesssim (a + b)$ to simplify the equations),
$$\Vert f_t\Vert_{L^\infty(\mathbb{R}^3)} \leq c_p\left[1+c_{mom} +\int_{\mathbb{R}^{3}}\jap{v}^{\gamma-2}f_t(v) \left\vert\nabla\log f_t(v) \right\vert^{4} dv\right].$$
Chaining with Lemma~\ref{lem:J_1controlsFisher4}, for any triplet $\bar{v}=(v_1,v_2,v_3)$ of $\delta$-non-aligned points,
$$\Vert f_t\Vert_{L^\infty(\mathbb{R}^3)} \leq c_p\left[1+c_{mom} + c_{\delta,R} (\iota_{\bar{v}}(t))^{-1}\mathcal{J}_1(f_t) \right],$$
with $\iota_{\bar{v}}$ defined in Lemma~\ref{lem:Höldercontof3aligned}. Finally, by Lemma~\ref{lem:Jcoerciveerrorcut}, 
\begin{equation}
\label{eq:linffinalcontrol}
\Vert f_t\Vert_{L^\infty(\mathbb{R}^3)} \leq c_p\left[1+c_{mom} + c_{\delta,R} (\iota_{\bar{v}}(t))^{-1}\left(\mathcal{J}(f_t) + 0.99^\gamma \cdot 432I(f_t)\right) \right].
\end{equation}

By Hölder continuity of $\iota_{\bar{v}}$ (Lemma~\ref{lem:Höldercontof3aligned}), there exists $\tau>0$ depending only on $\delta,\kappa$ and $\int_0^T D(f_s\otimes f_s)ds$ such that for any $t_0$, for any $\bar{v}$ triplet of $\delta$-non-aligned point, if $\iota_{\bar{v}}(t_0)\geq \kappa$, then for all $t\in[t_0,t_0+\tau]$, 
$\iota_{\bar{v}}(t)\geq \frac{\kappa}{2}$.

We apply Lemma~\ref{lem:entropyimpliesnonaligned} (with $H^*$ and $E^*$ as above) to the initial condition $\mu=g_0$ (which is a.s. equal to $f_0$). There exists three $\delta$-non-aligned-points $\bar{v}=(v_1,v_2,v_3)\in (B(0,R))^3$ such that
$\iota_{\bar{v}}(0)\geq \kappa$.

With $t_0=0$, we have that, a.s., for all $t\in[0,\tau]$, $(\iota_{\bar{v}}(t))^{-1} \leq \frac{2}{\kappa}$, so that plugging in~\eqref{eq:linffinalcontrol},
$$\Vert f_t\Vert_{L^\infty(\mathbb{R}^3)} \leq c_p\left[1+c_{mom} +  \frac{2c_{\delta,R}}{\kappa}\left(\mathcal{J}(f_t) + 0.99^\gamma \cdot 432I(f_t)\right) \right].$$

The right-hand side lies in $L^1([0,\tau])$ a.s. by Lemma~\ref{lem:Jasfinite} for $\mathcal{J}$ and Proposition~\ref{prop:level3estimatesforf} for $I$. Hence, almost surely,
$(f_t)_{t\in[0,\tau]}\in L^1([0,\tau],L^\infty(\mathbb{R}^3))$ and is a weak solution to the Landau equation. By the uniqueness result \cite[Theorem 1.3]{GuerinFournier2008} for $\gamma\in(-3,-2]$, or \cite[Theorem 2]{Fournier2010} for $\gamma=-3$, $(f_t)_{t\in[0,\tau]}=(g_t)_{t\in[0,\tau]}$.

We can repeat this process because $\tau$ is uniform in time and in the choice of the 3 points. Suppose that $f_t=g_t$ on $[0,n\tau]$ for some integer $n>0$. Since $H(g_{n\tau})\leq H(g_0)=H_0$ because it is a smooth solution (see Remark~\ref{rem:postthm}, and it also holds true for the energy), we can apply Lemma~\ref{lem:entropyimpliesnonaligned} again with \textit{the same} $H^*$ and $E^*$, with $\mu=f_{n\tau}=g_{n\tau}$. It yields three new $\delta$-non-aligned-points $\tilde{v}=(\tilde{v}_1,\tilde{v}_2,\tilde{v}_3)\in (B(0,R))^3$ such that
$\iota_{\tilde{v}}({n\tau})\geq \kappa$, so that $\iota_{\tilde{v}}(t)\geq \frac{\kappa}{2}$ on $[n\tau,(n+1)\tau]$. We conclude once again that
$(f_t)_{t\in[n\tau,(n+1)\tau]}\in L^1([n\tau,(n+1)\tau],L^\infty(\mathbb{R}^3))$, so that by \cite[Theorem 1.3]{GuerinFournier2008}/\cite[Theorem 2]{Fournier2010} again, $(f_t)_{t\in[0,(n+1)\tau]}=(g_t)_{t\in[0,(n+1)\tau]}$. We hence conclude that
$$(f_t)_{t\in[0,T]}=(g_t)_{t\in[0,T]}$$ as desired.
\end{proof}

\begin{remark}
    As explained in Remark~\ref{rem:defwsol}, the weak formulation to which the uniqueness results \cite{GuerinFournier2008,Fournier2010} apply is not symmetrized with respect to the test function as it is in Definition~\ref{def:weaksolution}. However, since $(f_t)_{t\in[0,\tau]}\in L^1([0,\tau],L^\infty(\mathbb{R}^3))$ holds when we apply the theorem, the unsymmetrized version makes sense and is equal to the symmetrized version.
\end{remark}

\section{Concluding the propagation of chaos}

\begin{proof}[Proof of Theorem~\ref{thm:main} and Corollary~\ref{cor:main}]
We summarize the proof of Theorem~\ref{thm:main} and Corollary~\ref{cor:main}: Recall that $$\mu^N_t =\frac{1}{N}\sum_{i=1}^N \delta_{V^N_i(t)} \in \mathcal{P}(\mathbb{R}^3)$$ is the empirical measure of the particle system~\eqref{eq:particlesystem}.

By Proposition~\ref{prop:tightness2}, the family $\left(\mathcal{L}\left( (\mu^N_t)_{t\in[0,T]}\right) \right)_{N\geq 2 }$ is tight in $\mathcal{P} \left(C\left([0,T],\mathcal{P}(\mathbb{R}^3)\right)\right)$.
By Proposition~\ref{prop:asfequalg}, the only cluster point of this sequence is $\delta_{(g_t)_{t\in[0,T]}}$. This means that the whole sequence converges to this cluster point. Since this limit is deterministic, the convergence in law of $(\mu^N_t)_t$ to $(g_t)_{t\in[0,T]}$ can be upgraded to a convergence in probability, hence Theorem~\ref{thm:main}.

By Proposition~\ref{prop:convFNj}, for any $t\in[0,T]$, for any $j\geq 1$, the weak convergence of probabilities
$$F^{N:j}_t\rightharpoonup (g_t)^{\otimes j} $$
holds, since if $\pi_t=\delta_{g_t}\in\mathcal{P}(\mathcal{P}(\mathbb{R}^3))$, the associated marginal is then $\pi_t^j=(g_t)^{\otimes j}$. This is Corollary~\ref{cor:main}.
\end{proof}

\section{Infinite-dimension affinity of generalized Fisher information}
\label{sec:infdimlimofDandK}
In this last section we prove point (3) in Theorem~\ref{thm:side}. The analogous property for the entropy has been known for a long time \cite{RobinsonRuelle1967}, and the case of the usual Fisher information can be found in \cite{HaurayMischler2012,Rougerie2020}. Although this theorem plays a key role in the propagation of chaos, its proof is quite orthogonal to the rest of this work. It also requires a new framework, explaining why we isolated it at the end of this paper. 

\subsection{Preliminaries and an easy inequality}

Most of our proof uses parts of the work by Rougerie \cite{Rougerie2020} as a black box, but we will summarize it for the reader's convenience. Below we first prove one easy half of the theorem. We then introduce the Hilbert framework which will end in us proving an abstract version of the other half in Theorem~\ref{thm:abstractlvl3affinity}. We finally apply the abstract theorem to our functionals to conclude.

We fix $k_0$ and a vector field $b$ satisfying the hypotheses of Theorem~\ref{thm:side}, $\partial=b\cdot\nabla_{1...k_0}$, and $\nu\in\mathcal{P}(\mathcal{P}(\mathbb{R}^3))$ as in point (3). We start by the easiest inequality. We note that we in fact did not use this inequality in this paper.

\begin{lemma}
\label{lem:lvl3affinityfirsthalf}
    It holds that
    $$\lim_{j\geq k_0} I^\partial(\nu^j)\leq\int_{\mathcal{P} (\mathbb{R}^3)} I^\partial(\rho^{\otimes k_0})\nu(d\rho),$$
    $$\lim_{j\geq k_0} K^\partial(\nu^j)\leq\int_{\mathcal{P} (\mathbb{R}^3)} K^\partial(\rho^{\otimes k_0})\nu(d\rho),$$
    and the sequences on the left hand-side are increasing in $j$.
\end{lemma}
\begin{proof}
    The proof is the same for both functionals, we treat the case of $I^\partial$. By super-additivity (Theorem~\ref{thm:side}, point (2)), $(I^\partial(\nu^j))_j$ is increasing in $j$.
    By convexity of $I^\partial$ (Theorem~\ref{thm:side}, point (1)) and Jensen's inequality,
    $$ I^\partial(\nu^j) = I^\partial\left( \int_{\mathcal{P} (\mathbb{R}^3)}\rho^{\otimes j}\nu(d\rho)\right)\leq\int_{\mathcal{P} (\mathbb{R}^3)}I^\partial(\rho^{\otimes j})\nu(d\rho).$$
    But $I^\partial(\rho^{\otimes j})=I^\partial(\rho^{\otimes k_0})$ by a direct computation:
    \begin{align*}
        I^\partial(\rho^{\otimes j})&=\int_{\mathbb{R}^{3j}}\frac{\vert \partial \rho^{\otimes k_0}(v_1,...,v_{k_0})\vert^2\vert \rho^{\otimes (j-k_0)}(v_{k_0+1},...,v_{j})\vert^2}{\rho^{\otimes j}} dv_1...dv_j\\
        &=\int_{\mathbb{R}^{3j}}\frac{\vert \partial \rho^{\otimes k_0}(v_1,...,v_{k_0})\vert^2}{\rho^{\otimes k_0}(v_1,...,v_{k_0})}\rho^{\otimes (j-k_0)}(v_{k_0+1},...,v_{j}) dv_1...dv_j\\
        &=\int_{\mathbb{R}^{3k_0}}\frac{\vert \partial \rho^{\otimes k_0}(v_1,...,v_{k_0})\vert^2}{\rho^{\otimes k_0}(v_1,...,v_{k_0})} dv_1...dv_{k_0}
    \end{align*}
    because $\partial$ only acts on $v_1,...v_{k_0}$.
    
\end{proof}

\subsection{The Hilbertian framework and proof for an abstract operator}

The reverse inequality will require the introduction of an Hilbertian framework, following \cite{Rougerie2020}. We forget the functionals $I^\partial$ and $K^\partial$ for a moment and focus on obtaining an abstract result, Theorem~\ref{thm:abstractlvl3affinity} below.

We still keep a fixed $\nu\in\mathcal{P}(\mathcal{P}(\mathbb{R}^3))$ as in point (3) of Theorem~\ref{thm:side}. Recall that $I(\nu^j)<+\infty$ by hypothesis so that $\nu^j$ admits a density, which we still denote by $\nu^j$. We define
$$\psi^j = \sqrt{\nu^j} \in L^2(\mathbb{R}^{3j})$$
which satisfies $\Vert \psi^j\Vert_{L^2(\mathbb{R}^{3j})}=1$. The reason for introducing the square roots is that our functionals admit nice expressions in terms of it (for instance, recall that the usual Fisher information is $I(\nu^j)=j^{-1}4\int \vert \nabla \psi^j\vert^2$). We let
$\Gamma^j$ be the orthogonal projection on the span of $\psi_j$ in $L^2(\mathbb{R}^{3j})$. It is of course a finite rank operator.

We recall that the trace of an operator $A$ on $L^2(\mathbb{R}^{3j})$ is defined, when the formula makes sense, by
\begin{equation}
\label{eq:deftrace}
    \Tr(A):= \sum_k \langle e_k, Ae_k\rangle
\end{equation}
where $(e_k)_k$ is any orthonormal basis of the separable Hilbert space $L^2(\mathbb{R}^{3j})$. The trace norm is
$$\Vert A \Vert_{\Tr} = \Tr((A^*A)^\frac{1}{2})\in [0,+\infty].$$
It is always well-defined because $(A^*A)^\frac{1}{2}$ is a non-negative operator so the series~\eqref{eq:deftrace} features only non-negative terms. We say that $A$ is trace-class if its trace norm is finite (its trace $\Tr(A)$ is then defined). The trace-class endowed with the trace-norm forms a Banach space and the duality bracket
$$(A,B)\mapsto \Tr(AB)$$
identifies the trace-class as the dual of compact operators, and as the predual of bounded operators.

Since $\Gamma^j$ is of finite rank, it is trace class, and its trace is
$$\Tr(\Gamma^j)=\Vert \Gamma^j \Vert_{\Tr} =\Vert\psi^j\Vert_{L^2}=1.$$
For trace-class operators, the partial trace is the analogue of taking marginals. For $1\leq k \leq j$, the partial trace $\Gamma^{j:k}$ of $\Gamma^j$ is the operator on $L^2(\mathbb{R}^{3k})$ defined by
    $$\Tr(A\Gamma^{j:k})=\Tr((A\otimes \text{Id}^{\otimes(j-k)})\Gamma^j)$$
for any bounded operator $A$ on $L^2(\mathbb{R}^{3k})$.

We have set up the needed definitions. Following \cite{Rougerie2020}, we first build a sequence of operators akin to the sequence of probability measures $(\nu^j)_j$:
\begin{lemma}
    \label{lem:gammak}
    Up to a subsequence in $j$ that we omit to write (and which is independent of $k$), the weak-star convergence
    $$\Gamma^{j:k}\xrightharpoonup[j\rightarrow \infty]{*} \gamma^k,$$
    holds for any $k\geq 1$, where $\gamma_k$ is a trace class operator on $L^2(\mathbb{R}^{3k})$.
\end{lemma}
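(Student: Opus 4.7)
The plan is a direct application of Banach-Alaoglu in the trace-class, combined with a diagonal extraction.

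First, I would fix $k \geq 1$ and establish a uniform bound on $(\Gamma^{j:k})_{j\geq k}$ in the trace norm. Since $\Gamma^j = |\psi^j\rangle\langle\psi^j|$ is a rank-one orthogonal projection with $\Tr(\Gamma^j) = \Vert \psi^j\Vert_{L^2}^2 = 1$, and since the partial trace of a non-negative trace-class operator is non-negative and preserves the total trace, we have
$$\Vert \Gamma^{j:k}\Vert_{\Tr} = \Tr(\Gamma^{j:k}) = \Tr(\Gamma^j) = 1 \qquad \text{for all } j\geq k.$$
Hence $(\Gamma^{j:k})_{j\geq k}$ lies in the closed unit ball of the trace-class $\mathfrak{S}^1(L^2(\mathbb{R}^{3k}))$.

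Second, I would invoke the duality $\mathfrak{S}^1(L^2(\mathbb{R}^{3k})) = \mathfrak{K}(L^2(\mathbb{R}^{3k}))^*$, where $\mathfrak{K}$ denotes the space of compact operators. Since $L^2(\mathbb{R}^{3k})$ is separable, so is $\mathfrak{K}(L^2(\mathbb{R}^{3k}))$ (finite-rank operators with rational coefficients in a countable orthonormal basis are dense). Consequently the closed unit ball of the dual is weak-$*$ metrizable and, by Banach-Alaoglu, weak-$*$ sequentially compact. Extracting a subsequence for each $k$ in turn, then performing a standard Cantor diagonal argument, yields a single subsequence $(j_n)_n$ along which $\Gamma^{j_n : k}$ converges in the weak-$*$ sense, simultaneously for every $k \geq 1$, to some bounded operator $\gamma^k$ on $L^2(\mathbb{R}^{3k})$.

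Third, I would check that $\gamma^k$ is itself trace-class. The closed unit ball of $\mathfrak{S}^1$ is the polar of the closed unit ball of $\mathfrak{K}$, and is therefore weak-$*$ closed by the bipolar theorem; equivalently, the trace norm is weak-$*$ lower semicontinuous, so $\Vert \gamma^k\Vert_{\Tr} \leq \liminf_n \Vert \Gamma^{j_n:k}\Vert_{\Tr} = 1$. In particular, $\gamma^k$ is a non-negative trace-class operator with $\Tr(\gamma^k) \leq 1$, which is the desired conclusion. The main (minor) subtlety in this argument is ensuring that the diagonal extraction genuinely produces a single subsequence working for all $k$, which requires the metrizability of the weak-$*$ topology on the unit ball, itself a consequence of separability of $\mathfrak{K}(L^2(\mathbb{R}^{3k}))$.
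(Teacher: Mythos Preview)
Your proof is correct and follows the same route as the paper: Banach--Alaoglu on the trace class as the dual of compact operators, followed by a diagonal extraction in $k$. You supply more detail (separability of $\mathfrak{K}$, weak-$*$ lower semicontinuity of the trace norm to conclude $\gamma^k$ is trace-class), but the argument is identical in substance.
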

\begin{proof}
    By the Banach-Alaoglu theorem, the unit ball of the trace-class on $L^2(\mathbb{R}^{3k})$, which is the dual of compact operators, is compact for the weak-$*$ topology, so that from the sequence $(\Gamma^{j:k})_j$ we can extract a converging subsequence. A diagonal argument allows us to make this sequence independent of $k$.
\end{proof}

One can upgrade this convergence and show that the operators $\gamma_k$ are compatible with each other. (This upgrade uses the boundedness of the Fisher information and the second moment condition for confinement, it can thus be seen as some operator version of the Rellich compactness theorem $H^1\subset\subset L^2$.)
\begin{lemma}
\label{lem:gammakstrong}
    The convergence $\Gamma^{j:k}\xrightarrow[j\rightarrow\infty]{} \gamma^k$ holds in trace norm.
    Moreover, for all $k\geq 1$, $\Tr(\gamma^k)=1$, and the sequence $(\gamma_k)_k$ is compatible in the sense that
    $\gamma^{k+1:k}=\gamma^k$. 
\end{lemma}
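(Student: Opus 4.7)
The plan is to upgrade the weak-$\ast$ convergence of Lemma~\ref{lem:gammak} to trace-norm convergence by ruling out any loss of mass in the limit (both at infinity in $v$ and in high frequency), and then to pass to the limit in the associativity relation $(\Gamma^{j:(k+1)})^{:k}=\Gamma^{j:k}$ to obtain the compatibility.

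\textbf{Uniform energy bound.} Since $\Gamma^j=|\psi^j\rangle\langle\psi^j|$ is a rank-one projection, $\Tr(\Gamma^j)=1$, and as the partial trace preserves the total trace, $\Tr(\Gamma^{j:k})=1$ for every $j\geq k$. Using the symmetry of $\psi^j$ and the hypothesis $\sup_j I(\nu^j)<+\infty$,
\[
\Tr\bigl((-\Delta)\,\Gamma^{j:k}\bigr)=\sum_{i=1}^k\int_{\mathbb{R}^{3j}}|\nabla_i\psi^j|^2=\frac{k}{4}\,I(\nu^j)\leq C_k,
\]
while the identity $\nu^{j:k}=\nu^k$ (which follows from $\nu^j=\int\rho^{\otimes j}\,\nu(d\rho)$) together with the finite mean second moment yields
\[
\Tr(|v|^2\,\Gamma^{j:k})=\int_{\mathbb{R}^{3k}}|v|^2\,\nu^k(dv)=k\int_{\mathbb{R}^3}|v_1|^2\,\nu^1(dv_1)<+\infty.
\]
Thus $\Tr(H_k\,\Gamma^{j:k})\leq C_k$ uniformly in $j$, where $H_k:=-\Delta+|v|^2$ is the harmonic oscillator on $L^2(\mathbb{R}^{3k})$, whose inverse is compact with discrete spectrum $0<\lambda_1\leq\lambda_2\leq\cdots\nearrow+\infty$.

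\textbf{Conservation of the trace and trace-norm convergence.} Let $P_N$ be the orthogonal projection onto the span of the first $N$ eigenfunctions of $H_k$; it is finite-rank, hence compact. Weak-$\ast$ convergence gives $\Tr(P_N\,\Gamma^{j:k})\to \Tr(P_N\,\gamma^k)$, while the operator inequality $H_k(\mathrm{Id}-P_N)\geq\lambda_{N+1}(\mathrm{Id}-P_N)$ together with the previous step gives $\Tr((\mathrm{Id}-P_N)\,\Gamma^{j:k})\leq C_k/\lambda_{N+1}$ uniformly in $j$. Sending first $j\to\infty$ and then $N\to\infty$ yields $\Tr(\gamma^k)=1$. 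A classical result on trace-class convergence (Gr\"umm's theorem) then upgrades the weak-$\ast$ convergence of non-negative trace-class operators with convergent traces to convergence in trace norm. Compatibility finally follows from the fact that the partial trace is a contraction for the trace norm: passing to the trace-norm limit in $j$ in $(\Gamma^{j:(k+1)})^{:k}=\Gamma^{j:k}$, using trace-norm convergence for both indices $k+1$ and $k$, yields $\gamma^{k+1:k}=\gamma^k$.

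The principal technical point is the conservation of the trace in the second step: mass could in principle escape to infinity either in configuration space or in Fourier space, and it is precisely here that the finite mean second moment and finite mean Fisher information of $\nu$ intervene, through the uniform bound on $\Tr(H_k\,\Gamma^{j:k})$. This is the Hilbertian counterpart of the usual tightness argument for probability densities.
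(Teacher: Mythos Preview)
Your proof is correct and follows essentially the same strategy as the paper: both exploit the uniform bound on $\Tr(H_k\,\Gamma^{j:k})$ for the harmonic oscillator $H_k=-\Delta+|v|^2$ (coming from the finite mean Fisher information and second moment of $\nu$) to prevent loss of trace in the weak-$\ast$ limit, and then upgrade to trace-norm convergence. The only cosmetic difference is that the paper tests the weak-$\ast$ convergence of $L_k^{1/2}\Gamma^{j:k}L_k^{1/2}$ against the compact operator $L_k^{-1}$, whereas you cut with spectral projections $P_N$ and invoke Gr\"umm's theorem explicitly; the compatibility argument is the same.
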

\begin{proof}
    The proof is done in [Paragraph 3.3, \cite{Rougerie2020}], we summarize it for the sake of self-containedness. For any $i$, we denote by $\Delta_i$ the Laplacian in the $v_i$ variable. Let any $j\geq k$, by symmetry of $\psi^j$,
    \begin{align*}
        I(\nu^j)&=4\int_{\mathbb{R}^{3j}} \vert \nabla_1 \psi^j\vert^2\\
        &=4\int_{\mathbb{R}^{3j}} (-\Delta_1\psi^j) \psi^j \\
        &=4\langle -\Delta_1\psi^j,\psi^j \rangle\\
    \end{align*}
    by integration by parts, denoting by $\langle \cdot,\cdot\rangle$ the $L^2$ scalar product.
    Then, using symmetry again
    \begin{align*}
        \langle -\Delta_1\psi^j,\psi^j \rangle&=\frac{1}{k}\langle\sum_{i=1}^k(-\Delta_i)\psi^j,\psi^j \rangle\\
        &=\frac{1}{k}\Tr \left(\sum_{i=1}^k(-\Delta_i) \Gamma^j\right)\\
        &=\frac{1}{k}\Tr \left(\sum_{i=1}^k(-\Delta_i) \Gamma^{j:k}\right)
    \end{align*}
    by definition of the partial trace. The same manipulation with the energy $\vert \cdot\vert^2$ instead of the Laplacian yields
    $$\Tr \left(\sum_{i=1}^k\left((-\Delta_i) +\vert v_i\vert^2+1\right)\Gamma^{j:k}\right) = \frac{k}{4}I(\nu^j)+k\int_{\mathbb{R}^3} (\vert v \vert^2+1)\nu^1(dv).$$
    By hypothesis the right hand side is bounded uniformly in $j$. Let $L_k=\sum_{i=1}^k\left((-\Delta_i) +\vert v_i\vert^2+1\right)$, it is well known that this operator has compact resolvent. By cyclicity of the trace,
    $$\sup_j \Tr\left(L_k^{1/2}\Gamma^{j:k} L_k^{1/2}\right) <+\infty,$$
    so that we can further extract a weak-$*$ convergent subsequence. By identifying the limit against smooth finite rank operators,
    $$L_k^{1/2}\Gamma^{j:k} L_k^{1/2}\xrightharpoonup[j\rightarrow \infty]{*} L_k^{1/2}\gamma^{k} L_k^{1/2}.$$
    The compactness of $L_k^{-1}$ means we can use it to test the weak-$*$ convergence:
    $$1=\Tr(\Gamma^{j:k})=\Tr\left(L_k^{-1} L_k^{1/2}\Gamma^{j:k} L_k^{1/2}\right)\xrightarrow[j\rightarrow \infty]{}\Tr\left(L_k^{-1} L_k^{1/2}\gamma^{k} L_k^{1/2}\right)=\Tr(\gamma^k)=1,$$
    where cyclicity of the trace has been used twice. Hence the trace norm converges, which implies the strong convergence $\Gamma^{j:k}\rightarrow\gamma^k$ in the trace class.

    The compatibility is an easy consequence: for any bounded operator $A$ on $L^2(\mathbb{R}^{3k})$,
    $$\Tr(A \gamma^{k+1:k})=\Tr((A\otimes \Id)  \gamma^{k+1})=\lim_j \Tr((A\otimes \Id)  \Gamma^{j:k+1})=\lim_j \Tr(A  \Gamma^{j:k})=\Tr(A \gamma^{k}),$$
    the limits being justified because the bounded operators are the dual of the trace-class. This means $\gamma^{k+1:k}=\gamma^{k}$.
\end{proof}
\begin{remark}
    In the above proof, we used the formula
    $$I(\nu^j)=4\Tr(-\Delta_1 \Gamma^j).$$
    Written in terms of $\Gamma^j$, the Fisher information becomes not only convex but linear. This linearity in $\Gamma^j$ is the cornerstone of the proof in \cite{Rougerie2020} of the infinite-dimensional affinity for the Fisher information. Basically, linearity allows us to exchange the Fisher information and the integral in the expression~\eqref{eq:quantum hierarchy} below, with an equality rather than the inequality we obtain when using Jensen's. We will generalize this, replacing $-\Delta_1$ by an abstract operator.
\end{remark}
Recall that the classical de Finetti-Hewitt-Savage Theorem relates the family of compatible marginals $(\nu^j)_j$ to the overarching probability $\nu \in \mathcal{P}(\mathcal{P}(\mathbb{R}^3))$ which makes it able to write $\nu^j$ as a superposition of tensorized probabilites $\rho^{\otimes j}$. Similarly, the \textit{quantum} de Finetti-Hewitt-Savage Theorem provides a probability over the unit sphere $\mathbb{S}(L^2(\mathbb{R}^3))$ which allows us to decompose $\gamma^k$ as tensorized states:
\begin{lemma}
\label{lem:quantum hierarchy}
    There exists $\xi\in \mathcal{P}(\mathbb{S}(L^2(\mathbb{R}^3)))$ such that, for all $k\geq 1$:
    \begin{equation}
        \label{eq:quantum hierarchy}
        \gamma^k =\int_{\mathbb{S}(L^2(\mathbb{R}^3))} p(u^{\otimes k}) \xi(du)
    \end{equation}
    where $p(\phi)$ is the orthogonal projection on the span of $\phi\in \mathbb{S}(L^2(\mathbb{R}^{3k}))$, \textit{i.e.} $p(\phi)=\phi\langle\phi,\cdot\rangle$.
    Moreover, $\xi$ and $\nu$ are connected as follows: if $\Theta:\mathbb{S}(L^2(\mathbb{R}^{3}))\rightarrow \mathcal{P}(\mathbb{R}^3) $ is the continuous map $\phi \mapsto \vert \phi\vert^2$, then
    the pushforward of $\xi$ is $\nu$: $\Theta \# \xi=\nu$.
\end{lemma}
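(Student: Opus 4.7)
The plan is to first construct $\xi$ via the infinite-dimensional quantum de Finetti--Hewitt--Savage theorem applied to the compatible bosonic hierarchy $(\gamma^k)_k$ built in Lemma~\ref{lem:gammakstrong}, and then identify the pushforward $\Theta\#\xi$ with $\nu$ by matching $k$-marginals through the diagonal of the integral kernels.

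For the first step, each $\Gamma^j=p(\psi^j)$ with $\psi^j=\sqrt{\nu^j}$ symmetric is a pure bosonic state, so the partial traces $\Gamma^{j:k}$ live in the symmetric sector; their trace-norm limits $\gamma^k$ inherit both the symmetry and the compatibility $\gamma^{k+1:k}=\gamma^k$. The strong quantum de Finetti--Hewitt--Savage theorem (Hudson--Moody--Størmer; see the review \cite{Rougerie2020}) applied to $(\gamma^k)_k$ then yields a probability measure $\xi\in\mathcal{P}(\mathbb{S}(L^2(\mathbb{R}^3)))$ satisfying~\eqref{eq:quantum hierarchy}. Since the $\psi^j$ are real and non-negative, $\xi$ can be arranged to be supported on real non-negative unit vectors, ensuring $\Theta(u)=u^2$ is a bona fide element of $\mathcal{P}(\mathbb{R}^3)$.

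For the second step, I compute $\Tr(M_m\gamma^k)$ in two ways, where $m\in C_b(\mathbb{R}^{3k})$ and $M_m$ is the associated multiplication operator. The kernel
\begin{equation*}
\Gamma^{j:k}(X_k,Y_k)=\int_{\mathbb{R}^{3(j-k)}}\psi^j(X_k,Z)\psi^j(Y_k,Z)\,dZ
\end{equation*}
restricted to the diagonal equals $\nu^{j:k}=\nu^k$, whence $\Tr(M_m\Gamma^{j:k})=\int m\,d\nu^k$ for every $j\geq k$. Passing to the trace-norm limit (legal since $M_m$ is bounded) and combining with~\eqref{eq:quantum hierarchy} yields
\begin{equation*}
\int_{\mathbb{R}^{3k}}m\,d\nu^k \;=\;\Tr(M_m\gamma^k)\;=\;\int_{\mathbb{S}(L^2(\mathbb{R}^3))}\int_{\mathbb{R}^{3k}}m(X_k)\,(u^2)^{\otimes k}(X_k)\,dX_k\,\xi(du),
\end{equation*}
which identifies $\nu^k=(\Theta\#\xi)^k$ for every $k\geq 1$. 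Classical Hewitt--Savage uniqueness \cite{HewittSavage1955} then forces $\Theta\#\xi=\nu$.

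The main obstacle is the first step: deploying a quantum de Finetti theorem in infinite dimension for a compatible limiting hierarchy $(\gamma^k)_k$ rather than a genuine $j$-body state for every $j$. Finite-dimensional bounds of Christandl--König--Mitchison--Renner type are not directly applicable, and the right tool is the Hudson--Moody--Størmer line of results, used here as a black box, just as in the proof of Theorem~\ref{thm:level3affinity} for the Fisher information in \cite{Rougerie2020}.
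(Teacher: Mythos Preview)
Your proof is correct and follows essentially the same approach as the paper: apply the Hudson--Moody--St{\o}rmer quantum de Finetti theorem to the compatible hierarchy $(\gamma^k)_k$ to obtain $\xi$, then identify $\Theta\#\xi=\nu$ by testing against multiplication operators and tracking the diagonal of the kernels. The only cosmetic difference is that the paper phrases the identification via density of the ``monomials'' $\Phi(\rho)=\int\varphi\,\rho^{\otimes k}$ in $C_b(\mathcal{P}(\mathbb{R}^3))$, whereas you phrase it as matching all $k$-marginals and invoking classical Hewitt--Savage uniqueness; these are the same argument. Your remark that $\xi$ can be taken supported on real non-negative unit vectors (so that $\Theta(u)=u^2$ lands in $\mathcal{P}(\mathbb{R}^3)$) is a welcome clarification that the paper leaves somewhat implicit, writing $\phi^2$ in the statement but $|u|^2$ in the proof.
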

\begin{proof}
    The probability $\xi$ is given by applying the quantum de Finetti-Hewitt-Savage theorem \cite{HudsonMoody1976}, the hypotheses holding by Lemma~\ref{lem:gammakstrong}. The link between $\xi$ and $\nu$ is provided by \cite[Lemma 3.2]{Rougerie2020} after recalling that a Borel probability measure over a metric space is entirely determined by the integral of continuous bounded functions. We reproduce the proof here:
    it suffices to check
    $$\int_{\mathcal{P}(\mathbb{R}^3)} \Phi(\rho)\nu(d\rho) = \int_{\mathbb{S}(L^2(\mathbb{R}^{3}))} \Phi(\Theta(u))\xi(du)$$
    for $\Phi$ of the form $\Phi(\rho)=\int_{\mathbb{R}^{3k}} \rho^{\otimes k} \varphi(v_1,...,v_k)$ for a continuous bounded $\varphi$, because these functions generate a dense sub-algebra of the continuous bounded functions over $\mathcal{P}(\mathbb{R}^3)$ (Those 'monomials' can be found for instance in \cite{MischlerMouhot2012}). We then have, for any $j\geq k$,
    \begin{align*}
        \int_{\mathcal{P}(\mathbb{R}^3)} \Phi(\rho)\nu(d\rho) &= \int_{\mathbb{R}^{3k}} \int_{\mathcal{P}(\mathbb{R}^3)} \rho^{\otimes k} \nu(d\rho)\varphi  \\
        &=\int_{\mathbb{R}^{3k}} \nu^k \varphi\\
        &=\int_{\mathbb{R}^{3j}} \nu^j \varphi\\
        &=\Tr((\varphi\otimes \text{Id}^{\otimes(j-k)} )\Gamma^{j})\\
        &=\Tr(\varphi \Gamma^{j:k})
    \end{align*}
    Letting $j\rightarrow\infty$, the multiplication by $\varphi$ being a bounded operator:
     \begin{align*}
        \int_{\mathcal{P}(\mathbb{R}^3)} \Phi(\rho)\nu(d\rho) &=
        \Tr(\varphi \gamma^{k})\\
        &=\int_{\mathbb{S}(L^2(\mathbb{R}^3))} \Tr(\varphi p(u^{\otimes k})) \xi(du).
    \end{align*}
    We then have
    $$\Tr(\varphi p(u^{\otimes k})) = \int_{\mathbb{R}^{3k}} (\vert u \vert^2)^{\otimes k} \varphi = \Phi(\vert u \vert^2) = \Phi(\Theta(u)). $$
    The proof is concluded.
\end{proof}

\begin{remark}
    We can interpret Lemma~\ref{lem:quantum hierarchy} as some sort of commutation result: Suppose that $\xi$ was exactly the pushforward of $\nu$ by $\mathcal{P}(\mathbb{R}^3) \ni \rho \mapsto \sqrt{\rho} \in \mathbb{S}(L^2(\mathbb{R}^{3}))$. Then the convergence $\Gamma^{j:k}\rightarrow \gamma^k$ would informally mean
    $$\Gamma^j=p\left(\sqrt{\nu^j}\right)=p\left(\sqrt{\int_{\mathcal{P}(\mathbb{R}^3)} \rho^{\otimes j}\nu(d\rho)}\right)\approx_{j\rightarrow\infty} \gamma^j = \int_{\mathcal{P}(\mathbb{R}^3)} p(\sqrt{\rho^{\otimes j}})\nu(d\rho),$$ 
    which is an approximate commutativity between the map
    $p(\sqrt{ \cdot\  })$ and the integration against $\nu(d\rho)$.
\end{remark}

Using this quantum hierarchy, we can prove an abstract version of the desired inequality from Theorem~\ref{thm:side}:

\begin{theorem}
\label{thm:abstractlvl3affinity}
    Let $Q$ be a (possibly unbounded) non-negative self-adjoint operator on $L^2(\mathbb{R}^{3k_0})$ for some $k_0\geq 1$. Assume that for all $u\in\mathbb{S}(L^2(\mathbb{R}^{3}))$ in the support of $\xi$ (given by Lemma~\ref{lem:quantum hierarchy}), $$\langle Q u^{\otimes k_0}, u^{\otimes k_0}\rangle \geq \langle Q \vert u\vert^{\otimes k_0}, \vert u\vert^{\otimes k_0}\rangle.$$
    Then
    $$\limsup_{j\geq k_0}  \langle Q\psi^j,\psi^j\rangle \geq \int_{\mathcal{P}(\mathcal{P}(\mathbb{R}^{3}))} \langle Q \sqrt\rho^{\otimes k_0},\sqrt\rho^{\otimes k_0}\rangle \nu(d\rho).$$
\end{theorem}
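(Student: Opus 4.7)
The plan is to express $\langle Q\psi^j,\psi^j\rangle$ as a trace against the projection $\Gamma^j$, pass to the $j\to\infty$ limit using the trace-norm convergence $\Gamma^{j:k_0}\to\gamma^{k_0}$ of Lemma~\ref{lem:gammakstrong}, unfold the limit via the quantum de Finetti representation of Lemma~\ref{lem:quantum hierarchy}, and finally use the pointwise hypothesis on $Q$ together with the pushforward relation $\Theta\#\xi=\nu$ to convert the integral against $\xi$ into an integral against $\nu$.

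\textbf{Step 1 (trace reformulation).} By symmetry of $\psi^j$, for $j\geq k_0$, letting $Q$ act on the first $k_0$ variables,
\[
\langle Q\psi^j,\psi^j\rangle \;=\; \Tr\bigl((Q\otimes\Id^{\otimes(j-k_0)})\Gamma^j\bigr) \;=\; \Tr\bigl(Q\,\Gamma^{j:k_0}\bigr).
\]
Because $Q$ is unbounded, the mere trace-norm convergence $\Gamma^{j:k_0}\to\gamma^{k_0}$ does not yield convergence of $\Tr(Q\,\Gamma^{j:k_0})$. I therefore approximate $Q$ by its spectral truncations $Q_n := \int_0^n \lambda\,dE_\lambda(Q)$, which are non-negative, self-adjoint and bounded, with $Q_n\nearrow Q$. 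For each fixed $n$, $Q_n$ being bounded gives
\[
\langle Q\psi^j,\psi^j\rangle \;\geq\; \Tr(Q_n\Gamma^{j:k_0}) \;\xrightarrow[j\to\infty]{}\; \Tr(Q_n\gamma^{k_0}),
\]
and taking $\sup_n$ together with monotone convergence $\Tr(Q_n\gamma^{k_0})\nearrow \Tr(Q\gamma^{k_0})$ (non-negativity of $Q$ and $\gamma^{k_0}$) yields
\[
\limsup_j \langle Q\psi^j,\psi^j\rangle \;\geq\; \liminf_j \langle Q\psi^j,\psi^j\rangle \;\geq\; \Tr(Q\gamma^{k_0}).
\]

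\textbf{Step 2 (de Finetti and the hypothesis).} Testing the identity $\gamma^{k_0}=\int p(u^{\otimes k_0})\xi(du)$ against each bounded $Q_n$ and then letting $n\to\infty$ by monotone convergence (both in the integral and in the trace) gives
\[
\Tr(Q\gamma^{k_0}) \;=\; \int_{\mathbb{S}(L^2(\mathbb{R}^3))} \langle Q u^{\otimes k_0}, u^{\otimes k_0}\rangle\,\xi(du).
\]
Applying the hypothesis $\xi$-a.e. on the support of $\xi$, then using $\Theta(u)=u^2$, hence $|u|=\sqrt{\Theta(u)}$, and the pushforward $\Theta\#\xi=\nu$ from Lemma~\ref{lem:quantum hierarchy},
\[
\int \langle Q u^{\otimes k_0}, u^{\otimes k_0}\rangle\,\xi(du) \;\geq\; \int \langle Q |u|^{\otimes k_0}, |u|^{\otimes k_0}\rangle\,\xi(du) \;=\; \int_{\mathcal{P}(\mathbb{R}^3)} \langle Q\sqrt{\rho}^{\otimes k_0},\sqrt{\rho}^{\otimes k_0}\rangle\,\nu(d\rho),
\]
which chained with the previous inequality concludes.

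\textbf{Main obstacle.} The delicate point is the unboundedness of $Q$: the trace-norm convergence $\Gamma^{j:k_0}\to\gamma^{k_0}$ is not sufficient on its own, so one must first reduce to bounded spectral truncations $Q_n$ before passing to the limit in $j$, and only then exploit non-negativity to recover $Q$ in the limit $n\to\infty$ by monotone convergence. The pointwise hypothesis $\langle Q u^{\otimes k_0}, u^{\otimes k_0}\rangle \geq \langle Q |u|^{\otimes k_0}, |u|^{\otimes k_0}\rangle$ then serves solely to bridge the quantum parameter $u\in\mathbb{S}(L^2)$ (which carries an arbitrary phase) with its modulus $|u|$, the latter being the only object naturally related to the classical probability $\rho\in\mathcal{P}(\mathbb{R}^3)$ through $\Theta$.
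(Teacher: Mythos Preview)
Your proof is correct and follows essentially the same approach as the paper's own proof: both truncate $Q$ spectrally to a bounded non-negative operator, use trace-norm convergence $\Gamma^{j:k_0}\to\gamma^{k_0}$ to pass to the limit in $j$, unfold via the quantum de Finetti representation, apply monotone convergence to remove the truncation, and finally invoke the pointwise hypothesis together with $\Theta\#\xi=\nu$. The only cosmetic difference is the order in which you take the $n\to\infty$ limit (you pass to $\Tr(Q\gamma^{k_0})$ first, then unfold; the paper keeps the truncation through the de Finetti step and removes it at the end), which is immaterial.
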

\begin{remark}
    Here and below, $\langle Q \psi, \psi\rangle$ should be understood as the quadratic form associated to $Q$, so that $\langle Q \psi, \psi\rangle=\langle Q^\frac{1}{2} \psi, Q^\frac{1}{2}\psi\rangle$ if $\psi$ is in the domain of $Q^\frac{1}{2}$ and $+\infty$ otherwise.
\end{remark}
\begin{remark}
    Taking $Q=-\Delta_1$ (hence $k_0=1$), we recover (half of) the infinite dimension affinity of the Fisher information, since $4\langle -\Delta \sqrt{\rho}, \sqrt{\rho}\rangle=I(\rho)$. The other half is essentially Lemma~\ref{lem:lvl3affinityfirsthalf}.
\end{remark}
\begin{proof}
    We approximate pointwise from below the operator $Q$ by bounded self-adjoint non-negative operators:
    By the spectral theorem for self-adjoint operators, there exists an isometry $U$ between $L^2(\mathbb{R}^{3k_0})$ and another separable Hilbert space $L^2(X,d\mu)$ such that $UQU^{-1}$ is the multiplication operator by some real function $q$. We have $q\geq 0$ $\mu-$a.e. by non-negativity of $Q$. We define approximations by cutting off $q$ to be bounded: for a positive integer $m$,
    $\tilde{Q}_{m}:=U^{-1}\max(q,m)U$, which is a non-negative self-adjoint bounded operator (of norm less than $m$).
    The monotone convergence theorem ensures that $\langle \tilde{Q}_m \psi,\psi\rangle$ is non-decreasing in $m$ and
    $$ \langle \tilde{Q}_m \psi,\psi\rangle \xrightarrow[m\rightarrow +\infty]{} \langle Q \psi, \psi\rangle\in[0,+\infty].$$
    By tensorization with the identity (which we omit to write), we can make $\tilde{Q}_m$ act on $L^2(\mathbb{R}^{3j})$ for any $j\geq k_0$.
    
    We have, for any $j$ and any $m$,
    \begin{align*}
        \langle Q\psi^j,\psi^j\rangle &\geq\langle \tilde{Q}_m\psi^j,\psi^j\rangle\\
        &=\Tr(\tilde{Q}_m\Gamma^j)\\
        &=\Tr(\tilde{Q}_m\Gamma^{j:k_0})\\
    \end{align*}
    Since the operator $\tilde{Q}_m$ is bounded, 
    $$\Tr(\tilde{Q}_m\Gamma^{j:k_0})\xrightarrow[j\rightarrow \infty]{}\Tr(\tilde{Q}_m\gamma^{k_0})$$
    because of the strong convergence from Lemma~\ref{lem:gammakstrong}. But 
    $$\Tr(\tilde{Q}_m\gamma^{k_0})=\int_{\mathbb{S}(L^2(\mathbb{R}^{3}))} \Tr(\tilde{Q}_m p(u^{\otimes k_0})) \xi(du)$$
    by Lemma~\ref{lem:quantum hierarchy}. Finally,
    $$\Tr(\tilde{Q}_m p(u^{\otimes k_0})) \xi(du) = \langle \tilde{Q}_m u^{\otimes k_0},u^{\otimes k_0}\rangle$$ and monotone convergence in $m$ under the integral in the inequality
    $$\limsup_j\langle Q\psi^j,\psi^j\rangle \geq \int_{\mathbb{S}(L^2(\mathbb{R}^{3}))} \langle \tilde{Q}_m u^{\otimes k_0},u^{\otimes k_0}\rangle \xi(du)$$
    yields
    $$\limsup_j \langle Q\psi^j,\psi^j\rangle \geq \int_{\mathbb{S}(L^2(\mathbb{R}^{3}))} \langle Q u^{\otimes k_0},u^{\otimes k_0}\rangle \xi(du).$$
    We then use the hypothesis to write
    \begin{equation}
    \label{eq:usingpospreserv}
        \langle Q u^{\otimes k_0},u^{\otimes k_0}\rangle \geq \langle Q \vert u\vert^{\otimes k_0}, \vert u\vert^{\otimes k_0}\rangle = \langle Q (\sqrt{\vert u\vert^2})^{\otimes k_0}, (\sqrt{\vert u\vert^2})^{\otimes k_0}\rangle.
    \end{equation}
    By Lemma~\ref{lem:quantum hierarchy}, $\nu$ is the pushforward of $\xi$ by $u\mapsto \vert u \vert^2$, so
    $$\int_{\mathbb{S}(L^2(\mathbb{R}^{3}))} \langle Q u^{\otimes k_0},u^{\otimes k_0}\rangle \xi(du)\geq \int_{\mathcal{P}(\mathcal{P}(\mathbb{R}^{3}))} \langle Q \sqrt\rho^{\otimes k_0},\sqrt\rho^{\otimes k_0}\rangle \nu(d\rho).$$
\end{proof}

\subsection{Proof for the first order Fisher information $I^\partial$}

Let us consider again a vector field $b$ as in Theorem~\ref{thm:side}, and $\partial=b\cdot\nabla_{1...k_0}$.
To conclude the proof of point (3) for $I^\partial$, we want to apply Theorem~\ref{thm:abstractlvl3affinity}, so we must write $I^\partial(\nu^j)$ in terms of $\psi_j$. The following integration by parts yields:
\begin{align*}
    I^\partial(\nu^j)&=4\int_{\mathbb{R}^{3j}} \vert \partial  \sqrt{\psi^j}\vert^2\\
    &=-4\int_{\mathbb{R}^{3j}} (\partial^2  \sqrt{\psi^j}) \psi^j\\
    &=4\langle Q\psi^j,\psi^j\rangle
\end{align*}
where $Q=-\partial^2$. To apply Theorem~\ref{thm:abstractlvl3affinity}, we need to show self-adjoint-ness of $Q$. Consider $Q$ acting on the domain of smooth rapidly decaying functions, it is a symmetric non-negative (by integration by parts) densely defined operator on $L^2(\mathbb{R}^6)$. Its Friedrichs extension \cite{AkhiezerGlazman2013, Friedrichs1934} is automatically a self-adjoint operator, and its domain is easily seen to be
$$\left\{ \varphi \in L^2(\mathbb{R}^6) \vert \partial^2\varphi \in L^2(\mathbb{R}^6)  \right\}$$
with $\partial^2\varphi$ understood in the distributional sense. The domain of $Q^{\frac{1}{2}}$ is without surprise
$$\left\{ \varphi \in L^2(\mathbb{R}^6) \vert \partial \varphi \in L^2(\mathbb{R}^6) \right \}.$$
This means that for any $\rho\in \mathcal{P}(\mathbb{R}^6)$, our understanding of $4\langle Q\sqrt{\rho}, \sqrt{\rho}\rangle$ as $4\langle Q^\frac{1}{2}\sqrt{\rho},Q^\frac{1}{2} \sqrt{\rho}\rangle$ precisely coincides with $I^\partial(\rho)$ when $Q$ is understood as the Friedrichs extension: both terms are finite at the same time (whenever $\sqrt{\rho}$ exists and is in the domain of $Q^{\frac{1}{2}}$) and then take the same value.

It remains to check the hypothesis of Theorem~\ref{thm:abstractlvl3affinity}: 

\begin{lemma}
    \label{lem:pospreservingQ}
        For any $g\in L^2(\mathbb{R}^{6})$,
        $$\langle Q g,g\rangle \geq \langle Q \vert g\vert,\vert g\vert\rangle.$$
\end{lemma}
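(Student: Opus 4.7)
The plan is to prove this as a Kato-type inequality for the Dirichlet form associated with $-Q^j$. Recall from~\eqref{eq:rewriteDwithsqrt} that this form admits the explicit expression
$$\langle -Q^j g, g\rangle = \sum_{k=1}^3 \int_{\mathbb{R}^{6}} \bigl|\partial_k g\bigr|^2, \qquad \partial_k := \sqrt{\alpha^j}\,\tilde b_k \cdot \nabla_{12},$$
valid for $g$ (complex-valued) in the form domain. If $\langle -Q^j g,g\rangle = +\infty$ there is nothing to prove, so I may assume $\partial_k g \in L^2(\mathbb{R}^6)$ for $k=1,2,3$. The whole statement then reduces to the pointwise inequality $|\partial_k |g|| \leq |\partial_k g|$ almost everywhere, which once summed in $k$ and integrated yields the claim.

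The main work is establishing a chain rule for $|g|$ with respect to the derivations $\partial_k$. Because $\alpha^j$ is a smooth, strictly positive, bounded function and $\tilde b_k$ is smooth with linear growth, each $\partial_k$ is a first-order differential operator with smooth coefficients, so Friedrichs' commutator lemma applies: if $g_\eta := g \ast \chi_\eta$ is a standard mollification, then $\partial_k g_\eta \to \partial_k g$ in $L^2_{\text{loc}}$ as $\eta \to 0$. For the smooth approximation I introduce
$$f_{\varepsilon,\eta} := \sqrt{|g_\eta|^2 + \varepsilon^2} - \varepsilon,$$
which is a smooth function of $g_\eta$, so that the classical chain rule gives
$$\partial_k f_{\varepsilon,\eta} = \frac{\operatorname{Re}\bigl(\overline{g_\eta}\,\partial_k g_\eta\bigr)}{\sqrt{|g_\eta|^2 + \varepsilon^2}}.$$
Since $|\operatorname{Re}(\bar z w)| \leq |z||w|$, this yields $|\partial_k f_{\varepsilon,\eta}| \leq |\partial_k g_\eta|$ pointwise. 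Letting $\eta \to 0$ (by Friedrichs) and then $\varepsilon \to 0$ (by dominated convergence, using that $f_{\varepsilon,\eta} \to |g|$ in $L^2$ and that $\partial_k g_\eta \to \partial_k g$ in $L^2$), one obtains that $|g|$ lies in the form domain of $-Q^j$ with
$$\partial_k |g| = \frac{\operatorname{Re}(\bar g\,\partial_k g)}{|g|}\, \mathbf{1}_{\{g\neq 0\}} \quad \text{a.e.},$$
and in particular $|\partial_k |g||^2 \leq |\partial_k g|^2$ a.e.

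Summing the pointwise inequality over $k=1,2,3$ and integrating over $\mathbb{R}^6$ gives
$$\langle -Q^j |g|,|g|\rangle = \sum_{k=1}^{3} \int_{\mathbb{R}^6} |\partial_k |g||^2 \;\leq\; \sum_{k=1}^3 \int_{\mathbb{R}^6} |\partial_k g|^2 = \langle -Q^j g, g\rangle,$$
which is the claim. The main obstacle is the chain rule step, since $g$ is only assumed to be in the form domain and not smooth; however, smoothness and (local) Lipschitz behaviour of the coefficients $\sqrt{\alpha^j}\tilde b_k$ make the usual mollification argument work. An alternative, completely abstract route, would be to invoke the Beurling--Deny criterion: the semigroup $(e^{tQ^j})_{t\geq 0}$ is the transition semigroup of a (degenerate) diffusion, hence positivity preserving, which is equivalent to the desired contractivity of the associated quadratic form under taking absolute values.
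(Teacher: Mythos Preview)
Your proof is correct and follows essentially the same approach as the paper: both reduce to the pointwise inequality $|\partial_k|g||\le|\partial_k g|$ via the polar decomposition $g=|g|e^{i\theta}$. The paper simply states the identity $|\tilde b_k\cdot\nabla_{12}g|^2=|\tilde b_k\cdot\nabla_{12}|g||^2+|g|^2|\tilde b_k\cdot\nabla_{12}\theta|^2$ without further justification, whereas you supply the mollification/Friedrichs argument that makes the chain rule rigorous for $g$ merely in the form domain; your version is therefore the more careful one.
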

\begin{proof}
    We have
    \begin{align*}
        \langle Q g,g\rangle
        =\int_{\mathbb{R}^{3j}} \left\vert b\cdot\nabla_{1...k_0}g \right\vert^2
        \geq \int_{\mathbb{R}^{3j}} \left\vert b\cdot\nabla_{1...k_0}\vert g \vert\right\vert^2
        =\langle Q \vert g \vert,\vert g \vert\rangle
    \end{align*}
    using that, writing $g=\vert g\vert e^{i\theta}$, we have 
    $$\vert b\cdot\nabla_{1...k_0} g \vert^2 = \vert b\cdot\nabla_{1...k_0}\vert g \vert\vert^2 + \vert g \vert^2 \vert b\cdot\nabla_{1...k_0}\theta\vert^2.$$
\end{proof}

We can now give the
\begin{proof}[Proof of Theorem~\ref{thm:side} (3) for $I^\partial$]
We write that $I^\partial(\nu^j)=4\langle Q\psi^j,\psi^j \rangle$ with $Q=-\partial^2$ and thanks to Lemma~\ref{lem:pospreservingQ}, we can apply Theorem~\ref{thm:abstractlvl3affinity}. We obtain
$$\lim_{j\geq k_0} I^\partial(\nu^j) \geq \int_{\mathcal{P}(\mathcal{P}(\mathbb{R}^{3}))} 4\langle Q \sqrt\rho^{\otimes k_0},\sqrt\rho^{\otimes k_0}\rangle \nu(d\rho).$$
but
$$4\langle Q \sqrt\rho^{\otimes k_0},\sqrt\rho^{\otimes k_0}\rangle=I^\partial(\rho^{\otimes k_0})$$
so we get one half of the desired result. The other inequality holds by Lemma~\ref{lem:lvl3affinityfirsthalf}.
\end{proof}

\subsection{Proof for the second order Fisher information $K^\partial$}

    The proof for the second order functional $K^\partial$ is a little more involved, because $\mathcal{K}^j(\nu^j)$ does not admit such an easy rewriting in terms of a trace involving $\Gamma^j$. We rely on the other functional $K^\partial_{1/2}$ involving a square root introduced in Section~\ref{sec:on the fisher dissipation and aux}. Recall that it is defined by
    $$ K_{1/2}^{\partial}(\nu^j)=4\int_{\mathbb{R}^{3N}}\left( \partial^2\psi^j \right)^2 dV=4\int_{\mathbb{R}^{3N}}( \partial^4\psi^j) \psi^j dV.$$
    We define this time the operator $L=\partial^4$.
    Once again, by integration by parts, $L$ is symmetric nonnegative, hence we can consider its self-adjoint Friedrichs extension (still denoted $L$), and the domain of $L^\frac{1}{2}$ is naturally
    $$\left\{ \varphi \in L^2(\mathbb{R}^6) \vert  \partial^2 \varphi \in L^2(\mathbb{R}^6)\right\},$$
    so that once again $4\langle L \sqrt{\rho},\sqrt{\rho}\rangle=4\langle L^\frac{1}{2}\sqrt{\rho},L^\frac{1}{2}\sqrt{\rho}\rangle$ coincides with $K^\partial_{1/2}(\rho)$ for any $\rho \in \mathcal{P}(\mathbb{R}^6)$.

    We need to show the hypothesis of Theorem~\ref{thm:abstractlvl3affinity} is satisfied, which will require a little more work. Note that we need to restrict ourselves to the $u$ in the support of $\xi$ this time.
    \begin{lemma}
    \label{lem:pospreservingL}
        For any $u\in L^2(\mathbb{R}^{3})$ in the support of $\xi$ (given by Lemma~\ref{lem:quantum hierarchy}),
        $$\langle L u^{\otimes k_0},u^{\otimes k_0}\rangle \geq \langle L \vert u^{\otimes k_0}\vert,\vert u^{\otimes k_0}\vert\rangle.$$
    \end{lemma}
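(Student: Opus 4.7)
The plan is to unpack $\langle L^{j,k} \cdot,\cdot\rangle$ into an explicit $L^2$-norm of $(\partial^j_{b_k})^2$ of the argument, perform the polar decomposition $u = R e^{i\theta}$, and then reduce to an integrated inequality that must be squeezed out by integration by parts, exploiting the tensor-product structure. Writing $\partial := \partial^j_{b_k}$ for brevity: since $\tilde{b}_k$ is divergence-free and the scalar prefactor $|v_1-v_2|^{-1/2}(\alpha^j)^{1/4}$ is radial in $v_1-v_2$ (hence annihilated by $\tilde{b}_k$), the derivation $\partial$ is antisymmetric on $L^2(\mathbb{R}^6)$ (for the complex inner product). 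Two integrations by parts then give $\langle L^{j,k} g,g\rangle = 4\int_{\mathbb{R}^6}|\partial^2 g|^2\,dV$, so the lemma reduces to
\[
\int_{\mathbb{R}^6} |\partial^2(u\otimes u)|^2\,dV \;\geq\; \int_{\mathbb{R}^6}\bigl(\partial^2|u\otimes u|\bigr)^2\,dV.
\]

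With $u = R e^{i\theta}$ ($R=|u|\geq 0$, $\theta$ real), one has $u\otimes u = f\, e^{i\Theta}$ where $f = R\otimes R = |u\otimes u|$ and $\Theta(v_1,v_2) = \theta(v_1)+\theta(v_2)$. A direct computation using $\partial(f e^{i\Theta})=(\partial f + if\partial\Theta)e^{i\Theta}$ yields
\[
\partial^2(fe^{i\Theta}) = \bigl[(\partial^2 f - f(\partial\Theta)^2) + i\bigl(2\partial f\,\partial\Theta + f\,\partial^2\Theta\bigr)\bigr]e^{i\Theta},
\]
so that $|\partial^2(u\otimes u)|^2 = (\partial^2 f - f(\partial\Theta)^2)^2 + (2\partial f\,\partial\Theta + f\partial^2\Theta)^2$, while $(\partial^2|u\otimes u|)^2 = (\partial^2 f)^2$. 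The inequality thus becomes
\[
\int \!\Bigl[-2(\partial^2 f)f(\partial\Theta)^2 + f^2(\partial\Theta)^4 + \bigl(2\partial f\partial\Theta + f\partial^2\Theta\bigr)^2\Bigr]\,dV \;\geq\; 0.
\]

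The pointwise integrand is sign-indefinite because of the cross term $-2(\partial^2 f)f(\partial\Theta)^2$, so the argument must be integrated. Using antisymmetry of $\partial$, integrate this cross term by parts:
\[
-2\int(\partial^2 f)f(\partial\Theta)^2\,dV \;=\; 2\int(\partial f)^2(\partial\Theta)^2 + 4\int f\,\partial f\,\partial\Theta\,\partial^2\Theta\,dV,
\]
so the integrated difference equals
\[
6\!\int\!(\partial f)^2(\partial\Theta)^2 + 8\!\int\! f\,\partial f\,\partial\Theta\,\partial^2\Theta + \!\int\! f^2(\partial\Theta)^4 + \!\int\! f^2(\partial^2\Theta)^2.
\]

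The main obstacle is showing this sum is $\geq 0$. My plan is to exploit the separable structure $\Theta = \theta\otimes 1+1\otimes\theta$, $f = R\otimes R$, together with the antisymmetric form $\tilde{b}_k=(b_k,-b_k)$ (and $b_k\perp (v_1-v_2)$, which already annihilates radial quantities). Both $\partial f = \beta\bigl[R(v_2)\, b_k\!\cdot\!\nabla R(v_1)-R(v_1)\,b_k\!\cdot\!\nabla R(v_2)\bigr]$ and $\partial\Theta=\beta\bigl[b_k\!\cdot\!\nabla\theta(v_1)-b_k\!\cdot\!\nabla\theta(v_2)\bigr]$ are antisymmetric differences $\beta(\phi(v_1)-\phi(v_2))$ with $\phi$ built from $\nabla R$, $\nabla\theta$ and $b_k$; this extra antisymmetry is precisely what should make the mixed term recombine cleanly. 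Concretely, I expect to express everything in terms of $u$ and $b_k\!\cdot\!\nabla u$ at $v_1,v_2$ (giving the closed-form expansion $\partial^2(u\otimes u) = u(v_2)L_1^2 u + u(v_1)L_2^2 u - u(v_2)L_2L_1 u - u(v_1)L_1L_2 u - 2(L_1u)(L_2u)$ with $L_i = \beta\, b_k\!\cdot\!\nabla_{v_i}$), and then relate $|\partial^2(u\otimes u)|^2$ to its modulus counterpart by a Cauchy–Schwarz/sum-of-squares argument. The delicate point — and the likely technical heart of the proof — is to verify that the cross term $8\int f\partial f\,\partial\Theta\,\partial^2\Theta$ is fully absorbed, without needing a further pointwise inequality that would fail in general (as one sees already in the 1D setting, where a similar expression can be negative for generic non-tensorial $f,\Theta$). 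The tensor-product separability of both $f$ and $\Theta$, and the fact that $\tilde{b}_k$ couples the two factors antisymmetrically, is what one should leverage.
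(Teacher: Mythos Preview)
Your computation is correct up to the integrated expression, but the ``delicate point'' you isolate is not the intended route, and I do not see how to close it by tensor-product algebra alone. You are effectively trying to prove the inequality for \emph{every} $u\in L^2(\mathbb{R}^3)$, ignoring the hypothesis ``$u$ in the support of $\xi$''. That hypothesis is not decorative: it is precisely what makes the argument short.

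The key idea you are missing is to first exploit the already-established level-3 affinity of the \emph{Fisher information}. Applying the abstract Theorem~\ref{thm:abstractlvl3affinity} with $Q=-\Delta_{12}$ gives
\[
\lim_j I(\nu^j)\;\ge\;\int_{\mathbb{S}(L^2)}\langle -\Delta_{12}\,u^{\otimes 2},u^{\otimes 2}\rangle\,\xi(du)\;\ge\;\int_{\mathbb{S}(L^2)}\langle -\Delta_{12}\,|u|^{\otimes 2},|u|^{\otimes 2}\rangle\,\xi(du)\;=\;\int I(\rho)\,\nu(d\rho),
\]
and since the two extremes are known to be \emph{equal} (this is the Fisher case of Theorem~\ref{thm:level3affinity}), every intermediate inequality is an equality. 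In particular, for $\xi$-a.e.\ $u$, writing $g=u\otimes u=|g|e^{i\theta}$,
\[
\int_{\mathbb{R}^6}|\nabla g|^2=\int_{\mathbb{R}^6}|\nabla|g||^2,
\qquad\text{hence}\qquad |g|^2|\nabla\theta|^2=0\ \text{a.e.}
\]
So on the support of $\xi$, the phase is constant where $|g|>0$: $\nabla g=(\nabla|g|)e^{i\theta}$, and therefore $\partial g=(\partial|g|)e^{i\theta}$. Differentiating once more,
\[
\partial^2 g=(\partial^2|g|)\,e^{i\theta}+i(\partial|g|)(\partial\theta)\,e^{i\theta},
\]
whence $|\partial^2 g|^2\ge(\partial^2|g|)^2$ pointwise, and the lemma follows immediately. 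In your notation, the troublesome term $-f(\partial\Theta)^2$ in the real part simply vanishes because $f\,\partial\Theta=0$; there is nothing to absorb.

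Without this input, your residual expression $6\int(\partial f)^2(\partial\Theta)^2+8\int f\,\partial f\,\partial\Theta\,\partial^2\Theta+\int f^2(\partial\Theta)^4+\int f^2(\partial^2\Theta)^2$ is not obviously nonnegative, and the antisymmetry of $\tilde{b}_k$ together with separability of $f,\Theta$ does not by itself produce a sum-of-squares identity here (the analogous second-order diamagnetic-type inequality fails for generic phases). The support restriction is doing real work.
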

    \begin{proof}
        \textit{Step 1.} In this step we claim that for any $g=u^{\otimes k_0}$ with $u$ in the support of $\xi$, writing $g=\vert g\vert e^{i\theta}$, it holds that
        $$\vert \nabla \theta \vert^2\vert g \vert^2=0.$$
        To do so, we exploit the proof of the infinite-dimensional affinity of the Fisher information $I$. With $\Delta_{1...k_0}$ the Laplacian in the first $k_0$ variables, we have for any $j\geq k_0$,
        $$I(\nu^j)=4\langle -\Delta_{1...k_0} \psi^j,\psi^j\rangle.$$
        It holds that,
        $$\vert \nabla g\vert^2 =\vert \nabla \vert g\vert\vert^2+\vert \nabla \theta \vert^2\vert g \vert^2\geq \vert \nabla \vert g\vert\vert^2,$$
        which allows us, since $\langle -\Delta_{1...k_0} g,g\rangle =\langle \nabla_{1...k_0} g,\nabla_{1...k_0} g\rangle $, to apply Theorem~\ref{thm:abstractlvl3affinity} with $Q=-\Delta_{1...k_0}$ to get
        $$\lim_j I(\nu^j) \geq 4\int_{\mathcal{P}(\mathcal{P}(\mathbb{R}^{3}))} \langle -\Delta_{12} \sqrt\rho^{\otimes 2},\sqrt\rho^{\otimes 2}\rangle \nu(d\rho)=\int_{\mathcal{P}(\mathcal{P}(\mathbb{R}^{3}))} I(\rho) \nu(d\rho).$$
        But the right hand side is greater than $\lim_j I(\nu^j)$ (because the equivalent of Lemma~\ref{lem:lvl3affinityfirsthalf} holds by convexity of $I$). This means that every inequality is an equality: in particular in the proof of Theorem~\ref{thm:abstractlvl3affinity}, we have equality in ~\eqref{eq:usingpospreserv}, so that for any $g=u^{\otimes k_0}$ with $u$ in the support of $\xi$,
        $$\int_{\mathbb{R}^6}\vert \nabla g\vert^2 =\int_{\mathbb{R}^6}\vert \nabla \vert g\vert\vert^2,$$
        which shows $\vert \nabla \theta \vert^2\vert g \vert^2=0.$
        
        \noindent\textit{Step 2.} We compute: since by Step 1
        $$\nabla g = (\nabla\vert g\vert )e^{i\theta}, $$
        we have
        $$\partial g =( \partial\vert g\vert )e^{i\theta}. $$
        Differentiating again,
        $$\partial^2 g = (\partial^2\vert g\vert) e^{i\theta}+i( \partial\vert g\vert )(\partial\theta )e^{i\theta}.$$
        Hence 
        $$\vert\partial^2 g\vert^2\geq\vert\partial^2 \vert g\vert \vert^2. $$
        Since $\langle L g,g\rangle = \int_{\mathbb{R}^6} \vert\partial^2 g\vert^2$, this concludes.
    \end{proof}
    We can now conclude the
    \begin{proof}[Proof of Theorem~\ref{thm:side} (3) for $K^\partial$]
    We apply Theorem~\ref{thm:abstractlvl3affinity} with $Q=L$ and get
    \begin{align*}
        \lim_{j\geq k_0}K^\partial_{1/2}(\nu^j)=\lim_{j\geq k_0}  4\langle L\psi^j,\psi^j\rangle &\geq \int_{\mathcal{P}(\mathcal{P}(\mathbb{R}^{3}))} 4\langle L \sqrt\rho^{\otimes k_0},\sqrt\rho^{\otimes k_0}\rangle \nu(d\rho)\\
        &=\int_{\mathcal{P}(\mathcal{P}(\mathbb{R}^{3}))} K^\partial_{1/2}(\rho^{\otimes k_0}) \nu(d\rho).
    \end{align*}
    But by Proposition~\ref{prop:func_comparison} with $\beta=1/2$, we have the equivalence
    $$K^\partial=K^\partial_1 \geq K^\partial_{1/2}  \geq  \frac{1}{16} K^\partial_{1},$$
    so
    $$\lim_{j\geq k_0}K^\partial(\nu^j)\geq \lim_{j\geq k_0}K_{1/2}^\partial(\nu^j)\geq \int_{\mathcal{P}(\mathcal{P}(\mathbb{R}^{3}))} K^\partial_{1/2}(\rho^{\otimes k_0}) \nu(d\rho)\geq \frac{1}{16}\int_{\mathcal{P}(\mathcal{P}(\mathbb{R}^{3}))} K^\partial(\rho^{\otimes k_0}) \nu(d\rho)$$
    which is the remaining inequality in Theorem~\ref{thm:side}, the other being given by Lemma~\ref{lem:lvl3affinityfirsthalf}.
\end{proof}

\appendix
\section{Derivation of the Kolmogorov equation}
\label{sec:derivkol}
We recall the derivation of the Kolmogorov equation~\eqref{eq:kol} from the particle system~\eqref{eq:particlesystem}. We hence work with $N\geq 2$ fixed.
We rewrite~\eqref{eq:particlesystem} in a more canonical form as
\begin{equation}
\label{eq:particlesystemappendix}
    d\bd{V}(t) = \bd{b}(\bd{V}(t)) dt + \bd{\Sigma}(\bd{V}(t)) d\bd{B}(t).
\end{equation}
Here $\bd{V}(t)=(V^N_1(t),...,V^N_N(t))\in\mathbb{R}^{3N}$, and the drift $\bd{b}(V)\in\mathbb{R}^{3N}$ is defined as
$$\left[\bd{b}(V)\right]_i=\frac{2}{N-1}\sum_{\substack{j=1\\ j\neq i}}^N b^N(v^i - v^j) \in \mathbb{R}^3,$$
recalling that $b^N=\nabla\cdot(\alpha^N a)$. The bracket $[\cdot]_i$ designates the components in the $i$-th $\mathbb{R}^3$ factor in the cartesian product $\mathbb{R}^{3N}=\mathbb{R}^3\times...\times\mathbb{R}^3$).
We also let $\bd{B}(t) \in \mathbb{R}^{3N(N-1)/2}$ be the column vector of the $B^N_{ij}(t)$ in lexicographic order:
$$\bd{B}(t)=(B^N_{12}(t),B^N_{13}(t),...,B^N_{1N}(t),B^N_{23}(t),...,B^N_{(N-1)N}(t))^T$$
and the matrix $\bd{\Sigma}(V)$ with $3N$ lines and $3N(N-1)/2$ columns, given in $3\times3$ blocks as:
\begin{align*}
    \bd{\Sigma} = \bd{\Sigma}(V) = \frac{\sqrt{2}}{\sqrt{N-1}}
    \left[\begin{array}{ccccccccc}
   \sigma^{12}  & ... & \sigma^{1N} & 0 & ... & 0 & 0 & ... & 0\\
   -\sigma^{21} & 0 &  0 & \sigma^{23} &...&\sigma^{2N} & 0 &...& 0 \\
   0 & -\sigma^{31} & 0 & -\sigma^{32} & 0 & 0 & \sigma^{34} & ... & 0 \\
   & & & & ... \\
   0 & 0 & -\sigma^{N1} & 0 & 0 & -\sigma^{N2} &  0 & ... & -\sigma^{N(N-1)}
\end{array}\right]
\end{align*}
with shorthand notation $\sigma^{ij}=\sigma(v^i-v^j)=\sigma(v^j-v^i)$ (recall that $\sigma= \sqrt{\alpha^N a}$ and note that $\sigma^{ij}=\sigma^{ji}$).

Consider a test function $\varphi(V)$, the Itô formula yields:
\begin{align*}
    \frac{d}{dt}\mathbb{E}(\varphi(\bd{V}(t)))&=\mathbb{E}\left[\nabla \varphi(\bd{V}(t)) \cdot\bd{b}(\bd{V}(t)) + \frac{1}{2}\nabla^2 \varphi(\bd{V}(t)): \bd{\Sigma}(\bd{V}(t)) \bd{\Sigma}^T(\bd{V}(t))\right],
\end{align*}
which, rewritten in terms of the law $F^N_t$,
\begin{align*}
    \frac{d}{dt}\int_{\mathbb{R}^{3N}}\varphi(V)F^N_t(dV)&=\int_{\mathbb{R}^{3N}}\nabla \varphi(V) \cdot\bd{b}(V) F^N_t(dV)\\
    &+ \frac{1}{2}\int_{\mathbb{R}^{3N}}\nabla^2 \varphi(V): \bd{\Sigma} (V)\bd{\Sigma}^T(V) F^N_t(dV)\\
    &=\int_{\mathbb{R}^{3N}}\varphi(V) \left[-\nabla \cdot(\bd{b}(V) F^N_t(dV)) + \frac{1}{2}\nabla^2:( \bd{\Sigma} (V)\bd{\Sigma}^T(V) F^N_t(dV))\right]
\end{align*}
after formally integrating by parts, so that the Kolmogorov equation is given by
\begin{equation}
\label{eq:kolgeneral}
    \partial_t F^N_t = - \nabla \cdot (\bd{b} F^N_t) + \frac{1}{2}\nabla^2: (\bd{\Sigma} \bd{\Sigma}^T F^N_t)
\end{equation}
We need to compute the $N\times N$ symmetric matrix $\bd{\Gamma}:=\bd{\Sigma} \bd{\Sigma}^T$:
\begin{align*}
     \bd{\Gamma} = \frac{2}{N-1}
    &\left[\begin{array}{c c c}
   \sum_{k\neq 1} (\alpha^Na)^{1j}  & 0 & 0\\
   0 & \sum_{k\neq 2} (\alpha^Na)^{2j} &  0 \\
   &...& \\
   0 & 0 & \sum_{k\neq N} (\alpha^Na)^{Nj}
\end{array}\right] \\&-\frac{2}{N-1}\
\left[\begin{array}{cccc}
   0  & (\alpha^Na)^{12} & ... & (\alpha^Na)^{1N} \\
   (\alpha^Na)^{21}  & 0 & (\alpha^Na)^{23} & (\alpha^Na)^{2N} \\
   &...& \\
   (\alpha^Na)^{N1}  & ... & (\alpha^Na)^{N(N-1)} & 0
\end{array}\right]
\end{align*}
(with $(\alpha^Na)^{ij}=\alpha^N(\vert v_i-v_j\vert)a(v_i-v_j)$) or in coordinates, for $i,j\in\{1,...,N\}$:
$$[\bd{\Gamma}]_{ij}=\frac{2}{N-1} \left[ \delta_{i=j} \sum_{\substack{k=1\\k\neq i}}^N (\alpha^Na)^{ik} - \delta_{i\neq j}(\alpha^Na)^{ij} \right].$$
We deduce the following relation
\begin{align*}
    [\nabla\cdot\bd{\Gamma}]_{i}&=\frac{2}{N-1}\sum_{j=1}^N \nabla_j\cdot\left[ \delta_{i=j} \sum_{\substack{k=1\\k\neq i}}^N (\alpha^Na)^{ik} - \delta_{i\neq j}(\alpha^Na)^{ij} \right]\\
    &=\frac{2}{N-1}\left(\sum_{\substack{k=1\\k\neq i}}^N b^{N}(v_i-v_k) +\sum_{\substack{j=1\\j\neq i}}^N b^{N}(v_i-v_j) \right)\\
    &=2[\bd{b}(V)]_i.
\end{align*}
Expanding the derivatives in~\eqref{eq:kolgeneral} yields:
\begin{align}
    \partial_t F^N_t =& - \nabla \cdot (\bd{b} F^N_t) + \frac{1}{2}\nabla^2: (\bd{\Gamma} F^N_t)\nonumber\\
    =& - (\nabla \cdot \bd{b})F^N_t -  \bd{b} \cdot \nabla F^N_t + \frac{1}{2} (\nabla^2: \bd{\Gamma}) F^N_t + \frac{1}{2} \bd{\Gamma}:\nabla^2 F^N_t + (\nabla \cdot \bd{\Gamma})\cdot \nabla F_t^N\nonumber\\
    =& - \frac{1}{2}(\nabla^2: \bd{\Gamma})F^N_t -  \frac{1}{2} (\nabla \cdot \bd{\Gamma}) \cdot \nabla F^N_t\nonumber \\
    &+  \frac{1}{2}(\nabla^2: \bd{\Gamma})F^N_t + \frac{1}{2} \bd{\Gamma}:\nabla^2 F^N_t + (\nabla \cdot \bd{\Gamma})\cdot \nabla F_t^N\nonumber \\
    =& \frac{1}{2} (\nabla \cdot \bd{\Gamma}) \cdot \nabla F^N_t + \frac{1}{2} \bd{\Gamma}:\nabla^2 F^N_t \nonumber\\
    =& \frac{1}{2} \nabla \cdot (\bd{\Gamma} \nabla F^N_t)
\end{align}
where we have used the relations $2 \bd{b}=\nabla \cdot \bd{\Gamma}$ and $\nabla\cdot \bd{b}=\frac{1}{2}(\nabla^2: \bd{\Gamma})$ to go from the second line to the third. Notice that the last line provides an explicit expression of the Kolmogorov equation in divergence form.

To obtain~\eqref{eq:kol}, we plug in the expression of $\bd{\Gamma}$, differentiating between diagonal and non diagonal terms:
\begin{align*}
    \partial_t F^N_t &= \frac{1}{(N-1)} \sum_{i,j=1}^{N} \nabla_i \cdot (\bd{\Gamma}_{ij} \nabla_j F^N_t) \\
    &= \frac{1}{(N-1)} \sum_{i=1}^{N} \nabla_i \cdot (\bd{\Gamma}_{ii} \nabla_i F^N_t) + \frac{1}{(N-1)} \sum_{i=1}^{N}\sum_{j \neq i} \nabla_i \cdot (\bd{\Gamma}_{ij} \nabla_j F^N_t) \\
    &= \frac{1}{(N-1)} \sum_{i=1}^{N} \sum_{j \neq i} \nabla_i\cdot  ((\alpha^Na)^{ij} \nabla_i F^N_t) - \frac{1}{(N-1)} \sum_{i=1}^{N}\sum_{j \neq i} \nabla_i\cdot  ((\alpha^Na)^{ij} \nabla_j F^N_t) \\
    &= \frac{1}{(N-1)} \sum_{\substack{i, j = 1 \\ i\neq j}}^N \nabla_i \cdot ((\alpha^Na)^{ij} (\nabla_i F^N_t - \nabla_j F^N_t))
\end{align*}
Symmetrizing the sum, and then indexing on ordered pairs, we get:
\begin{align*}
    \partial_t F^N_t &=\frac{1}{2(N-1)} \sum_{\substack{i, j = 1 \\ i\neq j}}^N (\nabla_i - \nabla_j)\cdot ((\alpha^Na)^{ij} (\nabla_i F^N_t - \nabla_j F^N_t)) \\
    &= \frac{1}{(N-1)} \sum_{\substack{i, j = 1 \\ i<j}}^N (\nabla_i - \nabla_j)\cdot ((\alpha^Na)^{ij} (\nabla_i F^N_t - \nabla_j F^N_t))\\
    &= \frac{1}{(N-1)} \sum_{\substack{i, j = 1 \\ i<j}}^N Q_{ij}^N (F^N_t),\\
\end{align*}
which is~\eqref{eq:kol}.

\sloppy
\printbibliography

\end{document}